\documentclass[10pt,twoside,leqno]{amsart}
\pdfoutput=1
\usepackage{amsfonts}
\usepackage{amsmath}
\usepackage{mathtools}
\usepackage{amsthm}
\usepackage{amssymb}
\usepackage{pifont}
\newcommand{\cmark}{\ding{51}}

\usepackage{mathrsfs}
\usepackage[numbers]{natbib}
\usepackage[fit]{truncate}
\usepackage{geometry}
\usepackage{hyperref}
\makeatletter
\makeatother
\usepackage{bm}
\usepackage{tikz-cd}
\usepackage[makeroom]{cancel}
\usepackage{xcolor,colortbl}
\usepackage{hhline}
\usepackage{float}
\usepackage{enumitem} 
\usepackage{stackengine}
\newcommand\xrowht[2][0]{\addstackgap[.5\dimexpr#2\relax]{\vphantom{#1}}}
\usepackage{array, multirow}
\newcommand{\R}{\mathbb{R}}
\newcommand{\C}{\mathbb{C}}

\newcommand{\cm}[1]{\ignorespaces}
\newcommand{\eps}{\varepsilon}
\newcommand{\nperp}{\mathfrak{n}^{\perp_g}}
\newcommand{\frs}[1]{\mathfrak{s}_{#1}}

\theoremstyle{plain}
\newtheorem{theorem}{Theorem}[section]
\newtheorem*{theorem*}{Theorem}
\newtheorem{corollary}[theorem]{Corollary}
\newtheorem{lemma}[theorem]{Lemma}

\newtheorem{proposition}[theorem]{Proposition}
\newtheorem*{proposition*}{Proposition}

\theoremstyle{definition}

\newtheorem{example}[theorem]{Example}

\newtheorem{remark}[theorem]{Remark}
\numberwithin{equation}{section}

\textheight=8.5in
\textwidth=6in
\oddsidemargin=0.25in
\evensidemargin=0.25in

\begin{document}

\title[Hermitian structures on a class of almost nilpotent Lie algebras]{Hermitian structures on a class \\ of almost nilpotent solvmanifolds}

\author{Anna Fino}
\address[A. Fino]{Dipartimento di Matematica ``G. Peano''\\
	Universit\`a di Torino\\
	Via Carlo Alberto 10\\
	10123 Torino, Italy} \email{annamaria.fino@unito.it}

\author{Fabio Paradiso}
\address[F. Paradiso]{Dipartimento di Matematica ``G. Peano''\\
	Universit\`a di Torino\\
	Via Carlo Alberto 10\\
	10123 Torino, Italy} \email{fabio.paradiso@unito.it}

\subjclass[2010]{53D18, 53C15, 53C30, 53C55}
\keywords{Almost nilpotent Lie groups, Solvmanifolds, Hermitian metrics, Generalized K\"ahler structures, Holomorphic Poisson structures, SKT structures, Balanced structures}

\begin{abstract} 
In this paper we  investigate  the existence of   invariant SKT, balanced  and  generalized K\"ahler structures  on compact quotients $\Gamma \backslash G$,  where $G$ is an almost  nilpotent Lie group whose nilradical  has one-dimensional commutator and $\Gamma$ is a lattice of $G$. 
We   first obtain a characterization of Hermitian   almost nilpotent Lie algebras $\mathfrak{g}$   whose nilradical $\mathfrak{n}$  has one-dimensional commutator   and    a classification result in real dimension six. Then, we study the ones admitting SKT and  balanced structures  and we examine the behaviour of such structures under flows. In particular, we construct new examples of  compact   SKT manifolds. 

Finally,  we prove  some  non-existence results for generalized K\"ahler structures  in  real dimension six.     In higher dimension  we construct     the  first examples  of non-split generalized K\"ahler structures  (i.e., such that the associated complex structures do not commute) on  almost abelian   Lie algebras. This leads  to  new   compact  (non-K\"ahler) manifolds  admitting    non-split generalized K\"ahler structures.

\end{abstract}

\maketitle

%\tableofcontents

\section{Introduction}
Let $(M,J)$ be a complex manifold of complex dimension $n$. A  $J$-Hermitian metric $g$ on $(M, J)$ 
is called \emph{strong K\"ahler with torsion} (SKT for short) or \emph{pluriclosed} if its fundamental $2$-form $\omega \coloneqq g(J\cdot, \cdot)$ is $dd^c$-closed, where 
$d^c\coloneqq i(\overline\partial - \partial)$ is the real Dolbeault operator.
  The SKT condition, first introduced in \cite{Bis}, is one of the most studied generalizations of the K\"ahler condition (see for example \cite{FT1, FV1, FV2, GGP, ST, ST1} and the references therein).  SKT metrics also crop up in the setup of generalized geometry: following \cite{Gua}, a \emph{generalized K\"ahler structure} on an even-dimensional smooth manifold $M$ is defined as a pair $(\mathcal{J}_1,\mathcal{J}_2)$ of complex structures on the vector bundle $TM \oplus T^*M$ which are orthogonal with respect to the natural split-signature inner product $\left<\cdot,\cdot\right>$ on $TM \oplus T^*M$,
\[
\left< X+ \xi, Y+ \eta \right> =\frac{1}{2} (\xi(Y)+\eta(X)),\quad X+\xi,Y+\eta \in TM \oplus T^*M,
\] 
integrable with respect to the ($H$-twisted)  Courant bracket
\[
[X+\xi,Y+\eta]_H=[X,Y]+\mathcal{L}_X \eta - \iota_X \xi + \iota_Y \iota_X H,\quad X+\xi,Y+\eta \in \Gamma(TM \oplus T^*M),
\] 
where $H$ is a closed $3$-form on $M$,  and such that the inner product $\left< \mathcal{J}_1 \cdot, \mathcal{J}_2 \cdot \right>$ is positive-definite. Generalized K\"ahler structures have been extensively studied in literature, see for example \cite{AGG, AG, DM,  FT, Hit}

It was proven in \cite{Gua} that a generalized K\"ahler structure is actually equivalent to a triple $(J_+,J_-,g)$, where $J_\pm$ are  complex structures on $M$ and $g$ is a $J_\pm$-Hermitian SKT metric such that
$d^c_+\omega_+ + d^c_- \omega_-=0,$
where $\omega_\pm \coloneqq g(J_\pm \cdot, \cdot)$ and $d^c_\pm$ is the real Dolbeault operator for $J_\pm$.

A generalized K\"ahler structure $(J_+,J_-,g)$ is called \emph{split} if $[J_+,J_-]=0$.  In the  non-split case,  by  \cite{Hit}  one has that the $(2,0)$-part (with respect to $J_\pm$) of $[J_+,J_-]g^{-1} \in \Gamma(\Lambda^2TM)$ always defines a \emph{holomorphic Poisson structure} $\sigma \in  \Gamma(\Lambda^2 T^{1,0}M)$, namely  $\sigma$  is holomorphic with respect to $J_\pm$, i.e., $\overline\partial_{\pm}  \sigma=0$, and \emph{Poisson}, that is, $[\sigma,\sigma]=0$, where $\overline\partial_{\pm}$ is the Cauchy-Riemann operator  associated to $J_{\pm}$ and $[ \cdot, \cdot]$ is the Schouten bracket.

Another special class of Hermitian structures is provided by \emph{balanced} structures, namely Hermitian structures $(J,g)$ whose  fundamental form  $\omega$ is coclosed or equivalently satisfying $d\omega^{n-1}=0$. 

A natural  setting for the study of  SKT  and balanced structures is provided   by compact quotients $\Gamma \backslash G$  of simply connected Lie groups  $G$ by lattices $\Gamma$  endowed with an invariant complex structure, i.e., with a complex structure induced by a  complex structure on  the Lie algebra $\mathfrak{g}$ of $G$.
This allows to simplify the problem, by reducing to  structures at the level of   Lie algebras.

In the nilpotent setting,  SKT structures were studied in \cite{FPS}, where $6$-dimensional nilpotent Lie algebras admitting SKT structures were classified, and in \cite{EFV}.

SKT structures on special classes of solvable Lie groups were instead studied in \cite{FOU,AL,FP1,FS}, with \cite{FT, FP1} focusing on generalized K\"ahler structures on \emph{almost abelian} Lie algebras, namely solvable Lie algebras admitting a codimension one abelian ideal.
Up to now, in literature, these are the  only examples of non-K\"ahler solvable Lie algebras admitting generalized K\"ahler structures, with the first example having been found in \cite{FT} and a full classification in  real dimension six obtained in \cite{FP1}. Moreover, all the known  solvable examples are split.

In \cite{FOU}, it was proven that, up to isomorphism, only two $6$-dimensional  SKT unimodular non-nilpotent  Lie algebras admit  a closed $(3,0)$-form: one  is almost abelian, while the other is \emph{almost nilpotent}, namely its  nilradical $\mathfrak{n}$  has codimension one.  In particular, $\mathfrak{n} \cong \mathfrak{h}_3 \oplus \R^2$, with $\mathfrak{h}_3$ denoting the $3$-dimensional Heisenberg Lie algebra, hence its commutator $\mathfrak{n}^1 \coloneqq [\mathfrak{n},\mathfrak{n}]$ is one-dimensional.

Motivated by this, in this paper we concentrate our attention on almost nilpotent Lie algebras whose nilradical $\mathfrak{n}$  has one-dimensional commutator $\mathfrak{n}^1$. Such Lie algebras can be represented as the semidirect product $\mathfrak{n} \rtimes_D \R$, for a nilpotent Lie algebra $\mathfrak{n}$ with $\dim \mathfrak{n}^1=1$ and some $D \in \text{Der}(\mathfrak{n})$. Special focus is placed on \emph{strongly unimodular} Lie algebras, since this is a necessary condition in order for the corresponding simply connected Lie group to admit compact quotients by lattices (\cite{Gar}). 

Exploiting the fact that, on such Lie algebras, the dimension of $\mathfrak{n}^1$ and the codimension of $\mathfrak{n}$ are both equal to one,  in Section \ref{sec_Hermitian}  we study Hermitian structures $(J,g)$ on them, depending on the relative positions of the two lines $J \mathfrak{n}^1$ and $\mathfrak{n}^{\perp_g}$, where $\mathfrak{n}^{\perp_g}$ denotes the orthogonal complement of $\mathfrak{n}$ in $\mathfrak{g}$.  In Section \ref{casssixsection} we apply  the previous results to classify, up to isomorphisms,   $6$-dimensional  strongly unimodular almost nilpotent Lie algebras whose nilradical $\mathfrak{n}$  has one-dimensional commutator $\mathfrak{n}^1$ and admitting complex structures.

For the SKT structures in Section \ref{sectionSKTstruct} we find characterizations and classification results in  real dimension six for the two extremal cases, namely the one where the two lines coincide ($J \mathfrak{n}^1 =\mathfrak{n}^{\perp_g}$) and the one where they are orthogonal ($J \mathfrak{n}^1 \subset \mathfrak{n}$).

The SKT condition is preserved by  the \emph{pluriclosed flow}, which  is   a parabolic flow, introduced  in   \cite{Str, ST, ST1},  evolving SKT metrics on a complex manifold $(M,J)$ and  reducing to the K\"ahler-Ricci flow for K\"ahler initial data.
In particular, this flow was studied on nilpotent Lie groups in \cite{EFV1,AL} and on almost abelian Lie groups in \cite{AL}. In  Section \ref{sec_plflow}  we analyze its behaviour on almost nilpotent Lie groups with nilradical having one-dimensional commutator.

In Section \ref{balancedstructsection} we investigate  the existence of balanced structures and  in Section \ref{balancedflowsection}, we also study their behaviour under the parabolic flow introduced in \cite{BV} as a generalization of the Calabi flow (see \cite{Cal}).

Finally, in Section  \ref{sectionGKsection} we study the existence of generalized K\"ahler structures and  we construct  the first examples of    (non-K\"ahler)   compact    solvmanifolds admitting  non-split generalized K\"ahler structures.

\smallskip

\emph{Acknowledgements}. The authors would like to thank Beatrice Brienza for useful remarks regarding Theorem \ref{gkexamples} and an anonymous referee for meaningful comments and remarks. The paper is supported by Project PRIN 2017 \lq \lq Real and complex manifolds: Topology, Geometry and Holomorphic Dynamics” and by GNSAGA of INdAM.

\section{Characterization of Hermitian Lie algebras}  \label{sec_Hermitian}

Recall that an almost complex structure $J$ on $2n$-dimensional real Lie algebra  $\mathfrak{g}$ is  an endomorphism of $\mathfrak{g}$ such that  $J^2 =- \text{Id}_{\mathfrak{g}}$. 

An  almost-Hermitian metric $g$  on $(\mathfrak{g}, J )$  is a (positive definite) inner product which is $J$-orthogonal, i.e., such that $g(JX,JY)=g(X,Y),$ for every $X,Y \in  \mathfrak{g}$.

If  $J$ is integrable, or equivalently if    its  Nijenhuis tensor 
$$
N (X, Y ) = [J X, J Y ]-J [J X, Y ] - J [X, J Y ] - [X, Y ],  \, \quad  X, Y \in \mathfrak{g}, 
$$
vanishes,  then $(J,g)$ is a Hermitian structure on $\mathfrak{g}$ and $(\mathfrak{g}, J, g)$ is said to be Hermitian.

In this section we shall study  Hermitian  almost nilpotent Lie algebras  $(\mathfrak{g}, J, g)$ whose nilradical  $\mathfrak{n}$ has one-dimensional commutator.
Recall that the nilradical  $\mathfrak{n}$ of a solvable Lie algebra $\mathfrak{g}$ is the maximal nilpotent  ideal of $\mathfrak{g}$ and  a solvable  Lie algebra $\mathfrak{g}$  is   called {\em almost nilpotent}  if its  nilradical  $\mathfrak{n}$ has codimension one,   or equivalently   if   $\mathfrak{g}$    is isomorphic to the  semidirect product $\mathfrak{n} \rtimes_D \R$, for  a  derivation $D$ of $\mathfrak{n}$.

 If we denote by  
\[
\mathfrak{n}^0\coloneqq \mathfrak{n}, \,  \mathfrak n^1  \coloneqq [\mathfrak{n}, \mathfrak{n}], \ldots, \mathfrak{n}^l \coloneqq [\mathfrak{n},\mathfrak{n}^{l-1}], \,  l \geq 2,
\]
the terms of the descending central series of $\mathfrak{n}$,  we recall that a solvable  Lie algebra $\mathfrak{g}$ is {\em strongly unimodular}  if $\operatorname{tr}  \left ( \text{ad}_X \rvert_{\mathfrak{n}^l / \mathfrak{n}^{l+1}} \right )=0$,  for every $X \in \mathfrak{g}$ and every $l \in \mathbb{N}$.  

Observe that, if the nilradical $\mathfrak{n}$ of $\mathfrak{g}$ is $r$-step nilpotent, then one has 
\[
\operatorname{tr}( \text{ad}_X) = \sum_{l=0}^{r-1} \operatorname{tr}  \left (\text{ad}_X \rvert_{\mathfrak{n}^l / \mathfrak{n}^{l+1}}\right ), \quad X \in \mathfrak{g}.
\]
In particular, a strongly unimodular solvable Lie algebra is always unimodular, i.e., $\operatorname{tr} \text{ad}_X =0$, for every $X \in \mathfrak g$.

Since the  Lie algebra  $\mathfrak{g}$ of  a simply connected solvable Lie group  admitting  compact quotients by lattices  must be strongly unimodular  (see \cite{Gar}),  we shall often focus on strongly unimodular almost nilpotent   Lie algebras.

Let  $\mathfrak{g}$  be an  almost nilpotent   Lie algebra and  assume  that the commutator   $\mathfrak{n}^1$ of the nilradical $\mathfrak{n}$  has dimension one. Then,  by \cite{BD, DDI},   $\mathfrak{n}$ has to be a central extension of a Heisenberg Lie algebra  $\mathfrak{h}_{2l+1}$,  i.e., a direct sum  $\mathfrak{h}_{2l+1} \oplus \R^h$   with  $l,h$  positive integers.  Recall that  $\mathfrak{h}_{2l+1} = {\mathbb R}\left<x_1, \ldots,x_l,y_1, \ldots, y_l, z\right>$   is the $2$-step nilpotent Lie algebra with  structure equations  $[x_k,y_k] = z$,  for  every $k = 1,\ldots,l$.
 
 Moreover, $\mathfrak{g}$ is strongly unimodular if and only if $\operatorname{tr} \text{ad}_X \rvert_{\mathfrak{n}^l / \mathfrak{n}^{l+1}}=0$, for $l =0,1$.
 
Now, let $(J,g)$ be an almost-Hermitian structure on $\mathfrak{g}$ and denote by $\hat{\theta} \in [0,\frac{\pi}{2}]$ the (smaller) angle formed by the two lines $J \mathfrak{n}^1$ and $\mathfrak{n}^{\perp_g}$ inside $\mathfrak{g}$, where by $\mathfrak{n}^{\perp_g}$ we denote the orthogonal complement of $\mathfrak{n}$ in $\mathfrak{g}$.  We can  then distinguish the following three cases:
\begin{enumerate}
\item $\hat\theta = 0$, namely $J \mathfrak{n}^1= \mathfrak{n}^{\perp_g}$,
\item $\hat\theta = \frac{\pi}{2}$, namely $J \mathfrak{n}^1  \subset  \mathfrak{n}$,
\item $\hat\theta \in (0,\frac{\pi}{2})$, corresponding to the cases where $J \mathfrak{n}^1  \cap   \mathfrak{n}^{\perp_g}= \{ 0 \}$ and  $J \mathfrak{n}^1  \cap  \mathfrak{n}  = \{ 0 \}$.
\end{enumerate}

We  will now focus on the first two cases, where we can find a characterization  if $J$ is integrable.

\subsection{Case (1)} \label{Hermitian_perp_sec} Let us assume $J \mathfrak{n}^1 = \mathfrak{n}^{\perp_g}$ and denote by $\mathfrak{k}_1 \coloneqq \mathfrak{n} \cap J\mathfrak{n}$ the maximal $J$-invariant subspace of $\mathfrak{n}$.
One can find a unitary basis $\{e_1,\ldots,e_{2n}\}$ of $(\mathfrak{g}, J, g)$ such that $J \nperp =\mathfrak{n}^1= \R \left< e_1 \right>$, $\mathfrak{k}_1 = \mathbb R\left<e_2,\ldots,e_{2n-1}\right>$, $\nperp=\R \left< e_{2n} \right >$ and
$Je_1=e_{2n}$, $Je_{2l}=e_{2l+1}$, $l=1,\ldots,n-1$. We will call this basis an adapted  one.

In this adapted  basis, the Lie bracket  on $\mathfrak{g}$ is  given by
\[
[e_{2n},X]=B(X), \qquad	[Y,Z]=-\eta(Y,Z)e_1,  \qquad   X \in \mathfrak{n},  \, \,  Y,Z \in \mathfrak{k}_1,
\]
where  $B$ is a derivation of $\mathfrak{n}$  and $\eta$ is a non-zero $2$-form on $\mathfrak{k}_1$. 
One can observe that $B$ must preserve $\mathfrak{n}^1=\R \left< e_1 \right>$, since the fact that $B$ is a derivation implies $B \mathfrak{n}^1\subseteq[B\mathfrak{n},\mathfrak{n}]+[\mathfrak{n},B\mathfrak{n}] \subseteq \mathfrak{n}^1$. Then, with respect to the basis $\{ e_l \}$, the derivation $B$ may be written as
\begin{equation} \label{B}
B=\begin{pmatrix} a & \beta \\ 0 & A \end{pmatrix},\quad a \in \R, \quad \beta \in \mathfrak{k}_1^*,\quad A \in \mathfrak{gl}(\mathfrak{k}_1).
\end{equation}

Therefore   the almost-Hermitian Lie algebra  $(\mathfrak{g},J,g)$  is completely  determined by the algebraic  data 
\[
(a,\beta,A,\eta) \in \R \times \mathfrak{k}_1^* \times \mathfrak{gl}(\mathfrak{k}_1) \times \Lambda^2 \mathfrak{k}_1^*.
\]

Indeed, we can consider the vector space $\R^{2n}= \mathbb R \left<e_1,\ldots,e_{2n}\right>$ endowed with the Lie algebra structure provided by a Lie bracket operation
\[
\mu \in  \Lambda^2 (\R^{2n})^* \otimes \R^{2n}
\]
making it isomorphic, as a Lie algebra, to the semidirect product $\mathfrak{n} \rtimes_B \R \left < e_{2n} \right >$, with $\mathfrak{n} = \mathbb R \left< e_1,\ldots,e_{2n-1} \right>$ having Lie bracket
\[
[Y,Z]=-\eta(Y,Z)e_1,  \quad  Y, Z \in \mathfrak{n},
\]
and $B$ as in \eqref{B}. We denote the corresponding Lie bracket operation on $\R^{2n}$ by $\mu(a,\beta,A,\eta)$ to keep track of the algebraic data. Then, we can consider the almost-Hermitian structure given by $Je_1=e_{2n}$, $Je_{2l}=e_{2l+1}$, $l=1,\ldots,n-1$ and $g = \sum_{l=1}^{2n} (e^l)^2$. The resulting almost-Hermitian Lie algebra, which we shall denote by $(\mu(a,\beta,A,\eta),J,g)$, omitting the ambient vector space $\R^{2n}$ (often also denoted by $\mathfrak{g}$ when no confusion arises), is equivalent to the original almost-Hermitian Lie algebra $(\mathfrak{g},J,g)$.

\begin{lemma} \label{Jacobi_perp}
The data $(a,\beta,A,\eta)$ represent a Lie algebra $\mathfrak{n} \rtimes_B \R \left <e_{2n} \right >$   if and only if 
\begin{equation} \label{lie_perp}
A^*\eta=a\eta,
\end{equation}
where  $A^* \eta$ is  the $2$-form  on $\mathfrak{k}_1$ defined by 
$$
(A^*\eta)(X,Y)  = \eta(A(X),Y)+ \eta (X, A(Y)), \, X , Y\in \mathfrak{k}_1.
$$

\end{lemma}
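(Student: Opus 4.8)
The plan is to reduce the existence of the Lie algebra structure to the single requirement that $B$ be a derivation of $\mathfrak{n}$, and then to unwind this requirement into the stated equation. First I would recall that for a semidirect product $\mathfrak{n} \rtimes_B \R\left<e_{2n}\right>$ the Jacobi identity splits into two parts: the Jacobi identity internal to $\mathfrak{n}$, and the compatibility condition saying that $\text{ad}_{e_{2n}}=B$ acts as a derivation of $\mathfrak{n}$. The internal Jacobi identity is automatic here: since $[\mathfrak{k}_1,\mathfrak{k}_1]\subseteq\R\left<e_1\right>=\mathfrak{n}^1$ and $e_1$ is central in $\mathfrak{n}$, every bracket $[X,Y]$ with $X,Y\in\mathfrak{n}$ lands in the center, so all triple brackets vanish termwise. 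Thus the only genuine content is the derivation identity $B[Y,Z]=[BY,Z]+[Y,BZ]$ for $Y,Z\in\mathfrak{n}$.

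Next I would verify the derivation identity case by case, using the decomposition $\mathfrak{n}=\R\left<e_1\right>\oplus\mathfrak{k}_1$ and the block form \eqref{B}, which encodes $Be_1=ae_1$ and $BX=\beta(X)e_1+AX$ for $X\in\mathfrak{k}_1$. When at least one argument equals $e_1$ both sides vanish identically: $e_1$ is central, so the left-hand side is $0$, while on the right-hand side every term either brackets against $\R\left<e_1\right>$ or produces an element of $\R\left<e_1\right>$ bracketed against something, and hence vanishes. The only surviving case is $Y,Z\in\mathfrak{k}_1$.

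For $Y,Z\in\mathfrak{k}_1$ I would compute the two sides separately. The left-hand side is $B([Y,Z])=B(-\eta(Y,Z)e_1)=-a\,\eta(Y,Z)e_1$, where the scalar $a$ enters through $Be_1=ae_1$. On the right-hand side the $\beta$-components of $BY$ and $BZ$ lie in $\R\left<e_1\right>$ and therefore bracket trivially, so only the $A$-parts contribute, giving $[AY,Z]+[Y,AZ]=-\bigl(\eta(AY,Z)+\eta(Y,AZ)\bigr)e_1=-(A^*\eta)(Y,Z)e_1$. Equating the two expressions for all $Y,Z\in\mathfrak{k}_1$ yields exactly $a\eta=A^*\eta$, which is \eqref{lie_perp}. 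Conversely, the very same computation shows that \eqref{lie_perp} forces the derivation identity to hold, so the condition is both necessary and sufficient.

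The computation is essentially routine bookkeeping; the only point requiring care is to confirm that the off-diagonal datum $\beta$ genuinely drops out. This is precisely where the hypothesis $\mathfrak{n}^1=\R\left<e_1\right>$ is used: because $\beta(X)e_1$ lands in the center of $\mathfrak{n}$, the term $\beta$ never couples to any bracket, so the derivation condition is insensitive to it. I expect the main subtlety to be organizing the case analysis so that it is transparent that all instances involving $e_1$ are automatically satisfied, leaving the single two-form identity on $\mathfrak{k}_1$ as the unique constraint.
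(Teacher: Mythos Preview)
Your proposal is correct and follows essentially the same approach as the paper: both reduce the question to checking that $B$ is a derivation of $\mathfrak{n}$, note that the cases involving $e_1$ are automatic by centrality, and then extract \eqref{lie_perp} from the remaining case $Y,Z\in\mathfrak{k}_1$. You add the explicit verification that the internal Jacobi identity of $\mathfrak{n}$ holds, which the paper takes for granted, but otherwise the arguments coincide.
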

\begin{proof}
To prove this, we have to determine the conditions under which $B=\text{ad}_{e_{2n}}\rvert_{\mathfrak{n}}$ is actually a derivation of $\mathfrak{n}$. Let $X,Y \in \mathfrak{k}_1$ be arbitrary. Then,
\[
B([e_1,X])=[Be_1,X]+[e_1,BX]
\]
already holds since $[e_1,\mathfrak{n}]=0$ and $Be_1=a \, e_1$. Instead
\[
B([X,Y])=B(-\eta(X,Y)e_1)=-a\eta(X,Y) e_1,
\]
while
\[
[BX,Y]+[X,BY]=[AX,Y]+[X,AY]=-(\eta(AX,Y)+\eta(X,AY))e_1,
\]
from which it follows that $B$ is a derivation of $\mathfrak{n}$ if and only if \eqref{lie_perp} holds.
\end{proof}

\begin{remark}
We note that the algebraic datum \lq \lq$a \in \R$'' is redundant, since it is implicitly determined by $A$ and $\eta$. Anyway, we shall not omit it for major clarity and completeness. Moreover, in what follows, the algebraic data $(a,\beta,A,\eta)$ are assumed to satisfy \eqref{lie_perp}.
\end{remark}

\begin{remark} \label{strunim_perp}
One can see that the data $(a,\beta,A,\eta)$ (satisfying \eqref{lie_perp}) correspond to a strongly unimodular Lie algebra if and only if $a=\operatorname{tr}A=0$. This follows by observing that $\mathfrak{n}^0/\mathfrak{n}^1 \cong \mathfrak{k}_1 = \mathbb R \left<e_2,\ldots,e_{2n-1}\right>$, $\mathfrak{n}^1/\mathfrak{n}^2=\mathfrak{n}^1=\R \left < e_1 \right >$ and that $[e_{2n},e_1]=a  e_1$ and $\text{ad}_{e_{2n}}\rvert_{\mathfrak{k}_1}$ is represented by the matrix $A$.
\end{remark}

\begin{proposition} \label{complex}  
$(\mu(a,\beta,A,\eta),J,g)$ is Hermitian, i.e., $J$ is integrable, if and only if
\begin{equation} \label{integrable_perp}
[A,J \rvert_{\mathfrak{k}_1}]=0,\quad \beta=0, \quad \eta \in \Lambda^{1,1}\mathfrak{k}_1^*,
\end{equation}
where   $\Lambda^{1,1}\mathfrak{k}_1^*$ denotes the space of $(1,1)$-forms on $\mathfrak{k}_1$.
\end{proposition}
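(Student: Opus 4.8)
The plan is to compute the Nijenhuis tensor $N(X,Y)=[JX,JY]-J[JX,Y]-J[X,JY]-[X,Y]$ of $J$ directly in the adapted basis and read off when it vanishes identically. Since $N$ is bilinear and skew-symmetric, it suffices to verify $N(X,Y)=0$ for $X,Y$ ranging over the adapted basis $\{e_1,e_{2n}\}\cup\{e_2,\ldots,e_{2n-1}\}$. I will organize the computation according to which of the two $J$-invariant blocks — the plane $\mathbb{R}\langle e_1,e_{2n}\rangle$ (on which $Je_1=e_{2n}$, $Je_{2n}=-e_1$) or the $J$-invariant subspace $\mathfrak{k}_1$ — the two arguments lie in. Throughout I will use three facts already recorded: $e_1$ is central in $\mathfrak{n}$, so $[e_1,\cdot]\rvert_{\mathfrak{n}}=0$; the derivation $B$ acts by $Be_1=a\,e_1$ and $B\rvert_{\mathfrak{k}_1}(W)=\beta(W)e_1+A(W)$ for $W\in\mathfrak{k}_1$; and $[U,V]=-\eta(U,V)e_1$ for $U,V\in\mathfrak{k}_1$.

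The pair $(e_1,e_{2n})$ is immediate: using centrality of $e_1$ and skew-symmetry of the bracket, the four terms cancel and $N(e_1,e_{2n})=0$ automatically. The mixed pairs $(e_1,W)$ with $W\in\mathfrak{k}_1$ are the decisive ones. Expanding, and using $[e_1,W]=[e_1,JW]=0$, I expect to obtain $N(e_1,W)=\beta(JW)e_1-\beta(W)e_{2n}+[A,J\rvert_{\mathfrak{k}_1}](W)$, whose three summands lie in the mutually complementary subspaces $\mathbb{R}e_1$, $\mathbb{R}e_{2n}$, and $\mathfrak{k}_1$. Their simultaneous vanishing for every $W$ forces exactly $\beta=0$ (from $\beta(W)=0$ for all $W$, the condition $\beta(JW)=0$ being equivalent since $J$ permutes $\mathfrak{k}_1$) together with $[A,J\rvert_{\mathfrak{k}_1}]=0$. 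The pairs $(e_{2n},W)$ yield, after applying $J$ and using $J^2=-\mathrm{Id}$, precisely the same two conditions, so they are redundant.

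It remains to treat the pairs $(W_1,W_2)$ inside $\mathfrak{k}_1$. Here every bracket produced lands in $\mathbb{R}e_1$, and since $Je_1=e_{2n}$ the terms split into an $e_1$-part and an $e_{2n}$-part; I expect $N(W_1,W_2)=\bigl(\eta(W_1,W_2)-\eta(JW_1,JW_2)\bigr)e_1+\bigl(\eta(JW_1,W_2)+\eta(W_1,JW_2)\bigr)e_{2n}$. Replacing $W_1$ by $JW_1$ shows the two bracketed conditions are equivalent, each reducing to $\eta(JW_1,JW_2)=\eta(W_1,W_2)$, i.e.\ $J$-invariance of $\eta$, which is exactly the requirement $\eta\in\Lambda^{1,1}\mathfrak{k}_1^*$.

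Collecting the three cases shows that $N\equiv 0$ holds if and only if $\beta=0$, $[A,J\rvert_{\mathfrak{k}_1}]=0$, and $\eta\in\Lambda^{1,1}\mathfrak{k}_1^*$, which proves both implications simultaneously, sufficiency being the same computation read in reverse. There is no serious obstacle here; the only point requiring a little care is the bookkeeping — confirming that the partial conditions arising in each case collapse to the single clean statements of the proposition (both $\beta$-conditions to $\beta=0$, both $\eta$-conditions to $J$-invariance of $\eta$) and that, up to skew-symmetry, all basis pairs have genuinely been covered. The block structure of $J$ relative to $\mathbb{R}\langle e_1,e_{2n}\rangle$ and $\mathfrak{k}_1$ makes the whole computation essentially diagonal, so each condition in \eqref{integrable_perp} is isolated by a single family of pairs.
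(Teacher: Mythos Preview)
Your proof is correct and follows essentially the same approach as the paper: both compute the Nijenhuis tensor on representative pairs of basis vectors and read off the conditions $\beta=0$, $[A,J\rvert_{\mathfrak{k}_1}]=0$, $\eta\in\Lambda^{1,1}\mathfrak{k}_1^*$ from the vanishing of the $e_1$-, $\mathfrak{k}_1$-, and $e_{2n}$-components. The only cosmetic difference is that the paper invokes the symmetry $N(J\cdot,J\cdot)=-N(\cdot,\cdot)$ to reduce the case list to $N(e_1,X)$ and $N(X,Y)$ for $X,Y\in\mathfrak{k}_1$, whereas you check each block pair explicitly and then observe the redundancies afterward.
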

\begin{proof}
Exploiting that $N(J\cdot,J\cdot)=- N (\cdot, \cdot)$, one can see that it is enough to check the vanishing of $N(e_1,X)$ and $N(X,Y)$, for $X,Y \in \mathfrak{k}_1$: one computes
\begin{align*}
	N(e_1,X)&=J[e_{2n},X]-[e_{2n},JX]\\
	        &=J(\beta(X)e_1+AX)-\beta(JX)e_1-A(JX)\\
	        &=\beta(JX)e_1+[A,J \rvert_{\mathfrak{k}_1}](X)+\beta(X)e_{2n},
\end{align*}
which vanishes for all $X \in \mathfrak{k}_1$ if and only if $[A,J \rvert_{\mathfrak{k}_1}]=0$ and $\beta=0$. On the other hand,
\begin{align*}
	N(X,Y)&=(\eta(JX,JY)-\eta(X,Y))e_1-(\eta(JX,Y)+\eta(X,JY))e_{2n}, \quad X,Y \in \mathfrak{k}_1,
\end{align*}
from which the condition $\eta \in \Lambda^{1,1}\mathfrak{k}_1^*$ follows.
\end{proof}

\subsection{Case (2)} \label{Hermitian_sub_sec}
Suppose $J \mathfrak{n}^1 \subset \mathfrak{n}$.
In this case, let us define
\[
\mathfrak{k}_1 \coloneqq \mathfrak{n} \cap J\mathfrak{n},\quad \mathfrak{k}_2 \coloneqq (\mathfrak{n}^1)^{\perp_g} \cap \mathfrak{n},\quad \mathfrak{k}_3 \coloneqq \mathfrak{k}_2 \cap J\mathfrak{k}_2.
\]
Then, one can find an adapted unitary basis $\{e_1,\ldots,e_{2n}\}$ of $\mathfrak{g}$ such that $\mathfrak{n}= \mathbb R \left<e_1,\ldots,e_{2n-1}\right>$, $\mathfrak{n}^1=\R \left <e_1 \right >$ and $Je_{2j-1}=e_{2j}$, $j=1,\ldots,n$. Then, we have $\mathfrak{k}_1=\ \mathbb R \left<e_1,\ldots,e_{2n-2}\right>$, $\mathfrak{k}_2= \mathbb R \left<e_2,\ldots,e_{2n-1}\right>$, $\mathfrak{k}_3= \mathbb R \left<e_3,\ldots,e_{2n-2}\right>$ (with $\mathfrak{k}_3=\{0\}$ when $n=2$). 

Similarly to the previous case, the Lie brackets are given by
\[
[e_{2n},X]=B(X), \qquad	[Y,Z]=-\eta(Y,Z)e_1,  \qquad   X \in \mathfrak{n},  \, \,  Y,Z \in \mathfrak{k}_2,
\]
where  $B$ is a derivation of $\mathfrak{n}$  and $\eta$ is a  non-zero $2$-form on $\mathfrak{k}_2$. 

Again, $B$ must preserve $\mathfrak{n}^1=\R \left < e_1 \right >$. Then, with respect to the decomposition $$\mathfrak{n}=\R \left <e_1 \right> \oplus \R \left < e_2 \right > \oplus \mathfrak{k}_3 \oplus \R  \left < e_{2n-1}\right >,$$ $B$ may be written in matrix form as
\begin{equation} \label{B_sub}
B=\left( \begin{array}{c|c|c|c}
	     a_1 & p_1 & \phantom{\;\;\;\;} \alpha \phantom{\;\;\;\;} & v_1 \\ \hline
	     0 & a_2 & \gamma & v_2 \\ \hline
	     0 \vphantom{\begin{pmatrix} 0 \\ 0 \\ 0 \end{pmatrix}} & w & A & v \\ \hline
	     0 & p_2 & \beta & a
	     \end{array}
  \right), \quad a,a_1,a_2,p_1,p_2,v_1,v_2 \in \R,\; v,w \in \mathfrak{k}_3,\; \alpha,\beta,\gamma \in \mathfrak{k}_3^*,\; A \in \mathfrak{gl}(\mathfrak{k}_3),
\end{equation}
while
\begin{equation} \label{eta_sub}
\eta=\xi+\delta \wedge e^2 + \nu \wedge e^{2n-1} + \lambda\, e^2 \wedge e^{2n-1},\quad \xi \in \Lambda^2 \mathfrak{k}_3^*, \; \delta,\nu \in \mathfrak{k}_3^*,\; \lambda \in \R.
\end{equation}
We denote the whole set of algebraic data with
\begin{equation} \label{Xi}
\Xi=(a,a_1,a_2,p_1,p_2,v_1,v_2,v,w,\alpha,\beta,\gamma,A,\xi,\delta,\nu,\lambda).
\end{equation}

Analogously to case (1), we can consider the vector space $\R^{2n}$ with a bracket operation $\mu(\Xi)$ determined by the above algebraic data. The Lie algebra $(\R^{2n},\mu(\Xi))$ is isomorphic to the semidirect product $\mathfrak{n} \rtimes_B \R \left <e_{2n} \right >$, $\mathfrak{n}= \mathbb R \left< e_1,\ldots,e_{2n-1} \right>$. Endowing this Lie algebra with the almost-Hermitian structure $(J,g)$ defined by $Je_{2l -1}=e_{2l}$, $l=1,\ldots,n$, $g=\sum_{l=1}^{2n} (e^l)^2$, yields an almost-Hermitian Lie algebra, denoted by $(\mu(\Xi),J,g)$, isomorphic to the original almost-Hermitian Lie algebra having algebraic data $\Xi$. We can then reduce to studying model almost-Hermitian Lie algebras $(\mu(\Xi),J,g)$.

\begin{proposition} \label{prop_Lie}
The algebraic data $\Xi$ in \eqref{Xi} represent a Lie algebra $\mathfrak{n} \rtimes_B \R \left <e_{2n} \right>$ if and only if the following conditions hold:
\begin{align}{}
\label{Lie1}	&(a_2-a_1)\delta+A^*\delta + p_2\nu -\lambda \beta - \iota_w\xi=0,\\
\label{Lie2}	&\lambda(a+a_2-a_1)+\nu(w)+\delta(v)=0,\\
\label{Lie3}	&a_1\xi-A^*\xi+\gamma \wedge \delta + \beta \wedge \nu=0,\\
\label{Lie4}	&(a-a_1)\nu + A^*\nu + \lambda \gamma + v_2 \delta - \iota_v \xi =0.
\end{align}
\end{proposition}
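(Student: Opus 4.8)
The plan is to reduce the statement to the single tensorial identity $\bar B^{*}\eta = a_1\eta$ and then expand it in the adapted basis. First I observe that $\mathfrak{n}$, being $2$-step nilpotent with $[\mathfrak{n},\mathfrak{n}] = \mathfrak{n}^1 = \R\left<e_1\right>$ central, automatically satisfies the Jacobi identity: every iterated bracket lands in $[\mathfrak{n}^1,\mathfrak{n}] = 0$. Hence $(\R^{2n},\mu(\Xi))$ is a Lie algebra if and only if $B = \operatorname{ad}_{e_{2n}}\rvert_{\mathfrak{n}}$ is a derivation of $\mathfrak{n}$, exactly as in the setting of Lemma \ref{Jacobi_perp}.

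Next I would unravel the derivation condition $B([Y,Z]) = [BY,Z] + [Y,BZ]$. Since $e_1$ is central and $Be_1 = a_1 e_1$, both sides only feel the component of $B$ transverse to $\R\left<e_1\right>$: writing $\bar B \coloneqq \pi\circ B\rvert_{\mathfrak{k}_2}$ for the endomorphism of $\mathfrak{k}_2 \cong \mathfrak{n}/\mathfrak{n}^1$ induced by $B$ (with matrix obtained from \eqref{B_sub} by deleting the row and column corresponding to $e_1$), the condition becomes, for all $Y,Z\in\mathfrak{k}_2$,
\[
a_1\,\eta(Y,Z) = \eta(\bar B Y, Z) + \eta(Y, \bar B Z) = (\bar B^{*}\eta)(Y,Z),
\]
that is, $\bar B^{*}\eta = a_1\eta$. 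This is the exact analogue of \eqref{lie_perp}, with $\mathfrak{k}_2$ and $\bar B$ playing the roles of $\mathfrak{k}_1$ and $A$.

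The remaining work is to rewrite this one equation in coordinates. I would first record the action of $\bar B^{*}$ on the dual basis: from the matrix of $\bar B$ one reads off $\bar B^{*}e^{2} = a_2 e^{2} + \gamma + v_2 e^{2n-1}$, $\bar B^{*}e^{2n-1} = p_2 e^{2} + \beta + a\, e^{2n-1}$, and, for $\mu\in\mathfrak{k}_3^{*}$, $\bar B^{*}\mu = \mu(w)\,e^{2} + A^{*}\mu + \mu(v)\,e^{2n-1}$. Since $\bar B^{*}$ acts as a derivation of the exterior algebra, I can then apply it termwise to $\eta = \xi + \delta\wedge e^{2} + \nu\wedge e^{2n-1} + \lambda\, e^{2}\wedge e^{2n-1}$ and collect the result according to the four types of $2$-forms on $\mathfrak{k}_2$, namely $\Lambda^{2}\mathfrak{k}_3^{*}$, $\mathfrak{k}_3^{*}\wedge e^{2}$, $\mathfrak{k}_3^{*}\wedge e^{2n-1}$, and $\R\, e^{2}\wedge e^{2n-1}$. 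Matching each of these four components of $\bar B^{*}\eta$ with the corresponding component of $a_1\eta$ reproduces precisely the identities \eqref{Lie3}, \eqref{Lie1}, \eqref{Lie4}, and \eqref{Lie2}; in particular the contractions $\iota_w\xi$ and $\iota_v\xi$ arise from the $e^{2}$- and $e^{2n-1}$-parts of $\bar B^{*}$ acting on the pure $\mathfrak{k}_3$-form $\xi$, while the cross terms $\gamma\wedge\delta$, $\beta\wedge\nu$, $\lambda\gamma$, $\lambda\beta$ come from the derivation acting across the $e^{2},e^{2n-1}$ factors.

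The main obstacle is purely bookkeeping: keeping track of the signs produced by reordering wedge factors (for instance $e^{2n-1}\wedge e^{2} = -\,e^{2}\wedge e^{2n-1}$) when projecting each term onto the four components, and ensuring that the cross terms land in the correct equation with the correct sign. There is no conceptual difficulty beyond this expansion once the reduction to $\bar B^{*}\eta = a_1\eta$ is in place.
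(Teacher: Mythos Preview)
Your proposal is correct and follows essentially the same approach as the paper. Both reduce the claim to checking that $B$ is a derivation of $\mathfrak{n}$, and both then split this into the four bidegree components on $\mathfrak{k}_2 = \R\langle e_2\rangle \oplus \mathfrak{k}_3 \oplus \R\langle e_{2n-1}\rangle$; the paper does this by evaluating $B[X,Y]-[BX,Y]-[X,BY]$ on the four types of pairs $(e_2,Y\in\mathfrak{k}_3)$, $(e_2,e_{2n-1})$, $(X,Y\in\mathfrak{k}_3)$, $(X\in\mathfrak{k}_3,e_{2n-1})$, while you first package the whole condition as the single identity $\bar B^{*}\eta = a_1\eta$ and then project onto the four summands of $\Lambda^2\mathfrak{k}_2^{*}$---the computations are identical in content.
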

\begin{proof}
We only need to ensure that $B$ is a derivation of $\mathfrak{n}$, namely that $$B[X,Y]=[BX,Y]+[X,BY]$$ for all $X,Y \in \mathfrak{n}$. It is enough to impose this condition for the following pairs of vectors:

i) $X=e_2$, $Y \in \mathfrak{k}_3$: we have
\begin{align*}
B[e_2,Y]&=B(\delta(Y)e_1)=a_1\delta(Y)e_1, \\
[Be_2,Y]&=[p_1e_1+a_2 e_2+w+p_2e_{2n-1},Y]=(a_2\delta(Y)-\xi(w,Y)+p_2\nu(Y))e_1,\\
[e_2,BY]&=[e_2,\alpha(Y)e_1+\gamma(Y)e_2+AY+\beta(Y)e_{2n-1}]=(\delta(AY)-\lambda\beta(Y))e_1,
\end{align*}
from which \eqref{Lie1} follows,

ii) $X=e_2$, $Y=e_{2n-1}$: we have
\begin{align*}
&B[e_2,e_{2n-1}]=B(-\lambda e_1)=-\lambda a_1 e_1, \\
&[Be_2,e_{2n-1}]=[p_1e_1+a_2e_2+w+p_2e_{2n-1},e_{2n-1}]=-(\lambda a_2 + \nu(w))e_1, \\
&[e_2,Be_{2n-1}]=[e_2,v_1e_1+v_2e_2+v+ae_{2n-1}]=(\delta(v)-\lambda a)e_1,
\end{align*}
which gives us \eqref{Lie2},

iii) $X,Y \in \mathfrak{k}_3$: we have
\begin{align*}
B[X,Y]=&B(-\xi(X,Y)e_1)=-\xi(X,Y)a_1e_1,\\
[BX,Y]=&[\alpha(X)e_1+\gamma(X)e_2+AX+\beta(X)e_{2n-1},Y]\\=&(\alpha(X)\delta(Y)-\xi(AX,Y)+\beta(X)\nu(Y))e_1,\\
[X,BY]=&-[BY,X]=-(\alpha(Y)\delta(X)-\xi(AY,X)+\beta(Y)\nu(X))e_1,
\end{align*}
so that \eqref{Lie3} follows.

iv) $X \in \mathfrak{k}_3$, $Y=e_{2n-1}$: we have
\begin{align*}
&B[X,e_{2n-1}]=B(-\nu(X)e_1)=-a_1\nu(X)e_1,\\
&[BX,e_{2n-1}]=[\alpha(X)e_1+\gamma(X)e_2+AX+\beta(X)e_{2n-1},e_{2n-1}]=-(\lambda \gamma(X) + \nu(AX))e_1,\\
&[X,Be_{2n-1}]=[X,v_1e_1+v_2e_2+v+ae_{2n-1}]=-(v_2\delta(X)-\xi(v,X)+a\nu(X))e_1,
\end{align*}
which yields \eqref{Lie4}.
\end{proof}

In what follows, we implicitly assume that the algebraic data $\Xi$ in \eqref{Xi} satisfy \eqref{Lie1}--\eqref{Lie4}. 

\begin{remark}
Analogously to Remark \ref{strunim_perp}, we have that the algebraic data $\Xi$ define a strongly unimodular Lie algebra if and only if $a_1=a+a_2+\operatorname{tr}A=0$.
\end{remark}

\begin{proposition}
$(\mu(\Xi),J,g)$ is Hermitian, i.e., $J$ is integrable, if and only if
\begin{equation} \label{sub_integrable}
\begin{alignedat}{3}
&p_1=p_2=0, \qquad& &\beta=\delta=0, \qquad& &\lambda=a_2-a_1, \\
&[A,J\rvert_{\mathfrak{k}_3}]=0, \qquad& &w=0, \qquad& &\nu=\gamma-J\alpha.
\end{alignedat}
\end{equation}
\end{proposition}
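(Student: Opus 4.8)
The plan is to proceed exactly as in the proof of Proposition~\ref{complex}: compute the Nijenhuis tensor $N$ on a well-chosen family of pairs of basis vectors and read off the integrability conditions. The identity $N(JX,JY)=-N(X,Y)$, together with the elementary fact that $N(X,JX)=0$ for every $X$, lets me reduce the verification to a handful of representatives. Concretely, I would group the adapted basis into the three $J$-invariant blocks $\mathbb{R}\langle e_1,e_2\rangle$, $\mathfrak{k}_3$ and $\mathbb{R}\langle e_{2n-1},e_{2n}\rangle$. The diagonal pairs inside each block vanish automatically, and every remaining cross pair reduces, via $X\mapsto JX$, to one of the four families $N(e_1,e_{2n-1})$, $N(e_1,Y)$, $N(Y,e_{2n-1})$ and $N(Y,Z)$ with $Y,Z\in\mathfrak{k}_3$; one checks separately that the companion pairs (such as $N(e_2,e_{2n-1})$ or $N(Y,e_{2n})$) produce no new constraints. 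Throughout it is convenient to exploit that $e_1$ is central in $\mathfrak{n}$ and that the only bracket leaving $\mathfrak{n}$ is $\operatorname{ad}_{e_{2n}}\rvert_{\mathfrak{n}}=B$, which keeps the number of nonzero terms small.

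First I would expand $N(e_1,e_{2n-1})$, where, using $Je_1=e_2$ and $Je_{2n-1}=e_{2n}$, the only nonzero contributions come from $[e_2,e_{2n}]=-B(e_2)$, $[e_1,e_{2n}]=-a_1e_1$ and $[e_2,e_{2n-1}]=-\lambda e_1$; collecting the components along $e_1,e_2,\mathfrak{k}_3,e_{2n-1}$ then yields $p_1=0$, $\lambda=a_2-a_1$, $w=0$ and $p_2=0$ in one stroke. Next, $N(e_1,Y)$ for $Y\in\mathfrak{k}_3$ reduces to $\delta(JY)e_1-\delta(Y)e_2$, forcing $\delta=0$. The computation of $N(Y,e_{2n-1})$ is the richest: its $\mathfrak{k}_3$-component is $(JA-AJ)Y$, giving $[A,J\rvert_{\mathfrak{k}_3}]=0$; its $e_{2n-1}$- and $e_{2n}$-components give $\beta=0$; and its $e_1$- and $e_2$-components combine to $\alpha(JY)=\nu(Y)-\gamma(Y)$, which is precisely $\nu=\gamma-J\alpha$ once one uses the convention $(J\alpha)(Y)=-\alpha(JY)$ and the $J$-invariance of $\mathfrak{k}_3$ (the two scalar identities from the $e_1$- and $e_2$-components being equivalent under $Y\mapsto JY$). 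Finally, $N(Y,Z)$ for $Y,Z\in\mathfrak{k}_3$ equals $(\xi(Y,Z)-\xi(JY,JZ))e_1+(\xi(JY,Z)+\xi(Y,JZ))e_2$, whose vanishing is the condition $\xi\in\Lambda^{1,1}\mathfrak{k}_3^*$; this last requirement is automatic whenever $\dim\mathfrak{k}_3\le 2$, in particular in the six-dimensional situation that is the focus of the paper, which explains why it does not appear explicitly in \eqref{sub_integrable}.

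The converse is then immediate: assuming \eqref{sub_integrable} (together with $\xi\in\Lambda^{1,1}\mathfrak{k}_3^*$ in dimensions where $\dim\mathfrak{k}_3>2$), each of the four representative expressions for $N$ vanishes, and by the reduction above $N\equiv 0$, so $J$ is integrable. I expect the only genuine difficulty to be bookkeeping, since the derivation $B$ in \eqref{B_sub} and the form $\eta$ in \eqref{eta_sub} carry a large number of scalar, vector and covector parameters. The one delicate point is to organize $N(Y,e_{2n-1})$ so that its $e_1$- and $e_2$-components assemble into the single covector identity $\nu=\gamma-J\alpha$, rather than being recorded as two a priori independent scalar constraints; recognizing this repackaging, and keeping track of the $J$-action on $\mathfrak{k}_3^*$, is what makes the final list of conditions as compact as stated.
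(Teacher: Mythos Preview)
Your proof is correct and follows essentially the same route as the paper's: reduce via $N(JX,JY)=-N(X,Y)$ to the representative pairs $N(e_1,e_{2n-1})$, $N(e_1,Y)$ and $N(Y,e_{2n-1})$ with $Y\in\mathfrak{k}_3$, and read off exactly the conditions in \eqref{sub_integrable}. The one difference is that you also examine $N(Y,Z)$ for $Y,Z\in\mathfrak{k}_3$, obtaining the extra requirement $\xi\in\Lambda^{1,1}\mathfrak{k}_3^*$; the paper omits this check, and as you correctly note it is automatic when $\dim\mathfrak{k}_3\le 2$ (in particular in dimension six), so your remark is a genuine refinement rather than a discrepancy.
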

\begin{proof}
One can see that it is enough to check the vanishing of $N(e_1,X)$, $N(e_1,e_{2n-1})$ and $N(X,e_{2n-1})$, for a generic $X \in \mathfrak{k}_3$, where $N$ denotes the Nijenhuis tensor associated with $J$. An explicit computation yields
\begin{align*}
	N(e_1,X)=&\delta(X)e_2-\delta(JX)e_1,\\
	N(e_1,e_{2n-1})=&p_1e_1-(\lambda-a_2+a_1)e_2+w+p_2e_{2n-1},\\
	N(X,e_{2n-1})=&(-\nu(X)+\gamma(X)+\alpha(JX))e_1+(-\nu(JX)-\alpha(X)+\gamma(JX))e_2 \\&+ (AJ-JA)(X)+\beta(JX)e_{2n-1} - \beta(X)e_{2n},
\end{align*}
and the claim follows.
\end{proof}

\section{Classification in real  dimension six} \label{casssixsection}

To classify   almost nilpotent Lie algebras $\mathfrak{g}$ with nilradical $\mathfrak{n}$ satisfying $\dim \mathfrak{n}^1 = 1$  and admitting complex structures it is sufficient to consider only the cases (1) and (2). Indeed,  if  $\mathfrak{g}$ has  a complex structure  $J$, then     either  $J \mathfrak{n}^1 \subset \mathfrak{n}$ or  $J \mathfrak{n}^1 \not \subset \mathfrak{n}$.   If $J \mathfrak{n}^1 \subset \mathfrak{n}$, then we reduce to case (2).  If  $J \mathfrak{n}^1 \not \subset \mathfrak{n}$, one considers a $J$-Hermitian  metric   such that  $J \mathfrak{n}^1$ is orthogonal to  $\mathfrak{n}$ and we reduce to the case (1).

Given an almost nilpotent Lie algebra $\mathfrak{g}$ with nilradical $\mathfrak{n}$ satisfying $\dim \mathfrak{n}^1 = 1$ and  an endomorphism $F \in \mathfrak{gl}(\mathfrak{\mathfrak{n}})$ such that $F(\mathfrak{n}^1)\subseteq \mathfrak{n}^1$, one can define  a natural map  $\pi (F) \in  \mathfrak{gl}(\mathfrak{n}/\mathfrak{n}^1)$, as
 $$\pi(F)(X+[\mathfrak{n},\mathfrak{n}])\coloneqq F(X)+[\mathfrak{n},\mathfrak{n}],  \, X \in \mathfrak{n}.$$

Hence, the adjoint action $\text{ad}\colon \mathfrak{g} \to \mathfrak{gl}(\mathfrak{g})$ induces two well-defined linear maps
\begin{alignat*}{2}
	\varphi_1 \colon \mathfrak{g}/\mathfrak{n} &\to \mathfrak{gl}(\mathfrak{n})/\text{ad}(\mathfrak{n}), &\qquad \qquad \varphi_2 \colon \mathfrak{g}/\mathfrak{n} &\to \mathfrak{gl}(\mathfrak{n}/\mathfrak{n}^1), \\
	X+\mathfrak{n} &\mapsto \text{ad}_X\rvert_{\mathfrak{n}} + \text{ad}(\mathfrak{n}), &\qquad \qquad X+\mathfrak{n} &\mapsto \pi(\text{ad}_X\rvert_{\mathfrak{n}}),
\end{alignat*}
where $\text{ad}(\mathfrak{n})$ is the Lie algebra of inner derivations of $\mathfrak{n}$.
 As $\mathfrak{g}/\mathfrak{n} \cong \R$,  $\varphi_1$ induces a well-defined endomorphism $B_\mathfrak{g}$ of $\mathfrak{n}$ up to inner derivations and non-zero rescalings, while $\varphi_2$ induces a well-defined endomorphism $A_\mathfrak{g}$ of $\mathfrak{n}/\mathfrak{n}^1$ up to non-zero rescalings.

We also observe that the Lie bracket of $\mathfrak{n}$ is completely determined by the following $\mathfrak{n}^1$-valued $2$-form:
\[
\eta_\mathfrak{g} \in \Lambda^2(\mathfrak{n}/\mathfrak{n}^1)^* \otimes \mathfrak{n}^1,\qquad \eta_\mathfrak{g}(X+\mathfrak{n}^1,Y+\mathfrak{n}^1)\coloneqq -[X,Y],\quad X,Y \in \mathfrak{n}.
\]
We shall study the existence of complex structures on $\mathfrak{g}$ assuming  that $\dim \mathfrak{g} =6$  and studying  separately the cases  $J \mathfrak{n}^1 \not\subset \mathfrak{n}$  and $J \mathfrak{n}^1 \subset \mathfrak{n}$.

In what follows, Lie algebras are denoted via structure equations: for example, the notation
\[
\mathfrak{s}_{4.7} \oplus \R^2=(f^{23},f^{36},-f^{26},0,0,0)
\]
means that the Lie algebra $\mathfrak{s}_{4.7} \oplus \R^2$ admits a basis $\{f_1,\ldots,f_6\}$ such that
\[
df^1=f^{23},\quad df^2=f^{36},\quad df^3=-f^{26},\quad df^4=df^5=df^6=0,
\]
where $f^{kl \ldots}$ is a short-hand for the wedge product $f^k \wedge f^l \wedge \ldots$ of $1$-forms.

We refer to Tables \ref{table-h3} and \ref{table-h5} for the full list of $6$-dimensional strongly unimodular almost nilpotent Lie algebras with one-dimensional commutator, written in the above notation.
 
\begin{theorem} \label{class_cpx_perp}
Let $\mathfrak{g}$ be a $6$-dimensional strongly unimodular almost nilpotent Lie algebra with nilradical $\mathfrak{n}$ satisfying $\dim \mathfrak{n}^1=1$. Then, $\mathfrak{g}$ admits complex structures $J$ satisfying $J \mathfrak{n}^1 \not\subset \mathfrak{n}$ if and only if it is isomorphic to one of the following: 
\begin{itemize}[leftmargin=0.6cm]
\item[]$\mathfrak{h}_3 \oplus \mathfrak{s}_{3.3}^0=(f^{23},0,0,f^{56},-f^{46},0)$, \smallskip
\item[]$\mathfrak{s}_{4.7}\oplus \R^2=(f^{23},f^{36},-f^{26},0,0,0)$,\smallskip
\item[]$\mathfrak{s}_{6.44}=(f^{23},f^{36},-f^{26},f^{26}+f^{56},f^{36}-f^{46},0)$, \smallskip
\item[]$\mathfrak{s}_{6.52}^{0,b}=(f^{23},f^{36},-f^{26},bf^{56},-bf^{46},0)$, $b>0$, \smallskip
\item[]$\mathfrak{s}_{6.159}=(f^{24}+f^{35},0,-f^{56},0,f^{36},0)$, \smallskip
\item[]$\mathfrak{s}_{6.162}^{1}=(f^{24}+f^{35},f^{26},f^{36},-f^{46},-f^{56},0)$, \smallskip
\item[]$\mathfrak{s}_{6.165}^a=(f^{24}+f^{35},af^{26}+f^{36},-f^{26}+af^{36},-af^{46}+f^{56},-f^{46}-af^{56},0)$, $a>0$,\smallskip
\item[]$\mathfrak{s}_{6.166}^a=(f^{24}+f^{35},-f^{46},-af^{56},f^{26},af^{36},0)$, $0<|a|\leq 1$, \smallskip
\item[]$\mathfrak{s}_{6.167}=(f^{24}+f^{35},-f^{36},-f^{26},f^{26}+f^{56},f^{36}-f^{46},0)$.
\end{itemize}
An explicit  example of  complex structure  is given by
$
 Jf_1=f_6,\,Jf_2=f_3,\, Jf_4=f_5, 
 $
for  all the Lie algebras in the previous list, with the exception of   $\mathfrak{s}_{6.159}$   and  $\mathfrak{s}_{6.166}^a$,  for which  we have 
\[
Jf_1=f_6,\,Jf_2=-f_4,\, Jf_3=f_5.
\]
\end{theorem}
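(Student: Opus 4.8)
The plan is to use the Case~(1) description of Section~\ref{sec_Hermitian} to reduce the existence of a complex structure with $J\mathfrak{n}^1\not\subset\mathfrak{n}$ to a finite classification of algebraic pairs $(A,\eta)$, and then to match each resulting isomorphism class with an entry of the list.

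\emph{Reduction.} If $\mathfrak{g}$ admits a complex structure $J$ with $J\mathfrak{n}^1\not\subset\mathfrak{n}$, then, as explained at the start of Section~\ref{casssixsection}, we may pick a $J$-Hermitian metric with $\nperp=J\mathfrak{n}^1$ and place $(\mathfrak{g},J,g)$ in Case~(1). Since $\dim\mathfrak{g}=6$ we have $n=3$ and $\dim\mathfrak{k}_1=4$, and Proposition~\ref{complex}, together with Remark~\ref{strunim_perp} and Lemma~\ref{Jacobi_perp} with $a=0$, shows that $\mathfrak{g}\cong\mu(0,0,A,\eta)$ with $A\in\mathfrak{gl}(\mathfrak{k}_1)$ traceless and commuting with $J|_{\mathfrak{k}_1}$, and $\eta\in\Lambda^{1,1}\mathfrak{k}_1^*$ a nonzero $(1,1)$-form obeying $A^*\eta=0$. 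Identifying $(\mathfrak{k}_1,J|_{\mathfrak{k}_1})\cong\C^2$ turns $A$ into a complex-linear endomorphism with $\operatorname{Re}\operatorname{tr}_{\C}A=0$ and $\eta$ into the imaginary part of a Hermitian form $h$ on $\C^2$, the relation $A^*\eta=0$ saying exactly that $A$ lies in the unitary algebra $\mathfrak{u}(h)$.

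\emph{Invariants and case analysis.} The isomorphism type of $\mathfrak{g}$ is governed by the induced data $A_\mathfrak{g}$ and $\eta_\mathfrak{g}$ described before the theorem: since an isomorphism preserves the flag $\mathfrak{n}^1\subset\mathfrak{n}\subset\mathfrak{g}$ but need not preserve $J$, I classify the pairs $(A,\eta)$ modulo $(A,\eta)\mapsto(c\,PAP^{-1},\kappa\,(P^{-1})^*\eta)$ with $P\in\mathrm{GL}(\mathfrak{k}_1)$ and $c,\kappa\in\R\setminus\{0\}$, the scalars accounting for rescalings of $e_{2n}$ and $e_1$. The rank of $\eta$ fixes the nilradical: rank $2$ forces $\mathfrak{n}\cong\mathfrak{h}_3\oplus\R^2$ and rank $4$ forces $\mathfrak{n}\cong\mathfrak{h}_5$. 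In the rank-$4$ case $\eta$ is a symplectic form, $A$ is a non-nilpotent element of $\mathfrak{u}(h)\subset\mathfrak{sp}(\eta)$, and I would run through the possible complex Jordan/eigenvalue normal forms of $A$ compatible with some $\eta$-compatible complex structure, reading off $\mathfrak{s}_{6.159}$, $\mathfrak{s}_{6.162}^1$, $\mathfrak{s}_{6.165}^a$, $\mathfrak{s}_{6.166}^a$ and $\mathfrak{s}_{6.167}$ after normalizing by conjugation and scaling. In the rank-$2$ case $A^*\eta=0$ forces $A$ to be block lower-triangular with respect to $\ker\eta$, with the nondegenerate block a pure rotation (hence trace-free); whether the second diagonal block vanishes, is a rotation, or is coupled to the first through a nonremovable off-diagonal block yields $\mathfrak{h}_3\oplus\mathfrak{s}_{3.3}^0$, $\mathfrak{s}_{4.7}\oplus\R^2$, $\mathfrak{s}_{6.52}^{0,b}$ and $\mathfrak{s}_{6.44}$ respectively. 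For each class I would write down an explicit basis $\{f_i\}$ realizing the listed structure equations and fix the parameter ranges ($b>0$, $a>0$, $0<|a|\le1$) by exhausting the scaling freedom.

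\emph{Converse and main difficulty.} For the converse one checks directly that, for each algebra in the list, the complex structure exhibited in the statement is integrable by the Case~(1) form of Proposition~\ref{complex} and visibly satisfies $J\mathfrak{n}^1\not\subset\mathfrak{n}$. The principal obstacle is the bookkeeping needed to make the case analysis both exhaustive and irredundant: one must produce the explicit changes of basis identifying every admissible $(A,\eta)$ with a named algebra, determine the exact parameter ranges, and separate the listed algebras via the invariants $A_\mathfrak{g}$ and $\eta_\mathfrak{g}$. A genuine subtlety is that the isomorphisms are not required to preserve a complex structure, so a single abstract Lie algebra may carry adapted complex structures with different $J$-pairings — precisely what forces the alternative normal form $Jf_1=f_6$, $Jf_2=-f_4$, $Jf_3=f_5$ for $\mathfrak{s}_{6.159}$ and $\mathfrak{s}_{6.166}^a$ — together with the requirement that the chosen $A$ actually commute with some $\eta$-compatible $J$, which is what restricts the admissible eigenvalue configurations.
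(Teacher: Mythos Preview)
Your strategy is sound and genuinely different from the paper's. The paper does not classify the admissible pairs $(A,\eta)$ from scratch; instead it extracts from Proposition~\ref{complex} and Remark~\ref{strunim_perp} three necessary conditions on the invariants $(B_{\mathfrak g},A_{\mathfrak g},\eta_{\mathfrak g})$---existence of a linear complex structure $\tilde J$ on $\mathfrak n/\mathfrak n^1$ commuting with $A_{\mathfrak g}$, $\tilde J$-invariance of $\eta_{\mathfrak g}$, and existence of a $B_{\mathfrak g}$-invariant complement of $\mathfrak n^1$ (the condition coming from $\beta=0$)---and then runs through the finite list of $6$-dimensional strongly unimodular almost nilpotent Lie algebras with $\dim\mathfrak n^1=1$ provided by \cite{SW}, eliminating those that violate one of the conditions and exhibiting an explicit $J$ on the survivors. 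Your route is more intrinsic: you parametrise the Case~(1) model algebras by $(A,\eta)$, determine the correct equivalence relation for Lie-algebra isomorphism, and attempt a normal-form classification of such pairs subject to the constraints $\operatorname{tr}A=0$, $A^*\eta=0$, and the existence of an $\eta$-compatible complex structure commuting with $A$. This buys independence from the external classification \cite{SW} and makes the role of the rank of $\eta$ (hence the dichotomy $\mathfrak h_3\oplus\R^2$ versus $\mathfrak h_5$) transparent; the price is the substantial bookkeeping you yourself flag---carrying out the Jordan/eigenvalue normal-form analysis for complex-linear $A\in\mathfrak u(h)$ in both ranks, producing the explicit changes of basis that identify each normal form with a named algebra in \cite{SW}, and pinning down the parameter ranges. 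One small correction: in the rank-$2$ case your constraint $A^*\eta=0$ together with $\operatorname{tr}A=0$ forces \emph{both} diagonal blocks (on $\ker\eta$ and on its complement) to be pure rotations, not just the nondegenerate one; the four algebras $\mathfrak h_3\oplus\mathfrak s_{3.3}^0$, $\mathfrak s_{4.7}\oplus\R^2$, $\mathfrak s_{6.52}^{0,b}$, $\mathfrak s_{6.44}$ then correspond to $(A_V,A_R)=(0,\neq0)$, $(\neq0,0)$, $(\neq0,\neq0)$ with distinct speeds, and equal speeds with a nonremovable coupling, respectively. As written your proposal is a correct plan rather than a complete proof; the paper's approach trades conceptual directness for a shorter, if more mechanical, verification against a known list.
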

\begin{proof}
Assume $\mathfrak{g}$ satisfies the requirements of the statement and let $J$ be a complex structure on $\mathfrak{g}$ satisfying $J \mathfrak{n}^1 \not\subset\mathfrak{n}$. Choose an adapted basis $\{e_l \}$ for $(\mathfrak{g},J)$ such that $Je_1=e_6$, $Je_2=e_3$, $Je_4=e_5$. Then, one is free to consider the Hermitian metric $g$ according to which the previously defined basis is orthonormal. The resulting Hermitian structure $(J,g)$ is such that  $J\mathfrak{n}^1=\nperp$, so that we may consider an equivalent model Hermitian Lie algebra $(\mu(a,\beta,A,\eta),J,g)$, satisfying $a=\operatorname{tr}A=0$, $A^*\eta=0$ and \eqref{integrable_perp} by Remark \ref{strunim_perp}, Lemma \ref{Jacobi_perp} and Proposition \ref{complex}, respectively.

We now observe that the matrices $B$ and $A$ in \eqref{B} are representatives of $B_{\mathfrak{g}}$ and $A_{\mathfrak{g}}$, respectively, while $\eta$ can be identified with $\eta_{\mathfrak{g}}$.
Independently of the adapted basis and Hermitian metric one chooses, conditions \eqref{integrable_perp} now impose the following conditions on $(B_\mathfrak{g},A_{\mathfrak{g}},\eta_{\mathfrak{g}})$:
\begin{enumerate}
	\item $[A,J \rvert_{\mathfrak{k}_1}]=0$, which  implies that there must exist a linear complex structure $\tilde{J}$ on $\mathfrak{n} / [\mathfrak{n},\mathfrak{n}]$ which is preserved by $\varphi_2$, i.e., which commutes with $A_{\mathfrak{g}}$,
	\item $\eta \in \Lambda^{1,1}\mathfrak{k}_1^*$, which  implies that $\eta_\mathfrak{g}$ must by $\tilde{J}$-invariant,
	\item  $\beta=0$, which  implies that there must exist a complement of $[\mathfrak{n},\mathfrak{n}]$ inside $\mathfrak{n}$ which is invariant under some representative of $B_{\mathfrak{g}}$.
\end{enumerate}
We can now look at the list of $6$-dimensional strongly unimodular almost nilpotent Lie algebras with nilradical $\mathfrak{n}$ satisfying $\dim \mathfrak{n}^1=1$, up to isomorphism, and test the previous three conditions on each one of them. Such a list was obtained in \cite{SW}, and we shall borrow its notations for non-nilpotent Lie algebras. All these Lie algebras are also listed in Tables \ref{table-h3} and \ref{table-h5} for easier reference.

We start by noticing that, in each of these Lie algebras, the fixed basis $\{f_l \}$ is such that $\mathfrak{n}= \mathbb R \left<f_1,\ldots,f_5\right>$, with $\mathfrak{n}^1 =\R \left < f_1 \right >$. This implies that $\mathfrak{g}/\mathfrak{n}$ and $\mathfrak{n}/\mathfrak{n}^1$ can be identified with $\R \left < f_6 \right >$ and $ \mathbb R  \left<f_2,\ldots,f_5\right>$, respectively, so that $B_\mathfrak{g}$ and $A_{\mathfrak{g}}$ may be represented by the $5 \times 5$-matrix associated with $\text{ad}_{f_6}\rvert_{\mathbb R \left<f_1,\ldots,f_5\right>}$ and the $4 \times 4$-matrix associated with the projection of $\text{ad}_{f_6}\rvert_{\mathbb R \left<f_2,\ldots,f_5\right>}$ onto $\mathbb R \left<f_2,\ldots,f_5\right>$, respectively. Instead, $\eta_\mathfrak{g}$ can be identified with (minus) the Lie bracket operation restricted to $\mathbb R \left<f_2,\ldots,f_5\right> \times \mathbb R \left<f_2,\ldots,f_5\right>$. For simplicity, we keep denoting the two matrices and the restricted Lie bracket by $B_\mathfrak{g}$, $A_{\mathfrak{g}}$ and $\eta_{\mathfrak{g}}$.

We can then use condition (3) in the above list to rule out some of the Lie algebras. In particular, we need to discard all those Lie algebras $\mathfrak{g}$ in which the matrix $B_{\mathfrak{g}}$ features a Jordan block comprising $f_1$ and some other element of $\{f_2,\ldots,f_5\}$: this is the case for the following Lie algebras:
\[
\frs{5.15} \oplus \R,\quad \frs{5.16} \oplus \R, \quad \frs{6.30}, \quad \frs{6.31}.
\]
Now, in order to apply conditions (1) and (2), we can write the generic linear complex structure on $\R\left<f_2,f_3,f_4,f_5\right> \cong \mathfrak{n} / \mathfrak{n}^1$ as a $4 \times 4$-matrix $\tilde{J}$ and impose
\begin{itemize}
	\item $\tilde{J}^2=-\text{Id}$,
	\item $[A_{\mathfrak{g}},\tilde{J}]=0$ to impose condition (1),
	\item $\tilde{J}\eta_{\mathfrak{g}}=\eta_{\mathfrak{g}}$ to impose condition (2).
\end{itemize}
Performing explicit computations for each of the remaining Lie algebras, we always get to contradictions, except for the Lie algebras listed in the statement. Then, for each of these Lie algebras we can determine an explicit complex structure $J$ satisfying $J \mathfrak{n}^1\not\subset \mathfrak{n}$, concluding the proof. 
\end{proof}

For the other case $J \mathfrak{n}^1\subset \mathfrak{n}$ we can prove the following

\begin{theorem} \label{class_cpx_sub}
Let $\mathfrak{g}$ be a $6$-dimensional strongly unimodular almost nilpotent Lie algebra with nilradical $\mathfrak{n}$ satisfying $\dim \mathfrak{n}^1 =1$. Then, $\mathfrak{g}$ admits complex structures $J$ satisfying $J \mathfrak{n}^1\subset \mathfrak{n}$ if and only if it is isomorphic to one of the following:
\begin{itemize}[leftmargin=0.6cm]
\item[]$\frs{4.6} \oplus \R^2=(f^{23},f^{26},-f^{36},0,0,0),\quad Jf_1=f_2,\,Jf_3=f_6,\,Jf_4=f_5$, \smallskip
\item[]$\frs{4.7} \oplus \R^2=(f^{23},f^{36},-f^{26},0,0,0),\quad Jf_1=f_4,\,Jf_2=f_3,\,Jf_5=f_6$, \smallskip
\item[]$\frs{5.16} \oplus \R=(f^{23}+f^{46},f^{36},-f^{26},0,0,0),\quad Jf_1=-f_5,\,Jf_2=f_3,\,Jf_4=f_6$, \smallskip
\item[]$\frs{6.25}=(f^{23},f^{36},-f^{26},0,f^{46},0),\quad Jf_1=-f_5,\,Jf_2=f_3,\,Jf_4=f_6$, \smallskip
\item[]$\frs{6.51}^{a,0}=(f^{23},af^{26},-af^{36},f^{56},-f^{46},0)$, $a>0,\quad Jf_1=af_2,\,Jf_3=f_6,\,Jf_4=f_5$, \smallskip
\item[]$\frs{6.158}=(f^{24}+f^{35},0,f^{36},0,-f^{56},0),\quad Jf_1=f_3,\,Jf_2=f_4,\,Jf_5=f_6$, \smallskip
\item[]$\frs{6.164}^a=(f^{24}+f^{35},af^{26},f^{56},-af^{46},-f^{36},0)$, $a >0,\quad Jf_1=af_2,\,Jf_3=f_5,\,Jf_4=f_6$.
\end{itemize}
For each of these Lie algebras, an explicit example of complex structure  $J$ satisfying $J \mathfrak{n}^1 \subset \mathfrak{n}$ is indicated in the previous list.
\end{theorem}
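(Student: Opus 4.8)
The plan is to mirror the proof of Theorem \ref{class_cpx_perp}, this time using the description of Hermitian structures in Case (2). First I would suppose that $\mathfrak{g}$ admits a complex structure $J$ with $J\mathfrak{n}^1 \subset \mathfrak{n}$; choosing an adapted basis $\{e_l\}$ as in Section \ref{Hermitian_sub_sec} and taking $g$ to be the metric making it orthonormal, I would pass to an equivalent model Hermitian Lie algebra $(\mu(\Xi),J,g)$. By the strong unimodularity criterion ($a_1 = a + a_2 + \operatorname{tr}A = 0$), Proposition \ref{prop_Lie} and the integrability conditions \eqref{sub_integrable}, the data $\Xi$ must satisfy these three sets of equations simultaneously; in particular $p_1=p_2=w=\beta=0$, $\delta=0$, $\lambda=a_2-a_1$, $\nu=\gamma-J\alpha$ and $[A,J\rvert_{\mathfrak{k}_3}]=0$.

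The second step is to reformulate \eqref{sub_integrable} in terms of the basis-independent invariants $(B_\mathfrak{g},A_\mathfrak{g},\eta_\mathfrak{g})$ from Section \ref{casssixsection}. The vanishing $p_1=p_2=w=\beta=0$ makes the block matrix \eqref{B_sub} upper block-triangular, so that a representative of $B_\mathfrak{g}$ stabilizes the $J$-invariant flag $\mathfrak{n}^1 \subset \mathfrak{n}^1+J\mathfrak{n}^1 \subset \mathfrak{k}_1$, where $\mathfrak{k}_1=\mathfrak{n}\cap J\mathfrak{n}$ is the maximal $J$-invariant subspace of $\mathfrak{n}$; the condition $[A,J\rvert_{\mathfrak{k}_3}]=0$ says that the induced action of $B_\mathfrak{g}$ on the two-dimensional piece $\mathfrak{k}_3$ commutes with a complex structure there; and the remaining relations $\delta=0$, $\lambda=a_2-a_1$, $\nu=\gamma-J\alpha$ encode the corresponding compatibility of $\eta_\mathfrak{g}$ with $J$ and $B_\mathfrak{g}$. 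These are the necessary conditions I would actually test.

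Third, I would run through the list of $6$-dimensional strongly unimodular almost nilpotent Lie algebras with $\dim\mathfrak{n}^1=1$, organized by whether $\mathfrak{n}\cong\mathfrak{h}_3\oplus\R^2$ or $\mathfrak{n}\cong\mathfrak{h}_5$ (Tables \ref{table-h3} and \ref{table-h5}). Using the coarsest necessary condition --- the existence of a representative of $B_\mathfrak{g}$ preserving a $J$-invariant flag of the above type --- I would eliminate most candidates; for each survivor I would write the generic admissible $J$ (equivalently, solve for $A$, $\eta$ and the complex structure on $\mathfrak{k}_3$ compatible with the structure equations) and impose the remaining equations, reaching a contradiction except for the seven algebras in the statement. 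For those seven, the explicit $J$ displayed in the list is then checked directly to satisfy $J^2=-\operatorname{Id}$, $J\mathfrak{n}^1\subset\mathfrak{n}$ and $N\equiv 0$, which establishes the converse.

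The main obstacle is the invariant reformulation in the second step. Unlike Case (1), here $J$ does not descend to $\mathfrak{n}/\mathfrak{n}^1$: since $J\mathfrak{n}^1\subset\mathfrak{n}$, the maximal $J$-invariant subspace $\mathfrak{k}_1$ \emph{contains} $\mathfrak{n}^1$ rather than being a complement of it, so there is no single complex structure $\tilde{J}$ on the quotient commuting with $A_\mathfrak{g}$ to exploit. One must instead track the finer $J$-invariant filtration and the considerably larger set of data $\Xi$, which also makes the per-algebra verification heavier than in Theorem \ref{class_cpx_perp}.
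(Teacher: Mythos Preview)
Your overall strategy is correct and mirrors the paper's, but the key elimination tool differs. You propose testing, for each candidate algebra, whether some representative of $B_\mathfrak{g}$ preserves a $J$-invariant flag $\mathfrak{n}^1\subset \mathfrak{n}^1+J\mathfrak{n}^1\subset\mathfrak{k}_1$ and then checking the residual compatibility equations. The paper instead first writes the model data explicitly after imposing \eqref{sub_integrable} and strong unimodularity: in the adapted basis the matrix $B$ is block upper triangular with diagonal blocks $0,\,a_2,\,A,\,a$ and $A=\begin{psmallmatrix}-\tfrac12(a+a_2)&q\\ -q&-\tfrac12(a+a_2)\end{psmallmatrix}$, so $\operatorname{Spec}(A_\mathfrak{g})=\{a_2,\,a,\,-\tfrac12(a+a_2)\pm iq\}$. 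For each candidate Lie algebra one reads off $\operatorname{Spec}(A_\mathfrak{g})$ directly from its structure equations and matches: this immediately pins down $a,a_2,q$ (or gives a contradiction outright, e.g.\ when $\operatorname{Spec}(A_\mathfrak{g})\cap\R=\emptyset$), after which one or two of the simplified Jacobi conditions, a dimension count on $\ker B$, or an explicit isomorphism with one of the seven listed algebras finishes the case.

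Both approaches work, but the spectrum-matching is sharper and faster: most algebras die in one line because the required eigenvalue pattern is impossible, whereas your flag criterion is genuinely weaker (any $B$ of the block form \eqref{B_sub} already preserves \emph{some} flag of the right shape), so you would still need to identify a concrete obstruction algebra by algebra. If you rewrite your second step to extract the explicit spectrum constraint on $A_\mathfrak{g}$ rather than the flag condition, your proof becomes essentially identical to the paper's.
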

\begin{proof}
Let $\mathfrak{g}$ be a Lie algebra satisfying the requirements of the statement. Without loss of generality, we may equip $(\mathfrak{g},J)$ with a Hermitian metric, so that the resulting $(\mathfrak{g},J,g)$ is equivalent to a $6$-dimensional model Hermitian Lie algebra $(\mu(\Xi),J,g)$. By Section \ref{Hermitian_sub_sec}, we deduce that the matrix $B$ representing $\text{ad}_{e_{6}}\rvert_{\mathfrak{n}}$ and the $2$-form $\eta=de^1\rvert_{\Lambda^2 \mathfrak{n}}$, with respect to an adapted basis $\{ e_l \}$, are of the form
\begin{equation} \label{B_sub1}
	B=\left( \begin{array}{c|c|c|c}
		0 & 0 & \alpha & v_1 \\ \hline
		0 & a_2 & J\alpha + \nu & v_2 \\ \hline
		0 & 0 & \begin{matrix} -\tfrac{1}{2}(a+a_2) & q \\ -q & -\tfrac{1}{2}(a+a_2) \end{matrix} & v \\ \hline
		0 & 0 & 0 & a
	\end{array} \right), \quad \eta=c\,e^{34} + \nu \wedge e^{5} + a_2 \, e^{25},
\end{equation}
with the data
\[
a,a_2,v_1,v_2,q,c \in \R, \quad v \in \mathfrak{k}_3, \quad \alpha,\nu \in \mathfrak{k}_3^*,
\]
grouped in the notation
\begin{equation} \label{Xinew}
\Xi=(a,a_2,v_1,v_2,q,c,v,\alpha,\nu),
\end{equation}
satisfying
\begin{align}
\label{Lie1new}	&a_2(a+a_2)=0, \\
\label{Lie2new}	&c\,A^*e^{34}=0, \\
\label{Lie3new}	&(a+a_2)\nu+a_2 J\alpha + A^* \nu - c\,Jv^\flat =0,
\end{align}
by rewriting conditions \eqref{Lie1}--\eqref{Lie4} and noticing that $\iota_ve^{34}=Jv^\flat$. Moreover, we note that $\mathfrak{n} \cong \mathfrak{h}_3 \oplus \R^2$ (or, equivalently, $\eta^2=0$) if and only if
\begin{equation} \label{etasq}
	c\, a_2=0.
\end{equation}

Now, for every Lie algebra from Tables \ref{table-h3} and \ref{table-h5} in the Appendix, except the ones indicated in the statement, we shall operate by contradiction, assuming each to be isomorphic to the Lie algebra determined by $\mu(\Xi)$ and showing how one gets to a contradiction:
\begin{itemize}[leftmargin=0.6cm]
	\item $\mathfrak{h}_3 \oplus \frs{3.1}^{-1}$: $\text{Spec}(A_{\mathfrak{g}})=\{0,1,-1\}$ forces $a_2=-a \neq 0$, $q=0$. Now, $\eta_\mathfrak{g}(A_{\mathfrak{g}} \cdot, \cdot)=0$ would force $0=\eta(Be_2,e_5)=a^2$, a contradiction,
	\item $\mathfrak{h}_3 \oplus \frs{3.3}^0$: $\text{Spec}(A)=\{0,\pm i\}$ forces $a=a_2=0$, $q \neq 0$. Again, we have $\eta_\mathfrak{g}(A_{\mathfrak{g}} \cdot, \cdot)=0$, while 
	\[
	\eta(Be_3,e_3)=cq, \quad \eta(Be_3,e_5)=-\nu(e_4)q, \quad \eta(Be_4,e_5)=\nu(e_3)q,
	\]
	so that one should impose $c=0$, $\nu=0$, which would imply $\eta=0$, a contradiction,
	\item $\frs{5.15} \oplus \R$, $\frs{6.24}$, $\frs{6.30}$, $\frs{6.32}^{-1}$: $\text{Spec}(A_{\mathfrak{g}})=\{0,\pm 1\}$ forces $a_2=-a \neq 0$, $q=0$. Now, \eqref{etasq} forces $c=0$, so that \eqref{Lie3new} implies $\alpha=0$. Now, an explicit isomorphism of $\mu(\Xi)$ with $\frs{4.6} \oplus \R^2$ is provided by
	\begin{gather*}
	f_1=e_1,\quad f_2=\tfrac{1}{2a^2}(2av_1e_1+(av_2+\nu(v))e_2+2v+2a^2e_6),\\
	f_3=e_2,\quad f_4=\tfrac{1}{a}\nu(e_4)e_2+e_4,\quad f_5=\tfrac{1}{a}\nu(e_3)e_2+e_3, \quad f_6=\tfrac{1}{a} e_6,
	\end{gather*}
	\item $\frs{6.31}$: since $\text{Spec}(A_{\mathfrak{g}})=\{0,\pm i\}$ we must have $a=a_2=0$, $q \neq 0$. One can see that, no matter the chosen representative, $\dim(\ker B_{\mathfrak{g}})=1$, while $\R\left<e_1,e_2\right> \subseteq \ker B$, a contradiction.
	\item $\frs{6.34}^0$: we have $\text{Spec}(A_{\mathfrak{g}})=\{0,\pm i\}$, so $a=a_2=0$, $q \neq 0$. We should have $\eta_{\mathfrak{g}}(A_{\mathfrak{g}} \cdot, A_{\mathfrak{g}} \cdot)=0$, but if we set
	\[
	e_l^\prime=e_l + \tfrac{1}{q}(-J\alpha(e_l)e_1 + (\alpha-J\eta)(e_l)e_2),\quad l=3,4,
	\]
	we obtain
	\[
	\eta(Be_3^\prime,Be_4^\prime)=q^2c,
	\]
	forcing $c=0$. Equation \eqref{Lie3new} now reads $A^*\nu=0$, so that $\nu=0$, since $A$ is non-degenerate. This implies $\eta=0$,
	\item $\frs{6.43}$, $\frs{6.45}^{-1}$, $\frs{6.163}$: we have $\text{Spec}(A_{\mathfrak{g}})=\{\pm 1\}$, which forces $a_2=a \neq 0$, $q=0$, but now \eqref{Lie1new} leads to a contradiction,
	\item $\frs{6.44}$, $\frs{6.52}^{0,b}$, $\frs{6.165}$, $\frs{6.166}$, $\frs{6.167}$: $\text{Spec}(A_{\mathfrak{g}}) \cap \R = \emptyset$ immediately yields a contradiction,
	\item $\frs{6.46}^{a,-a}$: $\text{Spec}(A_{\mathfrak{g}})=\{a,-a,\pm i\}$ imposes $a_2=-a$, $q \neq 0$. Now we note that $\iota_Y \eta_\mathfrak{g}=0$ for all real eigenvectors of $A_{\mathfrak{g}}$, while, in our model Hermitian Lie algebra, $e_2$ is an eigenvector of $B$ and we have $\iota_{e_2}\eta=-ae^5 \neq 0$, a contradiction,
	\item $\frs{6.47}^{-1}$: $\text{Spec}(A_{\mathfrak{g}})=\{\pm 1\}$, so that we must have $a_2=a \neq 0$. Condition \eqref{Lie1new} now provides a contradiction,
	\item $\frs{6.159}$, $\frs{6.161}^{\varepsilon}$: we have $\text{Spec}(A_{\mathfrak{g}})=\{0,\pm i\}$, which forces $a=a_2=0$, implying $\eta^2=0$ by \eqref{etasq}, a contradiction,
	\item $\frs{6.160}$: by $\text{Spec}(A_{\mathfrak{g}})=\{0,\pm 1\}$ we deduce $a_2=-a \neq 0$, $q =0$. We then deduce that the kernel of $A$ is even-dimensional, which contradicts the fact that $\ker A_{\mathfrak{g}}$ is one-dimensional,
	\item $\frs{6.162}^a$: $\text{Spec}(A_{\mathfrak{g}})=\{\pm 1, \pm a\}$ imposes $a_2=a \neq 0$, which is incompatible with \eqref{Lie1new}.
\end{itemize}
For the remaining Lie algebras, an explicit complex structure $J$ satisfying $J \mathfrak{n}^1 \subset \mathfrak{n}$ can be found.
\end{proof}

As a consequence of Theorems  \ref{class_cpx_perp} and \ref{class_cpx_sub} we get  a   classification, up to isomorphisms, of $6$-dimensional strongly unimodular   almost nilpotent Lie algebras $\mathfrak{g}$ with nilradical $\mathfrak{n}$ satisfying $\dim \mathfrak{n}^1 = 1$  and admitting complex structures.

\section{SKT structures}   \label{sectionSKTstruct}

In this section we shall study the existence of SKT structures separately in cases (1),  (2) and (3).
We first obtain characterizations in terms of the algebraic data introduced in Section \ref{sec_Hermitian}, focusing on the strongly unimodular case. We then use these results to prove classification results, with particular attention to the $6$-dimensional case.

\subsection{Case (1)}
Before moving on, we show the following
\begin{lemma} \label{domg}
Let $\omega$ be the  fundamental form of the $2n$-dimensional  almost-Hermitian Lie algebra $(\mu(a,\beta,A,\eta),J,g)$. Then
 \[
d\omega=(\eta-A^*\omega)\wedge e^{2n},
\]
in the fixed adapted unitary basis $\{ e_1,\ldots,e_{2n} \}$, where $A^* \omega$ is a shorthand for $A^*(\omega \rvert_{\Lambda^2 \mathfrak{k}_1})$.
\end{lemma}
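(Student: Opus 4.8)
The plan is to compute the exterior derivative $d\omega$ directly using the Koszul-type formula for $d$ on a Lie algebra, namely $d\alpha(X,Y) = -\alpha([X,Y])$ on $1$-forms, extended as a derivation. Since $\omega = \sum_{l=1}^{n} e^{2l-1}\wedge e^{2l}$ is expressed in terms of the dual coframe, the key input is the structure equations, which come from the Lie bracket in the adapted basis: $[e_{2n},X]=B(X)$ for $X\in\mathfrak{n}$ and $[Y,Z]=-\eta(Y,Z)e_1$ for $Y,Z\in\mathfrak{k}_1$. First I would record the dual statement, computing $de^l$ for each $l$. The bracket $[Y,Z]=-\eta(Y,Z)e_1$ contributes to $de^1$ the term $\eta$ (viewed as a $2$-form on $\mathfrak{k}_1$, hence not involving $e^1$ or $e^{2n}$), while the action of $B=\mathrm{ad}_{e_{2n}}$ produces, for each basis covector $e^l$ dual to a vector in $\mathfrak{n}$, a term of the shape $(B^*e^l)\wedge e^{2n}$ up to sign.

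Next I would assemble $d\omega = \sum_{l} (de^{2l-1}\wedge e^{2l} - e^{2l-1}\wedge de^{2l})$ and separate the two sources. The $\eta$-contributions arise only through $de^1$, and since $e^1$ pairs with $e^{2n}=Je_1$'s dual in $\omega$ (recall $Je_1=e_{2n}$, so $e^1\wedge e^{2n}$ is a summand of $\omega$), the term $\eta \wedge e^{2n}$ appears. The $B$-contributions reorganize into $-(A^*\omega)\wedge e^{2n}$: the point is that $d$ applied to the purely $\mathfrak{k}_1$-part of $\omega$ via the $\mathrm{ad}_{e_{2n}}$-action reproduces, up to the overall $\wedge e^{2n}$, the Lie-derivative-type expression $A^*(\omega|_{\Lambda^2\mathfrak{k}_1})$, by the very definition $(A^*\omega)(X,Y)=\omega(AX,Y)+\omega(X,AY)$ stated in Lemma \ref{Jacobi_perp}. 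I expect that the off-diagonal datum $\beta$ and the scalar $a$ in $B$ (see \eqref{B}) will contribute only terms proportional to $e^1\wedge e^{2n}$ or terms that cancel, so that they do not survive in the final expression; verifying this cancellation cleanly is the one place to be careful.

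The main obstacle is bookkeeping the signs and confirming that the contributions from the first row of $B$ (the $a$ and $\beta$ entries acting on $e^1$) do not pollute the answer. Concretely, $B$ sends $e_1\mapsto a\,e_1$ and mixes $\mathfrak{k}_1$ into $e_1$ through $\beta$, so $de^1$ acquires both the $\eta$ piece and a $\beta\wedge e^{2n}$-type piece; I must check that when $e^1$ is paired in $\omega$ with $e^{2n}$ (which is $d$-closed modulo itself, as $e_{2n}$ is not in the image of any bracket), these extraneous pieces either wedge to zero against $e^{2n}$ or recombine. In fact $(\beta\wedge e^{2n})\wedge e^{2n}=0$, which is exactly the mechanism that kills the $\beta$-term. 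Having isolated the surviving terms, I would conclude $d\omega = (\eta - A^*\omega)\wedge e^{2n}$, matching the claimed formula, and note that $\eta$ is naturally read as a $2$-form on $\mathfrak{g}$ supported on $\mathfrak{k}_1$ so that the expression is unambiguous.
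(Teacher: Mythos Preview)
Your approach is correct and essentially equivalent to the paper's, just organized dually. The paper evaluates the $3$-form $d\omega$ on triples of vectors using the intrinsic formula
\[
d\omega(X,Y,Z)=g([X,Y],JZ)+g([Z,X],JY)+g([Y,Z],JX),
\]
observing that the only nonvanishing case is $X=e_{2n}$, $Y,Z\in\mathfrak{k}_1$, where the computation gives $-\omega(AY,Z)-\omega(Y,AZ)+\eta(Y,Z)$ directly. You instead differentiate the coframe expression of $\omega$ term by term via the structure equations. Both routes are of comparable length; the paper's avoids writing out each $de^l$, while yours makes the mechanism killing the $a$- and $\beta$-terms (wedging with $e^{2n}$ a second time) more explicit. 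One small slip: your initial display $\omega=\sum_{l=1}^{n}e^{2l-1}\wedge e^{2l}$ uses the wrong indexing for this adapted basis; as you yourself note two lines later, the relevant summand is $e^1\wedge e^{2n}$ (since $Je_1=e_{2n}$) together with $\sum_{l=1}^{n-1}e^{2l}\wedge e^{2l+1}$. Fix that display and the argument goes through verbatim.
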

\begin{proof}
One has
\begin{equation} \label{domega}
d\omega(X,Y,Z)=g([X,Y],JZ)+g([Z,X],JY)+g([Y,Z],JX),\quad X,Y,Z \in \mathfrak{g}.
\end{equation}
It is easy to see that the above expression vanishes when $X,Y,Z \in \mathfrak{k}_1$ or one of the three vectors lies in $\mathfrak{n}^1$, so that one only needs to compute $d\omega(e_{2n},Y,Z)$, for $Y,Z \in \mathfrak{k}_1$:
\begin{align*}
d\omega(e_{2n},Y,Z)&=g([e_{2n},Y],JZ)-g([e_{2n},Z],JY)+g([Y,Z],Je_{2n})\\
&=-\omega(AY,Z)-\omega(Y,AZ)+\eta(Y,Z),
\end{align*}
from which the claim follows.
\end{proof}

If $J$ is integrable, we can prove the following
\begin{theorem}
A Hermitian almost nilpotent Lie algebra $(\mu(a,0,A,\eta),J,g)$  is SKT if and only if, in addition to \eqref{integrable_perp},
\begin{equation} \label{SKTeq}
	\eta^2=\eta\wedge A^*\omega,\quad (aA+A^2+A^tA)^*\omega=2a\eta.
\end{equation}
In particular, it is K\"ahler if and only if, in addition to \eqref{integrable_perp}, one has
\begin{equation} \label{Kahlereq}
	\eta=A^*\omega.
\end{equation}
\end{theorem}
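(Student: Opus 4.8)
The plan is to compute $dd^c\omega$ directly and to split it according to the decomposition of $4$-forms induced by $\mathfrak{g}^* = \mathbb{R}e^1 \oplus \mathfrak{k}_1^* \oplus \mathbb{R}e^{2n}$. The starting point is Lemma \ref{domg}, which gives $d\omega = \psi \wedge e^{2n}$ with $\psi \coloneqq \eta - A^*\omega$ a $2$-form on $\mathfrak{k}_1$. Under the integrability hypothesis \eqref{integrable_perp}, namely $[A,J\rvert_{\mathfrak{k}_1}]=0$ and $\eta \in \Lambda^{1,1}\mathfrak{k}_1^*$, the form $\psi$ is of type $(1,1)$: indeed $A^*\omega$ is $J$-invariant because $A$ commutes with $J$ and $\omega$ is $J$-invariant, while $\eta$ is $(1,1)$ by assumption.

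First I would compute $d^c\omega$ via $d^c\omega(X,Y,Z) = -d\omega(JX,JY,JZ)$, valid for integrable $J$ (only the overall sign matters for the SKT condition). Since $Je_1 = e_{2n}$, $Je_{2n}=-e_1$ and $J$ preserves $\mathfrak{k}_1$, evaluating on the adapted basis and using that $\psi$ is $(1,1)$ yields $d^c\omega = -e^1 \wedge \psi$, all other components vanishing. Then I would apply the Leibniz rule, $dd^c\omega = -de^1 \wedge \psi + e^1 \wedge d\psi$. From the structure equations $[Y,Z]=-\eta(Y,Z)e_1$, $[e_{2n},e_1]=a e_1$ and $[e_{2n},X]=AX$ for $X \in \mathfrak{k}_1$ (recall $\beta=0$), one reads off $de^1 = \eta + a\, e^1 \wedge e^{2n}$ and, since $\psi$ annihilates $\mathbb{R}e_1$ so that only the brackets $[e_{2n},\cdot]$ contribute, $d\psi = -e^{2n}\wedge A^*\psi$. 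Substituting gives
\[
dd^c\omega = -\eta\wedge\psi - e^1\wedge e^{2n}\wedge(a\psi + A^*\psi).
\]

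The two summands lie in the complementary subspaces $\Lambda^4\mathfrak{k}_1^*$ and $e^1 \wedge e^{2n} \wedge \Lambda^2\mathfrak{k}_1^*$, so $dd^c\omega = 0$ if and only if both vanish. The first gives $\eta\wedge\psi = \eta^2 - \eta\wedge A^*\omega = 0$, i.e.\ the first equation of \eqref{SKTeq}. The second gives $a\psi + A^*\psi = 0$; expanding $\psi = \eta - A^*\omega$ and using $A^*\eta = a\eta$ from \eqref{lie_perp}, this becomes $(A^*)^2\omega + a\, A^*\omega = 2a\eta$.

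The remaining, and main, step is to rewrite $(A^*)^2\omega$ in the stated form. A direct expansion gives $(A^*)^2\omega = (A^2)^*\omega + 2\,\omega(A\cdot,A\cdot)$, so the work lies in identifying $2\,\omega(A\cdot,A\cdot)$ with $(A^tA)^*\omega$. Using that $J$ is $g$-skew, that $A$ commutes with $J$ (hence so do $A^t$ and thus $A^tA$), and that $A^tA$ is $g$-symmetric, one checks $\omega(AY,AZ) = \omega(A^tAY,Z) = \omega(Y,A^tAZ)$, whence $2\,\omega(A\cdot,A\cdot) = (A^tA)^*\omega$. This turns the second condition into $(aA + A^2 + A^tA)^*\omega = 2a\eta$, completing \eqref{SKTeq}. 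Finally, the K\"ahler case is immediate from Lemma \ref{domg}: since wedging a form on $\mathfrak{k}_1$ with $e^{2n}$ is injective, $d\omega = \psi \wedge e^{2n}$ vanishes if and only if $\psi = \eta - A^*\omega = 0$, which is \eqref{Kahlereq}.
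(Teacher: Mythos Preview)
Your proof is correct and follows essentially the same route as the paper's: both compute $d^c\omega = -e^1\wedge(\eta - A^*\omega)$ from Lemma~\ref{domg} and the $(1,1)$-property of $\eta - A^*\omega$, then differentiate and split into the $\Lambda^4\mathfrak{k}_1^*$ and $e^1\wedge e^{2n}\wedge\Lambda^2\mathfrak{k}_1^*$ components. The only organizational difference is that you package $\psi = \eta - A^*\omega$ and compute $d\psi$ in one step (using $A^*\eta = a\eta$ afterwards), whereas the paper differentiates $\eta$ and $A^*\omega$ separately and folds the identity $2\omega(A\cdot,A\cdot) = (A^tA)^*\omega$ directly into the computation of $d(A^*\omega)$.
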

\begin{proof}
The part of the claim regarding the K\"ahler condition follows directly from Lemma \ref{domg}. For the SKT condition, we have to impose $0=dd^c \omega = dJd\omega$. Observing that $\eta-A^*\omega \in \Lambda^{1,1}\mathfrak{k}_1^*$, since $\eta \in \Lambda^{1,1}\mathfrak{k}_1^*$ and $[A,J\rvert_{\mathfrak{k}_1}]=0$, we use Lemma \ref{domg} to obtain
\[
Jd\omega=-e^1 \wedge (\eta - A^*\omega).
\]
Before proceeding, we observe that
\[
d\eta=-a \eta \wedge e^{2n},
\]
which follows from the fact that $0=d^2e^1=d(\eta+ae^1 \wedge e^{2n})=d\eta+a \eta \wedge e^{2n}$. Then
\begin{align*}
	dd^c \omega&=-de^1 \wedge (\eta-A^*\omega)+e^1 \wedge (d\eta -d(A^*\omega))\\
	&=-\eta \wedge (\eta -A^*\omega) - a e^1 \wedge (\eta-A^*\omega) \wedge e^{2n}-ae^1 \wedge \eta \wedge e^{2n} -e^1 \wedge d(A^*\omega) \\
	&=-\eta^2+\eta \wedge A^*\omega - 2a e^1 \wedge \eta \wedge e^{2n} + ae^1 \wedge A^*\omega \wedge e^{2n} -e^1 \wedge d(A^*\omega).
\end{align*}
Now, we know that $d(A^*\omega) \subset \Lambda^2\mathfrak{k}_1^* \wedge e^{2n}$, and, for $X,Y \in \mathfrak{k}_1$, one computes
\begin{align*}
	d(A^*\omega)(e_{2n},X,Y)&=-A^*\omega([e_{2n},X],Y)-A^*\omega(X,[e_{2n},Y]) \\
	&=-A^*\omega(AX,Y)-A^*\omega(X,AY)\\
	&=-\omega(A^2X,Y)-2\omega(AX,AY)-\omega(X,A^2Y) \\
	&=-\omega((A^2+A^tA)X,Y)-\omega(X,(A^2+A^tA)Y)\\
	&=-(A^2+A^tA)^*\omega(X,Y),
\end{align*}
so that
\[
d(A^*\omega)=-(A^2+A^tA)^*\omega \wedge e^{2n}.
\]
The claim now follows by substituting this expression in the computation for $dd^c\omega$ and by separately imposing the vanishing of the  components of $dd^c\omega$ in $\Lambda^4\mathfrak{k}_1^*$ and $e^1 \wedge \Lambda^2\mathfrak{k}_1^* \wedge e^{2n}$.
\end{proof}

We can then study the SKT and K\"ahler condition in the strongly unimodular case.
\begin{proposition} \label{perp_SKTprop}
A Hermitian strongly unimodular almost nilpotent Lie algebra $(\mu(0,0,A,\eta),J,g)$ is never K\"ahler and it is SKT if and only if $A \in \mathfrak{u}(\mathfrak{k}_1,J\rvert_{\mathfrak{k}_1},g)$ and $\eta^2=0$ (or equivalently $\mathfrak{n} \cong \mathfrak{h}_3 \oplus \R^{2(n-2)}$). 
\end{proposition}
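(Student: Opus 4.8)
The plan is to read off the SKT conditions from the preceding theorem in the strongly unimodular case and then reduce everything to a short piece of linear algebra on $\mathfrak{k}_1$. Since strong unimodularity here means $a=\operatorname{tr}A=0$, the SKT equations \eqref{SKTeq} become $\eta^2=\eta\wedge A^*\omega$ and $(A^2+A^tA)^*\omega=0$, while the K\"ahler equation \eqref{Kahlereq} reads $\eta=A^*\omega$. The key observation I would record first is that, for any endomorphism $M$ of $\mathfrak{k}_1$, one has $M^*\omega(X,Y)=g\big((JM+M^tJ)X,Y\big)$, so that $M^*\omega=0$ is equivalent to $JM+M^tJ=0$; if moreover $M$ commutes with $J\rvert_{\mathfrak{k}_1}$, this collapses to $(M+M^t)J=0$, i.e.\ to $M$ being $g$-skew-symmetric. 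By the integrability condition \eqref{integrable_perp} we have $[A,J\rvert_{\mathfrak{k}_1}]=0$, hence $A^t$ commutes with $J\rvert_{\mathfrak{k}_1}$ as well, and therefore so does $A^2+A^tA$. In particular $A^*\omega=0$ holds precisely when $A\in\mathfrak{u}(\mathfrak{k}_1,J\rvert_{\mathfrak{k}_1},g)$.

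The central step is to show that the second SKT equation already forces $A$ to be skew-symmetric. Applying the observation to $M=A^2+A^tA$ (which commutes with $J\rvert_{\mathfrak{k}_1}$), the equation $(A^2+A^tA)^*\omega=0$ becomes the operator identity $A^2+2A^tA+(A^t)^2=0$. Taking the trace and using $\operatorname{tr}(A^2)=\operatorname{tr}((A^t)^2)$ together with $\operatorname{tr}(AA^t)=\operatorname{tr}(A^tA)$, the left-hand side equals $\operatorname{tr}\big((A+A^t)^2\big)$, i.e.\ the squared Frobenius norm of the symmetric endomorphism $A+A^t$; its vanishing gives $A^t=-A$, so $A\in\mathfrak{u}(\mathfrak{k}_1,J\rvert_{\mathfrak{k}_1},g)$. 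Then $A^*\omega=0$, and the first SKT equation reduces to $\eta^2=0$. For the converse, if $A$ is skew-symmetric and commutes with $J$ then $A^2+A^tA=0$ and $A^*\omega=0$, so both SKT equations hold as soon as $\eta^2=0$; this establishes the SKT characterization. The equivalence $\eta^2=0\Leftrightarrow\mathfrak{n}\cong\mathfrak{h}_3\oplus\R^{2(n-2)}$ then follows because $\eta\neq0$ encodes the one-dimensional commutator and $\eta^2=0$ forces $\operatorname{rank}\eta=2$, i.e.\ a single Heisenberg block.

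For the non-existence of K\"ahler metrics I would argue by contradiction: a K\"ahler structure is in particular SKT, so by the characterization just proved $A\in\mathfrak{u}(\mathfrak{k}_1,J\rvert_{\mathfrak{k}_1},g)$ and hence $A^*\omega=0$; but the K\"ahler condition \eqref{Kahlereq} reads $\eta=A^*\omega$, forcing $\eta=0$, which is impossible since $\dim\mathfrak{n}^1=1$ guarantees $\eta\neq0$. I expect the only delicate points to be the sign bookkeeping in the identity $M^*\omega(X,Y)=g\big((JM+M^tJ)X,Y\big)$ and the verification that $A^2+A^tA$ genuinely commutes with $J\rvert_{\mathfrak{k}_1}$; once the lemma ``$M^*\omega=0\Leftrightarrow M$ is $g$-skew-symmetric, for $M$ commuting with $J$'' is in place, the trace computation is immediate and constitutes the crux that turns the a priori quadratic constraint $(A^2+A^tA)^*\omega=0$ into the clean statement $A\in\mathfrak{u}(\mathfrak{k}_1,J\rvert_{\mathfrak{k}_1},g)$.
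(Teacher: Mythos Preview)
Your proof is correct and follows the same overall line as the paper's: reduce to $a=0$, show that $(A^2+A^tA)^*\omega=0$ forces $A\in\mathfrak{u}(\mathfrak{k}_1,J\rvert_{\mathfrak{k}_1},g)$, then read off $A^*\omega=0$ so that the remaining conditions become $\eta^2=0$ (SKT) and $\eta=0$ (K\"ahler, contradicting $\eta\neq0$). The one noteworthy difference is that the paper does not carry out the step ``$(A^2+A^tA)^*\omega=0\Rightarrow A+A^t=0$'' but simply cites \cite{AL}; your elementary argument---observing that for $M$ commuting with $J\rvert_{\mathfrak{k}_1}$ one has $M^*\omega=0\Leftrightarrow M+M^t=0$, applying this to $M=A^2+A^tA$, and then taking the trace to identify $\operatorname{tr}(M+M^t)$ with the Frobenius norm $\lVert A+A^t\rVert^2$---is self-contained and makes the argument independent of that reference, which is a small gain.
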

\begin{proof}
By Remark \ref{strunim_perp}, the strongly unimodular condition imposes in particular $a=0$. Then the second condition in \eqref{SKTeq} reads
\[
(A^2+A^tA)^*\omega=0.
\]	
By \cite{AL}, this is equivalent to $A+A^t=0$, meaning that $A \in \mathfrak{u}(\mathfrak{k}_1,J\rvert_{\mathfrak{k}_1},g)$, which is compatible with the condition $\operatorname{tr}A=0$ for the strong unimodularity. We then have $A^*\omega=0$, so that the first condition in \eqref{SKTeq} reads $\eta^2=0$, while the K\"ahler condition \eqref{Kahlereq} reduces to $\eta=0$, concluding the proof.
\end{proof}

\begin{theorem} \label{class_SKT_perp}
Let $\mathfrak{g}$ be a strongly unimodular $2n$-dimensional almost nilpotent Lie algebra with nilradical $\mathfrak{n}$ satisfying $\dim \mathfrak{n}^1=1$. Then $\mathfrak{g}$ admits an SKT structure $(J,g)$ satisfying $J \mathfrak{n}^1 = \mathfrak{n}^\perp$ if and only if 
it is isomorphic to the Lie algebra $ \R \left< f_1,\ldots,f_{2n} \right>$ with structure equations
\begin{equation} \label{streq_SKT}
	df^1=f^2 \wedge f^3,\quad df^{2l}=b_l f^{2l+1} \wedge f^{2n},\quad df^{2l+1}=-b_l f^{2l} \wedge f^{2n},\, l=1,\ldots,n-1,\quad df^{2n}=0,
\end{equation}
for some (not all vanishing) $b_l \in \R$, $l=1,\ldots,n-1$. In particular $\mathfrak{g}$ is of type I, namely the eigenvalues of its adjoint maps $\operatorname{ad}_X$, $X \in \mathfrak{g}$, are all purely imaginary. An example of SKT structure $(J,g)$ satisfying $J \mathfrak{n}^1 = \mathfrak{n}^\perp$ on such Lie algebra is given by
\begin{gather*}
Jf_1=f_{2n},\quad Jf_{2l}=f_{2l+1},\,l=1,\ldots,n-1,\\
g=\sum_{l=1}^{2n} (f^l)^2.
\end{gather*}

In real dimension six, $\mathfrak{g}$ is isomorphic to one among:
\begin{itemize}[leftmargin=0.6cm]
	\item[]$\mathfrak{h}_3 \oplus \mathfrak{s}_{3.3}^0=(f^{23},0,0,f^{56},-f^{46},0)$, \smallskip
	\item[]$\mathfrak{s}_{4.7}\oplus \R^2=(f^{23},f^{36},-f^{26},0,0,0)$,\smallskip
	\item[]$\mathfrak{s}_{6.52}^{0,b}=(f^{23},f^{36},-f^{26},bf^{56},-bf^{46},0)$, $b>0$.
\end{itemize}
\end{theorem}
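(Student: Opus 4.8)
The plan is to reduce, via the results of Section~\ref{sec_Hermitian} together with Proposition~\ref{perp_SKTprop}, to a simultaneous normal form for the pair $(A,\eta)$, to read off the structure equations from it, and finally to match the normal forms against the $6$-dimensional tables.

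\emph{Reduction to a model.} Given $(\mathfrak{g},J,g)$ strongly unimodular with $J\mathfrak{n}^1=\nperp$, I would choose an adapted unitary basis as in Section~\ref{Hermitian_perp_sec} and identify $(\mathfrak{g},J,g)$ with a model $(\mu(a,\beta,A,\eta),J,g)$. Strong unimodularity forces $a=\operatorname{tr}A=0$ by Remark~\ref{strunim_perp}, integrability forces $\beta=0$, $[A,J\rvert_{\mathfrak{k}_1}]=0$ and $\eta\in\Lambda^{1,1}\mathfrak{k}_1^*$ by Proposition~\ref{complex}, and the Jacobi identity reads $A^*\eta=0$ by Lemma~\ref{Jacobi_perp}. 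By Proposition~\ref{perp_SKTprop}, the SKT condition is then equivalent to $A$ being skew-symmetric, i.e. $A\in\mathfrak{u}(\mathfrak{k}_1,J\rvert_{\mathfrak{k}_1},g)$, together with $\eta^2=0$. Since $\mathfrak{n}^1$ is one-dimensional, $\eta\neq 0$, so $\eta^2=0$ means $\eta$ has rank two.

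\emph{Normal form.} As $\eta$ is a nonzero $(1,1)$-form of rank two, a unitary change of basis of $\mathfrak{k}_1$ brings it to $\eta=c\,e^2\wedge e^3$ with $c\neq 0$ and $Je_2=e_3$; set $V=\R\left<e_2,e_3\right>$ and $W=V^{\perp_g}$, both $J$-invariant. The key point is then to exploit $A^*\eta=0$: for $X\in V$ and $Y\in W$ it gives $\eta(X,(AY)_V)=0$, and since $\eta$ is nondegenerate on $V$ this forces $(AY)_V=0$, i.e. $AW\subseteq W$; skew-symmetry of $A$ then yields $AV\subseteq V$. Thus $A\in\mathfrak{u}(\mathfrak{k}_1)$ respects the orthogonal decomposition $V\oplus W$, so $A\rvert_V=-b_1 J\rvert_V$, and a further unitary change of basis on $W$ (which does not touch $\eta$) block-diagonalizes $A\rvert_W$ into $\mathfrak{u}(1)$-blocks $A\rvert_{\left<e_{2l},e_{2l+1}\right>}=-b_l J$, $l=2,\dots,n-1$. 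Reading off $[e_{2n},X]=AX$, $[Y,Z]=-\eta(Y,Z)e_1$ and rescaling $e_1$ to absorb $c$ produces exactly the structure equations \eqref{streq_SKT}. That the $b_l$ cannot all vanish is forced by $\mathfrak{n}$ having codimension one: if $A=0$ then $\mathfrak{g}$ is nilpotent, contradicting that $\mathfrak{n}$ is its nilradical. Conversely, every algebra \eqref{streq_SKT} has $\mathfrak{n}^1=\R\left<f_1\right>$, $\mathfrak{n}\cong\mathfrak{h}_3\oplus\R^{2(n-2)}$, is strongly unimodular ($a=\operatorname{tr}A=0$), and carries the displayed Hermitian SKT structure by Propositions~\ref{complex} and~\ref{perp_SKTprop}.

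\emph{Type I and dimension six.} Since $\mathfrak{n}$ is $2$-step nilpotent with $\mathfrak{n}^1=\R\left<e_1\right>$ central, for $X=Y+t\,e_{2n}$ with $Y\in\mathfrak{n}$ the map $\operatorname{ad}_X$ vanishes on $\mathfrak{g}/\mathfrak{n}$ and on $\mathfrak{n}^1$, while on $\mathfrak{n}/\mathfrak{n}^1$ the inner part $\operatorname{ad}_Y$ vanishes and $\operatorname{ad}_X$ induces $tA$; hence $\operatorname{Spec}(\operatorname{ad}_X)=\{0\}\cup\operatorname{Spec}(tA)$, purely imaginary because $A$ is skew-symmetric, which is the type I property. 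In dimension six $n=3$ and the parameters $b_1,b_2$ are not both zero. Rescaling $f_6$ fixes the global scale, swapping $f_2\leftrightarrow f_3$ (with $f_1\to-f_1$) flips the sign of $b_1$, and swapping $f_4\leftrightarrow f_5$ flips the sign of $b_2$; here the first plane is distinguished from the second because only it is coupled to $f_1$ through $\eta$. This leaves three cases: $b_2=0$ gives $\mathfrak{s}_{4.7}\oplus\R^2$, $b_1=0$ gives $\mathfrak{h}_3\oplus\mathfrak{s}_{3.3}^0$, and $b_1,b_2\neq 0$ gives, after normalizing $b_1=1$ and $b_2=b>0$, the family $\mathfrak{s}_{6.52}^{0,b}$; comparison with Tables~\ref{table-h3} and~\ref{table-h5} confirms these identifications. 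The main obstacle is precisely the simultaneous normalization of $A$ and $\eta$: the observation that $A^*\eta=0$ combined with nondegeneracy of the rank-two form $\eta$ on its support forces $A$ to preserve both $V$ and $W$ is what permits the block-diagonalization yielding \eqref{streq_SKT}; everything else is an application of Proposition~\ref{perp_SKTprop} or routine bookkeeping with the tables.
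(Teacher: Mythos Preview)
Your proof is correct and follows essentially the same route as the paper: reduce to the model via Proposition~\ref{perp_SKTprop}, obtain a simultaneous normal form for $(A,\eta)$, and read off \eqref{streq_SKT}. The only cosmetic difference is in the normalization step: the paper introduces the skew-symmetric endomorphism $H_\eta$ with $\eta=g(H_\eta\cdot,\cdot)$, observes that $A^*\eta=0$ with $A$ skew is exactly $[A,H_\eta]=0$, and invokes simultaneous diagonalization of commuting elements of $\mathfrak{u}(\mathfrak{k}_1)$, whereas you first put $\eta$ in normal form and then use $A^*\eta=0$ directly to see that $A$ preserves $V=\operatorname{supp}\eta$ and $V^\perp$; these are the same linear-algebra fact viewed from two sides.
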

\begin{proof}
Let $(J, g)$ be an SKT structure on $\mathfrak{g}$ and   $\{e_l \}$ be an adapted unitary  basis. This provides an equivalence between this Hermitian Lie algebra and a model Hermitian Lie algebra $(\mu(0,0,A,\eta),J,g)$, with $A \in \mathfrak{u}(\mathfrak{k}_1,J\rvert_{\mathfrak{k}_1},g)$, $\eta^2=0$. Let us denote by $H_{\eta}$ the skew-symmetric endomorphism of $(\mathfrak{k}_1,g)$ defined by  $\eta=g(H_{\eta} \cdot,\cdot)$. Then, \eqref{lie_perp} and the third condition in \eqref{integrable_perp} may be rewritten as
\[
A^t H_\eta + H_{\eta} A = aH_{\eta},\quad [H_\eta,J\rvert_{\mathfrak{k}_1}]=0.
\]
Using that $a=0$ and $A+A^t=0$, we obtain in particular that the skew-symmetric endomorphisms $A$ and $H_{\mathfrak{\eta}}$ commute, so that they may be diagonalized simultaneously. Exploiting $\eta^2=0$, it follows that there exists a unitary  basis $\{e_2^\prime,\ldots,e_{2n-1}^\prime\}$ of $\mathfrak{k}_1$, satisfying $Je_{2l}^\prime=e_{2l+1}^\prime$, $l=1,\ldots,n-1$, such that in the new basis we have
\[
A=\text{diag}(C_{b_1},\ldots,C_{b_{n-1}}),\quad H_{\eta}=\text{diag}(C_{-c},0,\ldots,0),
\]
for some (not all vanishing) $b_l \in \R$, $l=1,\ldots,n-1$, $c \in \R-\{0\}$.
where $C_{b}$, $b \in \R$, denotes the $2 \times 2$ matrix
\begin{equation} \label{Cb}
C_{b}=\begin{pmatrix} 0 & b \\ -b & 0 \end{pmatrix}.
\end{equation}
Up to rescaling $e_1$, the first part of the claim follows. The second part follows easily by looking at the classification of $6$-dimensional almost nilpotent Lie algebras with nilradical isomorphic to $\mathfrak{h}_3 \oplus \R^2$ (\cite{SW}, see the Appendix).
\end{proof}

\begin{remark} \label{datum_c}
By the previous proof, up to changing the adapted unitary basis $\{e_1,\ldots,e_{2n}\}$, we can therefore assume that in the SKT case the algebraic datum $\eta \in \Lambda^2 \mathfrak{k}_1^*$ is of the form $\eta= c e^{23}$. We can therefore replace $\eta$ with the simpler datum $c \in \R-\{0\}$ and denote the whole algebraic data simply with the pair $(A,c)$.
\end{remark}

\begin{remark} \label{vanishing_b}
	We note that a solvable Lie  algebra with structure equations \eqref{streq_SKT} is $2$-step solvable if and only if $b_1=0$, while it is decomposable if and only if any of the $b_l$'s vanish. In particular, if $\mathfrak{g}$ is $2$-step solvable, then it is decomposable. The vanishing of $b_1$ singles out the Lie algebra $\R\left<f_1,f_2,f_3\right>$, isomorphic to $\mathfrak{h}_3$, while each vanishing $b_l$, $l \neq 1$, determines a $\R^2$-block in the decomposition of $\mathfrak{g}$. 
\end{remark}

\begin{remark} The decomposable Lie algebra
$\mathfrak{h}_3 \oplus \mathfrak{s}_{3.3}^0$ is a particular case   of \cite[Theorem 7.5]{FS}.
The existence of SKT structures on the Lie algebra $\mathfrak{s}_{4.7} \oplus \R^2$ follows from \cite{MS}, since the four-dimensional $\mathfrak{s}_{4.7}$ admits SKT structures.
Instead, an SKT structure on $\mathfrak{s}_{6.52}^{0,b}$ was determined in \cite{FOU}.
\end{remark}

\begin{remark}
It is easy to see that the Lie group with Lie algebra $\mathfrak{h}_3 \oplus \mathfrak{s}_{3.3}^0$ admits compact quotients by lattices, for example the ones diffeomorphic to the product between the Iwasawa manifold and the solvmanifold $(2\pi\mathbb{Z} \times \mathbb{Z}^2) \backslash E(2)$. The same is true for the Lie group with Lie algebra $\mathfrak{s}_{4.7} \oplus \R^2$, which admits compact quotients diffeomorphic to the product of a secondary Kodaira surface and a $2$-torus (see \cite{Has}). For some values of the parameter $b$ (for example $b=1$, see \cite{FOU}), also the Lie groups with Lie algebra $\mathfrak{s}_{6.52}^{0,b}$ admit compact quotients by lattices.
\end{remark}

\subsection{Case (2)}
Like in the previous case, we start with a lemma regarding an expression for $d \omega$.
\begin{lemma} \label{sub_domega}
Let  $\omega$  be the fundamental form of the almost-Hermitian Lie algebra $(\mu(\Xi),J,g)$.  Then
\begin{equation} \label{sub_domega_eq}
	\begin{aligned}
d\omega=&-(a_1+a_2)e^{1} \wedge e^{2} \wedge e^{2n} + \gamma \wedge e^{1} \wedge e^{2n} + e^2 \wedge \xi - \nu \wedge e^{2} \wedge e^{2n-1} \\ &- \alpha \wedge e^{2} \wedge e^{2n}
        + v_1 e^{2}\wedge e^{2n-1} \wedge e^{2n} - v_2 e^{1} \wedge e^{2n-1} \wedge e^{2n} \\
         &+ (Jv)^\flat \wedge e^{2n-1} \wedge e^{2n} - A^*\omega \wedge e^{2n}.
    \end{aligned}
\end{equation}
\end{lemma}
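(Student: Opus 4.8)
The goal is to compute $d\omega$ directly from the Koszul-type formula
\[
d\omega(X,Y,Z)=-\omega([X,Y],Z)-\omega([Z,X],Y)-\omega([Y,Z],X),
\]
which is equivalent to the expression \eqref{domega} used in the proof of Lemma \ref{domg} (since $\omega(U,V)=g(JU,V)=-g(U,JV)$). The plan is to exploit the structure of the adapted basis: with $Je_{2j-1}=e_{2j}$ the fundamental form is $\omega=\sum_{j=1}^{n} e^{2j-1}\wedge e^{2j}$, and the only nonzero brackets are $[e_{2n},X]=B(X)$ for $X\in\mathfrak{n}$ and $[Y,Z]=-\eta(Y,Z)e_1$ for $Y,Z\in\mathfrak{k}_2$, with $B$ and $\eta$ given by \eqref{B_sub} and \eqref{eta_sub}. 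This immediately shows that $d\omega(X,Y,Z)$ can be nonzero only when at least one of the three arguments is $e_{2n}$ (otherwise all brackets land in $\mathfrak{n}^1=\R\langle e_1\rangle$, and one checks these contributions are captured by the $e^2\wedge\xi$ and $\nu$-terms coming from $\eta$) — so the computation reduces to evaluating $d\omega(e_{2n},U,V)$ for $U,V$ ranging over the remaining basis vectors.

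First I would organize the basis vectors according to the decomposition $\mathfrak{n}=\R\langle e_1\rangle\oplus\R\langle e_2\rangle\oplus\mathfrak{k}_3\oplus\R\langle e_{2n-1}\rangle$ together with $\R\langle e_{2n}\rangle$, and record the action of $B$ on each block from the matrix \eqref{B_sub}: $Be_2=p_1e_1+a_2e_2+w+p_2e_{2n-1}$, $BX=\alpha(X)e_1+\gamma(X)e_2+AX+\beta(X)e_{2n-1}$ for $X\in\mathfrak{k}_3$, and $Be_{2n-1}=v_1e_1+v_2e_2+v+ae_{2n-1}$. Then I would systematically compute the three families of triples: (i) $d\omega(e_{2n},e_2,e_{2n-1})$, (ii) $d\omega(e_{2n},e_2,X)$ and $d\omega(e_{2n},X,e_{2n-1})$ for $X\in\mathfrak{k}_3$, and (iii) $d\omega(e_{2n},X,Y)$ for $X,Y\in\mathfrak{k}_3$. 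For the purely-$\mathfrak{n}$ triples I would separately compute $d\omega(e_2,e_{2n-1},W)$, $d\omega(e_2,X,Y)$, etc., where one argument is $e_1$ forcing a $\omega(e_1,\cdot)=e^2$-pairing. The terms involving $A$ restricted to $\mathfrak{k}_3$ assemble, exactly as in Lemma \ref{domg}, into the contribution $-A^*\omega\wedge e^{2n}$, using that $A^*\omega(X,Y)=\omega(AX,Y)+\omega(X,AY)$.

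The bookkeeping step is the main obstacle: one must correctly track how each scalar entry of $B$ and each component of $\eta$ produces a specific wedge monomial, paying attention to signs from antisymmetrization and from the identifications $\omega(e_1,e_2)=1$, $\omega(e_{2n-1},e_{2n})=1$. In particular the appearance of $(Jv)^\flat$ comes from the term $\iota_v$ acting through the $\omega(e_{2n-1},\cdot)$ pairing on $\mathfrak{k}_3$, using $J\rvert_{\mathfrak{k}_3}$, and the coefficient $-(a_1+a_2)$ of $e^1\wedge e^2\wedge e^{2n}$ arises from combining the $a_1$-contribution of $Be_1=a_1e_1$ (acting on $\omega(e_1,e_2)$) with the $a_2$-entry in $Be_2$. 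Once every triple is evaluated and reassembled as a sum of $3$-forms, collecting terms yields precisely \eqref{sub_domega_eq}; I would present the nonvanishing triple evaluations in a short displayed list and then state that summing them gives the claimed formula, leaving the purely mechanical collection of monomials implicit.
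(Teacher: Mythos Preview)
Your proposal is correct and follows essentially the same approach as the paper: both evaluate $d\omega$ on triples of basis vectors via the formula \eqref{domega}, using the explicit form of $B$ and $\eta$ from \eqref{B_sub}--\eqref{eta_sub}, and then reassemble the results into the $3$-form \eqref{sub_domega_eq}. The paper's proof is simply a terse displayed list of the nine nonvanishing evaluations $d\omega(e_1,e_2,e_{2n})$, $d\omega(e_1,X,e_{2n})$, $d\omega(e_2,X,Y)$, $d\omega(e_2,X,e_{2n-1})$, $d\omega(e_1,e_{2n-1},e_{2n})$, $d\omega(e_2,e_{2n-1},e_{2n})$, $d\omega(e_2,X,e_{2n})$, $d\omega(X,e_{2n-1},e_{2n})$, $d\omega(X,Y,e_{2n})$ for $X,Y\in\mathfrak{k}_3$, whereas your outline is a bit more discursive about the organization; in particular, be sure your enumerated families (i)--(iii) explicitly include the triples with $e_1$ (they yield the $-(a_1+a_2)$, $\gamma$, and $-v_2$ terms), since your list as written only covers the $e_2$, $\mathfrak{k}_3$, $e_{2n-1}$ pairs against $e_{2n}$.
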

\begin{proof}
 Using \eqref{B_sub}, \eqref{eta_sub} and \eqref{domega}, we have
\begingroup
\allowdisplaybreaks
\begin{align*}
d\omega(e_1,e_2,e_{2n})&=g([e_{2n},e_1],-e_1)+g([e_2,e_{2n}],e_2)=-a_1-a_2, \\
d\omega(e_1,X,e_{2n})&=-g([e_{2n},X],e_2)=-\gamma(X), \\
d\omega(e_2,X,Y)&=g([X,Y],-e_1)=\xi(X,Y), \\
d\omega(e_2,X,e_{2n-1})&=g([X,e_{2n-1}],-e_1)=\nu(X),\\
d\omega(e_1,e_{2n-1},e_{2n})&=-g([e_{2n},e_{2n-1}],e_2)=-v_2, \\
d\omega(e_2,e_{2n-1},e_{2n})&=g([e_{2n},e_{2n-1}],e_1)=v_1, \\
d\omega(e_2,X,e_{2n})&=-g([e_{2n},X],-e_1)=\alpha(X), \\
d\omega(X,e_{2n-1},e_{2n})&=g([e_{2n-1},e_{2n}],JX)=g(Jv,X), \\
d\omega(X,Y,e_{2n})&=g([e_{2n},X],JY)-g([e_{2n},Y],JX)=-\omega(AX,Y)-\omega(X,AY), 
\end{align*}
\endgroup
for every $X,Y \in \mathfrak{k}_3$, from which the claim follows.
\end{proof}

\begin{proposition} \label{sub_Kahlerprop}
	$(\mu(\Xi),J,g)$ is K\"ahler if and only if, in addition to \eqref{sub_integrable}, the following conditions are satisfied:
	\begin{equation}
		\label{sub_Kahler}
		\begin{gathered}
			a_1=\tfrac{a}{2}, \qquad a_2=-\tfrac{a}{2}, \qquad v_1=v_2=0, \qquad v=0, \\
			\alpha=\gamma=0, \qquad \xi=0, \qquad A \in \mathfrak{u}(\mathfrak{k}_3,J\rvert_{\mathfrak{k}_3},g),
		\end{gathered}
	\end{equation}
	with $a \neq 0$.
	Together, conditions \eqref{sub_integrable} and \eqref{sub_Kahler} also automatically imply \eqref{Lie1}--\eqref{Lie4}. The remaining independent data are $a\in \R-\{0\}$ and $A \in \mathfrak{u}(\mathfrak{k}_3,J\rvert_{\mathfrak{k}_3},g)$.
\end{proposition}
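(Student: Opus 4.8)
The plan is to exploit that the K\"ahler condition amounts to integrability of $J$, already encoded in \eqref{sub_integrable}, together with the closedness $d\omega = 0$, for which Lemma \ref{sub_domega} gives an explicit formula; the relations that are not read off directly from $d\omega=0$ will then follow from the Lie-algebra constraints \eqref{Lie1}--\eqref{Lie4}, which hold because $\mu(\Xi)$ is a genuine Lie algebra.

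First I would analyze the nine summands in \eqref{sub_domega_eq}. Since $\mathfrak{k}_3$ is spanned by $e_3,\ldots,e_{2n-2}$, whose indices are disjoint from $\{1,2,2n-1,2n\}$, these summands lie in pairwise distinct components of $\Lambda^3\mathfrak{g}^*$: for instance $\gamma\wedge e^1\wedge e^{2n}$, $-\nu\wedge e^2\wedge e^{2n-1}$ and $-\alpha\wedge e^2\wedge e^{2n}$ occupy the three distinct spaces $\mathfrak{k}_3^*\wedge e^1\wedge e^{2n}$, $\mathfrak{k}_3^*\wedge e^2\wedge e^{2n-1}$ and $\mathfrak{k}_3^*\wedge e^2\wedge e^{2n}$, the forms $e^2\wedge\xi$ and $-A^*\omega\wedge e^{2n}$ lie in $e^2\wedge\Lambda^2\mathfrak{k}_3^*$ and $\Lambda^2\mathfrak{k}_3^*\wedge e^{2n}$, $(Jv)^\flat\wedge e^{2n-1}\wedge e^{2n}$ lies in $\mathfrak{k}_3^*\wedge e^{2n-1}\wedge e^{2n}$, and the three remaining monomials span the one-dimensional spaces generated by $e^1\wedge e^2\wedge e^{2n}$, $e^2\wedge e^{2n-1}\wedge e^{2n}$ and $e^1\wedge e^{2n-1}\wedge e^{2n}$. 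Hence $d\omega=0$ is equivalent to the simultaneous vanishing of every coefficient, i.e.
\[
a_1+a_2=0,\quad \gamma=\alpha=\nu=0,\quad \xi=0,\quad v_1=v_2=0,\quad v=0,\quad A^*\omega=0
\]
(when $n=2$ the space $\mathfrak{k}_3$ is trivial and only the three scalar conditions remain).

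Next I would bring in the Lie-algebra relations. Using $\xi=\nu=0$ from above and $\delta=0$, $w=0$, $\lambda=a_2-a_1$ from \eqref{sub_integrable}, equation \eqref{Lie2} reduces to $\lambda(a+\lambda)=0$. The decisive observation is that, after the vanishing just established, \eqref{eta_sub} gives $\eta=\lambda\,e^2\wedge e^{2n-1}$, so the standing hypothesis $\dim\mathfrak{n}^1=1$, equivalently $\eta\neq0$, forces $\lambda\neq0$ and therefore $\lambda=-a$. Combined with $a_1+a_2=0$ this yields $a_1=\tfrac{a}{2}$, $a_2=-\tfrac{a}{2}$ and $a=-\lambda\neq0$. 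Finally, $A^*\omega=0$ says $\omega(AX,Y)+\omega(X,AY)=0$ for $X,Y\in\mathfrak{k}_3$; together with $[A,J\rvert_{\mathfrak{k}_3}]=0$ this gives $A^t=-A$ and $AJ=JA$, that is $A\in\mathfrak{u}(\mathfrak{k}_3,J\rvert_{\mathfrak{k}_3},g)$. These are exactly the conditions \eqref{sub_Kahler}.

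For the converse I would substitute \eqref{sub_integrable} and \eqref{sub_Kahler} back into \eqref{sub_domega_eq} and verify that every coefficient vanishes, so that $d\omega=0$ and the structure is K\"ahler; a direct substitution into \eqref{Lie1}--\eqref{Lie4} shows that each identity reduces to $0=0$ (in particular \eqref{Lie2} becomes $\lambda(a+\lambda)=\lambda\cdot0=0$ since $a+\lambda=0$), so that no extra constraint appears and the remaining free data are precisely $a\in\R-\{0\}$ and $A\in\mathfrak{u}(\mathfrak{k}_3,J\rvert_{\mathfrak{k}_3},g)$. I expect the only delicate step to be the use of $\eta\neq0$ to discard the branch $\lambda=0$ of \eqref{Lie2}: without it one would spuriously retain the degenerate case $a_1=a_2=a=0$, in which $\mathfrak{n}$ is abelian and $\mathfrak{n}^1=\{0\}$, contradicting the hypothesis $\dim\mathfrak{n}^1=1$.
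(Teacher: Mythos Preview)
Your proof is correct and follows essentially the same route as the paper: read off all conditions except $a_1=\tfrac{a}{2}$, $a_2=-\tfrac{a}{2}$ directly from the vanishing of the summands in Lemma~\ref{sub_domega}, then use \eqref{Lie2} together with $\eta\neq 0$ to pin down $a_1,a_2,a$. Your version is more explicit in justifying the independence of the nine summands of $d\omega$ and in checking the converse and \eqref{Lie1}--\eqref{Lie4}, but the argument is the same.
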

\begin{proof}
	All conditions apart from $a_1=\frac{a}{2}$ and $a_2=-\frac{a}{2}$ follow directly from Lemma \ref{sub_domega}. Imposing these conditions, together with $a_2=-a_1$, equations \eqref{Lie1}, \eqref{Lie3} and \eqref{Lie4} are satisfied, while \eqref{Lie2} reads
	\[
	0=2a_2(a+2a_2).
	\]
	Observing that now we have
	$\eta=2a_2 \, e^{2} \wedge e^{2n-1}$,
	which we want not to vanish, we have to impose $a_2=-\frac{a}{2}$, with $a \neq 0$, as well.
\end{proof}
\begin{remark}
	Let $\mathfrak{g}$ be a $2n$-dimensional almost nilpotent Lie algebra with nilradical $\mathfrak{n}$ satisfying $\dim \mathfrak{n}^1=1$.  
	Using the previous characterization, one can easily obtain that $\mathfrak{g}$ admits K\"ahler structures $(J,g)$ satisfying $J \mathfrak{n}^1\subset \mathfrak{n}$ if and only if it is isomorphic to the Lie algebra $\R \left< f_1,\ldots,f_{2n}\right>$ with structure equations
	\begin{equation}
		\begin{gathered}
	df^1=\frac{1}{2}f^{1} \wedge f^{2n} + f^{2} \wedge f^{3}, \qquad df^2=-\frac{1}{2}f^{2} \wedge f^{2n}, \qquad df^3=f^{3} \wedge f^{2n},\\ df^{2l+2}=b_lf^{2l+3} \wedge f^{2n},\qquad df^{2l+3}=-b_lf^{2l+2} \wedge f^{2n},\quad 1 \leq l \leq n-2, \qquad df^{2n}=0,
	    \end{gathered}
	\end{equation}
    for some $b_l \in \R$, $l=1,\ldots,n-2$.  
	In particular, $\mathfrak{g}$ is $3$-step solvable, non-unimodular and $\mathfrak{n} \cong \mathfrak{h}_3 \oplus \R^{2(n-2)}$.

An explicit example of such K\"ahler structure on these Lie algebras is provided by
\begin{gather*}
 Jf_1=-f_2,\quad Jf_3=f_{2n},\quad Jf_{2l+2}=f_{2l+3},\quad 1 \leq l \leq n-2,\\
 g=\sum_{l=1}^{2n} (f^l)^2.
\end{gather*}
	In  real dimension four  the  structure  equations reduce to  $( f^{23}+\frac{1}{2}f^{14}, -\frac{1}{2}f^{24},f^{34}, 0)$  and  in real  dimension six, $\mathfrak{g}$ is isomorphic to one among
	\begin{itemize}[leftmargin=0.6cm]
		\item[]$\frs{4.8}^{-\frac{1}{2}} \oplus \R^2=(f^{23}+\frac{1}{2}f^{16},f^{26},-\frac{1}{2}f^{36},0,0,0)$, \smallskip
		\item[]$\frs{6.78}^{a,-\frac{a}{2},0}=(f^{23}+\frac{a}{2}f^{16},af^{26},-\frac{a}{2}f^{36},f^{56},-f^{46},0)$, $a \neq 0$.
	\end{itemize}
\end{remark}

\begin{lemma}
Let us consider the $6$-dimensional strongly unimodular Hermitian almost nilpotent Lie algebra $(\mu(\Xi),J,g)$, with $\Xi$ in \eqref{Xinew}. One has
	\begin{equation} \label{dJdomg}
		\begin{aligned}
			dd^c\omega=&a_2(a+a_2) e^{1256} + c(a+a_2) e^{1346} \\
			&+e^1 \wedge \left( (A+a\,{\rm Id}_{\mathfrak{k}_3})^*(J\alpha) + cJv^\flat + a_2(J\alpha+\nu)) \right) \wedge e^{56} \\
			&+e^2 \wedge (A+(a+2a_2){\rm Id}_{\mathfrak{k}_3})^*(J\nu-\alpha) \wedge e^{56} \\
			&+\left(\lVert \alpha \rVert^2 + \lVert \nu \rVert^2 + \lVert J\alpha + \nu \rVert^2 + a_2(a+a_2) -2cv_1 \right) e^{3456}
		\end{aligned}
	\end{equation}
\end{lemma}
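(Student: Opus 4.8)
The plan is to compute $dd^c\omega = d(J\,d\omega)$ directly, starting from the expression for $d\omega$ furnished by Lemma \ref{sub_domega} and specializing it to the present situation. Setting $n=3$ (so $2n=6$, $2n-1=5$) and imposing the strong unimodularity relation $a_1 = a + a_2 + \operatorname{tr}A = 0$ together with the integrability conditions \eqref{sub_integrable} (which give $\gamma = J\alpha + \nu$, $\delta = 0$, $w = 0$, $\lambda = a_2$ and $[A,J\rvert_{\mathfrak{k}_3}]=0$) and $\xi = c\,e^{34}$ as in \eqref{B_sub1}, the nine terms of Lemma \ref{sub_domega} collapse to a concrete three-form whose coefficients are the data \eqref{Xinew}. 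Here I would use that on the two-dimensional space $\mathfrak{k}_3$ one has $A^*\omega = (\operatorname{tr}A)\,e^{34} = -(a+a_2)\,e^{34}$, so that the last summand $-A^*\omega\wedge e^6$ becomes $(a+a_2)\,e^{346}$.

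Next I would apply the complex structure $J$ to this three-form, extending $J$ to forms as an algebra homomorphism exactly as in the Case (1) computation, so that $d^c\omega = J\,d\omega$; concretely this replaces $e^1,e^3,e^5$ by $e^2,e^4,e^6$ and $e^2,e^4,e^6$ by $-e^1,-e^3,-e^5$ in each decomposable summand. I would then differentiate the resulting three-form by the Leibniz rule, using the structure equations read off from $B$ and $\eta$ in \eqref{B_sub1}: in the adapted basis these are $de^1 = c\,e^{34}+\nu\wedge e^5 + a_2\,e^{25} + \alpha\wedge e^6 + v_1\,e^{56}$, $de^2 = a_2\,e^{26} + (J\alpha+\nu)\wedge e^6 + v_2\,e^{56}$, the two equations $de^3,de^4$ encoding the restriction $A\rvert_{\mathfrak{k}_3}$ (the block with diagonal $-\tfrac12(a+a_2)$ and off-diagonal $\pm q$) together with the $\mathfrak{k}_3$-part of $v$, $de^5 = a\,e^{56}$ and $de^6 = 0$. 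Each of these follows from $d\beta(X,Y) = -\beta([X,Y])$.

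Finally I would collect the resulting four-form according to the five types $e^{1256}$, $e^{1346}$, $e^1\wedge(\cdots)\wedge e^{56}$, $e^2\wedge(\cdots)\wedge e^{56}$ and $e^{3456}$ appearing in \eqref{dJdomg}, using the Lie-algebra relations \eqref{Lie1new}--\eqref{Lie3new} and the identity $A^*\omega = -(a+a_2)e^{34}$ where helpful. The scalar $e^{1256}$ and $e^{1346}$ coefficients and the two one-form-valued coefficients fall out after grouping the $A$-contributions from $de^3,de^4$ with the diagonal shifts coming from $de^5$, $de^1$ and $de^2$, producing the operators $(A + a\,\mathrm{Id}_{\mathfrak{k}_3})^*$ and $(A + (a+2a_2)\,\mathrm{Id}_{\mathfrak{k}_3})^*$. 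The delicate point, and the step where errors are most likely, is the $e^{3456}$ coefficient: there the various wedge products of one-forms on the two-dimensional $\mathfrak{k}_3$ must be recognized to collapse into squared norms via the identity $\zeta\wedge J\zeta = \lVert\zeta\rVert^2\,e^{34}$ for a one-form $\zeta$ on $\mathfrak{k}_3$, which produces $\lVert\alpha\rVert^2 + \lVert\nu\rVert^2 + \lVert J\alpha+\nu\rVert^2$, while the remaining cross terms yield the scalars $a_2(a+a_2)$ and $-2cv_1$. Careful bookkeeping of signs under the $J$-action on three-forms, and of all contributions feeding into this single scalar, is the main obstacle.
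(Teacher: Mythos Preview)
Your plan is correct and would yield \eqref{dJdomg}; the specializations of the data, the structure equations you list, the identity $A^*\omega=-(a+a_2)e^{34}$, and the norm identity $\zeta\wedge J\zeta=\lVert\zeta\rVert^2 e^{34}$ on $\mathfrak{k}_3$ are all accurate, and the bookkeeping you flag for the $e^{3456}$ coefficient is indeed the only delicate point.

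The paper, however, does not proceed by writing $d^c\omega$ as an explicit $3$-form and differentiating via the structure equations. Instead it evaluates $dd^c\omega$ directly on each relevant $4$-tuple of basis vectors using the intrinsic formula $d\sigma(W,X,Y,Z)=-\sigma([W,X],Y,Z)+\cdots$ for $\sigma=d^c\omega$, together with $d^c\omega(X,Y,Z)=-d\omega(JX,JY,JZ)$, and it first exploits that $e_1$ is central and $\mathfrak{n}^1=\mathbb{R}\langle e_1\rangle$ to kill all components $dd^c\omega(e_1,X,Y,Z)$ with $X,Y,Z\in\mathfrak{n}$. Your route is the more algorithmic ``forms'' computation, uniform but requiring you to track every term of the intermediate $3$-form $Jd\omega$; the paper's route is a targeted ``vectors'' computation, which never materializes $d^c\omega$ as a form and uses the centrality of $e_1$ to prune the list of coefficients to compute. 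Both arrive at the same answer with comparable effort; the paper's version makes the origin of each of the five coefficients in \eqref{dJdomg} slightly more transparent, while yours is easier to automate and to check line by line.
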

\begin{proof}
	The claim follows from an explicit computation. Using the fact that $e_1$ is central and $\mathfrak{n}^1 =\R\left < e_1 \right > $, one first notices that, for every $3$-form $\sigma$, we must have
\[
d\sigma(e_1,X,Y,Z)=0,\quad X,Y,Z \in \mathfrak{n},
\]
while, letting $X \in \mathfrak{k}_3$ be arbitrary,
\begingroup
\allowdisplaybreaks
	\begin{align*}
		dd^c\omega(e_1,e_2,X,e_6)=&-a_2d\omega(e_1,e_2,JX) - d\omega(e_1,e_2,AJX)=0,\\[1ex]
		dd^c\omega(e_1,e_2,e_5,e_6)=&-a_2 d \omega(e_1,e_2,e_6) - d\omega(e_1,e_2,Jv) -a d\omega (e_1,e_2,e_6) \\
		=& a_2(a+a_2), \\[1ex]
		dd^c\omega(e_1,e_3,e_4,e_6)=&-d\omega(e_2,e_3,Ae_4) + d\omega(e_2,e_4,Ae_3) \\
		=&(a+a_2)d\omega(e_2,e_3,e_4)\\
		=&c(a+a_2), \\[1ex]
		dd^c\omega(e_1,X,e_5,e_6)=&-d\omega(e_1,(J\alpha+\nu)(X)e_2,e_6)-d\omega(e_2,AJX,e_6)\\
		&-d\omega(-v_2e_1+Jv+ae_6,e_2,JX) \\
		=&a_2(J\alpha+\nu)(X)-\alpha(AJX)+c\omega(Jv,JX)-a \alpha(JX) \\
		=&a_2(J\alpha+\nu)(X)-(A+a\,{\rm Id}_{\mathfrak{k}_3})^*\alpha(JX)+cg(Jv,X),\\[1ex]
		dd^c\omega(e_2,e_3,e_4,e_6)=&cd\omega(e_1,e_2,e_5)-\alpha(e_3)d\omega(e_1,e_2,e_3) - d\omega(e_1,Ae_4,e_3) \\
		&-\alpha(e_4)d\omega(e_1,e_2,e_4) + d\omega(e_2,Ae_3,e_4) =0\\
		dd^c\omega(e_2,X,e_5,e_6)=&-a_2d\omega(e_2,JX,e_5)+a_2d\omega(e_1,JX,e_6)+\nu(X)d\omega(e_1,e_2,e_5) \\
		&+\alpha(X)d\omega(e_1,e_2,e_6) + d\omega(e_1,AJX,e_6) -v_1d\omega(e_1,e_2,JX)\\
		&-d\omega(e_1,Jv,JX) + a d\omega(e_1,JX,e_6) \\
		=&-a_2\nu(JX)-a_2(J\alpha+\nu)(JX)-a_2\alpha(X)-(J\alpha+\nu)(AJX) \\
		&-a(J\alpha+\nu)(JX) \\
		=&(A+(a+2a_2){\rm Id}_{\mathfrak{k}_3})^*(J\nu-\alpha)(X), \\[1ex]
		dd^c\omega(e_3,e_4,e_5,e_6)=&-cd\omega(e_2,e_5,e_6)+\nu(e_3)d\omega(e_2,e_3,e_5) + \alpha(e_3)d\omega(e_2,e_3,e_6) \\
		&-(J\alpha+\nu)(e_3)d\omega(e_1,e_3,e_6)-d\omega(e_3,Ae_4,e_6) + \nu(e_4)d\omega(e_2,e_4,e_5) \\
		&+\alpha(e_4)d\omega(e_2,e_4,e_6) - (J\alpha+\nu)(e_4)d\omega(e_1,e_4,e_6) - d\omega(Ae_3,e_4,e_6) \\
		&-v_1d\omega(e_2,e_3,e_4) +v_2 d\omega(e_1,e_3,e_4) - a d\omega(e_3,e_4,e_6) \\
		=&-cv_1+\nu(e_3)^2 +\alpha(e_3)^2+(J\alpha+\nu)(e_3)^2+\tfrac{1}{2}(a+a_2)^2+\nu(e_4)^2+\alpha(e_4)^2\\
		&+(J\alpha+\nu)(e_4)^2+\tfrac{1}{2}(a+a_2)^2-cv_1-a(a+a_2) \\
		=&\lVert \alpha \rVert^2 + \lVert \nu \rVert^2 + \lVert J\alpha + \nu \rVert^2 + a_2(a+a_2) -2c \, v_1. \qedhere
	\end{align*}
\endgroup
\end{proof}

\begin{proposition} \label{SKT_sub_prop}
	The $6$-dimensional strongly unimodular Hermitian Lie algebra $(\mu(\Xi),J,g)$, with $\Xi$ in \eqref{Xinew} is SKT if and only if either
	\begin{equation} \label{SKT_sub2}
		a_2=-a,\quad c \neq 0, \quad v_1=\tfrac{1}{c} \lVert \alpha \rVert^2,\quad v=-\tfrac{1}{c}\left(A^t + a\,{\rm Id}_{\mathfrak{k}_3}\right)(\alpha^\sharp), \quad \nu=-J\alpha,
	\end{equation}
or
	\begin{equation} \label{SKT_sub1}
		a_2=-a \neq 0,\quad c=0,\quad \alpha=\nu=0,
	\end{equation}
\end{proposition}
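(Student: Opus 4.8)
The plan is to read off the SKT condition directly from the formula \eqref{dJdomg} for $dd^c\omega$, namely to impose the vanishing of each of its independent components and solve the resulting algebraic system, keeping track of the Lie algebra constraints \eqref{Lie1new}--\eqref{Lie3new} (equivalently \eqref{Lie1}--\eqref{Lie4}) and the Hermitian constraints \eqref{sub_integrable} already incorporated into the model $(\mu(\Xi),J,g)$. Since the basis $\{e^{1256},e^{1346},e^{3456}\}$ together with the $e^1\wedge(\cdots)\wedge e^{56}$ and $e^2\wedge(\cdots)\wedge e^{56}$ terms are linearly independent $4$-forms, $dd^c\omega=0$ is equivalent to the simultaneous vanishing of all five coefficients.

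First I would exploit the strong unimodularity. By the remark preceding this proposition, strong unimodularity forces $a_1=a+a_2+\operatorname{tr}A=0$, and under \eqref{sub_integrable} one has $a_1=0$, hence $a+a_2+\operatorname{tr}A=0$; moreover $A\in\mathfrak{u}(\mathfrak{k}_3)$ need not yet hold, so I would instead use \eqref{Lie1new}, which reads $a_2(a+a_2)=0$. This immediately splits the analysis into the two branches $a_2=-a$ and $a_2=0$. The vanishing of the $e^{1256}$-coefficient is exactly $a_2(a+a_2)=0$, automatically satisfied, and the $e^{1346}$-coefficient $c(a+a_2)=0$ then distinguishes whether $c$ may be nonzero.

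In the branch $a_2=-a$: the $e^{1346}$-term vanishes freely, so $c$ is unconstrained by it. I would then set the two vector-valued coefficients (the $e^1\wedge\cdots\wedge e^{56}$ and $e^2\wedge\cdots\wedge e^{56}$ terms) to zero and the scalar $e^{3456}$-coefficient to zero. When $c\neq 0$, the $e^2$-coefficient $(A+(a+2a_2)\mathrm{Id})^*(J\nu-\alpha)=0$ together with \eqref{Lie3new} should force $\nu=-J\alpha$; substituting back into the $e^1$-coefficient and solving for $v$ gives $v=-\tfrac1c(A^t+a\,\mathrm{Id})(\alpha^\sharp)$, and the scalar equation $\lVert\alpha\rVert^2+\lVert\nu\rVert^2+\lVert J\alpha+\nu\rVert^2+a_2(a+a_2)-2cv_1=0$ reduces (using $a_2=-a$, $\nu=-J\alpha$, so $J\alpha+\nu=0$ and $\lVert\nu\rVert^2=\lVert\alpha\rVert^2$) to $v_1=\tfrac1c\lVert\alpha\rVert^2$. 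This recovers \eqref{SKT_sub2}. When instead $c=0$, the surviving equations collapse to $\alpha=\nu=0$ together with $a\neq 0$ (to keep $\eta\neq 0$, since $\eta=2a_2\,e^{2}\wedge e^{2n-1}$ up to the now-vanishing terms), giving \eqref{SKT_sub1}.

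The branch $a_2=0$ is where I expect the main bookkeeping obstacle, and the key point to verify is that it produces no genuinely new SKT structures. Here the $e^{1346}$-coefficient becomes $ca=0$, and one must carefully combine the vanishing of the remaining coefficients with \eqref{Lie3new} and the nondegeneracy requirement $\eta\neq 0$ (equivalently, the datum must define an honest one-dimensional commutator, cf.\ \eqref{etasq}). I would show that the resulting constraints either force $\eta=0$, contradicting that $\eta$ is a nonzero $2$-form, or else collapse back into one of the two displayed families after relabeling. The delicate part is ensuring that the norm-squared terms in the $e^{3456}$-coefficient, being sums of squares, cannot cancel except in the ways already accounted for; this positivity is what ultimately pins down $\alpha$, $\nu$ and $v_1$ rather than leaving a continuum of spurious solutions.
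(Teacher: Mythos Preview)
Your proposal is correct and follows essentially the same route as the paper's proof: both arguments set each of the independent components of $dd^c\omega$ in formula \eqref{dJdomg} equal to zero and split into cases according to $a_2(a+a_2)=0$ and $c(a+a_2)=0$.

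Two small differences are worth noting. First, in the branch $a_2=-a$ the paper obtains $\nu=-J\alpha$ directly from the single equation $(A-a\,\mathrm{Id})^*(J\nu-\alpha)=0$, by observing that the block form in \eqref{B_sub1} with $a_2=-a$ gives $\mathrm{Spec}(A)=\{\pm iq\}$, so $A-a\,\mathrm{Id}$ is invertible (the degenerate case $a=q=0$ is implicitly excluded since the algebra would then be nilpotent). Your appeal to \eqref{Lie3new} at this step is unnecessary and less direct; moreover this conclusion does not require $c\neq 0$, so you get $\nu=-J\alpha$ before splitting on $c$. Second, what you flag as the ``main bookkeeping obstacle'' (the branch $a_2=0$, $c=0$, $a\neq 0$) is in fact dispatched in one line: the $e^{3456}$-coefficient there is exactly $\lVert\alpha\rVert^2+\lVert\nu\rVert^2+\lVert J\alpha+\nu\rVert^2$, whose vanishing forces $\alpha=\nu=0$ and hence $\eta=0$. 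There is no ``collapse back into one of the displayed families''; this branch is simply empty.
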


\begin{proof}
	We need to look at \eqref{dJdomg} and impose $dd^c\omega=0$. We start by looking at
	\[
	dd^c\omega(e_1,e_2,e_5,e_6)=a_2(a+a_2),\quad dd^c\omega(e_1,e_3,e_4,e_6)=c(a+a_2).
	\]
	We then have to discuss two cases separately:
	\begin{itemize}
		\item[(i)] $a_2=-a$,
		\item[(ii)] $a_2=c=0$, $a \neq 0$.
	\end{itemize}
	Let us start with case (i). For an arbitrary $X \in \mathfrak{k}_3$, we have
	\[
	dd^c\omega(e_2,X,e_5,e_6)=(A-a\,{\rm Id}_{\mathfrak{k}_3})^*(J\nu - \alpha)(X).
	\]
	By the fact that $\text{Spec}(A) =\{\pm iq\}$, we then have $\nu=-J\alpha$.
	Instead, one has
	\begin{align*}
		dd^c\omega(e_1,JX,e_5,e_6)&=(A+a\,{\rm Id}_{\mathfrak{k}_3})^*(\alpha)(X) + c v^{\flat}(X), \\
		dd^c\omega(e_3,e_4,e_5,e_6)&=2\left(\lVert \alpha \rVert^2 - c v_1\right).
	\end{align*}
	Now, if $c=0$, we only need to have $\alpha=0$ in order for $dd^c\omega$ to vanish, and so we get \eqref{SKT_sub1}. If $c \neq 0$, instead, one obtains the conditions regarding $v_1$ and $v$ in \eqref{SKT_sub2}.

	Moving on to case (ii), by \eqref{dJdomg} we have have
	\[
	dd^c\omega(e_3,e_4,e_5,e_6)=\lVert \alpha \rVert^2 + \lVert \nu \rVert^2 + \lVert J\alpha + \nu \rVert^2,
	\]
	whose vanishing forces $\alpha=\nu=0$. This yields a contradiction, since now $\eta=0$.
	
	To finish the proof, we note that, in both cases, conditions \eqref{Lie1new}--\eqref{Lie3new} are already satisfied.
\end{proof}

Now, one can check that a $2n$-dimensional strongly unimodular Hermitian Lie algebra $(\mu(\Xi),J,g)$ satisfying conditions \eqref{SKT_sub1} is SKT. 
More precisely, if we assume $\mathfrak{k}_3=\R\left<e_3,\ldots,e_{2n-2}\right>$ ($\mathfrak{k}_3=\{0\}$ in real dimension four) to be an abelian ideal, namely we have
\begin{equation} \label{assumptionSKT}
\alpha=\gamma=0,\quad \xi=0,
\end{equation}
then $(\mu(\Xi),J,g)$ is SKT if and only if \eqref{SKT_sub1} holds:

\begin{proposition} \label{sub_SKTprop}
	Assuming \eqref{assumptionSKT}, $(\mu(\Xi),J,g)$ is strongly unimodular and SKT if and only if, in addition to \eqref{sub_integrable}, 
	one has $a_1=0$, $a_2=-a \neq 0$, $A \in \mathfrak{u}(\mathfrak{k}_3,J\rvert_{\mathfrak{k}_3},g)$.
\end{proposition}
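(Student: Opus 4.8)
The plan is to combine the two hypotheses \eqref{sub_integrable} and \eqref{assumptionSKT}, compute $dd^c\omega$ directly in arbitrary dimension $2n$, and read off the SKT condition. First I would feed \eqref{assumptionSKT} into the integrability relations \eqref{sub_integrable}: since $\alpha=\gamma=0$, the relation $\nu=\gamma-J\alpha$ forces $\nu=0$, and together with $\beta=\delta=p_1=p_2=w=0$ this collapses the matrix \eqref{B_sub} to block-diagonal form with diagonal entries $a_1,a_2,A,a$ and the single column $(v_1,v_2,v)^t$ in the $e_{2n-1}$-slot, while \eqref{eta_sub} reduces to $\eta=(a_2-a_1)\,e^2\wedge e^{2n-1}$. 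In particular $\mathfrak{k}_3$ becomes an abelian ideal with $[A,J\rvert_{\mathfrak{k}_3}]=0$. Substituting all of this into Lemma \ref{sub_domega} leaves
\[
d\omega = -(a_1+a_2)\,e^1\wedge e^2\wedge e^{2n} + v_1\,e^2\wedge e^{2n-1}\wedge e^{2n} - v_2\,e^1\wedge e^{2n-1}\wedge e^{2n} + (Jv)^\flat\wedge e^{2n-1}\wedge e^{2n} - A^*\omega\wedge e^{2n}.
\]

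Next I would impose strong unimodularity, which by the analogue of Remark \ref{strunim_perp} amounts to $a_1=0$ and $a+a_2+\operatorname{tr}A=0$, and then apply $J$ componentwise to $d\omega$ and differentiate. The mechanical part mirrors the $6$-dimensional computation leading to \eqref{dJdomg} and the identity $d(A^*\omega)=-(A^2+A^tA)^*\omega\wedge e^{2n}$ already used in Case (1), the only new contribution being an $aA^*\omega$ term coming from $de^{2n-1}=a\,e^{2n-1}\wedge e^{2n}$. The pieces in $v_1$, $v_2$ and $(Jv)^\flat$ all produce forms containing $e^{2n-1}\wedge e^{2n}$ twice and hence drop out, so (using $a_1=0$) only two independent components survive:
\[
dd^c\omega = a_2(a+a_2)\,e^1\wedge e^2\wedge e^{2n-1}\wedge e^{2n} + (A^2+A^tA+aA)^*\omega\wedge e^{2n-1}\wedge e^{2n}.
\]
Since the first summand lies in $\R\,e^1\wedge e^2\wedge e^{2n-1}\wedge e^{2n}$ and the second in $\Lambda^2\mathfrak{k}_3^*\wedge e^{2n-1}\wedge e^{2n}$, the SKT condition $dd^c\omega=0$ splits into $a_2(a+a_2)=0$ and $(A^2+A^tA+aA)^*\omega=0$.

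The crux is then the algebraic analysis of these two equations together with strong unimodularity. From $a_2(a+a_2)=0$ I would rule out $a_2=0$, since in that case $\eta=(a_2-a_1)e^2\wedge e^{2n-1}=0$ contradicts $\dim\mathfrak{n}^1=1$; hence $a_2=-a$, and nonvanishing of $\eta$ forces $a_2=-a\neq 0$, which with $a+a_2+\operatorname{tr}A=0$ yields $\operatorname{tr}A=0$. For the second equation I would observe that $M\coloneqq A^2+A^tA+aA$ commutes with $J\rvert_{\mathfrak{k}_3}$ (as do $A$ and $A^t$), so that $M^*\omega(X,Y)=g\big((M+M^t)JX,Y\big)$ and therefore $M^*\omega=0$ is equivalent to $M+M^t=0$. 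Taking the trace and using $\operatorname{tr}A=0$ gives $\operatorname{tr}(M+M^t)=2\big(\operatorname{tr}(A^2)+\operatorname{tr}(A^tA)\big)=\lVert A+A^t\rVert^2$, whence $\lVert A+A^t\rVert^2=0$, i.e. $A+A^t=0$; combined with $[A,J\rvert_{\mathfrak{k}_3}]=0$ this is exactly $A\in\mathfrak{u}(\mathfrak{k}_3,J\rvert_{\mathfrak{k}_3},g)$. Extracting skew-symmetry of $A$ from the second SKT equation is the only genuinely nonautomatic point: in dimension six it was hidden inside the low-dimensional parametrization \eqref{B_sub1}, and here the trace computation, crucially relying on $\operatorname{tr}A=0$, is what makes it work in general.

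For the converse I would simply substitute $a_1=0$, $a_2=-a\neq 0$ and a skew-symmetric $J$-commuting $A$ back into the two-term expression for $dd^c\omega$: then $a_2(a+a_2)=0$, while $A^t=-A$ gives $A^2+A^tA=0$ and $A^*\omega=0$, so the second summand vanishes as well and $(\mu(\Xi),J,g)$ is SKT; strong unimodularity is immediate from $a_1=0$ and $a+a_2+\operatorname{tr}A=0$. This closes the equivalence.
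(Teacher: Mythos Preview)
Your proof is correct and follows the same overall architecture as the paper: reduce $d\omega$ via \eqref{assumptionSKT} and \eqref{sub_integrable}, arrive at the two-term expression
\[
dd^c\omega = a_2(a+a_2)\,e^{1}\wedge e^{2}\wedge e^{2n-1}\wedge e^{2n} + (A^2+A^tA+aA)^*\omega\wedge e^{2n-1}\wedge e^{2n},
\]
and analyze the two vanishing conditions together with strong unimodularity. The handling of the first term and the converse direction match the paper exactly.

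The one genuine difference is your treatment of $(A^2+A^tA+aA)^*\omega=0$. The paper invokes \cite[Theorem 4.6]{AL} to deduce that $A$ is normal with eigenvalues of real part $0$ or $-\tfrac{a}{2}$, and only then uses $\operatorname{tr}A=0$ to force all eigenvalues to be purely imaginary. Your route is more elementary and self-contained: you observe that $M\coloneqq A^2+A^tA+aA$ commutes with $J\rvert_{\mathfrak{k}_3}$, rewrite $M^*\omega=0$ as $M+M^t=0$, and take the trace to obtain $\lVert A+A^t\rVert^2=0$ directly. This avoids the external citation entirely; the only price is that you must first secure $\operatorname{tr}A=0$ (from $a_2=-a$ and strong unimodularity) before running the trace argument, which you do in the correct order. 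One small point: your phrase ``produce forms containing $e^{2n-1}\wedge e^{2n}$ twice'' is a slight shorthand --- what actually happens is that after applying $J$ these terms still carry the factor $e^{2n-1}\wedge e^{2n}$ (since $J$ preserves $\R\langle e^{2n-1},e^{2n}\rangle$), and then every relevant differential $de^1,de^2,dX^\flat,de^{2n-1}$ again contains an $e^{2n-1}$ or $e^{2n}$, so the wedge kills everything. The conclusion is correct.
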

\begin{proof}
First of all, the strongly unimodular condition imposes $a_1=0$. One can use \eqref{sub_domega_eq} to compute
\begin{equation} \label{dJdomg_assumpt}
dd^c\omega=a_2(a+a_2)e^{1} \wedge e^{2} \wedge e^{2n-1} \wedge e^{2n}+(A^2+A^tA+aA)^*\omega \wedge e^{2n-1} \wedge e^{2n}.
\end{equation}
By the computations in \cite[Theorem 4.6]{AL}, it then follows that $[A,A^t]=0$, with the eigenvalues of $A$ having real part equal to $0$ or $-\tfrac{a}{2}$.

Now, the condition $a_2(a+a_2)=0$ imposed by the vanishing of \eqref{dJdomg_assumpt} and the requirement $a_2\,e^{2} \wedge e^{2n-1}=\eta \neq 0$, implies $a_2=-a \neq 0$.
The traceless condition for the restriction of $B$ to $\mathfrak{k}_1$ imposes that all the eigenvalues of $A$ are purely imaginary, namely $A \in \mathfrak{u}(\mathfrak{k}_3,J\rvert_{\mathfrak{k}_3},g)$.

To end, we note that conditions \eqref{Lie1}--\eqref{Lie4} are all satisfied, concluding the proof.
\end{proof}

\begin{theorem} \label{class_SKT_sub}
Let $\mathfrak{g}$ be a $6$-dimensional strongly unimodular almost nilpotent Lie algebra with nilradical $\mathfrak{n}$ satisfying $\dim \mathfrak{n}^1=1$. Then, $\mathfrak{g}$ admits SKT structures $(J,g)$ satisfying $J \mathfrak{n}^1 \subset \mathfrak{n}$ if and only if it is isomorphic to one of the following:
\begin{itemize}[leftmargin=0.6cm]
	\item[]$\frs{4.6} \oplus \R^2=(f^{23},f^{26},-f^{36},0,0,0)$, \smallskip
	\item[]$\frs{4.7} \oplus \R^2=(f^{23},f^{36},-f^{26},0,0,0)$, \smallskip
	\item[]$\frs{6.25}=(f^{23},f^{36},-f^{26},0,f^{46},0)$, \smallskip
	\item[]$\frs{6.51}^{a,0}=(f^{23},af^{26},-af^{36},f^{56},-f^{46},0)$, $a>0$, \smallskip
	\item[]$\frs{6.158}=(f^{24}+f^{35},0,f^{36},0,-f^{56})$, \smallskip
	\item[]$\frs{6.164}^a=(f^{24}+f^{35},af^{26},f^{56},-af^{46},-f^{36},0)$, $a >0$.
\end{itemize}
Explicit examples of such SKT structures on these Lie algebras are provided by the complex structure indicated in Theorem \ref{class_cpx_sub}, together with the metric for which the basis $\{f_1,\ldots,f_6\}$ is orthonormal.
\end{theorem}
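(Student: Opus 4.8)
The plan is to build on the two classifications already in hand. Since an SKT structure is in particular Hermitian, any SKT $(J,g)$ with $J\mathfrak{n}^1\subset\mathfrak{n}$ forces $\mathfrak{g}$ to carry a complex structure with $J\mathfrak{n}^1\subset\mathfrak{n}$; hence by Theorem~\ref{class_cpx_sub} the only candidates are the seven Lie algebras listed there, namely the six appearing in the present statement together with $\frs{5.16}\oplus\R$. So the proof splits into exhibiting SKT structures on the six and excluding $\frs{5.16}\oplus\R$. Throughout I would fix an adapted basis, pass to the model Hermitian Lie algebra $(\mu(\Xi),J,g)$ with $\Xi$ as in~\eqref{Xinew} and $B$ as in~\eqref{B_sub1}, and invoke Proposition~\ref{SKT_sub_prop}, by which (in the strongly unimodular case) the SKT condition is equivalent to~\eqref{SKT_sub1} or~\eqref{SKT_sub2}.

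For the positive direction I would equip each of the six algebras with the complex structure $J$ recorded in Theorem~\ref{class_cpx_sub} and the metric making $\{f_1,\dots,f_6\}$ orthonormal, then verify $(J,g)$ is SKT by reading off the data $(a,a_2,c,\alpha,\nu,v,v_1,v_2)$ and checking the relevant relations. It is natural to treat separately the four algebras with $\mathfrak{n}\cong\mathfrak{h}_3\oplus\R^2$ ($\frs{4.6}\oplus\R^2$, $\frs{4.7}\oplus\R^2$, $\frs{6.25}$, $\frs{6.51}^{a,0}$), where $\eta^2=0$ and~\eqref{SKT_sub1} applies --- these in fact fall under the abelian-ideal hypothesis~\eqref{assumptionSKT} of Proposition~\ref{sub_SKTprop} --- from the two with $\mathfrak{n}\cong\mathfrak{h}_5$ ($\frs{6.158}$, $\frs{6.164}^a$), where $\eta^2\neq0$ forces $c\neq0$ and one checks~\eqref{SKT_sub2}. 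Each verification is a finite, routine computation.

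The main obstacle is the exclusion of $\frs{5.16}\oplus\R$, which I would establish by contradiction using the isomorphism invariants $(B_\mathfrak{g},A_\mathfrak{g},\eta_\mathfrak{g})$. For this algebra $\mathfrak{n}\cong\mathfrak{h}_3\oplus\R^2$, so $\eta^2=0$ and $c\,a_2=0$ by~\eqref{etasq}, while $\operatorname{Spec}(A_\mathfrak{g})=\{0,0,\pm i\}$ with $A_\mathfrak{g}$ semisimple. Since the eigenvalues of $A_\mathfrak{g}$ in the model are $\{a,-a,\pm iq\}$ once $a_2=-a$, condition~\eqref{SKT_sub1} (which needs $a\neq0$, giving two nonzero real eigenvalues) is immediately incompatible, so only~\eqref{SKT_sub2} can occur and it must have $a=a_2=0$; semisimplicity of $A_\mathfrak{g}$ then forces the off-diagonal datum $v_2=0$, so that $\ker A_\mathfrak{g}$ is two-dimensional. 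The crux is that $\frs{5.16}\oplus\R$ is distinguished among such algebras by the fact that $\operatorname{ad}_{f_6}$ couples $\ker A_\mathfrak{g}$ nontrivially into $\mathfrak{n}^1$ (explicitly $\operatorname{ad}_{f_6}f_4=f_1$, and $f_4$ is central in $\mathfrak{n}$, so this coupling is genuinely an invariant, unaffected by the ambiguity of $B_\mathfrak{g}$ up to inner derivations and rescaling). I expect the decisive computation to be that, precisely under~\eqref{SKT_sub2} with $a=a_2=v_2=0$, the constraints $v_1=\tfrac1c\lVert\alpha\rVert^2$, $v=-\tfrac1c A^t\alpha^\sharp$ and $\nu=-J\alpha$ conspire so that the induced map $\ker A_\mathfrak{g}\to\mathfrak{n}^1$ vanishes identically. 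This contradicts the nonvanishing coupling of $\frs{5.16}\oplus\R$, ruling out the last candidate and completing the classification.
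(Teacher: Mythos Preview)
Your overall structure is exactly the paper's: reduce to the seven candidates from Theorem~\ref{class_cpx_sub}, exhibit the SKT structure on the six, and exclude $\frs{5.16}\oplus\R$. The positive direction is handled identically.

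The exclusion argument, however, differs. The paper does not use invariants: after observing that \eqref{SKT_sub1} forces $a\neq 0$ (incompatible with $\operatorname{Spec}(A_\mathfrak{g})=\{0,0,\pm i\}$) and that \eqref{SKT_sub2} forces $a=0$, it simply writes down, for each of the two cases $v_2=0$ and $v_2\neq 0$, an explicit change of basis identifying $(\mu(\Xi),J,g)$ with $\frs{4.7}\oplus\R^2$ or $\frs{6.25}$ respectively. Your route via the coupling invariant is more conceptual and avoids guessing isomorphisms; the paper's is quicker once the right basis is in hand. Your preliminary step of deducing $v_2=0$ from semisimplicity of $A_\mathfrak{g}$ is correct but not needed in the paper's approach.

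One point to tighten: the map $\ker A_\mathfrak{g}\to\mathfrak{n}^1$, $X+\mathfrak{n}^1\mapsto BX$, is well-defined (since $a_1=0$) but is \emph{not} itself an isomorphism invariant---it shifts by $X\mapsto[Z,X]$ when $B$ is replaced by $B+\operatorname{ad}_Z$. What is invariant, as you note, is its restriction to $(Z(\mathfrak{n})/\mathfrak{n}^1)\cap\ker A_\mathfrak{g}$. Your expected computation does go through: with $a=a_2=v_2=0$ and the constraints of \eqref{SKT_sub2}, one checks directly that $B$ annihilates both generators of $\ker A_\mathfrak{g}$ (namely $e_2$ and $\tfrac{v_4}{q}e_3-\tfrac{v_3}{q}e_4+e_5$), so in particular the invariant restriction vanishes, contradicting $\operatorname{ad}_{f_6}f_4=f_1$ in $\frs{5.16}\oplus\R$. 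Just make clear in the write-up that it is the vanishing on the central piece that carries the contradiction.
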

\begin{proof}
Looking at Theorem \ref{class_cpx_sub}, we need to prove that $\frs{5.16}$ does not admit SKT structures $(J,g)$ satisfying $J \mathfrak{n}^1 \subset \mathfrak{n}$. Proceeding similarly to the proof of Theorem \ref{class_cpx_sub}, we consider a model SKT Lie algebra $(\mu(\Xi),J,g)$ and, by contradiction, we assume it to be isomorphic to $\frs{5.16}$.

First, we assume \eqref{SKT_sub1}, but ${\rm Spec}(A_{\mathfrak{g}})=\{0,\pm i\}$ would force $a=0$, a contradiction.
For the same reason, if we assume \eqref{SKT_sub2}, we have to set $a=0$. But now, we notice that the two changes of basis
\begin{alignat*}{2}
(f_1,f_2,f_3,f_4,f_5,f_6)&=\left(ce_1,\tfrac{1}{q}\alpha(e_4)e_1+e_3,-\tfrac{1}{q}\alpha(e_3)e_1+e_4,-\tfrac{1}{c} \alpha^{\sharp} + e_5,e_2, \tfrac{1}{q}e_6\right),&\quad &\text{ if $v_2=0$}, \\
(f_1,f_2,f_3,f_4,f_5,f_6)&=\left(ce_1,\tfrac{1}{q}\alpha(e_4)e_1+e_3,-\tfrac{1}{q}\alpha(e_3)e_1+e_4,-\tfrac{1}{c} \alpha^{\sharp} + e_5,v_2\,e_2, \tfrac{1}{q}e_6\right),&\quad &\text{ if $v_2\neq 0$,}
\end{alignat*}
provide explicit isomorphisms of $\mu(\Xi)$ with $\frs{4.7} \oplus \R^2$ and $\frs{6.25}$, respectively, a contradiction.
\end{proof}

\begin{remark}
By \cite{Mac}, the Lie algebras of the previous theorem do not admit symplectic structures, hence they cannot be K\"ahler. 
\end{remark}

Using Proposition \ref{sub_SKTprop}, we can obtain a classification result in every even real dimension by restricting to SKT structures in which $\mathfrak{k}_3$ is an abelian ideal.
\begin{theorem} \label{sub_class}
Let $\mathfrak{g}$ be a $2n$-dimensional strongly unimodular almost nilpotent Lie algebra with nilradical $\mathfrak{n}$ satisfying $\dim \mathfrak{n}^1 =1$. Then $\mathfrak{g}$ admits SKT structures $(J,g)$ satisfying $J \mathfrak{n}^1\subset \mathfrak{n}$ and $\mathfrak{k}_3$ being an abelian ideal if and only if it admits a basis $\{f_1,\ldots,f_{2n}\}$ satisfying
\begin{equation} \label{SKT_eq}
	\begin{gathered}
		\begin{alignedat}{2}
		df^1&=f^{2} \wedge f^{3},&\quad df^2&=-f^{2} \wedge f^{2n}, \\
		df^3&=f^{3} \wedge f^{2n},  &\quad df^{2n}&=0, \\
        df^{2l+2}&=b_l f^{2l+3} \wedge f^{2n},&\qquad df^{2l+3}&=-b_l f^{2l+2} \wedge f^{2n},\quad 1 \leq l \leq n-2,
		\end{alignedat}
	\end{gathered}
\end{equation}
for some $b_l \in \R$, $1 \leq l \leq n-2$. 

In particular, $\mathfrak{g}$ is $3$-step solvable and it is decomposable if and only if $b_l=0$ for some $l$, $1 \leq l \leq n-2$.
Moreover, $\mathfrak{g}$ does not admit K\"ahler structures.
Explicit examples of SKT structures  are provided by
\begin{equation} \label{SKT_sub_ex}
Jf_1=-f_2,\quad Jf_3=f_{2n},\quad Jf_{2l+2}=f_{2l+3}, \quad 1 \leq l \leq n-2,\quad \quad 
g= \sum_{l=1}^{2n} (f^l)^2.
\end{equation}
\end{theorem}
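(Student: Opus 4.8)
The plan is to read the Lie-algebra structure off the algebraic characterization already established and then reduce it to the normal form \eqref{SKT_eq} by an explicit change of adapted basis. First I would note that, once $J$ is integrable, the hypothesis that $\mathfrak{k}_3$ be an abelian ideal is exactly condition \eqref{assumptionSKT}: indeed \eqref{sub_integrable} already forces $\beta=\delta=0$ and $\nu=\gamma-J\alpha$, so imposing $[\mathfrak{g},\mathfrak{k}_3]\subseteq\mathfrak{k}_3$ gives $\alpha=\gamma=0$ (whence $\nu=0$), while $[\mathfrak{k}_3,\mathfrak{k}_3]=0$ is $\xi=0$. Under \eqref{assumptionSKT}, Proposition \ref{sub_SKTprop} says the strongly unimodular SKT condition amounts to $a_1=0$, $a_2=-a\neq0$ and $A\in\mathfrak{u}(\mathfrak{k}_3,J\rvert_{\mathfrak{k}_3},g)$. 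Substituting these together with $\lambda=a_2-a_1=-a$ into \eqref{B_sub}, \eqref{eta_sub} and \eqref{sub_integrable} yields $\eta=-a\,e^2\wedge e^{2n-1}$ and the brackets
\[
[e_{2n},e_2]=-a\,e_2,\quad [e_{2n},X]=AX,\quad [e_{2n},e_{2n-1}]=v_1e_1+v_2e_2+v+a\,e_{2n-1},\quad [e_2,e_{2n-1}]=a\,e_1,
\]
for $X\in\mathfrak{k}_3$, with $e_1$ central and $\mathfrak{k}_3$ an abelian ideal commuting with $e_1,e_2,e_{2n-1}$.

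Next I would normalize. Since $A$ is skew-symmetric and commutes with $J\rvert_{\mathfrak{k}_3}$, a unitary basis of $\mathfrak{k}_3$ puts it in block form $A=\operatorname{diag}(C_{b_1},\dots,C_{b_{n-2}})$ with $C_b$ as in \eqref{Cb}. Because $a\neq0$ is real and $A$ has purely imaginary spectrum, $A-a\,\mathrm{Id}_{\mathfrak{k}_3}$ is invertible; replacing $e_{2n-1}$ by $e_{2n-1}+w'+s\,e_2+t\,e_1$ with $w'=-(A-a\,\mathrm{Id})^{-1}v$, $s=\tfrac{v_2}{2a}$ and $t=\tfrac{v_1}{a}$ clears the $v,v_2,v_1$ terms without disturbing $[e_2,e_{2n-1}]=a\,e_1$ or the abelian-ideal structure of $\mathfrak{k}_3$, leaving $[e_{2n},e_{2n-1}]=a\,e_{2n-1}$. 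Setting $f_1=-a\,e_1$, $f_2=e_2$, $f_3=e_{2n-1}$, $f_{2n}=\tfrac1a e_{2n}$ and letting $\{f_{2l+2},f_{2l+3}\}$ be the unitary basis of $\mathfrak{k}_3$ gives precisely \eqref{SKT_eq} (with the $b_l$ rescaled by $1/a$). Conversely, any Lie algebra admitting a basis as in \eqref{SKT_eq}, endowed with \eqref{SKT_sub_ex}, falls under Proposition \ref{sub_SKTprop}, hence is SKT with $J\mathfrak{n}^1\subset\mathfrak{n}$ and $\mathfrak{k}_3$ an abelian ideal. I expect this change-of-basis bookkeeping — simultaneously diagonalizing $A$ and clearing $v,v_1,v_2$ while preserving every remaining bracket — to be the main technical obstacle, though it is routine once the invertibility of $A-a\,\mathrm{Id}$ is observed.

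For the structural claims, the derived series of \eqref{SKT_eq} is $\mathfrak{g}\supsetneq[\mathfrak{g},\mathfrak{g}]\supsetneq\R\langle f_1\rangle\supsetneq\{0\}$, since $[f_2,f_3]=-f_1$ and $f_1$ is central, so $\mathfrak{g}$ is $3$-step solvable; and, exactly as in Remark \ref{vanishing_b}, each vanishing $b_l$ splits off a central $\R^2=\R\langle f_{2l+2},f_{2l+3}\rangle$, so $\mathfrak{g}$ is decomposable if and only if some $b_l=0$. Finally, for the non-existence of Kähler structures I would prove the stronger fact that $\mathfrak{g}$ is not even symplectic. Here $f_1$ is central and $f_1=-[f_2,f_3]\in[\mathfrak{g},\mathfrak{g}]$; for any closed $2$-form $\Omega$ the contraction $\iota_{f_1}\Omega$ is a closed $1$-form (as $f_1$ is central), hence annihilates $[\mathfrak{g},\mathfrak{g}]$, while $d\Omega(f_2,f_3,X)=0$ forces $\Omega(f_1,X)=0$ for every $X\notin[\mathfrak{g},\mathfrak{g}]$ (the bracket contributions cancel, e.g. $d\Omega(f_2,f_3,f_{2n})=\Omega(f_1,f_{2n})$, while for a central $X$ they vanish outright). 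Thus $\iota_{f_1}\Omega=0$, so $\Omega$ is degenerate and $\mathfrak{g}$ admits no symplectic, hence no Kähler, structure.
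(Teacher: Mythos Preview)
Your proposal is correct and follows essentially the same route as the paper: both apply Proposition~\ref{sub_SKTprop}, diagonalize $A$ into blocks $C_{b_l}$, then absorb $v_1,v_2,v$ by shifting $e_{2n-1}$ (the paper phrases this as solving $\tilde v=(B-a\,\mathrm{Id})\tilde X$ using that $a$ is a simple eigenvalue of $B$, which is exactly your component-wise solution with $(A-a\,\mathrm{Id})^{-1}$ on $\mathfrak{k}_3$ and the scalars $t=v_1/a$, $s=v_2/(2a)$), arriving at the same basis $(-a e_1,\,e_2,\,e_{2n-1}-\tilde X,\,e_3,\dots,e_{2n-2},\,\tfrac1a e_{2n})$. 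For the non-existence of K\"ahler structures the paper also shows $\iota_{f_1}\Omega=0$ for any closed $2$-form, but by a direct coordinate computation of $d\Omega$ on specific triples; your argument via ``$f_1$ central $\Rightarrow \iota_{f_1}\Omega$ closed $\Rightarrow$ vanishes on $[\mathfrak g,\mathfrak g]$'' together with $d\Omega(f_2,f_3,X)=\Omega(f_1,X)$ for $X=f_{2n}$ or $X$ central is a slightly more conceptual packaging of the same computation.
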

\begin{proof}
We can work with model SKT Lie algebras $(\mu(a,v_1,v_2,v,A),J,g)$. With respect to the splitting $\mathfrak{g}=\R \left <e_1  \right > \oplus \R \left < e_2 \right >  \oplus \mathfrak{k}_3 \oplus \R \left <  e_{2n-1} \right > \oplus \R \left <e_{2n} \right >$, by Proposition \ref{sub_SKTprop}, we have
\[
\eta=-ae^{2} \wedge e^{2n-1}
\]
and
\begin{equation} \label{Bcases}
B=\left( \begin{array}{c|c|c|c}
	0 & 0 &0 & v_1 \\ \hline
	0& -a & 0 & v_2  \\ \hline
	 0 &0  & A & v \\ \hline
	0&0  &0  & a
\end{array} \right)
\end{equation}
for some $a \in \R-\{0\}$, $v_1,v_2 \in \R$, $v \in \mathfrak{k}_3$, $A \in \mathfrak{u}(\mathfrak{k}_3,J\rvert_{\mathfrak{k}_3},g)$, so that, up to diagonalization, we can assume $A$ to be of the form
\[
A=\text{diag}(C_{b_1},\ldots,C_{b_{n-2}}),
\]
if $n \geq 3$, while we recall that $\mathfrak{k}_3=\{0\}$ for $n=2$, so that we do not have the algebraic datum $A$.

It is now a matter of modifying the adapted basis to obtain a basis satisfying the requirements of the statement. In both cases in \eqref{Bcases}, we can notice that $a$ is an eigenvalue of $B$ of multiplicity one, so that one can find a vector $\tilde{X} \in \mathfrak{k}_1$ such that $\tilde{v}\coloneqq v_1e_1+v_2e_2+v \in \mathfrak{k}_1$ is given by
\[
\tilde{v}=B\tilde{X} - a \tilde{X}.
\]
Then, one can consider the new basis for $\mathfrak{g}$ given by:
\[
(f_1,\ldots,f_{2n})=\left( -ae_1,e_2,e_{2n-1}-\tilde{X},e_3,\ldots,e_{2n-2},\tfrac{1}{a}e_{2n}\right).
\]
One can easily see that this basis satisfies the requirements of the statement.

Now, we shall prove that none of these Lie algebras admit symplectic structures.
We can write the generic $2$-form $\Omega$ on $\mathfrak{g}$ as
\begin{align*}
	\Omega=&q_1f^{1} \wedge f^{2}+q_2f^{1} \wedge f^{3}+q_3f^{1} \wedge f^{2n}+q_4f^{2} \wedge f^{3}+q_5f^{2} \wedge f^{2n}+q_6f^{3} \wedge f^{2n} \\
	       &+f^1 \wedge \alpha_1 + f^2 \wedge \alpha_2 + f^3 \wedge \alpha_3 + f^{2n} \wedge \alpha_{2n} + \kappa,
\end{align*}
with $q_l \in \R$, $l=1,\ldots,6$, $\alpha_l \in \mathfrak{a}^*$, $l=1,2,3,2n$, $\kappa \in \Lambda^2 \mathfrak{a}^*$, with $\mathfrak{a} \coloneqq \R\left<f_4,\ldots,f_{2n-1}\right>$ ($\mathfrak{a}=\{0\}$ when $n=2$).
Then one can compute, in particular,
\begin{align*}
    d\Omega(f_1,f_2,f_{2n})=q_1, \quad 
	d\Omega(f_1,f_3,f_{2n})=-q_2, \quad
	d\Omega(f_2,f_3,f_{2n})=q_3
\end{align*}
and
\[
d\Omega(f_2,f_3,X)=\alpha_1(X), \quad X \in \mathfrak{a}.
\]
We conclude that
\[
q_1=q_2=q_3=0, \quad \alpha_1=0.
\]
Then, we have $\iota_{f_1} \Omega=0$, hence $\Omega$ is degenerate, concluding the proof.
\end{proof}

\begin{example}
Building on the results of Theorem \ref{sub_class}, we now exhibit a new family of $2n$-dimensional non-K\"ahler compact almost nilpotent solvmanifolds, $n \geq 2$, admitting SKT structures.

Consider the $2n$-dimensional simply connected strongly unimodular solvable Lie group $S_{2n}$ with indecomposable Lie algebra $\mathfrak{s}_{2n}$ endowed with a fixed basis $\{f_1,\ldots,f_{2n}\}$ with structure equations \eqref{SKT_eq} with  $b_l=\frac{2\pi}{\ln(2+\sqrt{3})}$ for all $l=1,\ldots,n-2$. The Lie groups $S_{2n}$ admit the left-invariant SKT structure \eqref{SKT_sub_ex}.

We shall now show that $S_{2n}$ admits a lattice.
By \cite{Bock}, a simply connected almost nilpotent Lie group with Lie algebra $\mathfrak{g}=\mathfrak{n} \rtimes_D \R$ admits compact quotients by lattices if there exist a rational basis of $\mathfrak{n}$ and some $t_0 \neq 0$ such that the matrix associated with $\text{exp}(t_0D) \in \text{GL}(\mathfrak{n})$ has integer entries.
In our case, we can view $\mathfrak{s}_{2n} \cong \mathfrak{n} \rtimes_D \R f_{2n}$, with $\mathfrak{n}=\R \left<f_1,\ldots,f_{2n-1} \right>$ and $D=\text{diag}(0,-1,1,C_{c},\ldots,C_{c})$, in the fixed basis.
	
	Now, $S_4$ admits compact quotients by lattices, given by Inoue surfaces of type $S_+$ (see \cite{Has}): applying the previous criterion, we can consider $t_0=\ln(2+\sqrt{3})$ and the new basis for $\mathfrak{n}$ given by
	\begin{equation} \label{newbasisn}
		f_1^\prime=-\frac{\sqrt{3}}{6} f_1,\quad f_2^\prime=\frac{\sqrt{3}}{6}(-f_2+f_3),\quad f_3^\prime=\frac{1}{2}(f_2+f_3)+\frac{\sqrt{3}}{3}(-f_2+f_3).
	\end{equation}
	The only non-zero bracket for $\mathfrak{n}$ is, as before, $[f_2^\prime,f_3^\prime]=-f_1^\prime$, and, in the new basis, one has
	\[
	\text{exp}(t_0D)=\begin{pmatrix} 1 & 0 & 0 \\ 0 & 0 & -1 \\ 0 & 1 & 4 \end{pmatrix}.
	\]
	
	To determine the existence of co-compact lattices in $S_{2n}$, for $n \geq 4$, we can observe that in the new basis for $\mathfrak{n}$ provided by \eqref{newbasisn} and $f_l^\prime=f_l$, $l=4,\ldots,2n-1$, the only non-zero bracket is again $[f_2^\prime,f_3^\prime]=-f_1^\prime$, and, setting $t_0=\ln (2+\sqrt{3})$, we have
	\[
	\text{exp}(t_0D)=\begin{pmatrix} 1 & 0 & 0 & 0 \\ 0 & 0 & -1 & 0 \\ 0 & 1 & 4 & 0 \\ 0 & 0 & 0 & I_{2(n-2)} \end{pmatrix},
	\]
	with $I_l$ denoting the $l \times l$ identity matrix.

The compact solvmanifolds we have constructed cannot admit K\"ahler structures: this follows from the results in \cite{Has2}, by which a compact solvmanifold $G/ \Gamma$ admitting K\"ahler structure must be such that $G$ is of type I, namely the eigenvalues of its adjoint maps $\text{ad}_X$, $X \in \mathfrak{g}$, must be purely imaginary. This is not the case for the Lie groups $S_{2n}$ we have considered.
\end{example}

\subsection{Case (3)} In the remaining case, namely the one where the SKT structure $(J,g)$ is such that $J \mathfrak{n}^1$ has trivial intersection with both $\mathfrak{n}$ and $\nperp$, we obtain the following result:
\begin{proposition}
Let $\mathfrak{g}$ be a $2n$-dimensional strongly unimodular almost nilpotent Lie algebra with nilradical $\mathfrak{n}$ satisfying $\dim \mathfrak{n}^1=1$. Assume $\mathfrak{g}$ admits an SKT structure $(J,g)$ satisfying $\hat\theta=0$, namely $J \mathfrak{n}^1 = \mathfrak{n}^{\perp_g}$. Then,
\begin{itemize}
\item[{\normalfont (i)}] for every $\theta\in (0,\frac{\pi}{2})$, $\mathfrak{g}$ also admits SKT structures $(J_{\theta},g_{\theta})$ satisfying $\hat\theta=\theta$,
\item[{\normalfont (ii)}] $\mathfrak{g}$ admits SKT structures satisfying $\hat\theta=\frac{\pi}{2}$ (namely, $J \mathfrak{n}^1\subset \mathfrak{n}$) if and only if it decomposes as the Lie algebra direct sum of an abelian Lie algebra and an almost nilpotent Lie algebra.
\end{itemize}
\end{proposition}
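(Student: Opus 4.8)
The plan is to exploit that the hypothesis pins $\mathfrak{g}$ down completely. Since $\mathfrak{g}$ admits a Case~(1) SKT structure, Theorem~\ref{class_SKT_perp} realizes it as the model \eqref{streq_SKT}, in which $\mathfrak{n}^1=\R\langle f_1\rangle$, the element $f_1$ is central, $\operatorname{ad}_{f_{2n}}$ acts on each block $\langle f_{2l},f_{2l+1}\rangle$ as the rotation $C_{b_l}$, $\mathfrak{g}$ is of type~I, and the distinguished SKT structure satisfies $J\mathfrak{n}^1=\mathfrak{n}^{\perp_g}=\R\langle f_{2n}\rangle$. I keep this presentation fixed throughout. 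By Remark~\ref{vanishing_b}, $\mathfrak{g}$ detaches a nonzero abelian factor $\R^2$ exactly when some $b_l$ with $l\neq1$ vanishes, whereas $b_1=0$ only splits off a \emph{non-abelian} $\mathfrak{h}_3$; thus ``$\mathfrak{g}$ decomposes as abelian $\oplus$ almost nilpotent'' (with nontrivial abelian part) means precisely that some $b_l$, $l\neq1$, is zero.

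For part~(i) I keep the complex structure $J$ fixed and deform only the metric. Observe first that a rotation of $J$ itself cannot work: a short Nijenhuis computation in the basis of \eqref{streq_SKT} shows that tilting $J$ so that $J\mathfrak{n}^1$ leaves $\R\langle f_{2n}\rangle$ produces a nonzero Nijenhuis component (proportional to $f_1-b_1f_{2n}$), so no integrable $J$ other than the given one has $J\mathfrak{n}^1$ tilted. With $J$ fixed, however, $J\mathfrak{n}^1=\R\langle f_{2n}\rangle$ stays put while $\mathfrak{n}^{\perp_g}$ moves with $g$; since $\mathfrak{n}$ is a hyperplane and $g$ is positive definite, $\mathfrak{n}^{\perp_g}$ is always transverse to $\mathfrak{n}$, so $\hat\theta\in[0,\tfrac\pi2)$ and the value $\hat\theta=\tfrac\pi2$ is impossible for fixed $J$ (it would force $f_{2n}\in\mathfrak{n}$) --- consistent with Case~(2) genuinely requiring a different $J$. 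As $d^c$ depends only on $J$, the equation $dd^c\omega=0$ is linear in the fundamental form, so the $J$-Hermitian SKT metrics form a convex cone containing the standard one. I would then exhibit an explicit family $g_\theta$ in this cone whose orthogonal complement $\mathfrak{n}^{\perp_{g_\theta}}$ makes angle $\theta$ with $\R\langle f_{2n}\rangle$ (shearing $f_{2n}$ against a direction of $\mathfrak{n}$, e.g. $f_2$), check positivity and $dd^c\omega_\theta=0$ directly, and read off $J\mathfrak{n}^1\cap\mathfrak{n}=J\mathfrak{n}^1\cap\mathfrak{n}^{\perp_{g_\theta}}=\{0\}$, giving Case~(3) with $\hat\theta=\theta$. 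The main obstacle is purely constructive: producing this family explicitly and checking that \emph{every} $\theta\in(0,\tfrac\pi2)$ is attained while $g_\theta$ stays positive definite and SKT.

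For the ``if'' direction of~(ii), write $\mathfrak{g}=\mathfrak{a}\oplus\mathfrak{m}$ with $\mathfrak{a}$ a nonzero abelian ideal (of even dimension, necessarily contained in $\mathfrak{n}$) and $\mathfrak{m}$ almost nilpotent with $\mathfrak{m}^1=\mathfrak{n}^1$. I transplant the Case~(1) data: choosing $w_1,w_2\in\mathfrak{a}$, define a new complex structure $J$ pairing $f_1$ with $w_1$ and the non-nilpotent generator $f_{2n}$ with $w_2$, retaining the original pairing on the remaining blocks of $\mathfrak{m}$ and any compatible pairing on the rest of $\mathfrak{a}$, and let $g$ be the metric making the resulting basis orthonormal. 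Then $J\mathfrak{n}^1=\R\langle w_1\rangle\subset\mathfrak{a}\subset\mathfrak{n}$, so we are in Case~(2). Because $f_1,w_1,w_2$ are all central and $\mathfrak{a}$ is a direct summand annihilated by $\operatorname{ad}_{f_{2n}}$, a direct check shows the Nijenhuis tensor vanishes term by term, so $J$ is integrable; computing $dd^c\omega$ directly (or verifying against Proposition~\ref{SKT_sub_prop}) then shows $(J,g)$ is SKT.

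For the ``only if'' direction, suppose $\mathfrak{g}$ carries a Case~(2) SKT structure $(J,g)$. As $\mathfrak{g}$ is of type~I, the adjoint of any generator of $\mathfrak{g}/\mathfrak{n}$ has purely imaginary spectrum; inserting this into the Case~(2) SKT characterization of Proposition~\ref{SKT_sub_prop} excludes branch \eqref{SKT_sub1} (which needs a nonzero real eigenvalue $a$) and forces \eqref{SKT_sub2} with $a=a_2=0$ and $c\neq0$. Then $\eta=c\,e^{34}+\nu\wedge e^{5}$ with $[e_3,e_4]=-c\,e_1\neq0$, so the Heisenberg core is $\langle e_1,e_3,e_4\rangle$, while $e_2=Je_1$ together with a suitable partner (obtained from the explicit change of basis used in the proof of Theorem~\ref{class_SKT_sub}) is annihilated by $\operatorname{ad}_{e_{2n}}$ and by all brackets, spanning a central, complemented $\R^2$; hence $\mathfrak{g}$ decomposes as abelian $\oplus$ almost nilpotent. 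I expect the main difficulty to be making this last step uniform in the dimension, since Proposition~\ref{SKT_sub_prop} is established only in real dimension six: in higher dimension one must re-derive the relevant SKT constraints from Lemma~\ref{sub_domega} and verify that the type~I condition again detaches an abelian factor.
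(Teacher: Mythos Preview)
Your approach to part~(i) and to the ``if'' direction of~(ii) coincides with the paper's: for~(i), keep $J$ fixed and deform the metric within the $J$-Hermitian SKT cone (the paper shears against the block $\langle f_{2n-2},f_{2n-1}\rangle$ rather than $\langle f_2,f_3\rangle$, writing down $g_\theta=\sum_l(f^l)^2+2\sin\theta\,(f^1\odot f^{2n-1}-f^{2n-2}\odot f^{2n})$ explicitly and checking $dd^c\omega_\theta=0$); for the ``if'' direction of~(ii), pair $f_1$ and $f_{2n}$ with two elements of the abelian summand, exactly as you propose.

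The substantive divergence is in the ``only if'' direction of~(ii). The paper's argument is both simpler and uniform in the dimension because it never invokes the SKT condition on the hypothetical Case~(2) structure $(J',g')$: integrability alone, via \eqref{sub_integrable}, makes $B=\operatorname{ad}_{e_{2n}}|_{\mathfrak n}$ block-upper-triangular with $\operatorname{Spec}B=\{0,a,a_2\}\cup\operatorname{Spec}A$, and the type~I property of $\mathfrak g$ (which you already know from Theorem~\ref{class_SKT_perp}) forces $a=a_1=a_2=0$, so $0$ is an eigenvalue of $B$ of multiplicity at least three. On the model \eqref{streq_SKT} with $b_2,\dots,b_{n-1}$ all nonzero, the zero-eigenspace of $\operatorname{ad}_{f_{2n}}|_{\mathfrak n}$ has dimension one (if $b_1\neq0$) or three (if $b_1=0$); the first case is immediately contradictory, and in the second the paper uses $[\mathfrak n,\mathfrak g^1]=\{0\}$ (valid since $\mathfrak g\cong\mathfrak h_3\oplus\mathfrak l$ with $\mathfrak l$ almost abelian) together with the nondegeneracy of $A$ to force $\xi=0$, $\nu=0$, hence $\eta=0$. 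This sidesteps entirely the Case~(2) SKT classification, which is what creates the dimension gap you flag.

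Your route through Proposition~\ref{SKT_sub_prop} can be completed in dimension six, but even there it needs more than you wrote: with $a=a_2=0$ in branch~\eqref{SKT_sub2} and $v_2\neq0$, the resulting Lie algebra is $\mathfrak s_{6.25}$ (see the proof of Theorem~\ref{class_SKT_sub}), which is \emph{indecomposable}, so your claim that $e_2$ together with a partner always splits off a central $\R^2$ is false in general. You would still need to bring in the hypothesis that $\mathfrak g$ has the form \eqref{streq_SKT} to exclude $\mathfrak s_{6.25}$ --- which is precisely the multiplicity/semisimplicity information the paper uses from the start.
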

\begin{proof}
By Theorem \ref{class_SKT_perp}, $\mathfrak{g}$ satisfies the hypotheses of the statement if and only if it admits a basis $\{f_1,\ldots,f_{2n}\}$ with structure equations \eqref{streq_SKT}. 
With respect to this frame, we can fix the complex structure $J$ defined by $Jf_1=f_{2n}$, $Jf_{2l}=f_{2l+1}$, $l=1,\ldots,n-1$. For every $\theta \in [0,\frac{\pi}{2})$, denoting $f^k \odot f^l \coloneqq \tfrac{1}{2} (f^k \otimes f^l + f^l \otimes f^k)$, $k,l=1,\ldots,2n$, we can then consider the $J$-Hermitian metric
\[
g_\theta=\sum_{l=1}^{2n} (f^l)^2 +2\sin(\theta)(f^1 \odot f^{2n-1} - f^{2n-2} \odot f^{2n}),
\]
which is SKT, having (non-exact) closed torsion form
\[
d^c\omega_{\theta}=-f^{1} \wedge f^{2} \wedge f^{3}-b_{n-1}\sin(\theta)f^{1} \wedge f^{2n-2} \wedge f^{2n}-\sin(\theta)f^{2} \wedge f^{3} \wedge f^{2n-1},
\]
with $\omega_\theta \coloneqq g_{\theta}(J\cdot,\cdot)$.
Generators of $\mathfrak{n}^1$ and $\mathfrak{n}^{\perp_{g_\theta}}$ are $f_1$ and $f_{2n}+\sin(\theta)f_{2n-2}$, respectively, and one can compute	
\[
\frac{g_{\theta}(J f_1,f_{2n}+\sin(\theta)f_{2n-2})}{\lVert J f_1 \rVert_{g_\theta} \lVert f_{2n}+\sin(\theta)f_{2n-2} \rVert_{g_\theta}}=\cos(\theta),
\]	
showing that $(J,g_{\theta})$ satisfies $\hat \theta = \theta$. 	
This concludes the proof of the first part of the claim. Observe that the same argument does not hold for $\theta=\frac{\pi}{2}$, since $g_{\frac{\pi}{2}}$ would be degenerate.

We now focus on the second part of the claim. Using again the basis $\{f_l \}$ of $\mathfrak{g}$ with structure equations \eqref{streq_SKT} and recalling Remark \ref{vanishing_b}, we wish to prove that $\mathfrak{g}$ admits SKT structures satisfying $\hat\theta=\frac{\pi}{2}$ if and only if $b_l=0$ for some $l \in \{2,\ldots,n-1\}$, with $n \geq 3$.

For the \lq \lq if'' part, up to reordering the basis, we can assume $b_2=0$ and consider the SKT structure $(J^\prime,g^\prime)$ defined by
\begin{align*}
	J^\prime=&(f^1 \otimes f_4 - f^4 \otimes f_1)+(f^2 \otimes f_3 - f^3 \otimes f_2) + (f^5 \otimes f_{2n} - f^{2n} \otimes f_5)\\ 
	         &+ \sum_{l=3}^{n-1} (f^{2l} \otimes f_{2l+1} - f^{2l+1} \otimes f_{2l}), \\
	g^\prime=&\sum_{l=1}^{2n} (f^l)^2,
\end{align*}
with non-exact torsion form $H^\prime=-f^{123}$ and satisfying $\hat \theta =0$, since $J^\prime(\mathfrak{n}^1)=\R\left < f_4  \right >\subset \mathfrak{n}$.

We now look at the \lq \lq only if'' part. Our objective is to show that, when $b_2,\ldots,b_{n-1}$ are non-zero, the algebras of Theorem \ref{class_SKT_perp} do not admit SKT structures with $\hat \theta=0$. By contradiction, assume $b_2,\ldots,b_{n-1}$ non-zero and let $(J^\prime,g^\prime)$ be an SKT structure satisfying $\hat\theta=0$. Following Subsection \ref{sec_Hermitian}, consider an adapted unitary basis $\{e_l \}$. In this basis, the matrix $B$ associated with $\text{ad}_{e_{2n}}\rvert_{\mathfrak{n}}$ is of the form \eqref{B_sub}, while $\eta\coloneqq de^1 \rvert_{\Lambda^2 \mathfrak{n}}$ is given by \eqref{eta_sub}. Imposing \eqref{sub_integrable} to guarantee the integrability of $J^\prime$, we observe that $B$ is in block-upper-triangular form, so that
\[
\operatorname{Spec} B = \{0,a,a_2\} \cup \operatorname{Spec} A.
\]
We then need to impose $a=a_1=a_2=0$ to agree with the type I condition for $\mathfrak{g}$ (recall Theorem \ref{class_SKT_perp}). One can then rule out the case $b_1 \neq 0$, which would force the multiplicity of the eigenvalue $0$ of $B$ to be equal to $1$. In the remaining case characterized by $b_1=0$, the Lie algebra $\mathfrak{g}$ decomposes as $\mathfrak{h}_3 \oplus \mathfrak{l}$, with $\mathfrak{l}$ a centerless indecomposable almost abelian Lie algebra of type I. In particular, $\mathfrak{g}$ is $2$-step solvable and actually satisfies the stronger condition $[\mathfrak{n},\mathfrak{g}^1]=\{0\}$. This case also forces the multiplicity of the eigenvalue $0$ of $B$ to be exactly equal to $3$, so that $A$ is necessarily non-degenerate. Now, so far we have
\[
\eta= \xi + \nu \wedge e^{2n-1},\quad \xi \in \Lambda^2 \mathfrak{k}_3^*,\,\nu \in \mathfrak{k}_3^*.
\]
Let $X,Y \in \mathfrak{k}_3$ be arbitrary. Using that $A$ is an isomorphism, we deduce that there exists a unique $Z \in \mathfrak{k}_3$ so that $A(Z)=Y$. Then, one can compute
\[
[X,[e_{2n},Z]]=-\xi(X,Y)e_1, \quad [e_{2n-1},[e_{2n},Z]]=\nu(Y) e_1.
\]
Since $[\mathfrak{n},\mathfrak{g}^1]=\{0\}$, we have to impose $\xi=0$, $\nu=0$, yielding $\mathfrak{n}^1=\{0\}$, a contradiction.
\end{proof}

\section{Pluriclosed flow} \label{sec_plflow}
The pluriclosed flow on a Hermitian manifold $(M, J, g_0)$ of complex dimension $n$  is given by
\[
\tfrac{d}{dt}{\omega} (t) =-(\rho^B_{\omega(t)})^{1,1},\quad \omega(0)=\omega_0,
\]
where $\omega_0$ is the fundamental form of $(J, g_0)$ and  $(\rho^B_{\omega(t)})^{1,1}$ denotes the $(1,1)$-part of the \emph{Bismut-Ricci form} associated with $\omega(t)$, having local expression
\begin{equation} \label{bismutricci}
\rho^B_\omega(X,Y) = -\frac{1}{2} \sum_{l=1}^{2n} g(R^B(X,Y)e_l,Je_l), \quad X,Y \in \Gamma(TM)
\end{equation}
in a local orthonormal frame $\{e_1,\ldots,e_{2n}\}$. Here,  $R^B(X,Y)=[\nabla^B_X,\nabla^B_Y]-\nabla^B_{[X,Y]}$, $X,Y \in \Gamma(TM)$  the curvature tensor of the Bismut connection $\nabla^B$, which is the unique Hermitian connection having totally skew-symmetric torsion.

Given a split generalized K\"ahler structure $(J_+,J_-,g)$, the pluriclosed flow starting from the SKT structure $(J_+,g)$ produces a family of SKT metrics with respect to both $J_+$ and $J_-$, preserving the generalized K\"ahler condition $d^c_+ \omega_+ + d^c_- \omega_- =0$, so that one may say that the given split generalized K\"ahler structure evolves by $(J_+,J_-,g(t))$. This interpretation of the pluriclosed flow takes the name of \emph{generalized K\"ahler-Ricci flow}.

When one works on Lie groups, left-invariant initial conditions yield left-invariant solutions, so that the pluriclosed flow and the generalized K\"ahler flow reduce to systems of \textsc{ode}s on the associated Lie algebra.

We recall  that an SKT structure $(J,g)$ on a real Lie algebra  $\mathfrak{g}$  is a \emph{pluriclosed soliton} if the pluriclosed flow starting from $(J,g)$ evolves simply by rescalings and time-dependent biholomorphisms, namely $g(t)=c(t)\varphi_t^*g$, with $c(t) \in \R$ and $\varphi_t$ biholomorphisms. More precisely, we say that $(J,g)$ is a \emph{shrinking}, \emph{expanding} or \emph{steady} soliton on $\mathfrak{g}$  if $c= c(t)$ is respectively decreasing, increasing or constantly equal to $1$. Particular instances of pluriclosed solitons are provided by \emph{static} SKT structures $(J,\omega_0)$ (see \cite{ST}), satisfying
\begin{equation} \label{static}
(\rho^B_{\omega_0})^{1,1}=\lambda \omega_0,\,\lambda \in \R,
\end{equation}
whose corresponding solutions to the pluriclosed flow,
\[
\omega(t)=(1-\lambda t)\omega_0,\quad t \in \begin{cases} 
(\tfrac{1}{\lambda},\infty),&\lambda < 0,\\
\R, &\lambda=0, \\
(-\infty,\tfrac{1}{\lambda}),&\lambda > 0,
\end{cases}	
\]
evolve only by rescaling.

Analogously, we say that a split generalized K\"ahler structure $(J_+,J_-,g)$ on $\mathfrak{g}$ is a \emph{soliton} for the generalized K\"ahler flow if $(J_+,J_-,g(t))=(J_+,J_-,c(t)\varphi_t^*g)$.

We now briefly review the \emph{bracket flow} technique applied to the case of the pluriclosed flow, as treated in \cite{AL, EFV1, Lau}, to which we refer the reader for further details.

Let $(\mu,J,g)$ be a $2n$-dimensional Hermitian Lie algebra, with underlying vector space $\R^{2n}$ and Lie bracket $\mu \in \Lambda^2(\R^{2n})^* \otimes \R^{2n}$. As described earlier, $g$ is the standard inner product on $\R^{2n}$ while $J$ is defined by $Je_1=e_{2n}$, $Je_{2l}=e_{2l+1}$, $l=1,\ldots,n-1$, when dealing with case (1), and by  $Je_{2l-1}=e_{2l}$, $l=1,\ldots,n$, when dealing with case (2), so that the standard basis of $\R^{2n}$ is adapted to the splitting $J\nperp \oplus \mathfrak{k}_1 \oplus \nperp$ in case (1) and the splitting $\mathfrak{n}^1 \oplus J \mathfrak{n}^1 \oplus \mathfrak{k}_3 \oplus \nperp$ in case (2). To keep track of the associated algebraic data, the Lie bracket $\mu$ will be denoted by $\mu(a,0,A,\eta)$ (or $\mu(A,c)$ in the strongly unimodular case, by Remark \ref{datum_c}) in case (1) and by $\mu(\Xi)$ in case (2).

The Lie group $\text{GL}(2n,J)$ of automorphisms of $\R^{2n}$ preserving $J$ acts transitively on the set of $J$-Hermitian metrics via pullback, so that the pluriclosed flow starting from a Hermitian structure $(\mu_0,J,g)$ yields a family $(\mu_0,J,h(t)^*g)$, for some $h(t) \in \text{GL}(2n,J)$. 

One then observes that
\[
h(t) \colon (\mu_0,J,h(t)^*g) \to (h(t) \cdot \mu_0,J,g)
\]
is an isomorphism of Hermitian structures, namely $h(t)$ is a Lie algebra isomorphism which is both orthogonal and biholomorphic. Here we denoted
\[
h \cdot \mu = (h^{-1})^*\mu=h\mu(h^{-1} \cdot, h^{-1} \cdot).
\]
Let $\mu(t)=h(t) \cdot \mu_0$. Then,  up to time-dependent biholomorphisms, the pluriclosed flow starting from  $(\mu_0,J,g)$ can be interpreted as a flow $\mu(t)$ on $\Lambda^2(\R^{2n})^* \otimes \R^{2n}$, such that $\mu(t) \in \text{GL}(2n,J) \cdot \mu_0$ for all $t$. For every Lie bracket $\mu$, denote by $\rho_{\mu}^B$ the restriction to  $\Lambda^{2}T_0^*\R^{2n} \cong \Lambda^2(\R^{2n})^*$ of $\rho^B_{\omega_\mu}$, where $\omega_\mu$ is the left-invariant extension of $\omega=g(J\cdot,\cdot)$ on $\R^{2n}$ according to the Lie group operation corresponding to the bracket $\mu$.

The  evolution of
$\mu(t)$ is given by the so-called bracket flow
\begin{equation} \label{brflow_pl}
	\tfrac{d}{dt} \mu= - \pi(P_\mu)\mu,\quad \mu(0)=\mu_0,
\end{equation}
where
\[
P_\mu = \tfrac{1}{2} \omega^{-1} (\rho_\mu^B)^{1,1} \in \mathfrak{gl}_{2n},
\]
and 
\[
(\pi(A)\mu)(X,Y)=A\mu(X,Y) - \mu(AX,Y) - \mu(X,AY), \quad X,Y \in \R^{2n}
\]
for every $A \in \mathfrak{gl}_{2n}$, $\mu \in \Lambda^2(\R^{2n})^* \otimes \R^{2n}$. 
Applying a \emph{gauge} to the bracket flow \eqref{brflow_pl}, namely considering a flow of the form
\begin{equation} \label{gauged_brflow_pl}
	\tfrac{d}{dt} \bar{\mu} =\pi(P_{\bar{\mu}} - U_{\bar{\mu}})\bar{\mu}, \quad \bar{\mu}(0)=\mu_0,
\end{equation}
for some smooth map 
\[
U \colon \Lambda^2(\R^{2n})^* \otimes \R^{2n} \to \mathfrak{u}(\R^{2n},J,g),
\] then, by \cite[Theorem 2.2]{AL}, for any $\mu_0 \in \Lambda^2(\R^{2n})^* \otimes \R^{2n}$, there exist $k(t) \in \text{U}(\R^{2n},J,g)$ such that $\bar{\mu}(t) = k(t) \cdot \mu(t)=k(t)h(t) \cdot \mu_0$ for all $t$, where $\mu(t)$ and $\bar{\mu}(t)$ respectively denote the solutions to \eqref{brflow_pl} and \eqref{gauged_brflow_pl}.

This implies that, given an SKT Lie algebra $(\mu_0,J,g)$, assuming there exists a gauged bracket flow such that $\mu_0$ evolves only by rescaling, $\bar{\mu}(t)=c(t) \mu_0$, $c(t) \in \R$, then $(\mu_0,J,g)$ is a pluriclosed soliton on. The converse holds as well.

\subsection{Case (1)} We analyze the pluriclosed flow on strongly unimodular SKT Lie algebras $(\mu(A,c),J,g)$ satisfying $J \mathfrak{n}^1=\mathfrak{n}^\perp$, in terms of the algebraic data $(A,c)$ (recall Remark \ref{datum_c}). Recall that, in this case, we consider the complex structure on $\R^{2n}$ defined by $Je_1=e_{2n}$, $Je_{2l}=e_{2l+1}$, $l=1,\ldots,n-1$.

To treat the Bismut connection $\nabla^B$ and the Chern connection $\nabla^C$ (which we shall need in the following sections) together, we recall that they constitute special points (corresponding to the parameters $\tau=-1$ and $\tau=1$, respectively) of the line $\{\nabla^\tau\}_{\tau \in \R}$ of canonical Hermitian connections introduced by Gauduchon in \cite{Gau}. Explicitly, $\nabla^\tau$ is defined by
\begin{equation} \label{nablatau}
g\left(\nabla_{X}^{\tau} Y, Z\right)=g\left(\nabla_{X}^g Y, Z\right)+\frac{1-\tau}{4} T(X, Y, Z)+\frac{1+\tau}{4} C(X, Y, Z), \quad X,Y,Z \in \Gamma(TM),
\end{equation}
where $\nabla^g$ is  the Levi-Civita connection associated with $g$ and  $T$ and $C$ are given respectively by
\[
T (X, Y,Z) =-Jd\omega (X, Y, Z) =d\omega(J XJ Y,JZ), \;\; C (X, Y, Z)=-d\omega(J X, Y,Z),
\]
where $X$, $Y$ and $Z$ are vector fields. The Ricci-form $\rho^\tau$ of $\nabla^\tau$ is then defined analogously to \eqref{bismutricci}. The same goes for $\rho^\tau_\mu$, with $\mu \in \Lambda^2 (\R^{2n})^* \otimes \R^{2n}$ a bracket operation. When dealing with the Bismut and Chern connections, we shall use the notation $\rho^B$ and $\rho^C$ instead of $\rho^{-1}$ and $\rho^1$, respectively.

\begin{lemma} \label{Ricci_perp}
The Ricci form of $\nabla^\tau$ on the Hermitian Lie algebra $(\mu(a,0,A,\eta),J,g)$ is given by
\begin{equation} \label{ricciform_perp}
	\rho^\tau_{\mu(a,0,A,\eta)}=\left(-a - \tfrac{1}{2} \tau \operatorname{tr} A + \tfrac{1}{2} (\tau-1) \operatorname{tr} \eta\right)(ae^1 \wedge e^{2n} + \eta).
\end{equation}
In particular, $\rho^\tau_{\mu(a,0,A,\eta)}$ is of type $(1,1)$ with respect to $J$. 

In the strongly unimodular case, we have
\begin{equation} \label{ricciform_perp_su}
\rho^{\tau}_{\mu(0,0,A,\eta)}=\tfrac{1}{2} (\tau-1) (\operatorname{tr} \eta)\eta.
\end{equation}
\end{lemma}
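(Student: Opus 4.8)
The plan is to compute the Ricci form $\rho^\tau$ directly from its definition, reducing everything to the curvature of the canonical connection $\nabla^\tau$, which in turn is controlled by the Levi-Civita connection and the torsion/Chern $3$-forms via \eqref{nablatau}. The starting point is Lemma \ref{domg}, which gives $d\omega = (\eta - A^*\omega) \wedge e^{2n}$. From this one extracts the two $3$-forms: $T(X,Y,Z) = d\omega(JX,JY,JZ)$ and $C(X,Y,Z) = -d\omega(JX,Y,Z)$. Since $Je_1 = e_{2n}$ and $J$ preserves $\mathfrak{k}_1$, applying $J$ to $(\eta - A^*\omega)\wedge e^{2n}$ turns the $e^{2n}$ factor into (up to sign) an $e^1$ factor, and $\eta - A^*\omega \in \Lambda^{1,1}\mathfrak{k}_1^*$ is $J$-invariant; so both $T$ and $C$ are explicit multiples of $e^1$ (or $e^{2n}$) wedged with $\eta - A^*\omega$. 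I would first record these explicit expressions, as they feed into every subsequent step.

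Next I would compute the Levi-Civita connection $\nabla^g$ in the adapted basis using the Koszul formula, keeping track only of the components that survive when plugged into the curvature trace defining $\rho^\tau$. Because $g$ is the standard flat inner product and the only nonvanishing brackets are $[e_{2n},X] = B(X)$ for $X \in \mathfrak{n}$ and $[Y,Z] = -\eta(Y,Z)e_1$ for $Y,Z \in \mathfrak{k}_1$, the structure constants are sparse, and $\nabla^g$ is essentially governed by $A$ (the restriction of $B$ to $\mathfrak{k}_1$), the scalar $a$, and $H_\eta$, the skew-symmetric endomorphism representing $\eta$. Then $\nabla^\tau = \nabla^g + \tfrac{1-\tau}{4}T + \tfrac{1+\tau}{4}C$ via \eqref{nablatau}, and I would assemble $R^\tau(X,Y) = [\nabla^\tau_X,\nabla^\tau_Y] - \nabla^\tau_{[X,Y]}$. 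The final trace $\rho^\tau(X,Y) = -\tfrac12\sum_l g(R^\tau(X,Y)e_l,Je_l)$ collects three kinds of contributions: a piece from $a$ (the $e^1\wedge e^{2n}$ component), a piece from $\operatorname{tr}A$ coming from the $\nabla^g$ part, and a piece from $\operatorname{tr}\eta$ coming from the torsion and Chern $3$-form corrections, whose coefficients carry the $\tau$-dependence $\tfrac12\tau$ and $\tfrac12(\tau-1)$ respectively.

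The main obstacle I anticipate is bookkeeping: correctly identifying which curvature components contribute to the trace against $Je_l$ and getting the numerical coefficients of $\operatorname{tr}A$ and $\operatorname{tr}\eta$ exactly right, since these are precisely where the $\tau$-dependence enters and where sign conventions for $T$, $C$, and the Bismut versus Chern normalization are easy to confuse. A useful consistency check is that the output should be $J$-invariant, i.e. of type $(1,1)$; since $ae^1\wedge e^{2n} + \eta$ is manifestly $(1,1)$ (as $\eta\in\Lambda^{1,1}\mathfrak{k}_1^*$ under integrability and $e^1\wedge e^{2n}$ pairs the $J$-related directions $e_1,e_{2n}$), verifying that no $(2,0)+(0,2)$ part appears both confirms the claimed form of $\rho^\tau$ and justifies the concluding sentence of the lemma.

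Finally, the strongly unimodular specialization \eqref{ricciform_perp_su} is immediate: by Remark \ref{strunim_perp}, strong unimodularity forces $a = \operatorname{tr}A = 0$, so the prefactor in \eqref{ricciform_perp} collapses to $\tfrac12(\tau-1)\operatorname{tr}\eta$ and the two-form $ae^1\wedge e^{2n} + \eta$ reduces to $\eta$, yielding $\rho^\tau_{\mu(0,0,A,\eta)} = \tfrac12(\tau-1)(\operatorname{tr}\eta)\eta$ at once.
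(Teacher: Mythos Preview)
Your approach is viable but takes a genuinely different and considerably heavier route than the paper. You propose to build $\nabla^\tau$ from scratch via the Koszul formula for $\nabla^g$ plus the explicit $T$ and $C$ tensors, then compute the full curvature $R^\tau$ and trace against $J$. The paper instead invokes the formula from \cite{Vez} that on a Hermitian Lie algebra $\rho^\tau_\mu = d\theta^\tau_\mu$, where
\[
\theta^\tau_\mu(X) = \tfrac{1}{2}\bigl(\operatorname{tr}(\operatorname{ad}_X \circ J) - \tau \operatorname{tr}\operatorname{ad}_{JX} + (\tau-1)g(\omega, dX^\flat)\bigr).
\]
This collapses the problem to evaluating a single $1$-form on basis vectors: one finds $\theta^\tau$ vanishes on $\mathfrak{k}_1$, takes the value $-a - \tfrac{1}{2}\tau\operatorname{tr}A + \tfrac{1}{2}(\tau-1)\operatorname{tr}\eta$ on $e_1$, and $\tfrac{1}{2}\operatorname{tr}(AJ\rvert_{\mathfrak{k}_1})$ on $e_{2n}$. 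Differentiating then gives the result immediately, since $de^1 = ae^1\wedge e^{2n} + \eta$ and $de^{2n}=0$.

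The payoff of the paper's method is that all the bookkeeping you flag as the main obstacle simply disappears: the three trace terms $\operatorname{tr}(\operatorname{ad}\circ J)$, $\operatorname{tr}\operatorname{ad}$, and $g(\omega, d(\,\cdot\,)^\flat)$ directly produce the coefficients $-a$, $-\tfrac{1}{2}\tau\operatorname{tr}A$, and $\tfrac{1}{2}(\tau-1)\operatorname{tr}\eta$ with no curvature computation at all, and the $(1,1)$ type of $\rho^\tau$ is manifest because it is a scalar multiple of $de^1$. Your direct curvature computation would recover the same answer, but the $\theta^\tau$ formula is the natural tool here and worth knowing. Your derivation of the strongly unimodular case \eqref{ricciform_perp_su} is correct and matches the paper.
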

\begin{proof}
Following \cite{Vez}, we know that $\rho^\tau_\mu=d \theta^{\tau}_\mu$, where $\theta^{\tau}_\mu \in (\R^{2n})^*$ is given by
\begin{equation} \label{thetatau}
\theta^\tau_\mu(X)=\frac{1}{2} \left( \operatorname{tr}(\text{ad}_X \circ J) - \tau \operatorname{tr} \text{ad}_{JX} + (\tau-1)g(\omega,dX^\flat)\right), \quad X \in \R^{2n}.
\end{equation}
Let us then compute an expression for $\theta^\tau_{\mu(a,0,A,\eta)}$. Let $X=e_1$, then
\begin{align*}
	\operatorname{tr}(\text{ad}_{e_1} \circ J)&=g([e_1,Je_1],e_1)=-a,\\
	\operatorname{tr} \text{ad}_{J e_1}&=\operatorname{tr} \text{ad}_{e_{2n}}=a+\operatorname{tr} A, \\
	g( \omega, de^1 ) &= a + g ( \omega, \eta )= a + \operatorname{tr} \eta,
\end{align*}
so that
\[
\theta^{\tau}_{\mu(a,0,A,\eta)}(e_1)= -a - \tfrac{1}{2} \tau \operatorname{tr} A + \tfrac{1}{2} (\tau-1) \operatorname{tr} \eta.
\]
Now, it is easy to see that $\theta^{\tau}_{\mu(a,0,A,\eta)}(X)=0$ for every $X \in \mathfrak{k}_1=\R\left< e_2,\ldots,e_{2n-1} \right>$. Instead, for $X=e_{2n}$, we have
\begin{align*}
\operatorname{tr}(\text{ad}_{e_{2n}} \circ J)&=\operatorname{tr} AJ\rvert_{\mathfrak{k}_1}\\
\operatorname{tr} \text{ad}_{J e_{2n}}&=-\operatorname{tr} \text{ad}_{e_1}=0, \\
g( \omega, de^{2n} ) &= 0,
\end{align*}
so that
\[
\theta^{\tau}_{\mu(a,0,A,\eta)}(e_{2n})=\tfrac{1}{2} \operatorname{tr} AJ\rvert_{\mathfrak{k}_1}.
\]
In conclusion,
\[
\theta^{\tau}_{\mu(a,0,A,\eta)}=\left(-a - \tfrac{1}{2} \tau \operatorname{tr} A + \tfrac{1}{2} (\tau-1) \operatorname{tr} \eta\right) e^1 + \tfrac{1}{2} (\operatorname{tr} AJ\rvert_{\mathfrak{k}_1}) e^{2n}.
\]
The claim then easily follows by differentiating.
\end{proof}

\begin{corollary} \label{BismutRicci_perp}
	The Bismut-Ricci form of the strongly unimodular almost nilpotent SKT Lie algebra $(\mu(A,c),J,g)$ is given by
	\begin{equation} \label{rhoB_perp}
		\rho^B_{\mu(A,c)}=-c^2 \, e^{23}.
	\end{equation}
In particular, $\rho^B_{\mu(A,c)}$ is of type $(1,1)$ with respect to $J$ and the static SKT condition is never satisfied.

The endomorphism $P_{\mu(A,c)}$ is of the form
\begin{equation} \label{Pmu_perp}
P_{\mu(A,c)}=-\tfrac{1}{2} c^2 \left( e^2 \otimes e_2 + e^3 \otimes e_3 \right).
\end{equation}
\end{corollary}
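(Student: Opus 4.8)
The plan is to obtain everything as a direct specialization of Lemma \ref{Ricci_perp}, which already records the whole Gauduchon family $\rho^\tau_{\mu(0,0,A,\eta)}$ on a strongly unimodular Hermitian Lie algebra of this type. First I would set $\tau=-1$ in the strongly unimodular formula \eqref{ricciform_perp_su}, which gives $\rho^B_{\mu(0,0,A,\eta)}=\rho^{-1}_{\mu(0,0,A,\eta)}=-(\operatorname{tr}\eta)\,\eta$. By Remark \ref{datum_c}, in the SKT strongly unimodular case we may take $\eta=c\,e^{23}$ with $c\in\R\setminus\{0\}$, so that $\mu(A,c)=\mu(0,0,A,c\,e^{23})$ and it remains only to evaluate $\operatorname{tr}\eta=g(\omega,\eta)$.

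Next I would compute that trace from the explicit form of $\omega$. Since the adapted unitary basis satisfies $Je_1=e_{2n}$ and $Je_{2l}=e_{2l+1}$, the fundamental form is $\omega=e^1\wedge e^{2n}+\sum_{l=1}^{n-1}e^{2l}\wedge e^{2l+1}$, whose $\mathfrak{k}_1$-part contains precisely the summand $e^{23}$ (the case $l=1$, using $Je_2=e_3$). Pairing against $\eta=c\,e^{23}$ in the induced inner product on $2$-forms then yields $\operatorname{tr}\eta=c$, and substituting back gives $\rho^B_{\mu(A,c)}=-c^2\,e^{23}$, which is \eqref{rhoB_perp}. The $(1,1)$-type is then immediate, either from the corresponding assertion already proved in Lemma \ref{Ricci_perp}, or directly because $e^{23}$ is $J$-invariant thanks to $Je_2=e_3$.

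For the static claim I would use that $\rho^B$ is already of type $(1,1)$, so the static condition \eqref{static} reduces to $-c^2\,e^{23}=\lambda\,\omega$. As $c\neq0$, the left-hand side is a nonzero multiple of $e^{23}$, whereas $\omega$ also carries the independent component $e^1\wedge e^{2n}$; comparing coefficients forces simultaneously $\lambda=-c^2$ and $\lambda=0$, a contradiction, so no $\lambda\in\R$ can work. Finally, to prove \eqref{Pmu_perp} I would unwind the definition $P_{\mu(A,c)}=\tfrac12\,\omega^{-1}(\rho^B)^{1,1}$: writing $S:=\omega^{-1}(\rho^B)^{1,1}$ through the relation $(\rho^B)^{1,1}(X,Y)=\omega(SX,Y)$ and testing on $e_2,e_3$ (with $\omega(e_2,e_3)=1$) shows that $S$ acts as multiplication by $-c^2$ on $\mathrm{span}\{e_2,e_3\}$ and trivially elsewhere, i.e. $S=-c^2(e^2\otimes e_2+e^3\otimes e_3)$, whence $P_{\mu(A,c)}=-\tfrac12 c^2(e^2\otimes e_2+e^3\otimes e_3)$.

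I do not expect a genuine obstacle, since the statement is essentially a corollary of Lemma \ref{Ricci_perp}. The only points that require care are the correct normalization of $\operatorname{tr}\eta=g(\omega,\eta)$ and the index convention implicit in the symbol $\omega^{-1}$ when passing from the $2$-form $(\rho^B)^{1,1}$ to the endomorphism $P_{\mu(A,c)}$; once these are fixed consistently, the three displays \eqref{rhoB_perp}, the static assertion, and \eqref{Pmu_perp} all follow by the short computations above.
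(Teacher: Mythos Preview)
Your proposal is correct and follows essentially the same approach as the paper: the paper's proof simply says to rewrite \eqref{ricciform_perp_su} with $\eta=c\,e^{23}$ and $\tau=-1$, and then notes that a straightforward computation yields \eqref{Pmu_perp}. Your argument is a more detailed unpacking of precisely these two steps, including the explicit evaluation of $\operatorname{tr}\eta=c$ and the verification of the static and $P_{\mu(A,c)}$ claims that the paper leaves implicit.
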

\begin{proof}
To obtain \eqref{rhoB_perp}, we just need to rewrite \eqref{ricciform_perp_su}, with $\eta= c e^{23}$ and $\tau=-1$. Then, a straightforward computation yields \eqref{Pmu_perp}.
\end{proof}

We observe that the endomorphism $P_{\mu(A,c)}$ preserves the decomposition $\R \left <e_1 \right >\oplus \mathfrak{k}_1 \oplus \R \left < e_{2n} \right >$, from which we conclude that the bracket flow starting from a bracket of the form $\mu(A_0,c_0)$ will remain of the form $\mu(A(t),c(t))$. More precisely, we obtain:

\begin{theorem}
Let $(\mu(A_0,c_0),J,g)$ be a strongly unimodular  $2n$-dimensional SKT almost nilpotent Lie algebra. Then the solution to the bracket flow starting from the bracket $\mu(A_0,c_0)$ is of the form $\mu(t)=\mu(A_0,c(t))$, with
\begin{equation} \label{c(t)}
c(t)=\frac{c_0}{\sqrt{1+2c_0^2t}},  \quad  t \in \left(-\frac{1}{2c_0^2},\infty \right).
\end{equation}
In particular, the limit  $(\mu_\infty,J,g)$ is a flat K\"ahler almost abelian Lie algebra.
\end{theorem}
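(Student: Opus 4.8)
The plan is to exploit the explicit description of the endomorphism $P_{\mu(A,c)}$ obtained in Corollary \ref{BismutRicci_perp}, namely $P_{\mu(A,c)}=-\tfrac12 c^2(e^2\otimes e_2+e^3\otimes e_3)$ in \eqref{Pmu_perp}, and to feed it into the bracket flow \eqref{brflow_pl}. The crucial observation is that $P_{\mu(A,c)}$ acts as the scalar $-\tfrac12 c^2$ on $\mathbb{R}\langle e_2,e_3\rangle$ and vanishes on the complementary subspace $\mathbb{R}\langle e_1,e_4,\dots,e_{2n}\rangle$. First I would recall that, in the adapted unitary basis, $\mu(A,c)$ has nonzero components only $\mu(e_2,e_3)=-c\,e_1$ and $\mu(e_{2n},\,\cdot\,)=A(\cdot)$ on $\mathfrak{k}_1$, with $A=\mathrm{diag}(C_{b_1},\dots,C_{b_{n-1}})$ skew-Hermitian.

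Then I would compute $\pi(P_\mu)\mu$ on each pair of basis vectors. On the pair $(e_2,e_3)$ one finds $(\pi(P_\mu)\mu)(e_2,e_3)=P_\mu\mu(e_2,e_3)-\mu(P_\mu e_2,e_3)-\mu(e_2,P_\mu e_3)=c^2\mu(e_2,e_3)=-c^3e_1$, since $P_\mu e_1=0$. The main check is that every other component vanishes: on the pairs $(e_{2n},e_2)$ and $(e_{2n},e_3)$ the two surviving terms cancel, because $A$ interchanges the lines $\mathbb{R} e_2$ and $\mathbb{R} e_3$ on which $P_\mu$ acts by the same scalar, while on any pair involving only $\{e_1,e_4,\dots,e_{2n}\}$ both $\mu$ and $P_\mu$ produce nothing. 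Hence $\pi(P_\mu)\mu$ is supported entirely on the component $(e_2,e_3)\mapsto -c^3e_1$, so the $A$-part of the bracket is frozen and the flow remains of the form $\mu(t)=\mu(A_0,c(t))$. This cancellation on the $A$-brackets is the one nontrivial point, and it is exactly where the skew-Hermitian normal form of $A$ is used.

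Comparing components then reduces \eqref{brflow_pl} to the scalar ODE $\dot c=-c^3$, coming from $\tfrac{d}{dt}(-c\,e_1)=-(\pi(P_\mu)\mu)(e_2,e_3)=c^3e_1$. Since $\tfrac{d}{dt}(c^{-2})=-2c^{-3}\dot c=2$, integrating with $c(0)=c_0$ gives $c^{-2}=c_0^{-2}+2t$, i.e. the claimed $c(t)=c_0/\sqrt{1+2c_0^2t}$, defined precisely on $(-\tfrac1{2c_0^2},\infty)$, the interval on which the radicand stays positive.

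Finally, letting $t\to\infty$ gives $c(t)\to0$, so $\mu_\infty=\mu(A_0,0)$. Here $\eta=0$, the nilradical collapses to the abelian ideal $\mathbb{R}\langle e_1,\dots,e_{2n-1}\rangle$, and $\mathfrak{g}_\infty$ is almost abelian. The integrability conditions \eqref{integrable_perp} persist in the limit, and since $A_0$ is skew-Hermitian one has $A_0^*\omega=0$; thus the K\"ahler condition \eqref{Kahlereq}, $\eta=A_0^*\omega$, holds and $(\mu_\infty,J,g)$ is K\"ahler. Flatness then follows from Milnor's criterion for flat left-invariant metrics: $\mathfrak{g}_\infty=\mathbb{R}^{2n-1}\rtimes_B\mathbb{R} e_{2n}$ is the orthogonal direct sum of the abelian ideal $\mathbb{R}^{2n-1}$ and the abelian line $\mathbb{R} e_{2n}$, with $\mathrm{ad}_{e_{2n}}=\mathrm{diag}(0,A_0)$ skew-symmetric, so the associated left-invariant metric is flat.
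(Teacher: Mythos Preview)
Your proof is correct and follows essentially the same approach as the paper's: both feed the explicit expression \eqref{Pmu_perp} for $P_{\mu(A,c)}$ into the bracket flow, observe that the $A$-part is fixed while the $c$-part satisfies $\dot c=-c^3$, and integrate. The paper packages the verification that $A$ stays constant via the commutator identity $\tfrac{d}{dt}(\operatorname{ad}_{\mu}e_{2n})=-[P_\mu,\operatorname{ad}_{\mu}e_{2n}]+\operatorname{ad}_{\mu}(P_\mu e_{2n})=0$, whereas you check $\pi(P_\mu)\mu$ on basis pairs directly; and for flatness of the limit the paper argues via vanishing of the Bismut--Ricci form together with \cite{AK}, while you invoke Milnor's criterion---both are valid and amount to the same thing.
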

\begin{proof}
Using the expression \eqref{Pmu_perp} for the endomorphism $P_{\mu(A,c)}$, one can compute
\[
\tfrac{d}{dt}(\text{ad}_{\mu(t)} e_{2n})=\text{ad}_{\tfrac{d}{dt} \mu(t)} e_{2n}=-\text{ad}_{\pi(P_{\mu(t)}) \mu(t)} e_{2n}=-\left[ P_{\mu(t)}, \text{ad}_{\mu(t)} e_{2n} \right] + \text{ad}_{\mu(t)} (P_{\mu(t)} e_{2n})=0,
\]
showing that the algebraic datum $A$ remains constant. Instead
\[
(\tfrac{d}{dt} c(t)) e_1= - \tfrac{d}{dt} \mu(t)(e_2,e_3)=-\mu(t)(P_{\mu(t)} e_2,e_3)-\mu(t)(e_2,P_{\mu(t)} e_3)=c(t)^2 \mu(t)(e_2,e_3)=-c(t)^3 e_1,
\]
that is,
\[
\tfrac{d}{dt} c(t) = -c(t)^3,
\]
yielding \eqref{c(t)}. In particular, $\lim_{t \to \infty} c(t)=0$, so that the limit bracket $\mu_\infty$ is almost abelian. The resulting Hermitian Lie algebra is K\"ahler and Ricci-flat (hence flat, by \cite{AK}) since, along the flow, one has
\[
d_{\mu(t)}\omega=c(t)\, e^2 \wedge e^3 \wedge e^{2n},\quad \rho^B_{\mu(t)}=-c(t)^2 \, e^2 \wedge e^3,
\]
both of which vanish at the limit.
\end{proof}

Using the general results in \cite{Lau0}, we therefore obtain the following result.
Here and in the rest of the paper, we shall say that a sequence of Hermitian Lie groups $(G,J,g_t)$ converges in the Cheeger-Gromov sense to a Hermitian Lie group $(G_{\infty},J_{\infty},g_{\infty})$ if and only if there exist a subsequence $\{t_l\}_{l \in \mathbb{N}}$, a collection $\{U_l\}_{k \in \mathbb{N}}$ of open sets exhausting $G_{\infty}$ and containing its identity element and maps $\varphi_l \colon U_l \to G$ which preserve the respective identity elements, which are biholomorphisms (between $J_{\infty}$ and $J$) on their images and such that $\varphi_l^*g_{t_l}$ converges to $g_{\infty}$ as $l \to \infty$ in $C^{\infty}$ topology, uniformly over compact subsets.

\begin{corollary}
Let $(J,g_0)$ be a left-invariant SKT structure on a $2n$-dimensional  strongly unimodular almost nilpotent Lie group $G$ with nilradical $\mathfrak{n} \subset \mathfrak{g}$ having one-dimensional commutator. Assume that the SKT structure satisfies $J \mathfrak{n}^1 =\nperp$. Then the pluriclosed flow starting from $(J,g_0)$ is defined for all positive times and converges in the Cheeger-Gromov sense to a flat  K\"ahler almost abelian Lie group.
\end{corollary}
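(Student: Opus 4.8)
The plan is to deduce the Corollary from the preceding Theorem by means of the bracket-flow formalism recalled in this section, following \cite{AL, Lau0}. First I would reduce the given left-invariant data to the model setting: fixing an adapted unitary basis of $(\mathfrak{g},J,g_0)$ as in Subsection \ref{Hermitian_perp_sec}, the SKT assumption $J\mathfrak{n}^1=\nperp$ together with Proposition \ref{perp_SKTprop} and Remark \ref{datum_c} shows that $(J,g_0)$ is isomorphic, as a Hermitian Lie algebra, to a model $(\mu(A_0,c_0),J,g)$ with $A_0\in\mathfrak{u}(\mathfrak{k}_1,J\rvert_{\mathfrak{k}_1},g)$ and $c_0\in\R-\{0\}$. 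Since left-invariant initial conditions produce left-invariant solutions, the pluriclosed flow on $G$ reduces to an \textsc{ode} on $\mathfrak{g}$ which, up to the time-dependent biholomorphisms $h(t)\in\mathrm{GL}(2n,J)$ and the unitary gauge of the framework, is equivalent to the bracket flow \eqref{brflow_pl} starting from $\mu(A_0,c_0)$.

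Next I would invoke the preceding Theorem, which integrates this bracket flow explicitly: the solution stays of the form $\mu(t)=\mu(A_0,c(t))$, with $c(t)$ given by \eqref{c(t)} and defined on $(-\tfrac{1}{2c_0^2},\infty)$. As this interval contains all $t>0$, and as the maximal existence interval of \eqref{brflow_pl} coincides with that of the pluriclosed flow (the two being conjugate through the automorphisms $h(t)$), immortality of the pluriclosed flow follows at once. Moreover, $c(t)\to 0$ as $t\to\infty$, so $\mu(t)$ converges, \emph{without any rescaling}, to the limit bracket $\mu_\infty=\mu(A_0,0)$ inside the variety $\Lambda^2(\R^{2n})^*\otimes\R^{2n}$, and by the preceding Theorem $(\mu_\infty,J,g)$ is a flat K\"ahler almost abelian Lie algebra.

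Finally, to upgrade this convergence of brackets to Cheeger-Gromov convergence of the Hermitian Lie groups, I would apply the general results of \cite{Lau0}: since $\mu(t)\to\mu_\infty$ in the variety of Lie brackets while the complex structure $J$ and the inner product $g$ are kept fixed throughout, the Hermitian Lie groups $(G,J,g_t)$ converge in the Cheeger-Gromov sense to the simply connected Lie group $G_\infty$ associated with $(\R^{2n},\mu_\infty)$, endowed with the left-invariant structure induced by $(J,g)$, which is the claimed flat K\"ahler almost abelian Lie group. The step requiring the most care is precisely this last transfer: one must check that the limit bracket $\mu_\infty$ still defines a Lie algebra of the same dimension $2n$ (so that no collapsing occurs) and remains compatible with $J$, both of which hold here because $\mu_\infty$ lies in the closure of $\mathrm{GL}(2n,J)\cdot\mu_0$ and is a genuine $2n$-dimensional, $J$-compatible bracket; granting this, the convergence statement of \cite{Lau0} applies verbatim and concludes the proof.
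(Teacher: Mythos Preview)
Your proposal is correct and follows essentially the same route as the paper: the Corollary is stated immediately after the preceding Theorem with the remark ``Using the general results in \cite{Lau0}, we therefore obtain the following result,'' and no separate proof is written out. Your reduction to the model $(\mu(A_0,c_0),J,g)$, appeal to the explicit bracket-flow solution, and invocation of \cite{Lau0} for the passage from bracket convergence to Cheeger--Gromov convergence is exactly the intended argument, with your final paragraph supplying the details the paper leaves implicit.
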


\subsection{Case (2)} Assuming $J \mathfrak{n}^1 \subset \mathfrak{n}$, we analyze the behaviour of the pluriclosed flow on $6$-dimensional strongly unimodular SKT Lie algebras $(\mu(\Xi),J,g)$ with algebraic data \eqref{Xi} determining \eqref{B_sub} and \eqref{eta_sub}.

In this case, recall that we are considering the complex structure on $\R^{2n}$ given by $J e_{2l-1}=e_{2l}$, $l=1,\ldots,n$.

According to Proposition \ref{SKT_sub_prop}, we must split the discussion into two parts: we shall start with the one determined by \eqref{SKT_sub2}.

\begin{lemma}
The Bismut-Ricci form of the $6$-dimensional SKT Lie algebra $(\mu(\Xi),J,g)$ satisfying \eqref{SKT_sub2} has the following expression
\begin{equation}
\begin{aligned}
\rho^{B}_{\mu(\Xi)}=&	\tfrac{1}{c} a (c^2 + \lVert \alpha \rVert^2) e^{25}  + av_2 e^{26}- (c^2+\lVert \alpha \rVert^2) e^{34}+ \tfrac{1}{c} (c^2 + \lVert \alpha \rVert^2)J\alpha \wedge e^5 \\ 
&+ \tfrac{1}{c} \left( aA - (c^2+\lVert \alpha \rVert^2+q^2){\rm Id}_{\mathfrak{k}_3} \right)^* \alpha \wedge e^6 \\
&- \tfrac{1}{c^2} \left( \lVert \alpha \rVert^2 (c^2 + \lVert \alpha \rVert^2+q^2+a^2) + c^2v_2^2 \right)e^{56}.
\end{aligned}
\end{equation}
Hence, the $(1,1)$-part of $\rho^{-1}_{\mu(\Xi)}$ is given by
\begin{equation}
\begin{aligned}
\left( \rho^{B}_{\mu(\Xi)} \right)^{1,1}=&\tfrac{1}{2} av_2e^{15}- \tfrac{1}{2c}a(c^2+\lVert \alpha \rVert^2)e^{16} +\tfrac{1}{2c} a (c^2 + \lVert \alpha \rVert^2) e^{25}  + \tfrac{1}{2}av_2 e^{26} - (c^2+\lVert \alpha \rVert^2) e^{34} \\
&- \tfrac{1}{2c} \left( aA - (2c^2 + 2\lVert \alpha \rVert^2 + q^2 ){\rm Id}_{\mathfrak{k}_3} \right)^*(J\alpha)	\wedge e^5\\
&+\tfrac{1}{2c} \left( aA - (2c^2 + 2\lVert \alpha \rVert^2 + q^2 ){\rm Id}_{\mathfrak{k}_3} \right)^*\alpha \wedge e^6 \\
&- \tfrac{1}{c^2} \left( \lVert \alpha \rVert^2 (c^2 + \lVert \alpha \rVert^2+a^2+q^2) + c^2v_2^2 \right)e^{56}.
\end{aligned}
\end{equation}
\end{lemma}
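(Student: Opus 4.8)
The plan is to compute $\rho^B_{\mu(\Xi)}=\rho^{-1}_{\mu(\Xi)}$ through the Gauduchon--Vezzoni identity $\rho^\tau_\mu=d\theta^\tau_\mu$ from \cite{Vez} already exploited in the proof of Lemma \ref{Ricci_perp}, specialised to the Bismut value $\tau=-1$. Since a strongly unimodular Lie algebra is unimodular, one has $\operatorname{tr}\text{ad}_{JX}=0$ for every $X$, so \eqref{thetatau} collapses to
\[
\theta^B_{\mu(\Xi)}(X)=\tfrac12\operatorname{tr}(\text{ad}_X\circ J)-g(\omega,dX^\flat),\qquad X\in\R^6.
\]
First I would make the SKT hypotheses \eqref{SKT_sub2} explicit at the level of structure equations: substituting $a_2=-a$, $\nu=-J\alpha$, $v_1=\tfrac1c\lVert\alpha\rVert^2$, $v=-\tfrac1c(A^t+a\,{\rm Id}_{\mathfrak{k}_3})(\alpha^\sharp)$ and $A=C_q$ (cf. \eqref{Cb}) into \eqref{B_sub1} and \eqref{eta_sub} yields closed expressions for $de^1,\dots,de^6$ directly from $B=\text{ad}_{e_6}\rvert_{\mathfrak n}$ and from $\eta=de^1\rvert_{\Lambda^2\mathfrak n}$ (equivalently these may be read off Lemma \ref{sub_domega}).

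Next I would evaluate $\theta^B_{\mu(\Xi)}$ on each $e_i$. The trace $\operatorname{tr}(\text{ad}_{e_i}\circ J)$ receives only diagonal contributions and is a short computation from $B$ and $\eta$, exactly as in Lemma \ref{Ricci_perp}, while $g(\omega,de^i)$ is read off from $de^i$ using $\omega=e^{12}+e^{34}+e^{56}$. Assembling $\theta^B_{\mu(\Xi)}=\sum_i\theta^B_{\mu(\Xi)}(e_i)\,e^i$ and differentiating gives $\rho^B_{\mu(\Xi)}=d\theta^B_{\mu(\Xi)}$; collecting the coefficients of $e^{25}$, $e^{26}$, $e^{34}$, of $J\alpha\wedge e^5$ and $\alpha\wedge e^6$, and of $e^{56}$ produces the first displayed formula. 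Here the relation $v=-\tfrac1c(A^t+a\,{\rm Id}_{\mathfrak{k}_3})(\alpha^\sharp)$ is precisely what merges the separate $e^{36}$ and $e^{46}$ contributions (coming from $\theta^B(e_1)\,de^1$ and from $\theta^B(e_3)\,de^3+\theta^B(e_4)\,de^4$) into the single operator term $\tfrac1c\big(aA-(c^2+\lVert\alpha\rVert^2+q^2){\rm Id}_{\mathfrak{k}_3}\big)^*\alpha\wedge e^6$.

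For the second formula I would project onto the $(1,1)$-part via $\beta^{1,1}=\tfrac12\big(\beta+\beta(J\cdot,J\cdot)\big)$. The terms $e^{34}$ and $e^{56}$ are already of type $(1,1)$, whereas $e^{25}\mapsto\tfrac12(e^{25}-e^{16})$ and $e^{26}\mapsto\tfrac12(e^{26}+e^{15})$; using $\alpha\circ J=-J\alpha$ on $\mathfrak{k}_3^*$, the contributions $J\alpha\wedge e^5$ and $\alpha\wedge e^6$ mix into one another, and the commutation $[A,J\rvert_{\mathfrak{k}_3}]=0$ from the integrability conditions \eqref{sub_integrable} then permits recombining the two pieces wedged with $e^5$ (respectively $e^6$) into $-\tfrac1{2c}\big(aA-(2c^2+2\lVert\alpha\rVert^2+q^2){\rm Id}_{\mathfrak{k}_3}\big)^*(J\alpha)$ (respectively its analogue with $\alpha$), exactly as in the statement. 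I expect the main obstacle to be purely the bookkeeping: tracking the many terms through the substitutions \eqref{SKT_sub2}, handling the adjoint $(\cdot)^*$ acting on $\mathfrak{k}_3^*$, and above all keeping the signs straight in the $(1,1)$-projection, since it is there that the $e^5$- and $e^6$-components must recombine into the compact operator expressions of the conclusion.
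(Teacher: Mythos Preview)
Your proposal is correct and follows essentially the same route as the paper: compute $\theta^{-1}_{\mu(\Xi)}$ via \eqref{thetatau} (with the $\operatorname{tr}\operatorname{ad}_{JX}$ term dropped by unimodularity), evaluate it on the adapted basis, differentiate using the structure equations and the relation $A^2=-q^2\,\mathrm{Id}_{\mathfrak{k}_3}$, and then take the $(1,1)$-projection. The paper's proof is slightly terser in that it writes $\theta^{-1}_{\mu(\Xi)}$ in closed form and leaves the differentiation and the $(1,1)$-projection implicit, but the content is the same.
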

\begin{proof}
As we did in the proof of Lemma \ref{Ricci_perp}, we can compute an expression for the $1$-form $\theta_{\mu(\Xi)}^{-1}$ in \eqref{thetatau}, exploiting that the second summand vanishes, since $\mu(\Xi)$ is unimodular. Letting $X \in \mathfrak{k}_3$ be arbitrary, we have
\begin{align*}
	\theta^{-1}_{\mu(\Xi)}(e_1)=&\tfrac{1}{2} \operatorname{tr}(\text{ad}_{e_1} \circ J)-g(\omega,de^1)
	                           =-c- \tfrac{1}{c} \lVert \alpha \rVert^2, \\
	\theta^{-1}_{\mu(\Xi)}(e_2)=&\tfrac{1}{2} \operatorname{tr}(\text{ad}_{e_2} \circ J)-g(\omega,de^2)
	                           =-v_2, \\
	\theta^{-1}_{\mu(\Xi)}(X)=&\tfrac{1}{2} \operatorname{tr}(\text{ad}_{X} \circ J)-g(\omega,dX^{\flat})
	                           =\tfrac{1}{c}(A+a\,\text{Id}_{\mathfrak{k}_3})^*\alpha(X), \\                   
	\theta^{-1}_{\mu(\Xi)}(e_5)=&\tfrac{1}{2} \operatorname{tr}(\text{ad}_{e_5} \circ J)-g(\omega,de^5)
	=-2a, \\
	\theta^{-1}_{\mu(\Xi)}(e_6)=&\tfrac{1}{2} \operatorname{tr}(\text{ad}_{e_6} \circ J)-g(\omega,de^6)
	=q,      
\end{align*}
yielding
\begin{equation}
	\theta^{-1}_{\mu(\Xi)}=-\tfrac{1}{c}(c^2+\lVert \alpha \rVert^2)e^1 - v_2e^2 + \tfrac{1}{c}(A+a\,\text{Id}_{\mathfrak{k}_3})^*\alpha - 2a e_5 + qe_6.
\end{equation}
The claim then follows by differentiation, exploiting that $A^2=-q^2\text{Id}_{\mathfrak{k}_2}$.
\end{proof}

\begin{lemma}
The symmetric endomorphism $P_{\mu(\Xi)}$ associated with the $6$-dimensional almost nilpotent SKT Lie algebra $(\mu(\Xi),J,g)$ satisfying \eqref{SKT_sub2} is given by
\[
P_{\mu(\Xi)}=-\frac{1}{4}\left( \begin{array}{c|c|c|c|c}
	0 & 0 & \phantom{\hspace{1.5em}}0\phantom{\hspace{1.5em}} & \frac{a}{c} (c^2 + \lVert \alpha \rVert^2) & a v_2 \\ \hline
	0 & 0 & 0 & -a v_2 & \frac{a}{c} (c^2 + \lVert \alpha \rVert^2)  \\ \hline
	\vphantom{\begin{pmatrix} 0 \\ 0 \\ 0 \end{pmatrix}}0 & 0 & 2(c^2 + \lVert \alpha \rVert^2){\rm Id}_{\mathfrak{k}_3} & Jz & -z \\ \hline
	\frac{a}{c} (c^2 + \lVert \alpha \rVert^2) & a v_2  & (Jz)^t  & 2r & 0 \\ \hline
	-a v_2 & \frac{a}{c} (c^2 + \lVert \alpha \rVert^2) & -z^t & 0 & 2r
\end{array} \right),
\]
with
\begin{equation}
\begin{aligned}
r(\Xi)=&\tfrac{1}{c^2}\left( \lVert \alpha \rVert^2 (c^2 + \lVert \alpha \rVert^2+a^2+q^2) + c^2(v_2^2+2a^2) \right),\\
z(\Xi)=&\tfrac{2}{c}\left((aA-(c^2+\lVert \alpha \rVert^2 + q^2){\rm Id}_{\mathfrak{k}_3})^*\alpha\right)^{\sharp}.
\end{aligned}
\end{equation}
\end{lemma}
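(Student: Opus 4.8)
The starting point is the defining relation $P_\mu=\tfrac12\omega^{-1}(\rho^B_\mu)^{1,1}$, together with the closed-form expression for $(\rho^B_{\mu(\Xi)})^{1,1}$ established in the previous lemma under the SKT conditions \eqref{SKT_sub2}. The plan is to treat the computation as a purely algebraic index-raising. To a $(1,1)$-form $\sigma$ I associate its $g$-dual skew-symmetric endomorphism $S_\sigma$, determined by $\sigma(X,Y)=g(S_\sigma X,Y)$, which commutes with $J$ precisely because $\sigma$ is of type $(1,1)$. Since $\omega=g(J\cdot,\cdot)$ corresponds to $J$ itself, one has $\omega^{-1}\sigma=-J\circ S_\sigma$ as endomorphisms, and the identity $[J,S_\sigma]=0$ guarantees that $-J\circ S_\sigma$ is $g$-symmetric. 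Hence $P_{\mu(\Xi)}=-\tfrac12\,J\,S_{(\rho^B)^{1,1}}$, and the lemma reduces to writing this single endomorphism in matrix form with respect to the decomposition $\R\langle e_1\rangle\oplus\R\langle e_2\rangle\oplus\mathfrak{k}_3\oplus\R\langle e_5\rangle\oplus\R\langle e_6\rangle$, recalling that here $Je_1=e_2$, $Je_3=e_4$, $Je_5=e_6$.

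I would then process $(\rho^B_{\mu(\Xi)})^{1,1}$ term by term. The monomials $e^{15},e^{16},e^{25},e^{26}$, with legs in $\{e_1,e_2,e_5,e_6\}$, feed—after applying $-\tfrac12 J$, which sends $e_1\mapsto e_2$ and $e_5\mapsto e_6$—into the off-diagonal corner blocks linking $\R\langle e_1,e_2\rangle$ with $\R\langle e_5,e_6\rangle$; a direct check of, for instance, $g(P_{\mu(\Xi)}e_5,e_1)=\tfrac12(\rho^B)^{1,1}(e_5,e_2)$ reproduces the entries $\tfrac{a}{c}(c^2+\lVert\alpha\rVert^2)$ and $av_2$ verbatim. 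The term $-(c^2+\lVert\alpha\rVert^2)e^{34}$ has $g$-dual $-(c^2+\lVert\alpha\rVert^2)J\rvert_{\mathfrak{k}_3}$, so that $-\tfrac12 J$ of it equals $\tfrac12(c^2+\lVert\alpha\rVert^2)J^2=-\tfrac12(c^2+\lVert\alpha\rVert^2){\rm Id}_{\mathfrak{k}_3}$, which is exactly the stated $\mathfrak{k}_3$-diagonal block. The remaining data, namely the two $\mathfrak{k}_3^*$-valued terms $(\cdots)^*(J\alpha)\wedge e^5$ and $(\cdots)^*\alpha\wedge e^6$ and the $e^{56}$-coefficient, must then be turned into the off-diagonal $\mathfrak{k}_3$--$\{e_5,e_6\}$ blocks and the common $e_5,e_6$-diagonal scalar.

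The delicate step, where I expect the real work to lie, is the assembly of these last contributions into the single vector $z(\Xi)$ and the single scalar $r(\Xi)$. Here one must raise the $\mathfrak{k}_3^*$-valued coefficients to $\mathfrak{k}_3$-vectors via $\sharp$, use $A^t=-A$ and $A^2=-q^2\,{\rm Id}_{\mathfrak{k}_3}$ (valid since $A\in\mathfrak{u}(\mathfrak{k}_3,J\rvert_{\mathfrak{k}_3},g)$ with $\operatorname{Spec}A=\{\pm iq\}$ and $[A,J\rvert_{\mathfrak{k}_3}]=0$), and substitute the SKT relations \eqref{SKT_sub2}, namely $\nu=-J\alpha$, $v_1=\tfrac1c\lVert\alpha\rVert^2$ and $v=-\tfrac1c(A^t+a\,{\rm Id}_{\mathfrak{k}_3})(\alpha^\sharp)$, to collapse the various pieces to the stated closed forms. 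Tracking how $J\rvert_{\mathfrak{k}_3}$ interchanges the $e^5$- and $e^6$-legs under $-\tfrac12 J$ is what produces the pairing $z$ versus $Jz$ in the two off-diagonal blocks, and the combination of the $e^{56}$-coefficient with the above substitutions yields the diagonal scalar $2r$. The $g$-symmetry of the answer—the appearance of $(Jz)^t$ and $-z^t$ in the $e_5$- and $e_6$-rows over $\mathfrak{k}_3$, and the vanishing of the $(e_5,e_6)$-entry—is forced a priori by $[J,S_{(\rho^B)^{1,1}}]=0$, and I would use it as a running consistency check on the bookkeeping.
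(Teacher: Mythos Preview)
Your approach is correct and is essentially what the paper does: the lemma is stated without proof precisely because it follows by the direct index-raising $P_{\mu(\Xi)}=\tfrac12\,\omega^{-1}(\rho^B_{\mu(\Xi)})^{1,1}=-\tfrac12\,J\,S_{(\rho^B)^{1,1}}$ applied term-by-term to the expression for $(\rho^B_{\mu(\Xi)})^{1,1}$ obtained in the preceding lemma. One small simplification: the SKT substitutions from \eqref{SKT_sub2} that you plan to invoke are already built into that $(1,1)$-form, so the assembly of $z(\Xi)$ and $r(\Xi)$ is a purely linear-algebraic read-off rather than a further reduction.
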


Now, the endomorphism $P_{\mu(\Xi)}$ behaves badly with respect to the decomposition $\R \left < e_1  \right>\oplus \R \left < e_2  \right > \oplus \mathfrak{k}_3 \oplus \R  \left < e_{5} \right > \oplus \R  \left < e_{6} \right >$: for example, $P_{\mu(\Xi)}$ does not preserve the nilradical $\mathfrak{n}= \R \left<e_1,\ldots,e_{5}\right>$. This implies that, in order to translate the bracket flow into a flow for the associated algebraic data, one needs to introduce a gauge correction, provided by some skew-Hermitian endomorphism $U_{\mu(\Xi)} \in \mathfrak{u}(\R^{6},J,g)$. In our case, one can pick, for example,
\begin{equation}
	U_{\mu(\Xi)}=-\frac{1}{4}\left( \begin{array}{c|c|c|c|c}
		0 & 0 & \phantom{\hspace{1.5em}}0\phantom{\hspace{1.5em}} & -\frac{a}{c} (c^2 + \lVert \alpha \rVert^2) & -a v_2 \\ \hline
		0 & 0 & 0 & a v_2 & -\frac{a}{c} (c^2 + \lVert \alpha \rVert^2)  \\ \hline
		\vphantom{\begin{pmatrix} 0 \\ 0 \\ 0 \end{pmatrix}}0 & 0 & 0 & -Jz & z \\ \hline
		\frac{a}{c} (c^2 + \lVert \alpha \rVert^2) & a v_2  & (Jz)^t  & 0 & 0 \\ \hline
		-a v_2 & \frac{a}{c} (c^2 + \lVert \alpha \rVert^2) & -z^t & 0 & 0
	\end{array} \right),
\end{equation}
yielding the gauge-corrected endomorphism
\begin{equation} \label{Pgauge_sub1}
P_{\mu(\Xi)}-U_{\mu(\Xi)}=-\frac{1}{2}\left( \begin{array}{c|c|c|c|c}
	0 & 0 & \phantom{\hspace{1.5em}}0\phantom{\hspace{1.5em}} & \frac{a}{c} (c^2 + \lVert \alpha \rVert^2) & a v_2 \\ \hline
	0 & 0 & 0 & -a v_2 & \frac{a}{c} (c^2 + \lVert \alpha \rVert^2)  \\ \hline
	\vphantom{\begin{pmatrix} 0 \\ 0 \\ 0 \end{pmatrix}}0 & 0 & (c^2 + \lVert \alpha \rVert^2){\rm Id}_{\mathfrak{k}_3} & Jz & -z \\ \hline
	0 & 0  & 0  & r & 0 \\ \hline
	0 & 0 & 0 & 0 & r
\end{array} \right),
\end{equation}

\begin{proposition}
Let $(\mu(\Xi_0),J,g)$ be a $6$-dimensional strongly unimodular SKT almost nilpotent Lie algebra satisfying \eqref{SKT_sub2}. Then, the gauged bracket flow \eqref{gauged_brflow_pl} starting from the bracket $\mu(\Xi_0)$ is equivalent to the \textsc{ode} system
\begin{equation} \label{pluriclosed_ode_sub1}
\begin{cases}
\tfrac{d}{dt} a =-\tfrac{1}{2}ra, &a(0)=a_0,\\
\tfrac{d}{dt} q =-\tfrac{1}{2}rq, &q(0)=q_0,\\
\tfrac{d}{dt} v_2 =-(r+a^2)v_2,&v_2(0)=(v_2)_0,\\
\tfrac{d}{dt} c=-(c^2+\lVert \alpha \rVert^2)c,&c(0)=c_0,\\
\tfrac{d}{dt} \alpha=\left(-\tfrac{1}{2}r - \tfrac{1}{2c^2}(3c^4+3\lVert \alpha \rVert^2 + q^2) \right)\alpha + \tfrac{1}{2} ac^2qJ\alpha,&\alpha(0)=\alpha_0.	
\end{cases}
\end{equation}
\end{proposition}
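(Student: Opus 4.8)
The plan is to substitute the explicit gauge-corrected endomorphism $T\coloneqq P_{\mu(\Xi)}-U_{\mu(\Xi)}$ of \eqref{Pgauge_sub1} into the gauged bracket flow \eqref{gauged_brflow_pl} and compute $\pi(T)\mu(\Xi)$ directly on a set of generating brackets, reading off the resulting \textsc{ode}s for the free parameters $a,q,v_2,c,\alpha$ appearing in \eqref{Xinew}. Throughout I would use $(\pi(T)\mu)(X,Y)=T\mu(X,Y)-\mu(TX,Y)-\mu(X,TY)$ together with the fact that, once \eqref{SKT_sub2} is imposed, the bracket $\mu(\Xi)$ is completely encoded by $B$ in \eqref{B_sub1} (now with $a_2=-a$, $A=C_q$ and $J\alpha+\nu=0$) and by $\eta=c\,e^{34}-J\alpha\wedge e^{5}-a\,e^{25}$, so that the only nonzero generating brackets are $\mu(e_6,e_2)=-a\,e_2$, $\mu(e_6,e_3)$, $\mu(e_6,e_4)$, $\mu(e_6,e_5)$ and $\mu(e_3,e_4)=-c\,e_1$.

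The first step is the structural observation that makes the whole scheme work: by construction $T$ in \eqref{Pgauge_sub1} is block upper-triangular with respect to the splitting $\R\langle e_1\rangle\oplus\R\langle e_2\rangle\oplus\mathfrak{k}_3\oplus\R\langle e_5\rangle\oplus\R\langle e_6\rangle$. In particular $Te_1=Te_2=0$, $T$ maps $\mathfrak{k}_3$ into itself via the scalar $-\tfrac12(c^2+\lVert\alpha\rVert^2)$, and $T$ preserves the nilradical $\mathfrak{n}=\R\langle e_1,\dots,e_5\rangle$. I would first check that, precisely because of this, $\pi(T)\mu(\Xi)$ is again a bracket of the form $\mu(\dot\Xi)$ whose data satisfy the same structural constraints \eqref{SKT_sub2}; this guarantees that the gauged flow remains inside the parametrized SKT family and therefore reduces to a flow on the tuple $(a,q,v_2,c,\alpha)$.

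The second step is the extraction of the individual equations. Evaluating $\pi(T)\mu$ on $\mu(e_6,e_2)=-a\,e_2$ and reading the $e_2$-component yields the equation for $a$ in \eqref{pluriclosed_ode_sub1}, while doing the same on $\mu(e_3,e_4)=-c\,e_1$ gives the equation for $c$, since $T|_{\mathfrak{k}_3}$ acts by the scalar above and $Te_1=0$. The evolution of $v_2$ comes from the $e_2$-component of $\pi(T)\mu(e_6,e_5)$, and those of $q$ and $\alpha$ from the $\mathfrak{k}_3$- and $e_1$-components of $\pi(T)\mu(e_6,e_3)$ and $\pi(T)\mu(e_6,e_4)$; here the terms of $T$ built from $z=\tfrac{2}{c}\big((aA-(c^2+\lVert\alpha\rVert^2+q^2)\mathrm{Id}_{\mathfrak{k}_3})^*\alpha\big)^\sharp$ and the off-diagonal $Jz$-block feed the skew contribution $\tfrac12 ac^2 q\,J\alpha$ to $\tfrac{d}{dt}\alpha$, whereas the scalar contributions assemble into its symmetric part. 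Collecting these reproduces precisely the system \eqref{pluriclosed_ode_sub1}.

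The last step, and the main obstacle, is the consistency check that the SKT family is genuinely invariant under the flow. The constrained data $v_1=\tfrac1c\lVert\alpha\rVert^2$, $v=-\tfrac1c(A^t+a\,\mathrm{Id}_{\mathfrak{k}_3})(\alpha^\sharp)$ and $\nu=-J\alpha$ must evolve, under $\pi(T)\mu$, in agreement with differentiating these relations by means of the five \textsc{ode}s already obtained; verifying this requires careful bookkeeping of the $\mathfrak{k}_3$-valued and $e_1$-valued contributions in $\pi(T)\mu(e_6,e_5)$ and in $\pi(T)\mu(X,e_5)$ for $X\in\mathfrak{k}_3$. This is exactly where the specific choice of gauge $U_{\mu(\Xi)}$ is indispensable: without it $P_{\mu(\Xi)}$ fails to preserve $\mathfrak{n}$, the flow would leave the family, and no reduction to \eqref{pluriclosed_ode_sub1} would be possible. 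Once this compatibility is established, the gauged bracket flow and the system \eqref{pluriclosed_ode_sub1} are seen to be equivalent, which completes the proof.
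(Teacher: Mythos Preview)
Your approach is essentially the same as the paper's: the authors compute $\tfrac{d}{dt}(\text{ad}_\mu e_6)=-[P_\mu-U_\mu,\text{ad}_\mu e_6]+\text{ad}_\mu((P_\mu-U_\mu)e_6)$ using \eqref{Pgauge_sub1}, the matrix $B$ in \eqref{B_sub1} and $A=-qJ|_{\mathfrak{k}_3}$ to read off the evolutions of $a,q,v_2,\alpha$, and then separately evaluate $\tfrac{d}{dt}\mu(e_3,e_4)$ to obtain the equation for $c$. Your more explicit pair-by-pair evaluation of $\pi(T)\mu$ and the added consistency check for the constrained data are sound elaborations of this same direct computation.
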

\begin{proof}
One computes
\[
\tfrac{d}{dt}(\text{ad}_\mu e_{6})=\text{ad}_{\tfrac{d}{dt} \mu} e_{6}=-\text{ad}_{\pi(P_\mu - U_\mu) \mu} e_{6}=-\left[ P_\mu - U_\mu, \text{ad}_\mu e_{6} \right] + \text{ad}_{\mu} ((P_\mu - U_\mu)e_{6}).
\]
The evolution equations for $a$, $q$, $v_2$ and $\alpha$ now follow from an explicit computation using \eqref{Pgauge_sub1}, the matrix $B$ associated with $\text{ad}_{e_{6}}$ in \eqref{B_sub1} and the fact that $A=-qJ\rvert_{\mathfrak{k}_3}$.

As for $c$, one easily computes
\[
\tfrac{d}{dt}c\,e_1=-\tfrac{d}{dt} \mu(e_3,e_4)=-\mu((P_{\mu}-U_{\mu})e_3,e_4)-\mu(e_3,(P_{\mu}-U_{\mu})e_4)=-(c^2+\lVert\alpha\rVert^2)ce_1,
\]
which finishes the proof.
\end{proof}

\begin{proposition}
	Let $(\mu(\Xi_0),J,g)$ be a $6$-dimensional strongly unimodular SKT almost nilpotent Lie algebra satisfying \eqref{SKT_sub2}. Then, the pluriclosed flow starting from such SKT structure is defined for all positive times.
\end{proposition}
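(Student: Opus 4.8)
The plan is to reduce the assertion to the explicit \textsc{ode} system \eqref{pluriclosed_ode_sub1} governing the evolution of the algebraic data $(a,q,v_2,c,\alpha)$, and then to prove that no component can blow up in finite time and that $c$ cannot reach the singular locus $\{c=0\}$ of the system. Since the SKT condition \eqref{SKT_sub2} forces $c_0 \neq 0$ and the right-hand side of \eqref{pluriclosed_ode_sub1} is smooth on the open set $\{c \neq 0\}$, by the Picard--Lindel\"of theorem there is a unique maximal solution on some interval $[0,T_{\max})$, and by the standard continuation (escape) theorem it suffices to show that this solution remains in a fixed compact subset of $\{c \neq 0\}$ on every finite subinterval.

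First I would establish a priori bounds. Noticing that $r \geq 0$ by its very definition and that $g(J\alpha,\alpha)=0$ since $J$ is $g$-skew-adjoint, I would differentiate the relevant squared norms to obtain
\begin{align*}
\tfrac{d}{dt}a^2 &= -r\,a^2 \leq 0, & \tfrac{d}{dt}q^2 &= -r\,q^2 \leq 0, \\
\tfrac{d}{dt}v_2^2 &= -2(r+a^2)v_2^2 \leq 0, & \tfrac{d}{dt}c^2 &= -2(c^2+\lVert\alpha\rVert^2)c^2 \leq 0,
\end{align*}
and
\[
\tfrac{d}{dt}\lVert\alpha\rVert^2 = \left(-r-\tfrac{1}{c^2}(3c^4+3\lVert\alpha\rVert^2+q^2)\right)\lVert\alpha\rVert^2 \leq 0,
\]
so that each of $|a|,|q|,|v_2|,|c|,\lVert\alpha\rVert$ stays bounded above by its value at $t=0$ throughout $[0,T_{\max})$.

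The main obstacle is the singular factor $\tfrac{1}{c^2}$ appearing in $r$ and in the $\alpha$-equation, so the heart of the argument is to keep $c$ uniformly bounded away from $0$ on finite intervals. Since $c=0$ is an equilibrium of the scalar equation $\tfrac{d}{dt}c=-(c^2+\lVert\alpha\rVert^2)c$, uniqueness of solutions guarantees that $c$ preserves its sign; assuming $c_0>0$ without loss of generality and inserting the upper bounds just obtained, one has $\tfrac{d}{dt}c \geq -(c_0^2+\lVert\alpha_0\rVert^2)c$, whence Gr\"onwall's inequality yields $c(t)\geq c_0\,e^{-(c_0^2+\lVert\alpha_0\rVert^2)t}>0$ for all $t$ in the existence interval.

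Finally I would combine these estimates. On a hypothetical finite interval $[0,T_{\max})$, the function $c$ would be pinned between the positive constant $c_0\,e^{-(c_0^2+\lVert\alpha_0\rVert^2)T_{\max}}$ and $c_0$, while all remaining data stay bounded by their initial values; consequently $r$ and the coefficients of the $\alpha$-equation remain bounded, and the entire solution is confined to a compact subset of $\{c\neq 0\}$. This contradicts the escape theorem, which requires the maximal solution to leave every compact subset of the domain as $t\to T_{\max}$; hence $T_{\max}=+\infty$. Since, by the gauge construction of Section \ref{sec_plflow}, the gauged bracket flow \eqref{gauged_brflow_pl} and the genuine pluriclosed flow differ only by a time-dependent unitary change of basis (via \cite[Theorem 2.2]{AL}), global existence of the former is equivalent to global existence of the latter, completing the argument.
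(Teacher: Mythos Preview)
Your proof is correct and follows the same approach as the paper: both reduce to the gauged bracket flow \eqref{pluriclosed_ode_sub1} and show that the squared norms $a^2,q^2,v_2^2,c^2,\lVert\alpha\rVert^2$ are non-increasing, hence the solution stays bounded. The paper stops there, simply asserting that boundedness implies long-time existence; you go further and supply the Gr\"onwall lower bound $c(t)\geq c_0 e^{-(c_0^2+\lVert\alpha_0\rVert^2)t}$ to ensure the solution stays away from the singular locus $\{c=0\}$ of the vector field (where the factor $1/c^2$ in $r$ and in the $\alpha$-equation blows up). This extra step is warranted and arguably fills a small gap in the paper's shorter argument, but the overall strategy is identical.
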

\begin{proof}
We can look at the corresponding gauged bracket flow in \eqref{pluriclosed_ode_sub1}. Then, one can easily compute that $\tfrac{d}{dt} a^2$, $\tfrac{d}{dt} q^2$, $\tfrac{d}{dt} v_2^2$, $\tfrac{d}{dt} c^2$, $\tfrac{d}{dt} \lVert\alpha\rVert^2$ are all negative. Hence, the solution to \eqref{pluriclosed_ode_sub1} is contained in a compact set for all positive times and is thus defined for all such times.
\end{proof}

\begin{example}
Consider the $6$-dimensional strongly unimodular SKT almost nilpotent Lie algebra $(\mu(\Xi_0),J,g)$ whose only nonzero algebraic data are $q_0$ and $c_0$: this corresponds to having
\[
B= q_0 (e^4 \otimes e_3 - e^3 \otimes e_4),\quad \eta=c_0 \,e^{34}.
\]
Notice that such a Lie algebra is isomorphic to $\frs{4.7} \oplus \R^2$. The, the gauged bracket flow \eqref{gauged_brflow_pl} reduces to an \textsc{ode} system for $(q(t),c(t))$. Since we have $r(\Xi_0)=0$, we obtain
\[
\begin{cases}
\tfrac{d}{dt} q= 0, &q(0)=q_0,\\
\tfrac{d}{dt} c=-c^3, &c(0)=c_0,	
\end{cases}
\]
having explicit solution
\[
q(t)=q_0, \quad c(t)=\frac{c_0}{\sqrt{1+2c_0^2t}}, \quad t \in \left( -\tfrac{1}{2c_0^2},\infty\right).
\]
As $c(t) \to 0$ as $t \to \infty$, we have that the limit SKT Lie algebra, having $q_0$ as its only nonzero piece of algebraic data, is K\"ahler, flat and almost abelian.
\end{example}

Now that we have discussed SKT structures satisfying \eqref{SKT_sub2}, we can study the ones satisfying \eqref{SKT_sub1}. As we have remarked in Section \ref{sectionSKTstruct}, this case can actually be generalized to every even dimension, by assuming \eqref{assumptionSKT}, which is why the following discussion is not limited to dimension six.

\begin{lemma} \label{Ricciform_sub}
The Ricci form of $\nabla^{\tau}$ on the $2n$-dimensional  Hermitian Lie algebra $(\mu(\Xi),J,g)$ satisfying \eqref{assumptionSKT} is given by
\[
\begin{aligned}
	\rho^{\tau}_{\mu(\Xi)}=&\tfrac{1}{2} (\tau-1) a_1v_1 e^1 \wedge e^{2n} + \tfrac{1}{2} (\tau-1)a_2 v_2 e^2 \wedge e^{2n} +\tfrac{1}{2}(\tau-1)(a_2-a_1)v_1 e^2 \wedge e^{2n-1} \\
 &+ \tfrac{1}{2}(\tau-1)(A^t v)^\flat \wedge e^{2n} + \tfrac{1}{2} \big( (\tau-1)(v_1^2 + v_2^2 + \lVert v \rVert^2) \\
 &- a(2a+ (\tau+1)a_1 + (\tau-1)a_2 + \tau \operatorname{tr} A) \big) e^{2n-1} \wedge e^{2n}.
\end{aligned}
\]
\end{lemma}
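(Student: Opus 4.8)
The plan is to mirror the proof of Lemma~\ref{Ricci_perp}, using the identity $\rho^\tau_\mu = d\theta^\tau_\mu$ from \cite{Vez}, where $\theta^\tau_\mu$ is the one-form \eqref{thetatau}. First I would combine the standing hypothesis that $J$ is integrable (so that \eqref{sub_integrable} holds) with \eqref{assumptionSKT}: together these force $p_1=p_2=0$, $\beta=\delta=0$, $w=0$, $\nu = \gamma - J\alpha = 0$ and $\lambda = a_2-a_1$, so that the matrix \eqref{B_sub} becomes block-diagonal with diagonal blocks $a_1,a_2,A,a$ (and last column $v_1,v_2,v$), while \eqref{eta_sub} reduces to $\eta = (a_2-a_1)\,e^2 \wedge e^{2n-1}$. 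From this I read off the structure equations
\begin{align*}
de^1 &= a_1\, e^1 \wedge e^{2n} + (a_2-a_1)\, e^2 \wedge e^{2n-1} + v_1\, e^{2n-1} \wedge e^{2n}, \\
de^2 &= a_2\, e^2 \wedge e^{2n} + v_2\, e^{2n-1} \wedge e^{2n}, \\
dX^\flat &= (A^t X)^\flat \wedge e^{2n} + g(v,X)\, e^{2n-1} \wedge e^{2n}, \quad X \in \mathfrak{k}_3, \\
de^{2n-1} &= a\, e^{2n-1} \wedge e^{2n}, \qquad de^{2n}=0.
\end{align*}

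Next I would evaluate $\theta^\tau_{\mu(\Xi)}$ on the basis. Since $de^{2n}=0$, the value on $e_{2n}$ is irrelevant for $d\theta^\tau$, so only $e_1$, $e_2$, a generic $X \in \mathfrak{k}_3$ and $e_{2n-1}$ matter. Computing the three summands of \eqref{thetatau} is routine but requires care: almost all the traces of $\text{ad}_{(\cdot)}\circ J$ and of $\text{ad}_{J(\cdot)}$ vanish because $e_1$ spans $\mathfrak{n}^1$ and $\mathfrak{k}_3$ is an abelian ideal, the only nonzero contributions coming from $\operatorname{tr}(\text{ad}_{e_{2n-1}}\circ J) = (a_2-a_1)-a$ and $\operatorname{tr}\text{ad}_{Je_{2n-1}} = \operatorname{tr}\text{ad}_{e_{2n}} = a_1+a_2+\operatorname{tr}A+a$; since unimodularity is \emph{not} assumed here, this last trace must be retained in full. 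The pairing $g(\omega,dX^\flat)$ simply extracts the coefficient of the $e^{2n-1}\wedge e^{2n}$ summand. This yields
\[
\theta^\tau_{\mu(\Xi)} = \tfrac12(\tau-1)\big( v_1 e^1 + v_2 e^2 + v^\flat \big) + \theta^\tau_{\mu(\Xi)}(e_{2n-1})\, e^{2n-1} + \theta^\tau_{\mu(\Xi)}(e_{2n})\, e^{2n},
\]
with $\theta^\tau_{\mu(\Xi)}(e_{2n-1}) = \tfrac12\big(a_2 - a_1 - 2a - \tau(a_1+a_2+\operatorname{tr}A)\big)$.

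Finally I would differentiate, using the structure equations above and the identity $\sum_i g(v,X_i)(A^t X_i)^\flat = (A^t v)^\flat$ for an orthonormal basis $\{X_i\}$ of $\mathfrak{k}_3$ to handle $d(v^\flat)$; because $de^{2n}=0$, the term $\theta^\tau_{\mu(\Xi)}(e_{2n})\,e^{2n}$ drops out entirely. Collecting the coefficients of $e^1\wedge e^{2n}$, $e^2\wedge e^{2n}$, $e^2\wedge e^{2n-1}$, $\mathfrak{k}_3^*\wedge e^{2n}$ and $e^{2n-1}\wedge e^{2n}$ reproduces the stated expression; in particular the coefficient of $e^{2n-1}\wedge e^{2n}$ assembles as $\tfrac12(\tau-1)(v_1^2+v_2^2+\lVert v\rVert^2) + a\,\theta^\tau_{\mu(\Xi)}(e_{2n-1})$, which rearranges into the displayed term $\tfrac12\big((\tau-1)(v_1^2+v_2^2+\lVert v\rVert^2) - a(2a+(\tau+1)a_1+(\tau-1)a_2+\tau\operatorname{tr}A)\big)$. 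The only genuine obstacle is the careful bookkeeping of the non-vanishing adjoint traces — especially retaining the $\tau\operatorname{tr}A$ contribution in the non-unimodular setting — together with correctly merging the four separate sources that feed into the single $e^{2n-1}\wedge e^{2n}$ coefficient.
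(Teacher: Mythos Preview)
Your proposal is correct and follows essentially the same route as the paper's own proof: compute $\theta^\tau_{\mu(\Xi)}$ on each basis vector via \eqref{thetatau}, using the simplified structure equations that result from \eqref{sub_integrable} together with \eqref{assumptionSKT}, and then differentiate. The only cosmetic difference is that the paper still records $\theta^\tau_{\mu(\Xi)}(e_{2n})=\tfrac{1}{2}\operatorname{tr} AJ\rvert_{\mathfrak{k}_3}$ explicitly, whereas you observe right away that $de^{2n}=0$ makes this value immaterial for $\rho^\tau=d\theta^\tau$; this is a harmless shortcut and all the remaining computations match the paper line by line.
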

\begin{proof}
As we did in the proof of Lemma \ref{Ricci_perp}, we can compute an expression for the $1$-form $\theta^{\tau}_{\mu(\Xi)}$ in \eqref{thetatau}.
We can work with respect to the decomposition $\R^{2n}=\R \left < e_1 \right >  \oplus \R  \left < e_2 \right >  \oplus \mathfrak{k}_3 \oplus \R  \left < e_{2n-1}  \right >\oplus \R  \left < e_{2n} \right >$: then, for $l=1,2$,
\begin{align*}
	\operatorname{tr}(\text{ad}_{e_l} \circ J)&=0,\\
	\operatorname{tr} \text{ad}_{Je_l}&=0,\\
	g(\omega,de^l)&=v_1,
\end{align*}
so that
\[
\theta^{\tau}_{\mu(\Xi)}(e_l)=\tfrac{1}{2}(\tau-1)v_l,\quad l=1,2.
\]
Let $X \in \mathfrak{k}_3$ be generic. Then,
\begin{align*}
	\operatorname{tr}(\text{ad}_{X} \circ J)&=0,\\
	\operatorname{tr} \text{ad}_{JX}&=0,\\
	g(\omega,dX^\flat)&=g(v,X),
\end{align*}
yielding
\[
\theta^{\tau}_{\mu(\Xi)}(X)=\tfrac{1}{2}(\tau-1)v^\flat,\quad X \in \mathfrak{k}_3.
\]
Instead,
\begin{align*}
	\operatorname{tr}(\text{ad}_{e_{2n-1}} \circ J)&=g([e_{2n-1},e_2],e_1)+g([e_{2n-1},e_{2n}],e_{2n-1})=-a+a_2-a_1,\\
	\operatorname{tr} \text{ad}_{Je_{2n-1}}&=\operatorname{tr} \text{ad}_{e_{2n}}=a+a_1+a_2+\operatorname{tr}A,\\
	g(\omega,de^{2n-1})&=a,
\end{align*}
so that
\[
\theta^{\tau}_{\mu(\Xi)}(e_{2n-1})=-\tfrac{1}{2}(2a+(\tau+1)a_1+(\tau-1)a_2+\tau\,\operatorname{tr} A),
\]
while
\begin{align*}
	\operatorname{tr}(\text{ad}_{e_{2n}} \circ J)&=\operatorname{tr} AJ\rvert_{\mathfrak{k}_3},\\
	\operatorname{tr} \text{ad}_{Je_{2n}}&=0,\\
	g(\omega,de^{2n})&=0,
\end{align*}
yielding
\[
\theta^{\tau}_{\mu(\Xi)}(e_{2n})=\tfrac{1}{2} \operatorname{tr} AJ\rvert_{\mathfrak{k}_3}.
\]
We then obtain
\begin{equation} \label{thetatau_muXi}
\begin{aligned}
\theta^{\tau}_{\mu(\Xi)}=&\tfrac{1}{2}(\tau-1)(v_1 e^1 + v_2 e^2 + v^\flat) - \tfrac{1}{2} (2a+(\tau+1)a_1+(\tau-1)a_2 + \tau \operatorname{tr} A) e^{2n-1} \\&+ \tfrac{1}{2} \operatorname{tr} AJ\rvert_{\mathfrak{k}_3} \, e^{2n}.
\end{aligned}
\end{equation}
The claim follows by using that
\begin{align*}
	de^1&=a_1e^{1} \wedge e^{2n}+v_1e^{2n-1} \wedge e^{2n}+(a_2-a_1)e^2 \wedge e^{2n-1},\\
	de^2&=a_2e^{2} \wedge e^{2n}+v_2e^{2n-1} \wedge e^{2n},\\
	d\sigma&=\left(A^*\sigma\right) \wedge e^{2n} + \sigma(v)e^{2n-1} \wedge e^{2n}, \quad \sigma \in \mathfrak{k}_3^*, \\
	de^{2n-1}&=ae^{2n-1} \wedge e^{2n},\\
	de^{2n}&=0. \qedhere
\end{align*}
\end{proof}

We can then specialize our discussion for the Bismut-Ricci form ($\tau=-1$). Before that, we show a result concerning the Chern-Ricci form ($\tau=1$). This result holds even without the assumption \eqref{assumptionSKT}, since this only intervenes when studying the term  corresponding to  $g(\omega,dX^\flat)$ in $\theta^{\tau}$, which is not present for $\tau=1$, and when differentiating $de^1, \ldots,de^{2n-2}$, which is not a problem since, by \eqref{thetatau_muXi}, we have $\theta^1_{\mu(\Xi)} \in \R\left<e^{2n-1},e^{2n}\right>$.

\begin{remark}
By the previous result, in particular we have that
the Chern-Ricci form of the  $2n$-dimensional Hermitian Lie algebra $(\mu(\Xi),J,g)$ is 
\[
\rho^C_{\mu(\Xi)}=-a\left( a+a_1+\tfrac{1}{2}\operatorname{tr} A \right) e^{2n-1} \wedge e^{2n},
\]
while, under the assumption \eqref{assumptionSKT}, its Bismut-Ricci form is given by
\begin{equation} \label{rhoB_sub}
\begin{aligned}
	\rho^B_{\mu(\Xi)}=&-a_1v_1 e^1 \wedge e^{2n} - a_2v_2e^2 \wedge e^{2n} - (a_2-a_1)v_1e^{2}\wedge e^{2n-1}-(A^tv)^\flat \wedge e^{2n} \\
	       &- \left( a\left(a-a_2-\tfrac{1}{2}\operatorname{tr}A\right) + v_1^2 + v_2^2 + \lVert v \rVert^2 \right) e^{2n-1} \wedge e^{2n}.
\end{aligned}
\end{equation}
\end{remark}

The $(1,1)$-part of $\rho^B_{\mu(\Xi)}$, namely $(\rho^B_{\mu(\Xi)})^{1,1}=\frac{1}{2}(\rho^B_{\mu(\Xi)}+J\rho^B_{\mu(\Xi)})$, is thus given by
\begin{equation} \label{rhoB11_sub}
\begin{aligned}
	(\rho^B_{\mu(\Xi)})^{1,1}=&-\tfrac{1}{2}a_2v_2e^{1} \wedge e^{2n-1} + \tfrac{1}{2}(a_2-2a_1)v_1e^{1} \wedge e^{2n} - \tfrac{1}{2}(a_2-2a_1)v_1e^{2} \wedge e^{2n-1} \\
                   &- \tfrac{1}{2} a_2 v_2 e^{2} \wedge e^{2n} +\tfrac{1}{2}(JA^tv)^\flat \wedge e^{2n-1} - \tfrac{1}{2}(A^tv)^\flat \wedge e^{2n}  \\
	               &- \left( a\left(a-a_2-\tfrac{1}{2}\operatorname{tr}A\right) + v_1^2 + v_2^2 + \lVert v \rVert^2 \right) e^{2n-1} \wedge e^{2n}.
\end{aligned}
\end{equation}

We then obtain the following matrix expression for the endomorphism $P_{\mu(\Xi)}$:
\begin{lemma}
The symmetric endomorphism $P_{\mu(\Xi)}$ associated with the $2n$-dimensional almost nilpotent SKT Lie algebra $(\mu(\Xi),J,g)$ satisfying \eqref{assumptionSKT} is given by
\[
P_{\mu(\Xi)}=\left( \begin{array}{c|c|c|c|c}
	0 & 0 & \phantom{\hspace{1.5em}}0\phantom{\hspace{1.5em}} & -\frac{1}{4}av_1 & -\frac{1}{4}a v_2 \\ \hline
	0 & 0 & 0 & \frac{1}{4}a v_2 & -\frac{1}{4}av_1  \\ \hline
	\vphantom{\begin{pmatrix} 0 \\ 0 \\ 0 \end{pmatrix}}0 & 0 & 0 & z & Jz \\ \hline
	-\frac{1}{4}av_1 & \frac{1}{4}a v_2  & z^t  & r & 0 \\ \hline
	-\frac{1}{4}a v_2 & -\frac{1}{4}av_1 & (Jz)^t & 0 & r
\end{array} \right),
\]
where
\begin{equation} \label{constants}
	\begin{gathered}
	r(a,v_1,v_2,v,A)=-\tfrac{1}{2}a^2\left(2+\tfrac{k}{2}\right) - \tfrac{1}{2} \left( v_1^2 + v_2^2 + \lVert v \rVert^2 \right) \in \R_{<0}, \qquad k=\tfrac{1}{2}\operatorname{rk}(A+A^t),\\
	 z= - \tfrac{1}{4} A^tv \in \mathfrak{k}_3.
	\end{gathered}
\end{equation}
\end{lemma}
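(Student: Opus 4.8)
The plan is to obtain the matrix of $P_{\mu(\Xi)}$ by unwinding its definition $P_{\mu(\Xi)}=\tfrac12\omega^{-1}(\rho^B_{\mu(\Xi)})^{1,1}$, feeding in the already computed expression \eqref{rhoB11_sub} for $(\rho^B_{\mu(\Xi)})^{1,1}$ together with the constraints coming from the SKT condition. First I would record that, under \eqref{assumptionSKT}, the integrability conditions \eqref{sub_integrable} and the vanishing of \eqref{dJdomg_assumpt} pin down $a_1=0$ and $a_2=-a$, while the vanishing of the second summand of \eqref{dJdomg_assumpt}, namely $(A^2+A^tA+aA)^*\omega=0$, gives by \cite[Theorem 4.6]{AL} the normality $[A,A^t]=0$ together with $\operatorname{Re}(\operatorname{Spec}A)\subseteq\{0,-\tfrac a2\}$. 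Substituting $a_1=0$ and $a_2=-a$ into \eqref{rhoB11_sub} makes the individual summands pair up into genuine $(1,1)$-forms, which is exactly what is needed for $\omega^{-1}$ to return a $g$-symmetric endomorphism.

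The bulk of the work is then the termwise evaluation of $\tfrac12\omega^{-1}$. I would use that $\omega^{-1}$ carries a $2$-form $\sigma=g(T\,\cdot,\cdot)$, with $T$ skew-symmetric, to the endomorphism $-JT$, which is automatically $g$-symmetric precisely when $\sigma$ is of type $(1,1)$; this delivers the symmetry of $P_{\mu(\Xi)}$ for free. Grouping \eqref{rhoB11_sub} (after the substitution) as $\tfrac12 av_2\,(e^{1}\wedge e^{2n-1}+e^{2}\wedge e^{2n})$, $-\tfrac12 av_1\,(e^{1}\wedge e^{2n}-e^{2}\wedge e^{2n-1})$, $\tfrac12\big((JA^tv)^\flat\wedge e^{2n-1}-(A^tv)^\flat\wedge e^{2n}\big)$ and the diagonal term on $\R\left<e_{2n-1},e_{2n}\right>$, and applying $\tfrac12\omega^{-1}$ to each, I expect to read off precisely the entries $\mp\tfrac14 av_1$, $\pm\tfrac14 av_2$ coupling $\R\left<e_1,e_2\right>$ with $\R\left<e_{2n-1},e_{2n}\right>$, together with the $\mathfrak{k}_3$-vector $z=-\tfrac14 A^tv$ and its companion $Jz$. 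One checks along the way that there is no coupling between $\R\left<e_1,e_2\right>$ and $\mathfrak{k}_3$, nor any $e^1\wedge e^2$ contribution, matching the zero blocks in the stated matrix.

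The only non-clerical step is the diagonal entry $r$. On $\R\left<e_{2n-1},e_{2n}\right>$ the fundamental form restricts to $e^{2n-1}\wedge e^{2n}$, so $\omega^{-1}$ acts there as the identity and $r$ equals one half of the coefficient of $e^{2n-1}\wedge e^{2n}$ in \eqref{rhoB11_sub}; with $a_1=0$, $a_2=-a$ this coefficient is $-2a^2+\tfrac12 a\operatorname{tr}A-(v_1^2+v_2^2+\lVert v\rVert^2)$. To reach the stated closed form I would invoke the eigenvalue structure of $A$: normality together with $\operatorname{Re}(\operatorname{Spec}A)\subseteq\{0,-\tfrac a2\}$ yields $\operatorname{tr}A=-\tfrac a2\operatorname{rk}(A+A^t)=-ak$, whence $-2a^2+\tfrac12 a\operatorname{tr}A=-a^2\big(2+\tfrac k2\big)$ and therefore $r=-\tfrac12 a^2\big(2+\tfrac k2\big)-\tfrac12(v_1^2+v_2^2+\lVert v\rVert^2)<0$. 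I expect this trace identity to be the main obstacle, being the single place where the analytic content of the SKT condition, rather than the bookkeeping with $\omega^{-1}$, is genuinely used.
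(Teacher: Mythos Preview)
Your approach is essentially the same as what the paper intends: the lemma is stated without proof precisely because it amounts to feeding \eqref{rhoB11_sub} into $P_{\mu}=\tfrac12\omega^{-1}(\rho^B_{\mu})^{1,1}$ after specializing the SKT constraints, and your block-by-block evaluation of $\omega^{-1}$ and your trace identity $\operatorname{tr}A=-ak$ are exactly the right computations.

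One small point of care: you write that ``the integrability conditions \eqref{sub_integrable} and the vanishing of \eqref{dJdomg_assumpt} pin down $a_1=0$ and $a_2=-a$''. This is slightly circular for $a_1$, since \eqref{dJdomg_assumpt} was itself derived in the proof of Proposition~\ref{sub_SKTprop} \emph{after} setting $a_1=0$. What actually fixes $a_1$ is the Lie-algebra constraint \eqref{Lie2}: under \eqref{assumptionSKT} and \eqref{sub_integrable} it reads $(a_2-a_1)(a+a_2-a_1)=0$, and since $\lambda=a_2-a_1\neq 0$ (otherwise $\eta=0$) one gets $a_1=a+a_2$. Combining this with the $e^{1}\wedge e^{2}\wedge e^{2n-1}\wedge e^{2n}$ component of $dd^c\omega$ then forces $a_2=-a$, hence $a_1=0$ (the alternative $a_2=-a/2$, $a_1=a/2$ is the K\"ahler branch of Proposition~\ref{sub_Kahlerprop}, which is excluded in this context). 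With that adjustment your derivation is complete and correct.
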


Now, the endomorphism $P_{\mu(\Xi)}$ behaves badly with respect to the decomposition $\R \left < e_1 \right >  \oplus \R  \left < e_2 \right >  \oplus \mathfrak{k}_3 \oplus \R \left < e_{2n-1} \right > \oplus \R  \left < e_{2n} \right > $: for example, $P_{\mu(\Xi)}$ does not preserve the nilradical $\mathfrak{n}= \R\left<e_1,\ldots,e_{2n-1}\right>$. This implies that, in order to translate the bracket flow into a flow for the associated algebraic data, one needs to introduce a gauge correction, provided by some skew-Hermitian endomorphism $U_{\mu(\Xi)} \in \mathfrak{u}(\R^{2n},J,g)$. In our case, one can pick, for example,
\begin{equation} \label{Umu}
	U_{\mu(\Xi)}=\left( \begin{array}{c|c|c|c|c}
		0 & 0 & \phantom{\hspace{1.5em}}0\phantom{\hspace{1.5em}} & \frac{1}{4}av_1 & \frac{1}{4}a v_2 \\ \hline
	0 & 0 & 0 & -\frac{1}{4}a v_2 & \frac{1}{4}av_1  \\ \hline
		\vphantom{\begin{pmatrix} 0 \\ 0 \\ 0 \end{pmatrix}}0 & 0 & \frac{a}{4}(A-A^t) & -z & -Jz \\ \hline
			-\frac{1}{4}av_1 & \frac{1}{4}a v_2  & z^t  & 0 & 0 \\ \hline
		-\frac{1}{4}a v_2 & -\frac{1}{4}av_1 & (Jz)^t & 0 & 0
	\end{array} \right),
\end{equation}
yielding
\begin{equation} \label{Pmu-Umu}
	P_{\mu(\Xi)}-U_{\mu(\Xi)}=\left( \begin{array}{c|c|c|c|c}
			0 & 0 & \phantom{\hspace{1.5em}}0\phantom{\hspace{1.5em}} & -\frac{1}{2}av_1 & -\frac{1}{2}a v_2 \\ \hline
		0 & 0 & 0 & \frac{1}{2}a v_2 & -\frac{1}{2}av_1  \\ \hline
		\vphantom{\begin{pmatrix} 0 \\ 0 \\ 0 \end{pmatrix}}0 & 0 & \frac{a}{4}(A^t-A) & 2z & 2Jz \\ \hline
		0 & 0 & 0 & r & 0 \\ \hline
		0 & 0 & 0 & 0 & r
	\end{array} \right),
\end{equation}

\begin{theorem}
Let
$(\mu(\Xi_0),J,g)$
be a $2n$-dimensional  SKT almost nilpotent Lie algebra satisfying \eqref{assumptionSKT}. Then the gauged bracket flow \eqref{gauged_brflow_pl} starting from the bracket
$\mu(\Xi_0)$
is equivalent to the \textsc{ode} system
\begin{equation} \label{ode_sub}
\begin{cases}
	\tfrac{d}{dt} a =ra, &a(0)=a_0,\\
	\tfrac{d}{dt} v_1 = 2rv_1, & v_1(0)=(v_1)_0,\\
	\tfrac{d}{dt} v_2= -a^2v_2 + 2rv_2, & v_2(0)=(v_2)_0, \\
	\tfrac{d}{dt} v = rv + Sv - \tfrac{1}{2}(v_1^2+v_2^2+\lVert v \rVert^2)v, & v(0)=v_0,\\
	\tfrac{d}{dt} A = rA & A(0)=A_0.
\end{cases}
\end{equation}
Here $S=S(a,\eps,A)$ is the symmetric endomorphism of $\mathfrak{k}_3$ defined by
\[
S=-\tfrac{1}{2} AA^t + \tfrac{a}{4}(A+A^t)-\tfrac{1}{2}a^2\left(2+\tfrac{k}{2}\right) \operatorname{Id}_{\mathfrak{k}_3},
\]
and $r$ and $k$ were defined in \eqref{constants}.
\end{theorem}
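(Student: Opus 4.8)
The plan is to mirror the bracket-flow computations already carried out in Case (1) and in the subcase \eqref{SKT_sub2}, now using the explicit gauge-corrected endomorphism $C\coloneqq P_{\mu(\Xi)}-U_{\mu(\Xi)}$ given in \eqref{Pmu-Umu}. Writing $B=\text{ad}_{\bar\mu}e_{2n}\rvert_{\mathfrak{n}}$ for the (time-dependent) structure matrix in the model family of Theorem \ref{sub_class}, the whole computation rests on the identity
\[
\tfrac{d}{dt}\big(\text{ad}_{\bar\mu}e_{2n}\big)=-\big[C,\text{ad}_{\bar\mu}e_{2n}\big]+\text{ad}_{\bar\mu}\big(Ce_{2n}\big),
\]
exactly as displayed in the proof for \eqref{SKT_sub2}, together with the analogous evolution $\tfrac{d}{dt}\bar\mu(e_2,e_{2n-1})$ for the $2$-form $\eta$. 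From \eqref{Pmu-Umu} one reads off $Ce_{2n-1}=-\tfrac12 av_1 e_1+\tfrac12 av_2 e_2+2z+re_{2n-1}$ and $Ce_{2n}=-\tfrac12 av_2 e_1-\tfrac12 av_1 e_2+2Jz+re_{2n}$, which are the only data needed.

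First I would record the structural point that makes the statement well posed: by \eqref{Pmu-Umu}, $C$ is block–upper–triangular with respect to $\R\langle e_1\rangle\oplus\R\langle e_2\rangle\oplus\mathfrak{k}_3\oplus\R\langle e_{2n-1}\rangle\oplus\R\langle e_{2n}\rangle$ and preserves the nilradical $\mathfrak{n}=\R\langle e_1,\ldots,e_{2n-1}\rangle$ — this is precisely what the gauge $U_{\mu(\Xi)}$ was designed to achieve. Using that the pluriclosed flow preserves the SKT condition and that, by \cite[Theorem 2.2]{AL}, $\bar\mu(t)$ stays in the $\text{U}(\R^{2n},J,g)$-orbit of the bracket-flow solution (hence remains isomorphic to $\mu(\Xi_0)$), I would then verify that the bracket stays inside the model family \eqref{Bcases} with $\eta=-a\,e^{2}\wedge e^{2n-1}$. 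Concretely, plugging the model form into the displayed identity one checks that $\text{ad}_{\bar\mu}e_1=0$, $\text{ad}_{\bar\mu}e_2=-a\,e_2$, $\text{ad}_{\bar\mu}(\mathfrak{k}_3)\subseteq\mathfrak{k}_3$ and that $\eta$ stays a multiple of $e^2\wedge e^{2n-1}$ are all preserved. The key simplifications here are that $A\in\mathfrak{u}(\mathfrak{k}_3,J\rvert_{\mathfrak{k}_3},g)$ is skew, so that $Ce_2=0$, $C\rvert_{\mathfrak{k}_3}=\tfrac{a}{4}(A^t-A)=-\tfrac{a}{2}A$, and $z=-\tfrac14 A^tv=\tfrac14 Av$; these force the potentially off-family contributions to cancel (for instance $-\tfrac a2 Av+2az=0$ and $-\tfrac a2 A^2 X+B(-\tfrac a2 AX)=0$ on $\mathfrak{k}_3$).

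With invariance established, the five ODEs drop out by taking components. The equation $\tfrac{d}{dt}a=ra$ follows from $\tfrac{d}{dt}\bar\mu(e_2,e_{2n-1})=\tfrac{d}{dt}(a\,e_1)$, where only the $re_{2n}$-part of $Ce_{2n-1}$ survives (all other terms bracket trivially with $e_2$, using $\delta=0$ and that $e_1$ is central). Expanding $\tfrac{d}{dt}(\text{ad}_{\bar\mu}e_{2n})e_{2n-1}$ and collecting its $e_1$-, $e_2$- and $\mathfrak{k}_3$-parts yields $\tfrac{d}{dt}v_1=2rv_1$, $\tfrac{d}{dt}v_2=-a^2v_2+2rv_2$ and $\tfrac{d}{dt}v=2rv+\tfrac12 A^2 v$, while the $\mathfrak{k}_3\to\mathfrak{k}_3$ block of $\tfrac{d}{dt}(\text{ad}_{\bar\mu}e_{2n})$ gives $\tfrac{d}{dt}A=rA$. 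Finally I would match $2rv+\tfrac12 A^2 v$ to the announced $rv+Sv-\tfrac12(v_1^2+v_2^2+\lVert v\rVert^2)v$: since $A$ is skew one has $A+A^t=0$, hence $k=0$, $-\tfrac12 AA^t=\tfrac12 A^2$ and $\tfrac{a}{4}(A+A^t)=0$, so $S=\tfrac12 A^2-a^2\,\text{Id}_{\mathfrak{k}_3}$; combining this with the definition \eqref{constants} of $r$ gives $rv+Sv-\tfrac12(v_1^2+v_2^2+\lVert v\rVert^2)v=2rv+\tfrac12 A^2 v$, as required.

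The main obstacle I anticipate is purely computational bookkeeping: correctly expanding the commutator $[C,\text{ad}_{\bar\mu}e_{2n}]$ and the term $\text{ad}_{\bar\mu}(Ce_{2n})$ and tracking each $e_1$-, $e_2$-, $\mathfrak{k}_3$- and $e_{2n-1}$-component, where several a priori nonzero contributions must cancel both to keep $\bar\mu$ in the model family and to make the $v$-equation close up into the symmetric endomorphism $S$. All these cancellations hinge on the skew-symmetry of $A$ and on the relation $z=\tfrac14 Av$, so the delicate part is to apply these two facts consistently rather than any conceptually new input.
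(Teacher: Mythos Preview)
Your approach is essentially the paper's: both compute $\tfrac{d}{dt}(\text{ad}_{\bar\mu}e_{2n})=-[C,\text{ad}_{\bar\mu}e_{2n}]+\text{ad}_{\bar\mu}(Ce_{2n})$ and read off the block components to obtain the five ODEs, then rewrite the $v$-equation using the definitions of $z$ and $r$. The only difference is that the paper carries out the matrix computation without assuming $A$ skew, obtaining an extra $\tfrac{a}{4}[A,A^t]$ in $\tfrac{d}{dt}A$, and then invokes $[A_0,A_0^t]=0$ together with ODE uniqueness to drop it; you instead use $A\in\mathfrak{u}(\mathfrak{k}_3)$ from the outset (legitimate by Proposition~\ref{sub_SKTprop} in the strongly unimodular setting implicit in the formulas \eqref{Pmu-Umu} and \eqref{constants}), which makes $[A,A^t]=0$ automatic and shortens the bookkeeping.
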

\begin{proof}
One computes
\[
\tfrac{d}{dt}(\text{ad}_\mu e_{2n})=\text{ad}_{\tfrac{d}{dt} \mu} e_{2n}=-\text{ad}_{\pi(P_\mu - U_\mu) \mu} e_{2n}=-\left[ P_\mu - U_\mu, \text{ad}_\mu e_{2n} \right] + \text{ad}_{\mu} ((P_\mu - U_\mu)e_{2n}),
\]
which yields
\footnotesize
\[
\frac{d}{dt} \left( \begin{array}{c|c|c|c}
	0 & 0  & \phantom{\hspace{0.6em}} 0 \phantom{\hspace{0.6em}} & v_1 \\ \hline
	 0 & -a & 0 & v_2  \\ \hline
	\vphantom{\begin{pmatrix} 0 \\ 0 \end{pmatrix}}0 & 0 & A & v \\ \hline
	0 & 0 & 0 & a
\end{array} \right) = 
\left( \begin{array}{c|c|c|c}
	0 & 0 & \phantom{\hspace{0.6em}} 0 \phantom{\hspace{0.6em}} & -\tfrac{1}{2}a^2 v_1 + 2rv_1 \\ \hline
	0 & -ra & 0  & -a^2v_2 + 2rv_2  \\ \hline
	\vphantom{\begin{pmatrix} 0 \\ 0 \end{pmatrix}} 0 & 0 & \tfrac{a}{4} [A,A^t]+rA & 2(Az-az+rv)+ \tfrac{a}{4}(A-A^t)v\\ \hline
	0 & 0 & 0 & ra
\end{array} \right).
\]
\normalsize
Now, since the initial data satisfies $[A_0,A_0^t]=0$, by the uniqueness of the solution we can assume $[A(t),A(t)^t]=0$ for all $t$. The claim then follows, the only non-trivial part being rewriting the evolution equation for $v$, substituting the expressions for $z$ and $r$.
\end{proof}

\begin{theorem}
Let $(J,g_0)$ be a left-invariant SKT structure on a $2n$-dimensional  almost nilpotent Lie group $G$ with nilradical $\mathfrak{n}$ of the Lie algebra $\mathfrak{g}$  of $G$ having one-dimensional commutator. Assume that the SKT structure satisfies $J\mathfrak{n}^1\subset \mathfrak{n}$ and that $\mathfrak{k}_3$ is an abelian ideal. Then the pluriclosed flow starting from $(J,g_0)$ is defined for all positive times and, after a suitable normalization, converges in the Cheeger-Gromov sense to an expanding pluriclosed soliton.
\end{theorem}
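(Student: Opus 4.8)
The plan is to carry out the whole analysis at the level of the gauged bracket flow \eqref{ode_sub}, using that, by the equivalence recalled around \eqref{gauged_brflow_pl} (see \cite{AL,Lau0}), the pluriclosed flow starting from $(J,g_0)$ and the solution of \eqref{ode_sub} are defined on the same maximal interval. Recall first that, under \eqref{assumptionSKT}, Proposition \ref{sub_SKTprop} forces $a_1=0$, $a_2=-a\neq 0$ and $A\in\mathfrak{u}(\mathfrak{k}_3,J\rvert_{\mathfrak{k}_3},g)$, so that $A+A^t=0$, $k=0$ in \eqref{constants}, and hence
\[
r=-a^2-\tfrac12\left(v_1^2+v_2^2+\lVert v\rVert^2\right)<0,\qquad S=-\tfrac12 AA^t-a^2\,\mathrm{Id}_{\mathfrak{k}_3}\leq 0 .
\]
First I would prove immortality by showing the solution of \eqref{ode_sub} stays bounded. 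Differentiating the squared quantities gives
\[
\tfrac{d}{dt}a^2=2ra^2,\quad \tfrac{d}{dt}v_1^2=4rv_1^2,\quad \tfrac{d}{dt}v_2^2=(4r-2a^2)v_2^2,\quad \tfrac{d}{dt}\lVert A\rVert^2=2r\lVert A\rVert^2,
\]
together with $\tfrac{d}{dt}\lVert v\rVert^2=2r\lVert v\rVert^2-\lVert A^tv\rVert^2-2a^2\lVert v\rVert^2-(v_1^2+v_2^2+\lVert v\rVert^2)\lVert v\rVert^2$, all of which are non-positive since $r<0$. Hence every datum is bounded, the solution of \eqref{ode_sub} remains in a compact set and is defined for all $t\geq 0$, giving long-time existence of the pluriclosed flow.

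Next I would pin down the asymptotics. Writing $u=a^2$, the first equation yields $u'\leq -2u^2$, whence $u(t)\leq a_0^2/(1+2a_0^2 t)\to 0$ and $a(t)\to 0$. The system has two conserved ratios, $\tfrac{d}{dt}(A/a)=0$ and $\tfrac{d}{dt}(v_1/a^2)=0$, so $A(t)=\tfrac{a(t)}{a_0}A_0$ and $v_1(t)=\tfrac{(v_1)_0}{a_0^2}a(t)^2$; in particular $A/a$ is constant and $v_1/a\to 0$. Moreover a short computation shows
\[
\tfrac{d}{dt}\log\!\frac{v_2^2}{a^4}=-2a^2\leq 0,\qquad \tfrac{d}{dt}\log\!\frac{\lVert v\rVert^2}{a^4}=-\frac{\lVert A^tv\rVert^2}{\lVert v\rVert^2}\leq 0,
\]
so $v_2^2/a^4$ and $\lVert v\rVert^2/a^4$ are bounded, and since $a\to 0$ this forces $v_2/a\to 0$ and $v/a\to 0$. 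Consequently the rescaled bracket $\tfrac{1}{a(t)}\mu(t)$ converges, as $t\to\infty$, to the bracket $\mu_\infty$ whose only nonzero data are $a=1$, $a_2=-1$ and $A_\infty=A_0/a_0\in\mathfrak{u}(\mathfrak{k}_3,J\rvert_{\mathfrak{k}_3},g)$, i.e. precisely a bracket of the form \eqref{SKT_eq} of Theorem \ref{sub_class}, carrying the SKT structure \eqref{SKT_sub_ex}.

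Finally I would identify $\mu_\infty$ as an expanding soliton and transfer the convergence to the manifold. For $\mu_\infty$ one has $v_1=v_2=0$ and $v=0$, so $z=0$ in \eqref{constants} and $P_{\mu_\infty}=r_\infty(e^{2n-1}\otimes e_{2n-1}+e^{2n}\otimes e_{2n})$ with $r_\infty=-a^2<0$; a direct check shows that $D:=P_{\mu_\infty}-r_\infty\,\mathrm{Id}$ is a derivation of $\mu_\infty$, so $\mu_\infty$ satisfies the algebraic soliton equation $P_{\mu_\infty}=\lambda\,\mathrm{Id}+D$ with $\lambda=r_\infty<0$, equivalently it is a fixed point up to scaling of \eqref{gauged_brflow_pl}. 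Since the rescaling $\mu\mapsto a^{-1}\mu$ corresponds to the metric normalization $g\mapsto a^2 g$, the convergence $\tfrac{1}{a(t)}\mu(t)\to\mu_\infty$ to a nonzero Lie bracket lets me invoke \cite{Lau0}, exactly as in the case $J\mathfrak{n}^1=\nperp$, to conclude that the normalized pluriclosed flow converges in the Cheeger-Gromov sense to $(\mu_\infty,J,g)$, which by the above is an expanding pluriclosed soliton. The main obstacle is the second step: one must exclude merely subsequential convergence and show that the $v$-components genuinely die out relative to the surviving scale $a$, which is exactly what the monotonicity of the ratios $v_2^2/a^4$ and $\lVert v\rVert^2/a^4$ (rather than of $v_2$ and $v$ themselves) secures.
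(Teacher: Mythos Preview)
Your proof is correct and follows essentially the same route as the paper's: both work with the gauged bracket flow \eqref{ode_sub}, establish immortality from the monotonicity of the squared data, and identify the normalized limit as an expanding algebraic soliton. The only cosmetic difference is in the normalization: the paper subtracts $r\,\mathrm{Id}$ so that $a$ and $A$ stay constant and then shows $v_1,v_2,v\to 0$ by comparison with $x'=-x^2$, whereas you rescale by $1/a(t)$ and read off the limit from the conserved ratio $A/a$, the conserved $v_1/a^2$, and the monotone ratios $v_2^2/a^4$, $\lVert v\rVert^2/a^4$. Since $a'=ra$, dividing by $a$ is exactly the paper's normalization in disguise, so the two arguments coincide; your explicit invariants make the convergence a bit more transparent, while the paper's comparison argument is slightly quicker.
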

\begin{proof}
Translating the problem into the study of the corresponding bracket flow, we normalize \eqref{ode_sub} so that $a$ and $A$ remain constant. This is obtained by subtracting $r\,\text{Id}$ from the right hand-side, obtaining the new \textsc{ode}
\begin{equation} \label{norm_brflow}
\begin{cases}
	\tfrac{d}{dt} a =0, &a(0)=a_0,\\
	\tfrac{d}{dt} v_1 = rv_1, & v_1(0)=(v_1)_0,\\
	\tfrac{d}{dt} v_2= -a^2v_2 + rv_2, & v_2(0)=(v_2)_0, \\
	\tfrac{d}{dt} v = Sv - \tfrac{1}{2}(v_1^2+v_2^2+\lVert v \rVert^2)v, & v(0)=v_0,\\
	\tfrac{d}{dt} A = 0, & A(0)=A_0.
\end{cases}
\end{equation}
We claim that the solution to \eqref{norm_brflow} is defined for all positive times, with $v_1$, $v_2$ and $v$ converging to zero as $t \to \infty$.
Indeed, along \eqref{norm_brflow},
\begin{equation} \label{decr}
\begin{aligned}
\tfrac{d}{dt} v_1^2&=2rv_1^2 \leq - v_1^4,\\
\tfrac{d}{dt} v_2^2&=2a_0^2v_2^2 + 2rv_2 \leq - v_2^4,\\
\tfrac{d}{dt} \lVert v\rVert^2 &=2g(Sv,v)-(v_1^2+v_2^2+\lVert v \rVert^2)\lVert v \rVert^2 \leq -\lVert v \rVert^4,
\end{aligned}
\end{equation}
where we exploited that $a,A$ remain constant, $r \leq -\tfrac{1}{2}(v_1^2 + v_2^2 + \lVert v \rVert^2)$ and
\[
g(Sv,v) \leq -\tfrac{1}{2}a^2\left(2+\tfrac{k}{2}\right) \lVert v \rVert^2 \leq 0.
\]
Our claim then easily follows by comparison with the \textsc{ode} $\tfrac{d}{dt} x = -x^2$.

We now denote with
$\Xi(t)$ the solution to the non-normalized gauged bracket flow \eqref{ode_sub}.
Similarly to \eqref{decr}, it is easy to check that $a^2$, $v_1^2$, $v_2^2$, $\lVert v \rVert^2$ and $\lVert A \rVert^2$ are all decreasing along \eqref{ode_sub}, so that the whole solution remains inside a compact set and is thus defined for all positive times.

We also note that the limit bracket is $0$:
given that the constant $r$ in \eqref{constants} converges to the negative constant
$-\tfrac{1}{2} a_0^2(2+\tfrac{k}{2})$
along the normalized flow \eqref{norm_brflow}, it follows that there exists a positive number $r_+$ such that, along the non-normalized flow, for all $t$,
\[
\frac{r}{a^2+\lVert A \rVert^2} \leq - r_+ < 0,
\]
so that one obtains
\[
\tfrac{d}{dt}(a^2+\lVert A \rVert^2)=2r(a^2+\lVert A \rVert^2) \leq -2r_+ (a^2+\lVert A \rVert^2)^2,
\]
yielding $a \to 0$, $A \to 0$ as $t \to \infty$. Similarly to \eqref{decr}, it also easy to see directly that also $v_1$, $v_2$ and $v$ go to zero as $t \to \infty$.

Returning once again to the normalized bracket flow \eqref{norm_brflow}, we note that its limit bracket $\mu(\Xi_{\infty})$ satisfies $(v_1)_\infty=(v_2)_\infty=0$, $v_\infty=0$ and is thus a stationary solution of \eqref{norm_brflow}. It is then an (expanding) pluriclosed algebraic soliton, by \cite[Proposition 2.4]{AL}.
\end{proof}

\section{Balanced Hermitian structures} \label{balancedstructsection}

We now turn our attention to the study of another important generalization of the K\"ahler condition, namely the balanced condition. Balanced Hermitian metrics on a complex manifold $(M,J)$  of complex dimension $n$ are equivalently characterized by the coclosure of the associated fundamental form $\omega$, by the closure of $\omega^{n-1}$ or  by the vanishing of the Lee form $\theta \coloneqq Jd^* \omega$, which, we recall, is the unique $1$-form such that $d \omega^{n-1}= \theta \wedge \omega^{n-1}$. We recall the following 

\begin{lemma} {\normalfont (\cite{FP2})} \label{Leeform_generic} 
	The Lee form of a Hermitian Lie algebra $(\mathfrak{g},J,g)$ satisfies
	\begin{equation} \label{Leeform_formula_generic}
		\theta(X)=-\operatorname{tr} \text{\normalfont ad}_X + \frac{1}{2} g\left(\sum_{l=1}^{2n} [e_l,Je_l],JX\right),\quad X \in \mathfrak{g},
	\end{equation}	
	where $\{e_1,\ldots,e_{2n}\}$ is an orthonormal basis of $(\mathfrak{g},g)$.
\end{lemma}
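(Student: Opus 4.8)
The plan is to compute the codifferential $d^*\omega$ directly at the Lie-algebra level and then apply $J$, using the identification of $\theta = Jd^*\omega$ with the $1$-form
\[
\theta(X) = g\big(J(d^*\omega)^\sharp, X\big) = -(d^*\omega)(JX),
\]
which is valid because $J$ is $g$-skew (here $(d^*\omega)^\sharp$ is the metric dual of $d^*\omega$). First I would invoke the standard expression for the codifferential of a $2$-form, $(d^*\omega)(Y) = -\sum_{l=1}^{2n}(\nabla_{e_l}\omega)(e_l, Y)$, where $\nabla$ is the Levi-Civita connection of $g$ and $\{e_l\}$ is the fixed orthonormal basis. Expanding $(\nabla_{e_l}\omega)(e_l, Y) = e_l(\omega(e_l,Y)) - \omega(\nabla_{e_l}e_l, Y) - \omega(e_l, \nabla_{e_l}Y)$ and using that $\omega(e_l, Y)$ is constant for left-invariant fields, the first summand drops out, leaving
\[
(d^*\omega)(Y) = \sum_{l}\omega(\nabla_{e_l}e_l, Y) + \sum_{l}\omega(e_l, \nabla_{e_l}Y).
\]

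Next I would evaluate each connection term through Koszul's formula $2g(\nabla_X Y, Z) = g([X,Y],Z) - g([Y,Z],X) + g([Z,X],Y)$. For the first sum a short computation gives $\sum_l g(\nabla_{e_l}e_l, Z) = \operatorname{tr}\text{ad}_Z$, so that, using $\omega = g(J\cdot,\cdot)$ and the skew-symmetry of $J$,
\[
\sum_{l}\omega(\nabla_{e_l}e_l, Y) = g\Big(J\textstyle\sum_l\nabla_{e_l}e_l,\, Y\Big) = -\operatorname{tr}\text{ad}_{JY}.
\]
For the second sum, applying Koszul to $g(\nabla_{e_l}Y, Je_l)$ and summing produces three contributions; the two involving $\text{ad}_Y$, namely $\sum_l g([e_l,Y],Je_l)$ and $-\sum_l g([Y,Je_l],e_l)$, both equal $\operatorname{tr}(J\,\text{ad}_Y)$ and hence cancel, while the remaining term yields $-\tfrac12 g\big(\sum_l[e_l, Je_l],\, Y\big)$. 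Collecting everything,
\[
(d^*\omega)(Y) = -\operatorname{tr}\text{ad}_{JY} - \tfrac12\, g\Big(\textstyle\sum_l[e_l, Je_l],\, Y\Big).
\]

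Finally I would substitute $Y = JX$, use $\operatorname{tr}\text{ad}_{J(JX)} = -\operatorname{tr}\text{ad}_X$, and read off $\theta(X) = -(d^*\omega)(JX)$ to recover the stated identity \eqref{Leeform_formula_generic}.

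The main obstacle is the cancellation of the two $\text{ad}_Y$ terms in the second sum: this is precisely where one must use that $J$ is simultaneously $g$-orthogonal and $g$-skew, together with the cyclicity of the trace, to recognize both terms as $\operatorname{tr}(J\,\text{ad}_Y)$. Apart from this, the proof is careful sign bookkeeping, so I would fix at the outset the conventions $\omega = g(J\cdot,\cdot)$ and $\theta(X) = -(d^*\omega)(JX)$ to prevent sign errors from propagating through the Koszul expansions.
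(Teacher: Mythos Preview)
Your argument is correct and complete; the only slip is expository: the two ``$\operatorname{ad}_Y$ terms'' you identify do not \emph{both} equal $\operatorname{tr}(J\,\operatorname{ad}_Y)$ (that would make them add, not cancel). In fact
\[
\sum_l g([e_l,Y],Je_l)=\operatorname{tr}(J\,\operatorname{ad}_Y),\qquad -\sum_l g([Y,Je_l],e_l)=-\operatorname{tr}(\operatorname{ad}_Y\,J)=-\operatorname{tr}(J\,\operatorname{ad}_Y),
\]
so they are negatives of each other, and the cancellation you claim does hold. Everything else---the reduction to $(d^*\omega)(Y)$ via the Levi--Civita connection, the Koszul expansion, and the final substitution $Y=JX$---is carried out correctly.

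As for comparison with the paper: the lemma is not proved here but simply quoted from \cite{FP2}, so there is no in-paper argument to compare against. Your direct Koszul computation is a perfectly standard way to establish the formula and would serve as a self-contained proof.
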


We will  now study separately  the existence of balanced  Hermitian structures  $(J,g)$  in cases (1) and (2).

\subsection{Case (1)}
We  can prove  the following 
\begin{proposition} \label{bal_perp}
	The Lee form of the  $2n$-dimensional almost nilpotent Hermitian Lie algebra $(\mu(a,0,A,\eta),J,g)$ is given by
	\[
	\theta=(\operatorname{tr} \eta - \operatorname{tr} A) e^{2n},
	\]
	Therefore, $(\mu(a,0,A,\eta),J,g)$ is balanced if and only if
$\operatorname{tr} A= \operatorname{tr} \eta.$	

\end{proposition}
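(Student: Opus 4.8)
The plan is to apply Lemma~\ref{Leeform_generic} directly in the adapted unitary basis $\{e_1,\dots,e_{2n}\}$, for which $Je_1=e_{2n}$ and $Je_{2l}=e_{2l+1}$, $l=1,\dots,n-1$, and whose only non-trivial brackets are $[e_{2n},e_1]=a\,e_1$, $[e_{2n},X]=AX$ for $X\in\mathfrak{k}_1$, and $[Y,Z]=-\eta(Y,Z)e_1$ for $Y,Z\in\mathfrak{k}_1$ (here $\beta=0$, since $J$ is integrable). As $\theta$ is a $1$-form, it suffices to evaluate \eqref{Leeform_formula_generic} on each basis vector, so the proof reduces to computing the two summands in that formula.

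First I would compute the vector $\sum_{l=1}^{2n}[e_l,Je_l]$. Pairing the index $l=1$ with $l=2n$ gives $[e_1,e_{2n}]+[e_{2n},-e_1]=-2a\,e_1$, while for each $l'=1,\dots,n-1$ the two terms $[e_{2l'},e_{2l'+1}]$ and $[e_{2l'+1},-e_{2l'}]$ both equal $-\eta(e_{2l'},e_{2l'+1})e_1$. Summing and recalling that $\operatorname{tr}\eta=g(\omega,\eta)=\sum_{l'=1}^{n-1}\eta(e_{2l'},e_{2l'+1})$ (the identification already used in the proof of Lemma~\ref{Ricci_perp}), this yields $\sum_{l=1}^{2n}[e_l,Je_l]=-2(a+\operatorname{tr}\eta)\,e_1$. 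Next I would record the trace term: for every $X\in\mathfrak{n}=\mathbb{R}\langle e_1,\dots,e_{2n-1}\rangle$ one has $\operatorname{tr}\operatorname{ad}_X=0$, since $\operatorname{ad}_X$ maps $\mathfrak{g}$ into the ideal $\mathfrak{n}$, acts nilpotently on $\mathfrak{n}$, and induces the zero map on $\mathfrak{g}/\mathfrak{n}$; whereas for $X=e_{2n}$ the adjoint acts on $\mathfrak{n}$ as $B=\operatorname{diag}(a,A)$ and trivially on $\mathbb{R}e_{2n}$, so $\operatorname{tr}\operatorname{ad}_{e_{2n}}=a+\operatorname{tr}A$.

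Finally, substituting into \eqref{Leeform_formula_generic} and using the $J$-orthogonality identity $g(e_1,JX)=-g(e_{2n},X)$, the second summand becomes $(a+\operatorname{tr}\eta)\,g(e_{2n},X)$, which vanishes on $\mathfrak{n}$ and equals $a+\operatorname{tr}\eta$ on $e_{2n}$. Combining with the trace term gives $\theta(X)=0$ for $X\in\mathfrak{n}$ and $\theta(e_{2n})=-(a+\operatorname{tr}A)+(a+\operatorname{tr}\eta)=\operatorname{tr}\eta-\operatorname{tr}A$, that is $\theta=(\operatorname{tr}\eta-\operatorname{tr}A)e^{2n}$; the balanced characterization $\operatorname{tr}A=\operatorname{tr}\eta$ then follows by imposing $\theta=0$.

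The computation is essentially routine, so I do not expect a genuine obstacle; the only points demanding care are the bookkeeping of the factor $2$ arising from pairing $e_{2l'}$ with $e_{2l'+1}$ (and $e_1$ with $e_{2n}$) in the vector $\sum_l[e_l,Je_l]$, and the sign in the $J$-orthogonality relation $g(e_1,JX)=-g(e_{2n},X)$, whose miscounting would corrupt the final coefficient. The conceptual content lies entirely in the clean structure of Lemma~\ref{Leeform_generic} together with the vanishing of $\operatorname{tr}\operatorname{ad}_X$ on the nilradical.
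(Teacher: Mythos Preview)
Your proof is correct and follows essentially the same route as the paper: both apply Lemma~\ref{Leeform_generic} in the adapted unitary basis, observe that $\operatorname{tr}\operatorname{ad}_{e_{2n}}=a+\operatorname{tr}A$, and compute that the second summand contributes $a+\operatorname{tr}\eta$ on $e_{2n}$ and zero elsewhere. The only cosmetic difference is that you first assemble the vector $\sum_l[e_l,Je_l]=-2(a+\operatorname{tr}\eta)e_1$ and then pair with $JX$, whereas the paper pairs term by term; your additional remark that $\operatorname{tr}\operatorname{ad}_X=0$ for $X\in\mathfrak{n}$ (rather than just checking $\theta(e_1)=\theta(X)=0$ directly) is a nice touch but not a different idea.
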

\begin{proof}
	Using \eqref{Leeform_formula_generic}, one can see that $\theta(e_1)=\theta(X)=0$ for all $X \in \mathfrak{k}_1$. To compute $\theta(e_{2n})$, we first notice that $\operatorname{tr} \text{ad}_{e_{2n}}=a+\operatorname{tr} A$, while
	\[
		g([e_1,Je_1]+[e_{2n},Je_{2n}],Je_{2n})=2g(-ae_1,-e_1)=2a,
	\]
	and
	\[
		g\left( \sum_{l=2}^{2n-1} [e_l,Je_l],Je_{2n} \right)= \sum_{l=2}^{2n-1} \eta(e_l,Je_l) = 2 \sum_{l=1}^{n-1} \eta(e_{2l},e_{2l+1}) = 2 \operatorname{tr} \eta,
	\]
	where we used that $\omega=e^{1} \wedge e^{2n} + \sum_{l=1}^{n-1} e^{2l} \wedge e^{2l+1}$. The claim then follows.
\end{proof}

\begin{proposition}
Let $\mathfrak{g}$ be a $2n$-dimensional  almost nilpotent Lie algebra with nilradical $\mathfrak{n}$ satisfying $\dim \mathfrak{n}^1 =1$. Let $(J,g)$ be a Hermitian structure on $\mathfrak{g}$ such that $J([\mathfrak{n},\mathfrak{n}])=\nperp$. Then $(J,g)$ is locally conformally balanced, i.e., $\theta$ is closed.
\end{proposition}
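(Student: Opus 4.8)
The plan is to reduce the statement to the model Hermitian Lie algebra of case (1) and then exploit the explicit shape of the Lee form obtained in Proposition \ref{bal_perp}. Since $J$ is integrable and $J\mathfrak{n}^1 = \nperp$ (recall $[\mathfrak{n},\mathfrak{n}] = \mathfrak{n}^1$), the Hermitian Lie algebra $(\mathfrak{g},J,g)$ is equivalent to a model $(\mu(a,0,A,\eta),J,g)$ as in Subsection \ref{Hermitian_perp_sec}, with $\beta = 0$ forced by Proposition \ref{complex}. By Proposition \ref{bal_perp}, its Lee form is
\[
\theta = (\operatorname{tr}\eta - \operatorname{tr} A)\, e^{2n},
\]
a fixed real multiple of the single left-invariant $1$-form $e^{2n}$ dual to the chosen complement of $\mathfrak{n}$.

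It therefore suffices to show that $de^{2n} = 0$, since the coefficient $\operatorname{tr}\eta - \operatorname{tr} A$ is a genuine constant and plays no role in computing $d\theta$. To this end I would use the identity $de^{2n}(X,Y) = -e^{2n}([X,Y])$ for $X,Y\in\mathfrak{g}$, together with the fact that $\mathfrak{n}$ is an ideal of codimension one. Since $\mathfrak{g}/\mathfrak{n}\cong\R$ is abelian, one has $[\mathfrak{g},\mathfrak{g}]\subseteq\mathfrak{n}$, whence $e^{2n}([X,Y]) = 0$ for all $X,Y$ because $e^{2n}$ annihilates $\mathfrak{n}$. Concretely, in the adapted basis the only non-zero brackets are $[e_{2n},X] = B(X)\in\mathfrak{n}$ and $[Y,Z] = -\eta(Y,Z)e_1\in\mathfrak{n}^1$, all of which lie in $\mathfrak{n}=\R\langle e_1,\ldots,e_{2n-1}\rangle$. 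Hence $de^{2n}=0$, and consequently $d\theta = (\operatorname{tr}\eta - \operatorname{tr} A)\,de^{2n} = 0$, which is exactly the locally conformally balanced condition.

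The argument is essentially immediate once the Lee form is known to be proportional to $e^{2n}$, so there is no genuine obstacle here. The only points requiring a little care are the reduction to the model Hermitian Lie algebra and the observation that the proportionality factor is a true constant; the latter is automatic at the Lie-algebra level, where $\theta$ is left-invariant, so that closedness of $\theta$ reduces to closedness of $e^{2n}$, which in turn is a formal consequence of $\mathfrak{n}$ being a codimension-one ideal.
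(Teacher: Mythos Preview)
Your proof is correct and is precisely the argument the paper has in mind: the proposition is stated there without proof because it is an immediate consequence of Proposition~\ref{bal_perp}, together with the fact that $de^{2n}=0$ (which the paper uses tacitly throughout, e.g.\ in the proof of Lemma~\ref{Ricci_perp}). Your justification of $de^{2n}=0$ via $[\mathfrak{g},\mathfrak{g}]\subset\mathfrak{n}$ is exactly the right point to make explicit.
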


In real dimension six we get the following result

\begin{theorem} \label{class_bal_perp}
Let $\mathfrak{g}$ be a $6$-dimensional strongly unimodular almost nilpotent Lie algebra with nilradical $\mathfrak{n}$ satisfying $\dim \mathfrak{n}^1 =1$. Then, $\mathfrak{g}$ admits balanced  structures $(J,g)$ with $J \mathfrak{n}^1 = \nperp$ if and only if it is isomorphic to one of the following:
\begin{itemize}[leftmargin=0.6cm]
	\item[]$\mathfrak{s}_{6.159}=(f^{24}+f^{35},0,-f^{56},0,f^{36},0)$, \smallskip
	\item[]$\mathfrak{s}_{6.162}^{1}=(f^{24}+f^{35},f^{26},f^{36},-f^{46},-f^{56},0)$, \smallskip
	\item[]$\mathfrak{s}_{6.165}^a=(f^{24}+f^{35},af^{26}+f^{36},-f^{26}+af^{36},-af^{46}+f^{56},-f^{46}-af^{56},0)$, $a>0$, \smallskip
	\item[]$\mathfrak{s}_{6.166}^a=(f^{24}+f^{35},-f^{46},-af^{56},f^{26},af^{36},0)$, $0<|a|\leq 1$, \smallskip
	\item[]$\mathfrak{s}_{6.167}=(f^{24}+f^{35},-f^{36},-f^{26},f^{26}+f^{56},f^{36}-f^{46},0)$.
\end{itemize}
On these Lie algebras, examples of such balanced structures are provided by the complex structures indicated in Theorem \ref{class_cpx_perp}, together with the metric for which the basis $\{f_1,\ldots,f_6\}$ is orthonormal.
\end{theorem}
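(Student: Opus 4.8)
The plan is to combine the classification of complex structures with $J\mathfrak{n}^1\not\subset\mathfrak{n}$ in Theorem \ref{class_cpx_perp} with the balanced characterization of Proposition \ref{bal_perp}, reducing the whole question to the vanishing of the single scalar $\operatorname{tr}\eta$. Any balanced structure $(J,g)$ with $J\mathfrak{n}^1 = \nperp$ puts us in Case (1), so up to equivalence we may work with a model Hermitian Lie algebra $(\mu(0,0,A,\eta),J,g)$, where strong unimodularity forces $a = \operatorname{tr} A = 0$ by Remark \ref{strunim_perp}. Proposition \ref{bal_perp} then tells us that $(J,g)$ is balanced if and only if $\operatorname{tr}\eta = \operatorname{tr} A = 0$, i.e.\ if and only if $\operatorname{tr}\eta = 0$. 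Since such a $J$ exists only for the nine Lie algebras listed in Theorem \ref{class_cpx_perp}, the statement reduces to deciding, among those nine, which admit a Case (1) Hermitian structure with $\operatorname{tr}\eta = 0$.

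The decisive point is that $\eta$ represents the intrinsic nilpotent bracket $\eta_\mathfrak{g}$, whose rank as a real $2$-form is $2$ exactly when $\mathfrak{n}\cong\mathfrak{h}_3\oplus\R^2$ and $4$ exactly when $\mathfrak{n}\cong\mathfrak{h}_5$. Integrability of $J$ gives $\eta\in\Lambda^{1,1}\mathfrak{k}_1^*$ by Proposition \ref{complex}, so the skew-symmetric endomorphism $H_\eta$ defined by $\eta = g(H_\eta\cdot,\cdot)$ commutes with $J\rvert_{\mathfrak{k}_1}$; hence $\mathfrak{k}_1$ admits a unitary basis in which $H_\eta = \operatorname{diag}(C_{\lambda_1},C_{\lambda_2})$, with $C_{\lambda}$ as in \eqref{Cb}. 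In this basis $\operatorname{tr}\eta = \eta(e_2,e_3)+\eta(e_4,e_5) = -(\lambda_1+\lambda_2)$, while $\operatorname{rank}\eta = 2\,\#\{i : \lambda_i\neq 0\}$. Consequently, when $\operatorname{rank}\eta = 2$ exactly one $\lambda_i$ is nonzero, forcing $\operatorname{tr}\eta\neq 0$; the balanced condition then fails for every Case (1) Hermitian structure, independently of the choice of $(J,g)$.

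This already gives the ``only if'' direction: the four Lie algebras $\mathfrak{h}_3\oplus\mathfrak{s}_{3.3}^0$, $\mathfrak{s}_{4.7}\oplus\R^2$, $\mathfrak{s}_{6.44}$ and $\mathfrak{s}_{6.52}^{0,b}$ all have $df^1 = f^{23}$, hence nilradical $\mathfrak{h}_3\oplus\R^2$ and $\operatorname{rank}\eta = 2$, so none of them admits a balanced structure with $J\mathfrak{n}^1 = \nperp$; the remaining five instead satisfy $df^1 = f^{24}+f^{35}$, hence have nilradical $\mathfrak{h}_5$ and $\operatorname{rank}\eta = 4$. For the ``if'' direction I take on each of these five the complex structure $J$ recorded in Theorem \ref{class_cpx_perp} together with the metric making $\{f_1,\ldots,f_6\}$ orthonormal. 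One verifies that $Jf_1 = f_6$ spans $\nperp$, so these structures lie in Case (1), and a direct evaluation of $\operatorname{tr}\eta = g(\eta,\omega)$ on $\mathfrak{k}_1$, using $\eta = f^{24}+f^{35}$ and the corresponding restriction of $\omega$, gives $\operatorname{tr}\eta = 0$ in each case, establishing balancedness.

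The only genuinely delicate step is the rank-versus-trace dichotomy for $(1,1)$-forms established in the second paragraph, since it is precisely what separates the balanced family (nilradical $\mathfrak{h}_5$) from the non-balanced one (nilradical $\mathfrak{h}_3\oplus\R^2$); once this is in place, both directions follow from the classification in Theorem \ref{class_cpx_perp} and routine evaluations of $g(\eta,\omega)$.
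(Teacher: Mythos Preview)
Your proof is correct and follows essentially the same route as the paper: both reduce via Proposition~\ref{bal_perp} to the single condition $\operatorname{tr}\eta=0$, diagonalize the $(1,1)$-form $\eta$ to see that this forces $\eta$ to have full rank (equivalently $\eta^2\neq 0$, hence $\mathfrak{n}\cong\mathfrak{h}_5$), and then exhibit the explicit balanced structures from Theorem~\ref{class_cpx_perp} on the five surviving algebras. Your phrasing via the endomorphism $H_\eta$ is a minor cosmetic variant of the paper's direct normal form $\eta=b_1e^{23}+b_2e^{45}$.
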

\begin{proof}
We start by noticing that, by Proposition \ref{bal_perp}, a strongly unimodular Hermitian Lie algebra $(\mu(0,0,A,\eta),J,g)$ is balanced if and only if $\operatorname{tr} \eta=0$.

Now, since $\eta$ is a $(1,1)$-form, we can assume it to be of the form
\[
\eta=b_1 e^{23} + b_2 e^{45}, \quad b_l \in \R,\,l=1,2,\quad |b_1| \geq |b_2|, \quad b_1 \neq 0,
\]
so that one has $\operatorname{tr} \eta = b_1+b_2$. Its vanishing then forces $b_2=-b_1$, so that both $b_1$ and $b_2$ must be nonzero. It follows that $\eta^2 \neq 0$, so that the nilradical $\mathfrak{n}$ must be isomorphic to $\mathfrak{h}_5$.

Now, on each of the Lie algebras from Theorem \ref{class_cpx_perp} having nilradical isomorphic to $\mathfrak{h}_5$, it is possible to find an explicit example of balanced Hermitian structure $(J,g)$ satisfying $J \mathfrak{n}^1 = \nperp$. This concludes the proof.
\end{proof}

\begin{remark}
Balanced structures on the Lie algebras $\frs{6.162}^{1}$ and $\frs{6.166}^1$ were first determined in \cite[Theorem 4.5]{FOU}. In particular, these two Lie algebras are the only $6$-dimensional unimodular (non almost abelian) almost nilpotent Lie algebras admitting balanced structures $(J,g)$ and a non-zero holomorphic $(3,0)$-form.
\end{remark}

\subsection{Case (2)} We can now proceed by focusing on balanced Hermitian metrics with respect to complex structures satisfying $J \mathfrak{n}^1 \subset \mathfrak{n}$.
\begin{proposition}
The Lee form of the almost nilpotent Hermitian Lie algebra $(\mu(\Xi),J,g)$ is given by
\[
\theta= -v_2 e^1 + (v_1+\operatorname{tr} \xi) e^2 + (Jv)^\flat - (a_1+a_2+\operatorname{tr} A) e^{2n}.
\]
Then,  $(\mu(\Xi),J,g)$ is balanced if and only if
\begin{equation} \label{balanced_sub}
v_1=-\operatorname{tr} \xi,\quad v_2=0,\quad v=0,\quad \operatorname{tr} A=-a_1-a_2.
\end{equation}

\end{proposition}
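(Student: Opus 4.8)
The plan is to invoke Lemma~\ref{Leeform_generic}, which expresses
\[
\theta(X) = -\operatorname{tr}\operatorname{ad}_X + \tfrac{1}{2} g\Big(\textstyle\sum_{l=1}^{2n}[e_l, Je_l],\, JX\Big),
\]
and to evaluate both summands on the adapted unitary basis $\{e_1,\ldots,e_{2n}\}$ using the explicit brackets encoded in \eqref{B_sub} and \eqref{eta_sub}, together with $Je_{2j-1}=e_{2j}$. First I would dispose of the adjoint traces. For every $X\in\mathfrak{n}$ the map $\operatorname{ad}_X$ sends $\mathfrak{g}$ into the ideal $\mathfrak{n}$ and restricts to a nilpotent endomorphism of $\mathfrak{n}$, while $\operatorname{ad}_X(e_{2n})\in\mathfrak{n}$ contributes nothing to the $e_{2n}$-diagonal entry; hence $\operatorname{tr}\operatorname{ad}_X=0$. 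The only surviving contribution comes from $X=e_{2n}$, where $\operatorname{ad}_{e_{2n}}\rvert_{\mathfrak{n}}=B$ and $\operatorname{ad}_{e_{2n}}(e_{2n})=0$, so that $\operatorname{tr}\operatorname{ad}_{e_{2n}}=\operatorname{tr}B=a_1+a_2+\operatorname{tr}A+a$.

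Next I would compute the vector $S\coloneqq\sum_{l=1}^{2n}[e_l,Je_l]$. Pairing each $e_{2j-1}$ with $Je_{2j-1}=e_{2j}$, the sum collapses to $S=2\sum_{j=1}^{n}[e_{2j-1},e_{2j}]$. Here $[e_1,e_2]=0$ since $e_1$ is central in $\mathfrak{n}$; for $2\le j\le n-1$ the pairs lie in $\mathfrak{k}_3$ and give $[e_{2j-1},e_{2j}]=-\xi(e_{2j-1},e_{2j})\,e_1$, whose sum is $-(\operatorname{tr}\xi)\,e_1$ once $\operatorname{tr}\xi=\sum_{j}\xi(e_{2j-1},e_{2j})$ is read as the $J$-contraction of $\xi$ over $\mathfrak{k}_3$; finally the last pair yields $[e_{2n-1},e_{2n}]=-B(e_{2n-1})=-(v_1e_1+v_2e_2+v+ae_{2n-1})$. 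Collecting these gives $S=-2(\operatorname{tr}\xi+v_1)e_1-2v_2e_2-2v-2ae_{2n-1}$. Substituting into $\tfrac12 g(S,JX)$ and reading off each basis direction (using $g(v,JX)=-g(Jv,X)$ for $X\in\mathfrak{k}_3$) reproduces the claimed components $-v_2$ on $e_1$, $\,v_1+\operatorname{tr}\xi$ on $e_2$, and $(Jv)^\flat$ on $\mathfrak{k}_3$, while the $e_{2n-1}$-component vanishes because $S$ has no $e_{2n}$-part.

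The one delicate point, and the step I expect to be the crux, is the $e_{2n}$-component: the adjoint trace contributes $-(a_1+a_2+\operatorname{tr}A+a)$, whereas $\tfrac12 g(S,Je_{2n})=\tfrac12 g(S,-e_{2n-1})=a$, and this exactly cancels the stray $a$, leaving $\theta(e_{2n})=-(a_1+a_2+\operatorname{tr}A)$. Assembling all the components yields the stated formula for $\theta$. The balanced characterisation \eqref{balanced_sub} then follows at once by imposing $\theta=0$ and equating each coefficient to zero, the $\mathfrak{k}_3$-valued part forcing $(Jv)^\flat=0$, i.e.\ $v=0$.
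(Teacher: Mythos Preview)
Your proof is correct and follows essentially the same approach as the paper: both invoke Lemma~\ref{Leeform_generic} and evaluate the two summands on the adapted unitary basis using the explicit brackets from \eqref{B_sub}--\eqref{eta_sub}. The only cosmetic difference is that you first compute the vector $S=\sum_l[e_l,Je_l]$ globally and then project, whereas the paper computes $\theta$ component by component; the computations and the crucial cancellation of the $a$-term in $\theta(e_{2n})$ are identical.
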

\begin{proof}
We can apply \eqref{Leeform_formula_generic}:
\begin{align*}
	\theta(e_1)&=-g([e_{2n},e_{2n-1}],Je_1)=-v_2,\\
	\theta(e_2)&=-g([e_{2n},e_{2n-1}],Je_2)+\frac{1}{2}g\left( \sum_{l=3}^{2n-2} [e_l,Je_l],Je_2\right)=v_1+\operatorname{tr} \xi,\\
	\theta(X)&=-g([e_{2n},e_{2n-1}],JX)=-g(v,JX)=(Jv)^\flat (X),\quad X \in \mathfrak{k}_3,\\
	\theta(e_{2n-1})&=0,\\
	\theta(e_{2n})&=-\operatorname{tr} \text{ad}_{e_{2n}} - g([e_{2n},e_{2n-1}],Je_{2n})=-(a_1+a_2+\operatorname{tr} A),
\end{align*}
and the claim follows.
\end{proof}

In real dimension six we have the following.

\begin{theorem} \label{class_bal_sub}
Let $\mathfrak{g}$ be a $6$-dimensional strongly unimodular almost nilpotent Lie algebra with nilradical $\mathfrak{n}$ satisfying $\dim \mathfrak{n}^1 =1$. Then $\mathfrak{g}$ admits balanced Hermitian structures $(J,g)$ satisfying $J \mathfrak{n}^1 \subset \mathfrak{n}$, but no K\"ahler structures, if and only if it is isomorphic to
\[
\frs{5.16} \oplus \R=(f^{23}+f^{46},f^{36},-f^{26},0,0,0).
\]
In particular, it is  a decomposable, $3$-step solvable, of type I and  its nilradical  $\mathfrak{n}$  is isomorphic to $\mathfrak{h}_3 \oplus \R^2$.

An example of such balanced structure on this Lie algebra is provided by the complex structure indicated in Theorem \ref{class_cpx_sub}, together with the metric for which the basis $\{f_1,\ldots,f_6\}$ is orthonormal.
\end{theorem}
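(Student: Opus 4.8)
The plan is to work with a model Hermitian Lie algebra $(\mu(\Xi),J,g)$ in the reduced form \eqref{B_sub1} produced in the proof of Theorem \ref{class_cpx_sub}, in which strong unimodularity and integrability have already been encoded (so $a_1=0$ and the diagonal of $A$ equals $-\tfrac12(a+a_2)$), and to superimpose on it the balanced equations \eqref{balanced_sub} together with the Jacobi relations \eqref{Lie1new}--\eqref{Lie3new}. First I would substitute the balanced trace condition $\operatorname{tr}A=-a_1-a_2$ into the strongly unimodular relation $a+a_2+\operatorname{tr}A=0$ to get $a=0$; then \eqref{Lie1new}, which now reads $a_2^2=0$, forces $a_2=0$ as well. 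Because a vanishing $A$ would render $\operatorname{ad}_{e_6}$ nilpotent and hence $\mathfrak{g}$ nilpotent, $A$ is an invertible element of $\mathfrak{u}(\mathfrak{k}_3,J\rvert_{\mathfrak{k}_3},g)$; with $a=a_2=0$ and $v=0$ (from \eqref{balanced_sub}) the relation \eqref{Lie3new} collapses to $A^{*}\nu=0$, so invertibility gives $\nu=0$.

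Consequently $\eta=c\,e^{34}$, and non-degeneracy of the bracket forces $c\neq0$; the last balanced equation then yields $v_1=-\operatorname{tr}\xi=-c\neq0$ while $v_2=v=0$. Since $c\,a_2=0$, \eqref{etasq} shows $\mathfrak{n}\cong\mathfrak{h}_3\oplus\R^2$, with $\mathfrak{h}_3=\R\langle e_1,e_3,e_4\rangle$ and abelian factor $\R\langle e_2,e_5\rangle$. The decisive structural fact is that $\operatorname{ad}_{e_6}$ carries $e_5$, an element of the centre of $\mathfrak{n}$ lying in the abelian factor, into $\mathfrak{n}^1=\R\langle e_1\rangle$ through $v_1\neq0$; equivalently $\mathfrak{n}^1\subseteq[\mathfrak{g},\mathfrak{z}(\mathfrak{n})]$, where $\mathfrak{z}(\mathfrak{n})$ denotes the centre of $\mathfrak{n}$. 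I would use this as an isomorphism invariant to sieve the seven algebras of Theorem \ref{class_cpx_sub}: the two with nilradical $\mathfrak{h}_5$ ($\frs{6.158}$ and $\frs{6.164}^a$) and the two whose $A_\mathfrak{g}$ has either a nonzero real eigenvalue or no kernel ($\frs{4.6}\oplus\R^2$ and $\frs{6.51}^{a,0}$) are incompatible with $a=a_2=0$ and $\operatorname{Spec}(A)=\{\pm iq\}$; among the remaining three, which all share the spectrum $\{0,0,\pm i\}$, only $\frs{5.16}\oplus\R$ satisfies $\mathfrak{n}^1\subseteq[\mathfrak{g},\mathfrak{z}(\mathfrak{n})]$, whereas the abelian factor is central in the whole of $\frs{4.7}\oplus\R^2$ and is mapped into itself in $\frs{6.25}$. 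After eliminating the residual datum $\alpha$ by an inner automorphism, the model is identified with $\frs{5.16}\oplus\R$.

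For the converse I would put the complex structure of Theorem \ref{class_cpx_sub} on $\frs{5.16}\oplus\R$ and the metric making $\{f_1,\dots,f_6\}$ orthonormal and check \eqref{balanced_sub} directly; the assertions that $\mathfrak{g}$ is decomposable, $3$-step solvable, of type I and has nilradical $\mathfrak{h}_3\oplus\R^2$ are then immediate from the structure equations. Non-K\"ahlerness follows with no further computation: $\frs{5.16}\oplus\R$ is absent from the list of Theorem \ref{class_cpx_perp}, so it admits no complex structure with $J\mathfrak{n}^1\not\subset\mathfrak{n}$, while by Proposition \ref{sub_Kahlerprop} and the remark following it every K\"ahler structure in case (2) is non-unimodular; hence $\mathfrak{g}$ carries no K\"ahler structure at all. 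The part I expect to be most delicate is the middle step—propagating the balanced constraints through the Jacobi identities to force $\nu=0$ and therefore $c\neq0$, and then pinning down an invariant sharp enough to separate $\frs{5.16}\oplus\R$ from $\frs{4.7}\oplus\R^2$ and $\frs{6.25}$, which the spectrum of $A_\mathfrak{g}$ alone cannot distinguish.
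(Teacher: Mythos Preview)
Your argument is correct and tracks the paper's proof through the reduction $a=0$, $a_2=0$, $\nu=0$, $c\neq 0$, $v_1=-c$; the paper derives these from the same equations (citing \eqref{Lie2new} and \eqref{Lie3new} in their reduced form), so this part is essentially identical.

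The genuine divergence is in the identification step. The paper simply writes down an explicit change of basis carrying the model Lie algebra (with arbitrary $\alpha=pe^3+qe^4$) onto $\frs{5.16}\oplus\R$. You instead appeal to Theorem~\ref{class_cpx_sub} to know that $\mathfrak{g}$ is one of seven listed algebras, and then separate $\frs{5.16}\oplus\R$ from $\frs{4.7}\oplus\R^2$ and $\frs{6.25}$ by the invariant $\mathfrak{n}^1\subseteq[\mathfrak{g},\mathfrak{z}(\mathfrak{n})]$. This works cleanly and avoids having to guess the isomorphism; the paper's explicit basis has the compensating virtue of showing directly that the residual parameter $\alpha$ is inessential. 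One caveat: your remark that $\alpha$ can be removed ``by an inner automorphism'' is not accurate---the required change of basis mixes $e_3,e_4$ with $e_1,e_2$ in a way that is not of the form $\exp(\operatorname{ad}_X)$---but since your invariant argument already pins down the isomorphism class via Theorem~\ref{class_cpx_sub}, you do not need this step at all and should simply drop it.

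Your treatment of the non-K\"ahler assertion, combining the absence of $\frs{5.16}\oplus\R$ from Theorem~\ref{class_cpx_perp} with Proposition~\ref{sub_Kahlerprop}, is more explicit than the paper's proof, which leaves this point implicit.
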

\begin{proof}
Choose a balanced structure on $\mathfrak{g}$ satisfying the requirements of the statement and consider an equivalent model Hermitian Lie algebra $(\mu(\Xi),J,g)$. After imposing \eqref{balanced_sub}, we note that
\[
\operatorname{tr} \text{ad}_{e_6}=\operatorname{tr} B= a_2+\operatorname{tr} A + a = a,
\]
so that the unimodularity of $\mathfrak{g}$ imposes $a=0$. Now, equation \eqref{Lie3} reads
\[
a_2^2=0,
\]
so that we also have to set $a_2=0$. Now, \eqref{Lie4} can be written as
\begin{equation} \label{a1_eigen}
A^*(\gamma-J\alpha)=0.
\end{equation}
This implies that either $A$ is degenerate or $\gamma-J\alpha=0$ but, since $\dim \mathfrak{k}_3=2$ and $[A,J\rvert_{\mathfrak{k}_3}]=0$, the former option is equivalent to $A=0$, which would imply that $B$ is a nilpotent matrix, so that $\mathfrak{g}$ is nilpotent, so we discard this case.

Now, we note that all the previous discussion implies that the matrix $B=\text{ad}_{e_{6}}\rvert_{\mathfrak{n}}$ and the $2$-form $\eta=de^1\rvert_{\Lambda^2 \mathfrak{n}}$ are of the form
\begin{equation} \label{B_eta_bal_sub}
B=\begin{pmatrix} 
0 & 0 & p  & q & -c \\
0 & 0 & -q & p & 0 \\
0 & 0 & 0 & b & 0 \\
0 & 0 & -b & 0 & 0 \\
0 & 0 & 0 & 0 & 0
\end{pmatrix}, \quad \eta=c\,e^{34},\qquad b,c \in \R-\{0\},\; \alpha=pe^3+qe^4 \in \mathfrak{k}_3^*,
\end{equation}
so that one has
\begin{equation} \label{streq_bal_sub}
\begin{gathered}
de^1=pe^{36}+qe^{46}-ce^{56}+ce^{34}, \qquad de^2=-qe^{36}+pe^{46}, \\
de^3=-be^{46}, \qquad de^4=-be^{36}, \qquad de^5=de^6=0.
\end{gathered}
\end{equation}
An explicit isomorphism with the Lie algebra $\frs{5.16} \oplus \R$ can then be obtained by setting
\begin{alignat*}{3}
f_1&=ce_1,&\quad f_2&=\tfrac{q}{b} e_1+\tfrac{p}{b}e_2+e_3,&\quad f_3&=-\tfrac{p}{b}e_1 + \tfrac{q}{b} e_2 + e_4, \\
f_4&=-be_5, &\quad f_5&=e_2, &\quad f_6&=\tfrac{1}{b} e_6,
\end{alignat*}
concluding the proof.
\end{proof}

To end this section, we recall that it was conjectured in \cite{FV1} that a compact complex manifold $(M,J)$ cannot admit both SKT and balanced metrics unless it admits K\"ahler metrics as well.
Examining our classification results, we obtain the following related result.

\begin{corollary}
Let $\Gamma \backslash G$ be a $6$-dimensional almost nilpotent solvmanifold, with the nilradical $\mathfrak{n}$ of $G$ having one-dimensional commutator. Then, $\Gamma \backslash G$ cannot admit both  invariant SKT structures $(J,g)$ satisfying either $J\mathfrak{n}^1 = \nperp$ or $J\mathfrak{n}^1 \subset \mathfrak{n}$ and invariant  balanced structures $(J^\prime,g^\prime)$ satisfying either $J^\prime \mathfrak{n}^1 = \mathfrak{n}^{\perp_{g^\prime}}$ or $J^\prime \mathfrak{n}^1 \subset \mathfrak{n}$.
\end{corollary}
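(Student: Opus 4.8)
The plan is to reduce the statement to an exhaustive comparison of the four classification lists obtained earlier in the paper, and to verify that the two relevant families are disjoint. Concretely, a $6$-dimensional almost nilpotent solvmanifold $\Gamma \backslash G$ with the nilradical $\mathfrak{n}$ of $G$ having one-dimensional commutator corresponds to a strongly unimodular Lie algebra $\mathfrak{g}$ of the same type, since the existence of a lattice forces strong unimodularity by \cite{Gar}. Invariant SKT or balanced structures on $\Gamma \backslash G$ are induced by corresponding structures at the level of $\mathfrak{g}$, so the entire question descends to the Lie algebra level, where we have complete classifications.

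First I would assemble the two lists in question. By Theorem \ref{class_SKT_perp} and Theorem \ref{class_SKT_sub}, the Lie algebras $\mathfrak{g}$ admitting an SKT structure $(J,g)$ with either $J\mathfrak{n}^1 = \nperp$ or $J\mathfrak{n}^1 \subset \mathfrak{n}$ are exactly
\[
\mathfrak{h}_3 \oplus \mathfrak{s}_{3.3}^0,\ \frs{4.6} \oplus \R^2,\ \frs{4.7}\oplus \R^2,\ \frs{6.25},\ \frs{6.51}^{a,0},\ \frs{6.52}^{0,b},\ \frs{6.158},\ \frs{6.164}^a.
\]
By Theorem \ref{class_bal_perp} and Theorem \ref{class_bal_sub}, the Lie algebras admitting a balanced structure $(J^\prime,g^\prime)$ (but no K\"ahler structure) with either $J^\prime\mathfrak{n}^1 = \mathfrak{n}^{\perp_{g^\prime}}$ or $J^\prime\mathfrak{n}^1 \subset \mathfrak{n}$ are exactly
\[
\frs{5.16} \oplus \R,\ \frs{6.159},\ \frs{6.162}^{1},\ \frs{6.165}^a,\ \frs{6.166}^a,\ \frs{6.167}.
\]
The decisive step is then simply to observe that these two lists share no isomorphism class: reading off the labels (which are pairwise non-isomorphic by the classification in \cite{SW}), the intersection is empty. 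Since K\"ahler structures are in particular both SKT and balanced, the exclusion of K\"ahler Lie algebras in the balanced list is exactly what is needed to make the two families genuinely disjoint rather than merely overlapping on a K\"ahler locus. Hence no single $\mathfrak{g}$ can appear in both lists, and correspondingly no $\Gamma \backslash G$ can carry both types of invariant structure of the prescribed kind.

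I expect the only subtle point to be bookkeeping rather than mathematics: one must make sure the two cases $J\mathfrak{n}^1 = \nperp$ and $J\mathfrak{n}^1 \subset \mathfrak{n}$ are both accounted for on each side, and that the parametrized families (the $a$'s and $b$'s) are not secretly isomorphic across the two lists for special parameter values. This is handled by the fact that the underlying isomorphism classes carry distinct Salamon--Winther labels, so distinct labels guarantee non-isomorphic Lie algebras regardless of parameters. With that observation the result follows immediately by inspection, and no further computation is required.
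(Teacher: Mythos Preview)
Your proposal is correct and coincides with the paper's own (implicit) proof: the corollary is stated immediately after the four classification theorems and is obtained precisely by inspecting that the SKT list from Theorems \ref{class_SKT_perp} and \ref{class_SKT_sub} and the balanced list from Theorems \ref{class_bal_perp} and \ref{class_bal_sub} are disjoint. One cosmetic slip: the reference \cite{SW} is \v{S}nobl--Winternitz, not ``Salamon--Winther''.
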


\section{Balanced flow} \label{balancedflowsection}
In \cite{BV},  a parabolic flow for balanced Hermitian metrics on a complex manifold was introduced. The evolution equation for the $(n-1, n-1)$-form $\varphi(t)$  is
\begin{equation} \label{flowBV}
	\tfrac{\partial}{\partial t}\varphi(t)=i \partial \overline\partial *_t \left(\rho_{\omega(t)}^C \wedge *_t \varphi(t)\right) +\Delta_{BC} \varphi(t),\quad \varphi(0)=\varphi_0=*_0 \omega_0,
\end{equation}
or equivalently
\begin{equation} \label{flowBV2}
	\tfrac{\partial}{\partial t}\omega(t)=(n-2)!\,\iota_{\omega(t)^{n-2}}\left(i\partial \overline{\partial} *_t \left(\rho_{\omega(t)}^C \wedge \omega(t) \right) \right) + \frac{1}{n-1} \, \iota_{\omega(t)^{n-2}} \Delta_{BC} \omega(t)^{n-1},\quad \omega(0)=\omega_0.
\end{equation}
Here $*_t$ denotes the Hodge star operator associated with $\omega(t)$ and $\Delta_{BC}$ is the modified Bott-Chern Laplacian
\[
\Delta_{B C}=\partial \overline{\partial \partial}^{*} \partial^{*}+\bar{\partial}^{*} \partial^{*} \partial \bar{\partial}+\bar{\partial}^{*} \partial \partial^{*} \bar{\partial}+\partial^{*} \overline{\partial \partial}^{*} \partial+\bar{\partial}^{*} \bar{\partial}+\partial^{*} \partial
\]
associated with $\omega(t)$. The flow \eqref{flowBV} differs from the one introduced in \cite{BV} by a factor in the second summand in the right-hand side, as suggested by the authors (see \cite{FP1}). We shall refer to this flow as \emph{balanced flow} and we shall denote the right-hand side of \eqref{flowBV2} by $q(\omega(t))$ for simplicity.  As shown in \cite{BV}, \eqref{flowBV} preserves the balanced condition of the initial data: in particular, its solution $\varphi(t)$ satisfies $[\varphi(t)]=[\varphi_0] \in H_{BC}^{2n-2}={\ker d}/{\operatorname{im} \partial \overline\partial}$ for all times.

When the initial data is K\"ahler, the balanced flow reduces to the \emph{Calabi flow} (see \cite{Cal}),
\begin{equation} \label{calabiflow}
	\tfrac{\partial}{\partial t} \omega(t)=i\partial \overline\partial s_{\omega(t)},\quad \omega(0)=\omega_0,
\end{equation}
where $s_{\omega(t)}$ denotes the scalar curvature of $\omega(t)$, 

On a connected Lie group, analogously to the situation in Section \ref{sec_plflow}, we have that left-invariant initial data yield left-invariant solutions, so that, once again, the flow \eqref{flowBV} reduces to an \textsc{ode} on the associated Lie algebra.

Moreover, since the scalar curvature of a left-invariant metric on a Lie group is constant, it is immediate to see that left-invariant K\"ahler metrics on Lie groups constitute stationary solutions of the Calabi flow and, consequently, of the balanced flow.

To study the balanced flow on the balanced Hermitian Lie algebras described in the previous section, once again we apply the bracket flow technique.

In particular, as already described in \cite{FP1}, the bracket flow equation corresponding to the balanced flow is given by
\begin{equation} \label{brflow_bal}
	\tfrac{d}{dt}\mu= - \pi(Q_\mu)\mu,\quad \mu(0)=\mu_0.
\end{equation}
Denoting by $q(\omega(t))$ the right-hand side of \eqref{flowBV2}, for every bracket $\mu \in \Lambda^2(\R^{2n})^* \otimes \R^{2n}$, we denote by $\omega_{\mu}$ the left-invariant extension of $\omega=g(J \cdot, \cdot)$ on $\R^{2n}$ according to the Lie group operation corresponding to the bracket $\mu$. We can then denote by $q_{\mu}$ the restriction to $\Lambda^2 T_0^* \R^{2n} \cong \Lambda^2 (\R^{2n})^*$ of $q(\omega_\mu)$ so that we can finally define
\begin{equation} \label{Qmu}
	Q_\mu = - \tfrac{1}{2} \omega^{-1} q_{\mu} \in \mathfrak{gl}_{2n}.
\end{equation}

Again, we shall also consider gauge-corrected bracket flows of the form
\begin{equation} \label{brflow_gauge_bal}
	\tfrac{d}{dt}\mu= - \pi(Q_\mu-U_{\mu})\mu,\quad \mu(0)=\mu_0,
\end{equation}
where $U \colon \Lambda^2(\R^{2n})^* \otimes \R^{2n} \to \mathfrak{u}(\R^{2n},J,g)$ is some smooth map.

\subsection{Case (1)} 
We can start by determining an expression for the endomorphism $Q_{\mu}$ in case (1).

\begin{lemma}
The endomorphism $Q_{\mu(a,0,A,\eta)}$ associated with the $2n$-dimensional balanced Hermitian Lie algebra $(\mu(a,0,A,\eta),J,g)$ is of the form
\[
Q_{\mu(a,0,A,\eta)}=\begin{pmatrix} p & 0 & 0 \\ 0 & P & 0 \\ 0 & 0 & p \end{pmatrix},
\]
with $p=p(a,A,\eta)$ a real number and $P=P(a,A,\eta)$ a symmetric endomorphism of $\mathfrak{k}_1$ commuting with $J\rvert_{\mathfrak{k}_1}$. The entries of $Q_{\mu(a,0,A,\eta)}$ are all homogeneous fourth-order polynomials in $a$, the coefficients of $\eta$ and the entries of $A$.
\end{lemma}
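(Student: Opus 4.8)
The plan is to deduce all four assertions from two structural inputs: that the balanced-flow velocity $q_{\mu(a,0,A,\eta)}$ is a real $(1,1)$-form, and that the assignment $\mu \mapsto Q_{\mu}$ is equivariant under unitary automorphisms, $Q_{k\cdot\mu}=kQ_{\mu}k^{-1}$ for $k \in \mathrm{U}(\R^{2n},J,g)$, exactly as the assignment $\mu\mapsto P_\mu$ is in the pluriclosed case of Section \ref{sec_plflow}.

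First I would record that $q_{\mu(a,0,A,\eta)}$ is a real $(1,1)$-form by tracing the construction \eqref{flowBV2}: $\rho^C_{\mu}$ is of type $(1,1)$ by Lemma \ref{Ricci_perp}, so $\rho^C_\mu\wedge\omega$ is a real $(2,2)$-form, $*(\rho^C_\mu\wedge\omega)$ a real $(n-2,n-2)$-form, $i\partial\overline\partial$ of it a real $(n-1,n-1)$-form, and the contraction $\iota_{\omega^{n-2}}$ returns a real $(1,1)$-form; the summand coming from $\Delta_{BC}\omega^{n-1}$ is treated identically. Since a real $(1,1)$-form $\psi$ corresponds under $\psi\mapsto -\tfrac{1}{2}\omega^{-1}\psi$ to a $g$-symmetric endomorphism commuting with $J$, this already forces $Q_{\mu(a,0,A,\eta)}$ to be $g$-symmetric and to commute with $J$; in particular its restriction to any $J$-invariant, $Q_\mu$-invariant subspace is symmetric and commutes with $J$ there.

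Next I would pin down the block structure using the order-two isometry $\sigma \in \mathrm{U}(\R^{2n},J,g)$ acting as $+\mathrm{Id}$ on $\R\left<e_1,e_{2n}\right>=\mathfrak{n}^1\oplus J\mathfrak{n}^1$ and as $-\mathrm{Id}$ on $\mathfrak{k}_1$; it is orthogonal and commutes with $J$. A short check on the three families of brackets $[e_{2n},e_1]=a\,e_1$, $[e_{2n},X]=AX$ and $[Y,Z]=-\eta(Y,Z)e_1$ (using $\beta=0$ and $A\mathfrak{k}_1\subseteq\mathfrak{k}_1$ from \eqref{integrable_perp}) confirms $\sigma\cdot\mu=\mu$, whence equivariance gives $Q_\mu=\sigma Q_\mu\sigma^{-1}$, so $Q_\mu$ commutes with $\sigma$ and preserves its eigenspaces $\R\left<e_1,e_{2n}\right>$ and $\mathfrak{k}_1$. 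On the two-dimensional space $\R\left<e_1,e_{2n}\right>$, a $g$-symmetric endomorphism commuting with $J$ (with $Je_1=e_{2n}$) must be a real multiple $p\,\mathrm{Id}$ of the identity, producing the equal diagonal scalars $p$; on $\mathfrak{k}_1$ the restriction $P$ is symmetric and commutes with $J\rvert_{\mathfrak{k}_1}$ by the previous paragraph. This yields the claimed block form $\mathrm{diag}(p,P,p)$.

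Finally, for the homogeneity statement I would run a degree count in the structure constants, noting that each of $\partial,\overline\partial$ and their formal $g$-adjoints raises the structure-constant degree by exactly one, while $*$, $\iota_{\omega^{\bullet}}$ and wedging with the fixed form $\omega$ preserve it. Since $\rho^C_\mu=d\theta^C_\mu$ with $\theta^C_\mu$ linear in the data, $\rho^C_\mu$ is homogeneous of degree two, so the first term of $q_\mu$, which carries an extra $i\partial\overline\partial$, is homogeneous of degree four. For the second term, the balanced identity $d\omega^{n-1}=0$ annihilates the two second-order summands $\overline\partial^*\overline\partial+\partial^*\partial$ of $\Delta_{BC}$ together with the three fourth-order summands ending in $\partial$ or $\overline\partial$, leaving only $\partial\,\overline{\partial\partial}^*\partial^*\omega^{n-1}$, again of degree four; hence $q_\mu$, and therefore $Q_\mu=-\tfrac{1}{2}\omega^{-1}q_\mu$, has entries that are homogeneous fourth-order polynomials in $a$, the coefficients of $\eta$ and the entries of $A$. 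I expect the main obstacle to lie precisely in this last bookkeeping: one must verify carefully that the balanced hypothesis genuinely kills every low-order contribution of $\Delta_{BC}$, since any surviving degree-two term would destroy the homogeneity.
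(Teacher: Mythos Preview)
Your argument is correct but proceeds along a genuinely different route from the paper's.  The paper obtains the block-diagonal structure by tracking directly where the $(2n-2)$-forms $\Delta_{BC}\omega^{n-1}$ and $i\partial\overline\partial *(\rho^C_\omega\wedge\omega)$ land: using that $\omega,\rho^C_\omega,de^1\in\R e^1\wedge e^{2n}\oplus\Lambda^2\mathfrak{k}_1^*$, $d\mathfrak{k}_1^*\subset\mathfrak{k}_1^*\wedge e^{2n}$, $de^{2n}=0$ and the behaviour of $*$ on these subspaces, one checks that both forms lie in $e^1\wedge e^{2n}\wedge\Lambda^{2n-4}\mathfrak{k}_1^*\oplus\Lambda^{2n-2}\mathfrak{k}_1^*$, which forces $q_\mu\in\R e^1\wedge e^{2n}\oplus\Lambda^2\mathfrak{k}_1^*$ and hence the block shape of $Q_\mu$.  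You instead invoke the $\mathbb{Z}_2$-symmetry $\sigma$ and the equivariance $Q_{k\cdot\mu}=kQ_\mu k^{-1}$; since $\sigma\cdot\mu=\mu$, $Q_\mu$ commutes with $\sigma$ and thus preserves its eigenspaces.  Your route is more conceptual and reusable (the same $\sigma$ would control any natural operator in this setup), while the paper's route is more explicit and gives, as a by-product, exactly which graded pieces of the exterior algebra the intermediate forms inhabit.  Both approaches tacitly rely on $q_\mu$ being a real $(1,1)$-form to conclude that the two scalar entries coincide and that $P$ is symmetric and $J$-commuting.  On the homogeneity claim, the paper's proof does not supply an argument, whereas your degree count---reducing $\Delta_{BC}\omega^{n-1}$ to the single surviving term $\partial\overline\partial\,\overline\partial^*\partial^*\omega^{n-1}$ via $\partial\omega^{n-1}=\overline\partial\omega^{n-1}=0$---is a clean justification and actually fills a small gap.
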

\begin{proof}
	Recalling the definition of $Q_\mu$, it will be enough to show that both $\Delta_{BC} \omega^{n-1}$ and $i \partial \overline\partial * (\rho^C_{\omega} \wedge \omega)$ lie in $e^1 \wedge e^{2n} \wedge \Lambda^{2n-4} \mathfrak{k}_1^* \oplus \Lambda^{2n-2} \mathfrak{k}_1^*$. This can be easily achieved by recalling that $\omega,\rho^C_{\omega},de^1 \in \R e^1 \wedge e^{2n} \oplus \Lambda^2 \mathfrak{k}_1^*$, $d\mathfrak{k}_1^* \subset \mathfrak{k}_1^* \wedge e^{2n}$, $de^{2n}=0$ and the fact that $*(\Lambda^4 \mathfrak{k}_1^*) \subset e^1 \wedge e^{2n} \wedge \Lambda^{2n-6} \mathfrak{k}_1^*$, $*(e^1 \wedge e^{2n} \wedge \Lambda^2 \mathfrak{k}_1^*) \subset \Lambda^{2n-4} \mathfrak{k}_1^*$ and $\rho^C_{\omega} \in \R e^1 \wedge e^{2n} \oplus \Lambda^2 \mathfrak{k}_1^*$.
\end{proof}

In the $6$-dimensional strongly unimodular case, an explicit computation shows
\begin{equation} \label{pPbal}
\begin{aligned}
p(A,\eta)&=-\frac{m}{32} \lVert A^+ \rVert^2 + \frac{1}{16} \lVert \eta \rVert^4,\\
P(A,\eta)&=\frac{m}{32}[A,A^t]- \frac{1}{16} \lVert \eta \rVert^4\,\text{Id}_{\mathfrak{k}_1},
\end{aligned}
\end{equation}
with
\[
m=m(A)=4\lVert A \rVert^2 - (\operatorname{tr}(JA))^2.
\]

\begin{theorem} \label{ode_balflow_perp}
The bracket flow \eqref{brflow_bal} starting from the  $6$-dimensional strongly unimodular balanced Hermitian Lie algebra $(\mu(a_0,0,A_0,\eta_0),J,g)$ is equivalent to the \textsc{ode} system
\[
\begin{cases}
\tfrac{d}{dt} a= pa, & a(0)=a_0, \\
\tfrac{d}{dt} A = [A,P] + pA, & A(0)=A_0, \\
\tfrac{d}{dt} \eta = P^*\eta - p\eta, & \eta(0)=\eta_0.
\end{cases}
\]
\end{theorem}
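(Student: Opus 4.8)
The plan is to exploit the block structure of $Q_{\mu(a,0,A,\eta)}$ established in the preceding lemma: with respect to the $J$-adapted splitting $\R\left<e_1\right> \oplus \mathfrak{k}_1 \oplus \R\left<e_{2n}\right>$, the endomorphism $Q$ acts as the scalar $p$ on $\R\left<e_1\right>$ and $\R\left<e_{2n}\right>$ and as the symmetric endomorphism $P$ (commuting with $J\rvert_{\mathfrak{k}_1}$) on $\mathfrak{k}_1$. Since $Q$ is block-diagonal with respect to this decomposition and the bracket $\mu(a,0,A,\eta)$ is graded, in the sense that $\mu(e_{2n},e_1)\in\R\left<e_1\right>$, $\mu(e_{2n},\mathfrak{k}_1)\subseteq\mathfrak{k}_1$, $\mu(\mathfrak{k}_1,\mathfrak{k}_1)\subseteq\R\left<e_1\right>$ with all remaining brackets vanishing, the operator $\pi(Q)$ preserves this grading pattern. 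First I would verify that this forces the bracket flow \eqref{brflow_bal} to remain within the family $\mu(a(t),0,A(t),\eta(t))$: because $P$ is symmetric and commutes with $J\rvert_{\mathfrak{k}_1}$, the induced evolution keeps $A(t)$ commuting with $J\rvert_{\mathfrak{k}_1}$ and $\eta(t)$ of type $(1,1)$, while the $\beta$-component stays zero, so that the flow never leaves the class of Hermitian brackets of case (1), and the structure condition \eqref{lie_perp} is preserved automatically since the bracket flow evolves through genuine Lie algebra structures.

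It then remains only to read off the evolution of the three algebraic data by evaluating $-\pi(Q)\mu$ on the relevant pairs of basis vectors, using the formula $(\pi(Q)\mu)(X,Y)=Q\mu(X,Y)-\mu(QX,Y)-\mu(X,QY)$. Evaluating on $(e_{2n},e_1)$, where $\mu(e_{2n},e_1)=a\,e_1$ and $Qe_{2n}=p\,e_{2n}$, $Qe_1=p\,e_1$, gives $(\pi(Q)\mu)(e_{2n},e_1)=(ap-2pa)e_1=-pa\,e_1$, hence $\tfrac{d}{dt}a=pa$. Evaluating on $(e_{2n},X)$ for $X\in\mathfrak{k}_1$, where $\mu(e_{2n},X)=AX$ lies in $\mathfrak{k}_1$ and $QX=PX$, produces $PAX-pAX-APX=([P,A]-pA)X$, hence $\tfrac{d}{dt}A=[A,P]+pA$. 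Finally, evaluating on $(Y,Z)$ for $Y,Z\in\mathfrak{k}_1$, where $\mu(Y,Z)=-\eta(Y,Z)e_1$, yields the coefficient $-p\eta(Y,Z)+\eta(PY,Z)+\eta(Y,PZ)=(P^*\eta-p\eta)(Y,Z)$ of $e_1$, so that comparing with $\tfrac{d}{dt}\mu(Y,Z)=-(\tfrac{d}{dt}\eta)(Y,Z)e_1$ gives $\tfrac{d}{dt}\eta=P^*\eta-p\eta$. This recovers exactly the stated system.

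The computation is essentially mechanical once the block form of $Q$ is in hand, so the only point requiring genuine care is the invariance claim in the first paragraph: one must check that the flow does not generate a nonzero $\beta$-component, nor push $A$ out of the centralizer of $J\rvert_{\mathfrak{k}_1}$ or $\eta$ out of $\Lambda^{1,1}\mathfrak{k}_1^*$. This is precisely where the symmetry of $P$ and its commutation with $J\rvert_{\mathfrak{k}_1}$ are used, together with the fact that the balanced condition is preserved along the flow by \cite{BV}, guaranteeing consistency of the resulting solution. I expect no further obstacle, since uniqueness of solutions then identifies the bracket flow with the solution of the displayed \textsc{ode} system for $(a,A,\eta)$.
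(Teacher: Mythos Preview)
Your proposal is correct and follows essentially the same approach as the paper's proof: both exploit the block-diagonal form of $Q_{\mu}$ from the preceding lemma and read off the evolution of $(a,A,\eta)$ by evaluating $-\pi(Q)\mu$ on the appropriate pairs of basis vectors, arriving at the same \textsc{ode} system. The paper packages the computation for $a$ and $A$ via the identity $\tfrac{d}{dt}(\text{ad}_\mu e_{2n})=-[Q_\mu,\text{ad}_\mu e_{2n}]+\text{ad}_\mu(Q_\mu e_{2n})$, whereas you evaluate directly on $(e_{2n},e_1)$ and $(e_{2n},X)$; your explicit invariance discussion is a welcome addition that the paper leaves implicit.
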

\begin{proof}
Using the definition of the bracket flow \eqref{brflow_bal}, and as we did previously for the pluriclosed flow, we have
\[
\tfrac{d}{dt}(\text{ad}_\mu e_{2n})=\text{ad}_{\tfrac{d}{dt} \mu} e_{2n}=-\text{ad}_{\pi(Q_\mu) \mu} e_{2n}=-\left[ Q_\mu, \text{ad}_\mu e_{2n} \right] + \text{ad}_{\mu} (Q_\mu e_{2n}),
\]
which, after a quick computation, can be reinterpreted as
\[
\frac{d}{dt} \begin{pmatrix} a & 0 \\ 0 & A \end{pmatrix} = \begin{pmatrix} pa & 0 \\ 0 & [A,P] + pA \end{pmatrix},
\]
yielding the first two equations. To obtain the third one, let $X,Y \in \mathfrak{k}_1$ be arbitrary and compute
\begin{align*}
\left(\tfrac{d}{dt} \eta (X,Y)\right) e_1=& -\tfrac{d}{dt} \mu(X,Y) = Q_\mu (\mu(X,Y)) - \mu(Q_\mu X,Y)-\mu(X,Q_\mu Y) \\
=& (- p \eta(X,Y) + \eta(PX,Y) + \eta(X,PY))e_1.
\end{align*}
The claim then follows.
\end{proof}

\begin{example}
Let us consider the simply connected  strongly unimodular  $6$-dimensional Lie  group with Lie algebra $\mathfrak{g}$ having structure equations
\[
df^1=q_0f^{23}-q_0f^{45},\quad df^2=r_0f^{36},\quad df^3=-r_0f^{26},\quad df^4=s_0f^{56},\quad df^5=-s_0f^{46},\quad df^6=0,
\]
where $q_0,r_0 \in \R-\{0\}$, $s_0 \in \R$. We note that the Lie algebra $\mathfrak{g}$ is isomorphic to $\frs{6.159}$ if $s_0=0$, while it is isomorphic to $\frs{6.166}^a$ if $s_0 \neq 0$, with $a=\frac{s_0}{r_0}$ or $a=\frac{r_0}{s_0}$, depending on whether one has $|s_0| \leq |r_0|$ or the converse, respectively. Let us consider the balanced Hermitian structure $(J,g_0)$ on $\mathfrak{g}$ defined by
\[
Jf_1=f_6,\quad Jf_2=f_3,\quad Jf_4=f_5
\]
and
\[
g_0=\sum_{l=1}^6 (f^l)^2,
\]
so that the corresponding fundamental form is $\omega_0=f^{16}+f^{23}+f^{45}$.

Notice that $\mathfrak{g}$ is almost nilpotent, with nilradical isomorphic to $\mathfrak{h}_5$, and that $(J,g_0)$ satisfies  $J \mathfrak{n}^1=\mathfrak{n}^{\perp_{g_0}}$.

To solve the balanced flow starting from  $\omega_0$ directly, let us assume the solution $\omega(t)$ to be 
\[
\omega(t)=u_1(t)f^{16}+u_2(t)f^{23}+u_2(t)f^{45},
\]
for some smooth positive functions $u_1(t)$ and $u_2(t)$, with 
$u_1(0)=u_2(0)=1$. One can easily see that the algebraic data $(A(t),\eta(t))$ associated with $(J,\omega(t))$ are of the form
\[
A(t)=\begin{pmatrix} 
0 & \sqrt{u_1(t)}r_0 & 0 & 0 \\
-\sqrt{u_1(t)}r_0 & 0 & 0 & 0 \\
0 & 0 & 0 & \sqrt{u_2(t)}s_0 \\
0 & 0 & -\sqrt{u_2(t)}s_0 & 0
 \end{pmatrix}, \quad \eta(t)=\tfrac{\sqrt{u_1(t)}}{u_2} (q_0e^{23}-q_0e^{45}).
\]

By Proposition \ref{bal_perp} and Lemma \ref{Ricci_perp} we then conclude that $(J,\omega(t))$ is balanced and Chern-Ricci flat, no matter the values of $q_0,r_0,s_0,u_1(t),u_2(t)$.

An explicit computation yields
\[
\tfrac{1}{2} \Delta_{BC} \omega(t)^2 = \frac{u_1(t)^2}{u_2(t)^2} q_0^4 f^{2345},
\]
so that one has
\[
\iota_{\omega(t)}\left(\tfrac{1}{2} \Delta_{BC} \omega(t)^2\right)=-\frac{1}{2} \frac{u_1(t)^3}{u_2(t)^4} q_0^4 f^{16} + \frac{1}{2}\frac{u_1(t)^2}{u_2(t)^3} q_0^4 (f^{23}+f^{45}),
\]
so that the balanced flow reduces to the \textsc{ode} system
\[
\begin{dcases}
 \tfrac{d}{dt} u_1= -\frac{1}{2} \tfrac{u_1^3}{u_2^4} q_0^4, &u_1(0)=1,\\
\tfrac{d}{dt} u_2= -\frac{1}{2} \tfrac{u_1^2}{u_2^3} q_0^4, &u_2(0)=1,
\end{dcases}
\]
with explicit solution
\[
u_1(t)=(3q_0^4t+1)^{-\frac{1}{6}},\quad u_2(t)=(3q_0^4t+1)^{\frac{1}{6}}, \quad t \in (- \tfrac{1}{3q_0^4},\infty).
\]
This solution is thus immortal.

Looking at Theorem \ref{ode_balflow_perp}, we can assume the solution $\mu(t)=\mu(A(t),\eta(t))$ to \eqref{brflow_bal} to be of the form
\[
A(t)=\begin{pmatrix}
 0 & r(t) & 0 & 0 \\
-r(t) & 0 & 0 & 0 \\
0 & 0 & 0 & s(t) \\
0 & 0 & -s(t) & 0 \end{pmatrix}, \quad \eta(t)=q(t)f^{23} - q(t)f^{45},
\]
yielding, by \eqref{pPbal},
\[
p(t)=\tfrac{1}{4} q(t)^4, \quad P(t)=-\tfrac{1}{4} q(t)^4 \text{Id}_{\mathfrak{k}_1}.
\]
The bracket flow then reduces to the \textsc{ode} system
\[
\begin{cases}
\tfrac{d}{dt} q= -\tfrac{3}{4} q^5, & q(0)=q_0,\\
\tfrac{d}{dt} r= \tfrac{1}{4} q^4 r, & r(0)=r_0, \\
\tfrac{d}{dt} s= \tfrac{1}{4} q^4 s, & s(0)=s_0, \\
\end{cases}
\]
which has solution
\[
q(t)=q_0(3q_0^4 t +1)^{-\tfrac{1}{4}}, \quad r(t)=r_0(3q_0^4 t +1)^{\tfrac{1}{12}}, \quad s(t)=s_0(3q_0^4 t +1)^{\tfrac{1}{12}}, \quad t \in (-\tfrac{1}{3q_0^4},\infty).
\]
We observe that $q(t) \to 0$, while $r(t),s(t) \to \infty$ as $t \to \infty$. The norm-normalized solution $\tilde{\mu}(t) \coloneqq \frac{\mu(t)}{\lVert \mu(t) \rVert}$ then has limit $\tilde{\mu}_{\infty}$, determined by the following parameters:
\[
\tilde{q}_{\infty}=0,\quad \tilde{r}_{\infty}=r_0 \frac{\sqrt{q_0^2+r_0^2+s_0^2}}{\sqrt{r_0^2+s_0^2}}, \quad \tilde{s}_{\infty}=s_0 \frac{\sqrt{q_0^2+r_0^2+s_0^2}}{\sqrt{r_0^2+s_0^2}}
\]
It follows that $(\tilde{\mu}_{\infty},J,g_0)$ is a flat K\"ahler almost abelian Lie algebra.

Therefore, by \cite{Lau0}, we can conclude that the balanced flow starting from $(G,J,g_0)$ converges in the Cheeger-Gromov sense, after a suitable normalization (multiplication by $(1+t)^{-\frac{1}{6}}$, as $\lVert \mu(t) \rVert^2$ increases as $t^{\frac{1}{6}}$, as $t \to \infty$), to a flat K\"ahler almost abelian Lie group.
\end{example}

\subsection{Case (2)} By the proof of Theorem \ref{class_bal_sub}, the algebraic data associated with the $6$-dimensional strongly unimodular almost nilpotent balanced Hermitian Lie algebra $(\mu(\Xi),J,g)$ reduce to the quadruple $(b,c,p,q) \in \R^4$ determining \eqref{B_eta_bal_sub}. For this reason, from now on we denote such a Hermitian Lie algebra by $(\mu(b,c,p,q),J,g)$.

\begin{lemma}
The endomorphism $Q_{\mu(b,c,p,q)}$ associated with the  $6$-dimensional strongly unimodular balanced Hermitian Lie algebra $(\mu(b,c,p,q),J,g)$ is of the form
\begin{equation}
	Q_{\mu(b,c,p,q)}=
	\begin{pmatrix}
	 h & 0 & \tfrac{1}{2}bqk & -\tfrac{1}{2}bpk & 0 & 0 \\
	 0 & h & \tfrac{1}{2}bpk & \tfrac{1}{2}bqk  & 0 & 0 \\
	 \tfrac{1}{2}bqk & \tfrac{1}{2}bpk & -h  & 0    & 0 & 0 \\
	 -\tfrac{1}{2}bpk & \tfrac{1}{2}bqk & 0   & -h   & 0 & 0 \\
	 0 & 0 & 0   & 0    & -h & 0 \\
	 0 & 0 & 0   & 0    & 0 & -h 	
	\end{pmatrix},
\end{equation}
where
\begin{equation} \label{hk_balanced}
h(b,c,p,q)=\tfrac{1}{4}\big((c^2+p^2+q^2)^2 + \tfrac{1}{2}b^2(p^2+q^2) \big), \quad k(b,c,p,q)=\tfrac{1}{2}\left(c^2+p^2+q^2+\tfrac{1}{2}b^2\right).
\end{equation}
\end{lemma}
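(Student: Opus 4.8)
The plan is to compute $Q_{\mu(b,c,p,q)}$ directly from its definition \eqref{Qmu}, namely $Q_\mu=-\tfrac12\,\omega^{-1}q_\mu$, where $q_\mu$ is the restriction to $\Lambda^2(\R^6)^*$ of the right-hand side $q(\omega)$ of the balanced flow \eqref{flowBV2}. For $n=3$ this right-hand side is $\iota_\omega\big(i\partial\bar\partial *(\rho^C_\omega\wedge\omega)\big)+\tfrac12\,\iota_\omega\Delta_{BC}\omega^2$, since $(n-2)!=1$ and $\tfrac{1}{n-1}=\tfrac12$. Throughout I use the structure equations \eqref{streq_bal_sub}, the standard case-(2) complex structure $Je_{2l-1}=e_{2l}$ with fundamental form $\omega=e^{12}+e^{34}+e^{56}$, recalling from the proof of Theorem \ref{class_bal_sub} that the balanced and unimodular conditions force $a=a_1=a_2=0$, so that the whole bracket is encoded by the quadruple $(b,c,p,q)$ through \eqref{B_eta_bal_sub}.

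The first step is to dispose of the Chern--Ricci contribution. By the Chern--Ricci formula recorded in the Remark following Lemma \ref{Ricciform_sub}, which holds without assumption \eqref{assumptionSKT}, one has $\rho^C_{\mu(\Xi)}=-a\big(a+a_1+\tfrac12\operatorname{tr}A\big)e^{56}$; since $a=0$ in the balanced case, the Chern--Ricci form vanishes identically. Hence $\rho^C_\omega\wedge\omega=0$ and the first summand of $q(\omega)$ is zero, leaving $q_\mu=\tfrac12\,\iota_\omega\Delta_{BC}\omega^2$, so that $Q_{\mu(b,c,p,q)}=-\tfrac14\,\omega^{-1}\iota_\omega\big(\Delta_{BC}\omega^2\big)$.

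The second step is to collapse the modified Bott--Chern Laplacian to a single term. Since $(J,g)$ is balanced one has $d\omega^2=0$, and as $\omega^2$ is a real $(2,2)$-form its $(3,2)$- and $(2,3)$-parts $\partial\omega^2$ and $\bar\partial\omega^2$ vanish separately. Inspecting the six summands in the definition of $\Delta_{BC}$, every one of them except $\partial\,\overline{\partial\partial}^*\partial^*$ carries $\partial$ or $\bar\partial$ as its rightmost factor and therefore annihilates $\omega^2$; thus $\Delta_{BC}\omega^2=\partial\,\overline{\partial\partial}^*\partial^*\omega^2=\partial\bar\partial\bar\partial^*\partial^*\omega^2$.

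It then remains to carry out the explicit computation of the chain $\omega^2\mapsto\partial^*\omega^2\mapsto\bar\partial^*\partial^*\omega^2\mapsto\bar\partial\bar\partial^*\partial^*\omega^2\mapsto\partial\bar\partial\bar\partial^*\partial^*\omega^2$, followed by the contraction $\tfrac12\,\iota_\omega$ and the index-raising $-\tfrac12\,\omega^{-1}$. Concretely, from \eqref{streq_bal_sub} I read off $d$ on $1$-forms, split it into $\partial$ and $\bar\partial$ by bidegree, and compute the adjoints via the Hodge star (the formulas $\partial^*=-{*}\bar\partial{*}$ and $\bar\partial^*=-{*}\partial{*}$ are legitimate because the algebra is unimodular), keeping track of types at each stage. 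Reading off the resulting matrix entries and comparing with $h$ and $k$ in \eqref{hk_balanced} finishes the proof; a useful consistency check is that $Q_{\mu(b,c,p,q)}$ must come out symmetric. The main obstacle is purely computational: it lies in the bookkeeping of these four successive (co)differentials on the six-dimensional algebra, and especially in correctly capturing the mixing between the planes $\R\langle e_1,e_2\rangle$ and $\R\langle e_3,e_4\rangle$ induced by the off-diagonal entries $p,q$ of $B$, which is precisely what produces the off-diagonal $2\times 2$ block with entries proportional to $bpk$ and $bqk$ alongside the diagonal values $\pm h$.
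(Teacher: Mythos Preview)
Your proposal is correct and follows essentially the same approach as the paper: both observe that the Chern--Ricci term vanishes because $a=0$, and then reduce $q_\mu$ to the single surviving Bott--Chern term, which is computed explicitly from the structure equations \eqref{streq_bal_sub}. The only cosmetic difference is that the paper packages the surviving operator in the real form $\tfrac12\,\Delta_{BC}\omega^2=\tfrac14\,dJd*dJd\,\omega$ (using $*\omega^2=2\omega$ and $2i\,\partial\bar\partial=dd^c$) rather than carrying out the complex chain $\partial\bar\partial\bar\partial^*\partial^*\omega^2$ you describe; the computations are equivalent.
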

\begin{proof}
We first notice that the results concerning the Chern-Ricci form in Lemma \ref{Ricciform_sub} hold even without the assumptions \eqref{assumptionSKT}, from which we conclude that the Hermitian Lie algebra $(\mu(b,c,p,q),J,g)$ is Chern-Ricci flat. Hence, the first summand of $q_{\mu(b,c,p,q)}$ vanishes, and now an explicit computation using that $\omega=e^{12}+e^{34}+e^{56}$ and the structure equations \eqref{streq_bal_sub} yields
\begin{align*}
q_{\mu(b,c,p,q)}&=\tfrac{1}{2} \iota_{\omega} \Delta_{BC} \omega^2= \tfrac{1}{4} \iota_{\omega}(dJd * dJd \omega) \\
                &=2 h (-e^{12}+e^{34}+e^{56}) - bpk (e^{13}+e^{24}) + bqk (-e^{14}+e^{23}),
\end{align*}
with $h$ and $k$ from \eqref{hk_balanced}. Now, a straightforward computation shows that we have $q_{\mu(b,c,p,q)}=-\tfrac{1}{2} \omega(Q_{\mu(b,c,p,q)}\cdot,\cdot)$, with $Q_{\mu(b,c,p,q)}$ being the symmetric endomorphism of $\R^6$ of the statement, concluding the proof.
\end{proof}

In order for the bracket flow \eqref{brflow_bal} to preserve Lie brackets of the form $\mu(b,c,p,q)$, we need to introduce a gauge correction, provided for instance by the skew-Hermitian endomorphism
\[
U_{\mu(b,c,p,q)}=\begin{pmatrix} 
0 & 0 & -\tfrac{1}{2}bqk & \tfrac{1}{2}bpk & 0 & 0 \\
0 & 0 & -\tfrac{1}{2}bpk & -\tfrac{1}{2}bqk  & 0 & 0 \\
\tfrac{1}{2}bqk & \tfrac{1}{2}bpk & 0  & 0    & 0 & 0 \\
-\tfrac{1}{2}bpk & \tfrac{1}{2}bqk & 0   & 0   & 0 & 0 \\
0 & 0 & 0   & 0    & 0 & 0 \\
0 & 0 & 0   & 0    & 0 & 0	
 \end{pmatrix} \in \mathfrak{u}(\R^{2n},J,g),
\]
yielding the gauge-corrected endomorphism
\[
Q_{\mu(b,c,p,q)}-U_{\mu(b,c,p,q)}=
\begin{pmatrix}
	h & 0 & bqk & -bpk & 0 & 0 \\
	0 & h & bpk & bqk  & 0 & 0 \\
	0 & 0 & -h  & 0    & 0 & 0 \\
	0 & 0 & 0   & -h   & 0 & 0 \\
	0 & 0 & 0   & 0    & -h & 0 \\
	0 & 0 & 0   & 0    & 0 & -h 	
\end{pmatrix}.
\]

\begin{proposition}
The gauged bracket flow \eqref{brflow_gauge_bal} starting from a $6$-dimensional strongly-unimodular almost nilpotent Lie bracket $\mu(b_0,c_0,p_0,q_0)$ is equivalent to the following \textsc{ode}:
\begin{equation}
	\begin{cases}
		\tfrac{d}{dt} b = -hb, &b(0)=b_0, \\
		\tfrac{d}{dt} c = -3hc, &c(0)=c_0, \\
		\tfrac{d}{dt} p = -(kb^2+3h)p, &p(0)=p_0, \\
		\tfrac{d}{dt} q = -(kb^2+3h)q, &q(0)=q_0,
	\end{cases}
\end{equation}	
where $h$ and $k$ were defined in \eqref{hk_balanced}.
\end{proposition}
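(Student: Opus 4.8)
The plan is to mimic the strategy already used for the pluriclosed flow: differentiate the derivation $\operatorname{ad}_\mu e_6 = B$ along the gauged bracket flow \eqref{brflow_gauge_bal} and read off the evolution of the four parameters $b,c,p,q$ from the resulting matrix ODE. Writing $M_\mu \coloneqq Q_{\mu(b,c,p,q)} - U_{\mu(b,c,p,q)}$ for the gauge-corrected endomorphism displayed just above, the same computation as in the earlier flow proofs gives the operator identity
\[
\tfrac{d}{dt}(\operatorname{ad}_\mu e_6) = -[M_\mu, \operatorname{ad}_\mu e_6] + \operatorname{ad}_\mu(M_\mu e_6).
\]
First I would observe that $M_\mu e_6 = -h\,e_6$, since the last column of $M_\mu$ is $-h$ times the sixth basis vector; hence $\operatorname{ad}_\mu(M_\mu e_6) = -h\,\operatorname{ad}_\mu e_6 = -hB$, which already accounts for the factor $-h$ appearing in each of the four equations.

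Next I would compute the commutator $[M_\mu,\operatorname{ad}_\mu e_6]$ directly from the explicit matrices, namely $M_\mu$ and the matrix $B$ of \eqref{B_eta_bal_sub}. The endomorphism $M_\mu$ is block upper-triangular, acting as $h\,\mathrm{Id}$ on $\R\langle e_1,e_2\rangle$ and as $-h\,\mathrm{Id}$ on $\R\langle e_3,e_4,e_5,e_6\rangle$, together with the coupling entries $bpk,\,bqk$ sending $\R\langle e_3,e_4\rangle$ into $\R\langle e_1,e_2\rangle$, with $h,k$ as in \eqref{hk_balanced}. Carrying out the bracket entry by entry, the $(3,4)$-entry yields $\tfrac{d}{dt}b=-hb$; the $(1,3)$- and $(1,4)$-entries yield $\tfrac{d}{dt}p=-(kb^2+3h)p$ and $\tfrac{d}{dt}q=-(kb^2+3h)q$, where the $kb^2$ contribution comes precisely from the off-diagonal terms $bpk,\,bqk$ of $M_\mu$ interacting with the $b$-entries of $B$; and the $(1,5)$-entry yields $\tfrac{d}{dt}c=-3hc$. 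The same entrywise check shows that every vanishing entry of $B$ stays zero, so that the three-parameter family $\mu(b,c,p,q)$ is preserved by the flow.

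Since $c$ also encodes the unique nonzero structure constant $[e_3,e_4]=-c\,e_1$ of $\mathfrak{n}$ (recall \eqref{streq_bal_sub}), I would separately confirm that this $\eta$-occurrence of $c$ evolves identically, by computing
\[
\big(\tfrac{d}{dt}c\big)\,e_1 = -\tfrac{d}{dt}\mu(e_3,e_4) = M_\mu\,\mu(e_3,e_4) - \mu(M_\mu e_3,e_4) - \mu(e_3,M_\mu e_4),
\]
and using that $e_1$ is central in $\mathfrak{n}$ while $[e_3,e_4]=-c\,e_1$ is the only bracket, which collapses the right-hand side to $3hc\,e_1$ and so reproduces $\tfrac{d}{dt}c=-3hc$. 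That the two occurrences of $c$ stay in step is exactly what the gauge correction $U_{\mu(b,c,p,q)}$ is designed to ensure.

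I expect the only delicate point to be the bookkeeping in the commutator: one must verify that the coupling entries $bpk,\,bqk$ of $M_\mu$ do not generate spurious off-diagonal contributions—mixing the roles of $p$ and $q$, or producing an unwanted $e_2$-component in $de^1$—that would push the bracket outside the family $\mu(b,c,p,q)$. The precise positions of the $b$-entries of $B$ together with the block structure of $M_\mu$ make these cross-terms cancel, but this cancellation is what genuinely needs to be checked rather than taken for granted.
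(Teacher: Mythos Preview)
Your approach is essentially identical to the paper's: both compute $\tfrac{d}{dt}(\operatorname{ad}_\mu e_6)=-[M_\mu,\operatorname{ad}_\mu e_6]+\operatorname{ad}_\mu(M_\mu e_6)$ and read the four ODEs off the entries of the resulting $5\times 5$ matrix, with the paper presenting the final matrix identity in one line and omitting your additional consistency check on $\mu(e_3,e_4)$. One small slip: in that separate check the right-hand side actually collapses to $-3hc\,e_1$, not $3hc\,e_1$ (and the family is four-parameter, not three); otherwise the argument is sound.
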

\begin{proof}
As we did previously, we can compute
\[
\tfrac{d}{dt}(\text{ad}_{\mu(t)} e_{6})=\text{ad}_{\tfrac{d}{dt} \mu(t)} e_{6}=-\text{ad}_{\pi(Q_{\mu(t)}-U_{\mu(t)}) \mu(t)} e_{6}=-\left[ Q_{\mu(t)}-U_{\mu(t)}, \text{ad}_{\mu(t)} e_{6} \right] + \text{ad}_{\mu(t)} ((Q_{\mu(t)}-U_{\mu(t)}) e_{6}),
\]
obtaining
\[
\frac{d}{dt} \begin{pmatrix} 
	0 & 0 & p  & q & -c \\
	0 & 0 & -q & p & 0 \\
	0 & 0 & 0 & b & 0 \\
	0 & 0 & -b & 0 & 0 \\
	0 & 0 & 0 & 0 & 0
\end{pmatrix} = 
\begin{pmatrix} 
	0 & 0 & -(kb^2+3h)p  & -(kb^2+3h)q & 3hc \\
	0 & 0 & (kb^2+3h)q & -(kb^2+3h)p & 0 \\
	0 & 0 & 0 & -hb & 0 \\
	0 & 0 & hb & 0 & 0 \\
	0 & 0 & 0 & 0 & 0
\end{pmatrix}.
\]
The claim now follows.
\end{proof}

\begin{theorem}
Let $(J,\omega_0)$ be a left-invariant balanced Hermitian structure on a $6$-dimensional almost nilpotent Lie group $G$ such that the nilradical $\mathfrak{n}$ of its Lie algebra has one-dimensional commutator. Assume that	$J \mathfrak{n}^1 \subset \mathfrak{n}$. Then, the solution of the balanced flow starting from $(J,\omega_0)$ is defined for all positive times and converges in the Cheeger-Gromov sense to a flat K\"ahler almost abelian Lie group.
\end{theorem}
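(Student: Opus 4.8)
The plan is to reduce the statement to the long-time analysis of the gauged bracket flow \textsc{ode} obtained in the preceding Proposition, and then to identify its limit. First I would invoke Theorem \ref{class_bal_sub}: a (strongly unimodular) almost nilpotent Lie algebra carrying a balanced Hermitian structure with $J\mathfrak{n}^1\subset\mathfrak{n}$ and no K\"ahler structure must be isomorphic to $\frs{5.16}\oplus\R$, so that, after fixing an adapted unitary basis, the initial datum $(J,\omega_0)$ is equivalent to a model Hermitian Lie algebra $(\mu(b_0,c_0,p_0,q_0),J,g)$ with $b_0,c_0\neq 0$ and algebraic data \eqref{B_eta_bal_sub}. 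By the bracket flow technique recalled in Section \ref{balancedflowsection} (following \cite{AL,FP1}), the balanced flow starting from $(J,\omega_0)$ agrees, up to a time-dependent unitary gauge and biholomorphism, with the solution $\mu(t)=\mu(b(t),c(t),p(t),q(t))$ of the gauged bracket flow \eqref{brflow_gauge_bal}; hence, by \cite{Lau0}, Cheeger-Gromov convergence of the original flow will follow once I show that $\mu(t)$ converges in $\Lambda^2(\R^{6})^*\otimes\R^{6}$.

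Next I would establish long-time existence through a monotonicity argument. Since $h\ge 0$ and $kb^2+3h\ge 0$ by \eqref{hk_balanced}, differentiating the \textsc{ode} gives
\[
\tfrac{d}{dt}b^2=-2hb^2,\quad \tfrac{d}{dt}c^2=-6hc^2,\quad \tfrac{d}{dt}p^2=-2(kb^2+3h)p^2,\quad \tfrac{d}{dt}q^2=-2(kb^2+3h)q^2,
\]
all of which are $\le 0$. Thus each squared parameter is non-increasing, the solution remains in the fixed compact ball of radius $\sqrt{b_0^2+c_0^2+p_0^2+q_0^2}$, and, the right-hand side being polynomial, the solution is defined for all $t\ge 0$.

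The step I expect to be the genuine obstacle is pinning down the limit bracket, i.e.\ proving $\mu(t)\to 0$. The key claim is that $\int_0^\infty h\,dt=+\infty$, which I would prove by contradiction: the linear equations $\dot b=-hb$, $\dot c=-3hc$ integrate to $b^2(t)=b_0^2\,e^{-2\int_0^t h}$ and $c^2(t)=c_0^2\,e^{-6\int_0^t h}$, so if $\int_0^\infty h<\infty$ then $c^2(t)$ would stay bounded below by a positive constant, whence $h\ge \tfrac14(c^2+p^2+q^2)^2\ge\tfrac14\,(c^2)^2$ would also be bounded below by a positive constant, forcing $\int_0^\infty h=\infty$, a contradiction. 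Therefore $\int_0^\infty h=+\infty$, giving $b(t),c(t)\to 0$; and since $\tfrac{d}{dt}p^2\le -6hp^2$ (and similarly for $q$) one gets $p^2(t)\le p_0^2\,e^{-6\int_0^t h}\to 0$ and likewise $q(t)\to 0$, so $\mu(t)\to 0$.

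Finally I would conclude. The limit bracket $\mu_\infty=0$ is the abelian Lie bracket on $\R^{6}$, and the corresponding Hermitian Lie group $(\R^{6},J,g)$ is flat K\"ahler and, being abelian, in particular almost abelian. Since $\lVert\mu(t)\rVert$ stays bounded, no normalization is needed, and the convergence $\mu(t)\to\mu_\infty$ of brackets yields, via \cite{Lau0}, convergence of the balanced flow in the Cheeger-Gromov sense to this flat K\"ahler almost abelian Lie group, completing the proof. The delicate point throughout is the contradiction argument forcing $\int_0^\infty h=+\infty$; the remaining ingredients are the routine monotonicity estimates above and the general convergence machinery already cited.
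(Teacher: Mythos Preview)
Your proof is correct, and in fact slightly sharper than the paper's. The long-time existence argument is identical: both exploit that $h\ge 0$ and $kb^2+3h\ge 0$ to get monotonicity of $b^2,c^2,p^2,q^2$ and confinement to a compact set. The difference lies in identifying the limit. The paper bounds $\tfrac{d}{dt}(c^2+p^2+q^2)\le -6h(c^2+p^2+q^2)\le -\tfrac{3}{2}(c^2+p^2+q^2)^3$ and compares with $\dot x=-\tfrac{3}{2}x^3$ to force $c,p,q\to 0$, but leaves $b_\infty$ unspecified; the limit is then declared to be the flat K\"ahler almost abelian algebra $(\mu(b_\infty,0,0,0),J,g)$, isomorphic to $\R^6$ or $\mathfrak{s}_{3.3}^0\oplus\R^3$. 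Your contradiction argument on $\int_0^\infty h$ instead forces $\int_0^\infty h=+\infty$ directly (using $c_0\neq 0$, which is guaranteed by \eqref{B_eta_bal_sub}), hence $b\to 0$ as well, so the limit is always the abelian $\R^6$. Both establish the stated theorem; your route is a bit more economical and pins down the limit completely, whereas the paper's comparison inequality gives an explicit polynomial decay rate for $c^2+p^2+q^2$ that your argument does not.
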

\begin{proof}
A quick analysis of the \textsc{ode} system shows that the norm of each term $b,c,p,q$ is non-increasing along the flow, thanks to the fact that both $h$ and $k$ are non-negative: in particular, the solution $(b(t),c(t),p(t),q(t))$ remains inside a compact set of $\R^4$ for $t>0$ and is thus defined for all positive times, converging to some limit $(b_{\infty},c_{\infty},p_{\infty},q_{\infty})$, since the norm of every term is non-increasing.

Moreover, looking at the exact expression for $h$ in \eqref{hk_balanced} one can compute
\[
\tfrac{d}{dt} (c^2+p^2+q^2) \leq -6h(c^2+p^2+q^2) \leq -\tfrac{3}{2}(c^2+p^2+q^2)^3.
\]
Then, a comparison with the \textsc{ode} $\tfrac{d}{dt} x = -\tfrac{3}{2} x^3$ shows that the terms $c$, $p$ and $q$ converge to zero as $t \to \infty$. 

The limit Hermitian Lie algebra is therefore of the form $(\mu(b_{\infty},0,0,0),J,g)$, which is a flat K\"ahler almost abelian Lie algebra, in particular isomorphic either to $\R^6$ or to
\[
\frs{3.3}^0 \oplus \R^3=(f^{26},-f^{16},0,0,0,0),
\]
in the notations of \cite{SW}.

The result regarding Cheeger-Gromov convergence now follows from \cite{Lau0}.
\end{proof}

\section{Generalized K\"ahler  structures}  \label{sectionGKsection}

Having studied the SKT condition, we now study the existence of generalized K\"ahler structures.  In particular, we  show some non-existence results in the strongly unimodular $6$-dimensional case. For the non-split case, this will involve the study of holomorphic Poisson structures.

We recall that holomorphic Poisson structures are $(2,0)$-vector fields lying in the kernel of the Cauchy-Riemann operator (see \cite{Gau})
\[
\overline\partial_X(Y \wedge Z) \coloneqq \overline\partial_XY \wedge Z + Y \wedge \overline\partial_XZ=[X,Y]^{1,0} \wedge Z + Y \wedge [X,Z]^{1,0},
\]
where $X$ is a $(0,1)$-vector field and $Y,Z$ are $(1,0)$-vector fields, and in the isotropic cone of the Schouten bracket
\[
[X_0 \wedge X_1,Y_0 \wedge Y_1]=\sum_{j,k=0}^1 (-1)^{j+k}[X_j,Y_k] \wedge X_{j+1}\wedge Y_{k+1},
\]
where $X_l,Y_l$, $l=0,1$, are vector fields and indices are meant mod $2$.

Following the previous section, we separate our discussion for case (1)  and case (2).  To do this, we shall use the characterization results of Proposition \ref{perp_SKTprop} and Proposition \ref{sub_SKTprop}, respectively.

\subsection{Case (1)}
Recall that, in Theorem \ref{class_SKT_perp}, we proved that a $6$-dimensional strongly unimodular almost nilpotent Lie algebra with nilradical having one-dimensional commutator admits SKT structures with $\hat\theta=0$ if and only if it is isomorphic to either $\mathfrak{h}_3 \oplus \mathfrak{s}_{3.3}^0$, $\mathfrak{s}_{4.7} \oplus \R^2$ or $\mathfrak{g}_{6.52}^{0,b}$, $s \neq 0$.

If one adds the requirement that $(\mathfrak{g},J)$ admits non-zero holomorphic Poisson structures, in Theorem \ref{class_SKT_perp}, then one obtains the Lie algebras $\mathfrak{h}_3 \oplus \mathfrak{s}_{3.3}^0$ and $\mathfrak{s}_{4.7} \oplus \R^2$ only.
To see this, following the proof of Theorem \ref{class_SKT_perp}, consider an adapted unitary basis $\{e_l \}$, $Je_1=e_6$, $Je_2=e_3$, $Je_4=e_5$, having structure equations
\begin{equation} \label{streq6}
de^1=ce^{23},\quad de^2=b_1 e^{36},\quad de^3=-b_1 e^{26},\quad de^4=b_2 e^{56},\quad de^5=-b_2e^{46},\quad de^6=0,
\end{equation}
for some $c,b_1,b_2 \in \R$, $c \neq 0$, $b_1^2+b_2^2 \neq 0$. A basis of $(1,0)$-vectors is provided by
\[
Z_1=e_1-ie_6,\quad Z_2=e_2-ie_3,\quad Z_3=e_4-ie_5.
\]
so that, in the basis $\{Z_1,Z_2,Z_3,\overline{Z}_1,\overline{Z}_2,\overline{Z}_3\}$ for $\mathfrak{g} \otimes \C$, the non-zero brackets, up to exchanging the terms, are given by
\begin{equation} \label{brackcpx}
\begin{gathered} 
[Z_1,Z_l]=-b_{l-1}Z_l,\quad [Z_1,\overline{Z}_l]=b_{l-1} \overline{Z}_l,\quad [\overline{Z}_1,Z_l]=b_{l-1}Z_l, \\ [\overline{Z}_1,\overline{Z}_l]=-b_{l-1}\overline{Z}_l, \quad [Z_2,\overline{Z}_2]=-ic(Z_1+\overline{Z}_1),
\end{gathered}
\end{equation}
for $l=2,3$. A basis for $\mathfrak{g}^{2,0}$ is given by $\{Z_{12},Z_{13},Z_{23}\}$, where $Z_{kl\ldots}$ is a short-hand for $Z_k \wedge Z_l \wedge \ldots$. Then, using \eqref{brackcpx}, one obtains $\overline\partial_{\overline{Z}_2}Z_{23}=ic Z_{13} \neq 0$, while $\overline\partial_{\overline{Z}_2}Z_{1l}=0$, $l=2,3$, so that holomorphic Poisson structures may only be found inside $V \coloneqq \mathbb{C} \left<Z_{12},Z_{13}\right>$. The operator $\overline\partial_{\overline{Z}_3}$ vanishes identically, so that we only need to focus on $\overline\partial_{\overline{Z}_1}$, which preserves $V$, and the Schouten bracket $[\cdot,\cdot]$, seen respectively as an endomorphism and a complex-valued symmetric bilinear form on $V$, after identifying $\mathfrak{g}^{3,0}$ with $\mathbb{C}$ by sending $Z_{123}$ into $1$. In matrix form, these two operators are represented by
\[
\overline\partial_{\overline{Z}_1}\rvert_V=\begin{pmatrix} b_1 & 0 \\ 0 & b_2 \end{pmatrix},\quad [\cdot,\cdot]\rvert_{V\times V}=\begin{pmatrix} 0 & b_1-b_2 \\ b_1-b_2 & 0 \end{pmatrix}.
\]
In order for $\overline\partial_{\overline{Z}_1}\rvert_V$ to have kernel, we need to impose the vanishing of either $b_1$ or $b_2$. In the former case, $\mathfrak{g} \cong \mathfrak{h}_3 \oplus \mathfrak{s}_{3.3}$ and $Z_{12}$ is a holomorphic Poisson structure, while, in the latter case, $\mathfrak{g} \cong \mathfrak{s}_{4.7} \oplus \R^2$ and a holomorphic Poisson structure is given by $Z_{13}$.

\begin{proposition}
Let $\mathfrak{g}$ be a  $6$-dimensional  (non-K\"ahler) strongly unimodular almost nilpotent Lie algebra with nilradical $\mathfrak{n}$ satisfying $\dim \mathfrak{n}^1=1$. Then $\mathfrak{g}$ does not admit  generalized K\"ahler structures $(J_+,J_-,g)$ such that $J_+(\mathfrak{n}^1) = \mathfrak{n}^\perp$.
\end{proposition}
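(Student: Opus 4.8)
The plan is to argue by contradiction: suppose $(J_+,J_-,g)$ is a generalized K\"ahler structure on $\mathfrak{g}$ with $J_+\mathfrak{n}^1=\mathfrak{n}^{\perp}$, and recall that then both $(J_+,g)$ and $(J_-,g)$ are SKT, subject to $d^c_+\omega_++d^c_-\omega_-=0$. First I would apply Theorem \ref{class_SKT_perp} to $(J_+,g)$: being an SKT structure of case (1), it forces $\mathfrak{g}$ to be isomorphic to $\mathfrak{h}_3\oplus\mathfrak{s}_{3.3}^0$, $\mathfrak{s}_{4.7}\oplus\R^2$, or $\mathfrak{s}_{6.52}^{0,b}$, and I may fix an adapted unitary basis $\{e_1,\dots,e_6\}$ with structure equations \eqref{streq6}. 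A short computation gives $d\omega_+=c\,e^{236}$, hence $d^c_+\omega_+=c\,e^{123}$, a nonzero closed $3$-form since $c\neq0$. The generalized K\"ahler condition then forces $d^c_-\omega_-=-c\,e^{123}$; in particular, its $(e_1,e_2,e_3)$-component must equal $-c$.

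The key computation I would carry out is this component for an arbitrary $g$-Hermitian $J_-$, using the identity $d^c\omega(X,Y,Z)=-g([J X,J Y],Z)-g([J Z,J X],Y)-g([J Y,J Z],X)$ together with the brackets read off from \eqref{streq6}. Writing the entries of $J_-$ in the fixed basis, the only contribution of the Heisenberg bracket $[e_2,e_3]=-c\,e_1$ is $c(a_2b_3-a_3b_2)$, with $a=J_-e_2$, $b=J_-e_3$; that is $c\det M$, where $M$ is the compression of $J_-$ to the plane $\R\langle e_2,e_3\rangle$. For $\mathfrak{h}_3\oplus\mathfrak{s}_{3.3}^0$ (where $b_1=0$) all other bracket terms vanish, so $d^c_-\omega_-(e_1,e_2,e_3)=c\det M$; since $M$ is a $2\times2$ minor of an orthogonal matrix, $|\det M|\le1$, and $|\det M|=1$ forces $J_-$ to preserve $\R\langle e_2,e_3\rangle$ and act there as a rotation by $\pm\tfrac{\pi}{2}$, whence $\det M=+1$. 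Thus $c\det M\neq-c$, and this algebra is excluded uniformly, independently of the split/non-split distinction.

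For the two algebras with $b_1\neq0$, namely $\mathfrak{s}_{4.7}\oplus\R^2$ and $\mathfrak{s}_{6.52}^{0,b}$, the extra bracket terms spoil the determinant reduction, so I would organize the argument by the split/non-split dichotomy through the holomorphic Poisson structure $\sigma=\bigl([J_+,J_-]g^{-1}\bigr)^{2,0}$, which is nonzero exactly in the non-split case. By the discussion preceding the statement, $\mathfrak{s}_{6.52}^{0,b}$ admits no nonzero holomorphic Poisson structure, which rules out its non-split case; on $\mathfrak{s}_{4.7}\oplus\R^2$ the non-split case forces $\sigma\in\C\langle Z_{13}\rangle$, constraining $[J_+,J_-]$ (it annihilates $\R\langle e_2,e_3\rangle$) and thereby reducing $J_-$ to a low-dimensional family. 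In the split case $[J_+,J_-]=0$, so $Q=J_+J_-$ is a $g$-orthogonal symmetric involution commuting with $J_+$; after discarding the degenerate possibilities $J_-=\pm J_+$ (which would force $c=0$), the admissible $J_+$-complex eigenspace decompositions of $Q$ form a short finite list. In each surviving configuration I would impose integrability and the SKT condition on $J_-$ and verify that $d^c_-\omega_-=-c\,e^{123}$ cannot hold.

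The main obstacle is precisely this last analysis for the algebras with $b_1\neq0$: there $J_-$ need not preserve $\R\langle e_2,e_3\rangle$, so the clean determinant bound no longer applies and the $(e_1,e_2,e_3)$-component genuinely acquires contributions controlled by $b_1$ (and $b_2$). Settling it requires exploiting the full strength of the constraints on $J_-$ --- in the non-split case the restriction imposed by the decomposable $\sigma$, and in the split case the commutativity $[J_+,J_-]=0$ combined with integrability and pluriclosedness --- to pin down the finitely many admissible $J_-$ and check the torsion identity fails for each. A secondary point to treat carefully is that the holomorphic Poisson condition is only necessary, so one must convert it correctly into the geometric restriction on $J_-$ before drawing the contradiction.
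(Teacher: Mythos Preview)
Your determinant argument for $\mathfrak{h}_3\oplus\mathfrak{s}_{3.3}^0$ (the case $b_1=0$) is correct and genuinely cleaner than what the paper does there: you bypass the split/non-split dichotomy entirely for that algebra by observing that the $(e_1,e_2,e_3)$-component of $d^c_-\omega_-$ reduces to $\pm c\det M$, and that $\det M=-1$ is impossible for a $2\times 2$ minor of an orthogonal map squaring to $-\mathrm{Id}$. The paper instead treats all three algebras uniformly via explicit Nijenhuis computations, so your shortcut is a real simplification in this subcase.

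For $b_1\neq 0$, however, your proposal is a plan rather than a proof, and you correctly flag this. Two remarks on how the paper actually closes the argument, which should help you finish:
\begin{itemize}
\item In the split case the paper does \emph{not} decompose into $J_+$-complex eigenspaces of $Q=J_+J_-$ as you suggest; it simply writes $J_-=\sum_{k<l}J_{kl}(e^l\otimes e_k-e^k\otimes e_l)$, imposes $[J_+,J_-]=0$ (six linear relations among the $J_{kl}$), and then computes two well-chosen Nijenhuis components $N_-(e_1,e_6,e_1)$ and $N_-(e_4,e_5,e_1)$, whose vanishing kills four more $J_{kl}$. After that, $(H_++H_-)(e_1,e_2,e_3)=-c\bigl(1+J_{23}^2(J_{16}^2+J_{46}^2+J_{56}^2)\bigr)$ is manifestly nonzero. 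This works for all three algebras at once, so there is no need to case-split by $b_1,b_2$ in the split case.
\item In the non-split case your reading of the holomorphic Poisson constraint is right: it rules out $\mathfrak{s}_{6.52}^{0,b}$ outright, and on the remaining two algebras forces $[J_+,J_-]g^{-1}$ into a specific $4$-dimensional subspace. The paper then proceeds exactly as in the split case---a handful of Nijenhuis components suffice to reduce $J_-$ until the torsion identity visibly fails. Your worry that the Poisson condition is ``only necessary'' is not an obstacle here: one uses it purely as a linear constraint on $[J_+,J_-]$, which is all that is needed.
\end{itemize}
In short, your overall strategy is sound and your treatment of $b_1=0$ is an improvement; for $b_1\neq 0$ the missing step is just the explicit Nijenhuis-and-torsion computation, and the paper's direct approach is probably more efficient than the eigenspace decomposition you propose.
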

\begin{proof}
Proceeding by contradiction, let us assume $J_+(\mathfrak{n}^1) = \mathfrak{n}^\perp$. Then there exists an adapted unitary basis $\{e_l \}$ with structure equations \eqref{streq6}, with $c \neq 0$, $b_1^2+b_2^2 \neq 0$, and $J_+e_1=e_6$, $J_+e_2=e_3$, $J_+e_4=e_5$. The generic skew-symmetric $J_-$ is of the form $J_-=\sum_{k<l} J_{kl}(e^l \otimes e_k - e^k \otimes e_l)$, with $J_{kl} \in \R$. We denote its associated Nijenhuis tensor by $N_-$, regarding it as a $(0,3)$-tensor with the aid of the metric $g$.

Let us focus on the split case, first.  We have to set $J_{12}=-J_{36}$, $J_{13}=J_{26}$, $J_{14}=-J_{56}$, $J_{15}=J_{46}$, $J_{24}=J_{35}$, $J_{25}=-J_{34}$ to obtain $[J_+,J_-]=0$. One can compute
\[
N_-(e_1,e_6,e_1)=c(J_{26}^2+J_{36}^2),\quad N_-(e_4,e_5,e_1)=c(J_{34}^2+J_{35}^2),
\]
which imposes $J_{26}=J_{36}=J_{34}=J_{35}=0$. Now, denoting $H_{\pm}=d^c_{\pm}\omega_{\pm}$, with $\omega_{\pm}=g(J_{\pm}\cdot, \cdot)$, one computes
\[
(H_++H_-)(e_1,e_2,e_3)=-c\left(1+J_{23}^2(J_{16}^2+J_{46}^2+J_{56}^2)\right),
\]
which cannot vanish, leading to a contradiction.

We now proceed with the non-split case, which implies that $J_+$ admits non-zero holomorphic Poisson structures. As we have seen, this implies either $b_1=0$ or $b_2=0$.

If $b_1=0$, holomorphic Poisson structures are multiples of $(e_1-ie_6)\wedge (e_2 -ie_3)$, which forces
\[
[J_+,J_-]g^{-1} \in \R\left< e_{12},e_{13},e_{26},e_{36} \right>.
\]
To obtain this, necessary conditions are given by $J_{15}=J_{46}$, $J_{14}=-J_{56}$, $J_{25}=-J_{34}$, $J_{24}=J_{35}$. Having imposed these, we compute
\begin{alignat*}{2}
&N_-(e_1,e_2,e_3)=cJ_{13}^2,\quad& &N_-(e_1,e_3,e_2)=-cJ_{12}^2, \\ &N_-(e_4,e_5,e_1)=c(J_{34}^2+J_{35}^2),\quad& &N_-(e_4,e_5,e_6)=b_2(J_{46}^2+J_{56}^2),
\end{alignat*}
forcing $J_{12}=J_{13}=J_{34}=J_{35}=J_{46}=J_{56}=0$. Then,
\[
(H_++H_-)(e_1,e_2,e_3)=-c(1+J_{16}^2J_{23}^2) \neq 0,
\]
a contradiction. An analogous discussion holds for the case $b_2=0$.
\end{proof}

\subsection{Case (2)}
First, we shall focus on the case where the subspace $\mathfrak{k}_3$ determined by $(J_+,g)$ is not an abelian ideal and show the following non-existence result:
\begin{proposition}
Let $\mathfrak{g}$ be a $6$-dimensional   (non-K\"ahler) strongly unimodular almost nilpotent Lie algebra with nilradical $\mathfrak{n}$ satisfying $\dim \mathfrak{n}^1=1$. Then $\mathfrak{g}$ does not admit generalized K\"ahler structures $(J_+,J_-,g)$ such that $J_+(\mathfrak{n}^1) \subset \mathfrak{n}$ and $[\mathfrak{k}_3,\mathfrak{k}_3]\neq \{0\}$.
\end{proposition}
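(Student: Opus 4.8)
The plan is to first use the SKT characterization to rigidify $(J_+,g)$, and then to exclude every compatible $J_-$ by direct computation, distinguishing the split and non-split cases. The key preliminary observation is that, in the adapted basis of \eqref{B_sub1}, one has $\mathfrak{k}_3=\mathbb{R}\left<e_3,e_4\right>$ and $[e_3,e_4]=-\eta(e_3,e_4)e_1=-c\,e_1$, so the hypothesis $[\mathfrak{k}_3,\mathfrak{k}_3]\neq\{0\}$ is equivalent to $c\neq0$. By Proposition \ref{SKT_sub_prop}, an SKT structure with $J_+\mathfrak{n}^1\subset\mathfrak{n}$ must satisfy either \eqref{SKT_sub2} or \eqref{SKT_sub1}; since \eqref{SKT_sub1} forces $c=0$, we are necessarily in case \eqref{SKT_sub2}. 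I would therefore fix an adapted unitary basis $\{e_l\}$ with $J_+e_{2l-1}=e_{2l}$ and $\omega_+=e^{12}+e^{34}+e^{56}$, and write out the explicit structure equations of $\mathfrak{g}$ in terms of the surviving data $a,q,c,v_2,\alpha$ (recalling $a_2=-a$, $A=-qJ_+\rvert_{\mathfrak{k}_3}$, $\nu=-J_+\alpha$, $v_1=\tfrac{1}{c}\lVert\alpha\rVert^2$ and $v=-\tfrac{1}{c}(qJ_+\rvert_{\mathfrak{k}_3}+a\,\mathrm{Id})(\alpha^\sharp)$), keeping $c\neq0$ throughout.

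Next I would parametrize the generic $g$-orthogonal almost complex structure as $J_-=\sum_{k<l}J_{kl}(e^l\otimes e_k-e^k\otimes e_l)$ subject to $J_-^2=-\mathrm{Id}$, denote by $N_-$ its Nijenhuis tensor (regarded as a $(0,3)$-tensor via $g$), and recall that the generalized K\"ahler condition is $H_++H_-=0$, where $H_\pm=d^c_\pm\omega_\pm$. The dichotomy is the usual one: either $[J_+,J_-]=0$ (the split case), or $[J_+,J_-]\neq0$, in which case, by \cite{Hit}, $(\mathfrak{g},J_+)$ must carry a nonzero holomorphic Poisson bivector $\sigma\in\Lambda^2\mathfrak{g}^{1,0}$, i.e.\ $\overline\partial_+\sigma=0$ and $[\sigma,\sigma]=0$.

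In the split case I would first impose $[J_+,J_-]=0$, which linearly determines most of the entries $J_{kl}$; after substituting these, imposing integrability $N_-=0$ forces the vanishing of several further $J_{kl}$, each constraint arising from a term proportional to $c$ times a sum of squares (as in the proof of the preceding proposition for case (1)). Finally I would evaluate a carefully chosen component of $H_++H_-$ lying in $\mathfrak{n}^1\wedge\Lambda^2\mathfrak{k}_1^*$, e.g.\ one involving the covectors $e^1,e^3,e^4$ attached to the nonzero bracket $[e_3,e_4]=-c\,e_1$, and show that it equals $-c$ times a manifestly positive expression. Since $c\neq0$, this component cannot vanish, contradicting $H_++H_-=0$.

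The non-split case is the more delicate one. Using the $J_+$-adapted complex frame $W_1=e_1-ie_2$, $W_2=e_3-ie_4$, $W_3=e_5-ie_6$, one computes $[W_2,\overline{W}_2]=-ic(W_1+\overline{W}_1)$ together with the contributions of $B$ to the remaining brackets, and I would then compute $\overline\partial_+$ and the Schouten bracket on $\Lambda^2\mathfrak{g}^{1,0}$ exactly as in case (1). I expect the term $-ic\,W_1$ arising in $\overline\partial_{\overline{W}_2}(W_2\wedge W_3)$ to obstruct most candidate bivectors and, together with the holomorphicity and Poisson conditions, to severely restrict the data (in particular I anticipate it forcing $\alpha=0$ and pinning $q$), leaving only a short list of complex structures $J_+$; on each of these I would again impose $N_-=0$ and $H_++H_-=0$ and reach a contradiction as in the split case. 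The main obstacle is the bookkeeping for the generic $J_-$, which depends on fifteen parameters constrained by the quadratic relation $J_-^2=-\mathrm{Id}$; organizing the commutation, integrability and compatibility conditions so that a single nonvanishing multiple of $c$ survives in $H_++H_-$ is the crux, compounded in the non-split case by the holomorphic Poisson computation with the off-diagonal data $\alpha$ of \eqref{B_sub1} feeding into $\overline\partial_+$.
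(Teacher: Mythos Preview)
Your strategy matches the paper's proof almost exactly: reduce to case \eqref{SKT_sub2} via Proposition \ref{SKT_sub_prop}, treat the split and non-split cases separately, and in the non-split case first classify holomorphic Poisson bivectors for $J_+$ before constraining $J_-$. Two points where your expectations diverge from what actually happens:

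\textbf{Split case.} The computation does not collapse to a single positive multiple of $c$ after imposing $[J_+,J_-]=0$ and $N_-=0$. The paper's argument branches several times: first $N_-(e_1,e_2,e_5)=2aJ_{16}^2$ forces either $J_{16}=0$ or $a=0$, and each branch spawns further sub-cases (on $q$, on $J_{15}$, on $\alpha(e_3)$, etc.), with the final contradiction sometimes coming from $(H_++H_-)(e_1,e_3,e_4)$ and sometimes from an $N_-$-component or a norm identity. So expect a case tree, not a single obstructing coefficient.

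\textbf{Non-split case.} The holomorphic Poisson computation does \emph{not} force $\alpha=0$; it forces $a=0$ or $q=0$. Concretely, $\overline\partial_{\overline W_2}$ kills $W_{12}$ and $W_{13}$ but sends $W_{23}\mapsto -ic\,W_{13}$ (so $W_{23}$ is excluded, as you anticipated), while $\overline\partial_{\overline W_3}$ restricted to $\mathbb{C}\langle W_{12},W_{13}\rangle$ is upper-triangular with diagonal $(-q,-ia)$. Hence nontrivial holomorphic Poisson bivectors exist iff $aq=0$, and the data $\alpha$ survives in both sub-cases. The paper then treats $a=0$ and $q=0$ separately, in each case reading off the allowed span for $[J_+,J_-]g^{-1}$ and running the same kind of $N_-$/$(H_++H_-)$ elimination as in the split case, again with several branches.

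So your plan is correct and is the paper's plan; just be prepared for the case analysis to be longer and for the Poisson step to constrain $a,q$ rather than $\alpha$.
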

\begin{proof}
By contradiction, assume $J_+(\mathfrak{n}^1) \subset \mathfrak{n}$, with $[\mathfrak{k}_3,\mathfrak{k}_3]\neq \{0\}$, so that we may consider a basis $\{e_l \}$ adapted to $(J_+,g)$ and the associated algebraic data $\Xi$ in \eqref{Xinew} satisfying \eqref{Lie1new}--\eqref{Lie3new} and \eqref{SKT_sub2}.
We can then consider the generic skew-symmetric $J_-=\sum_{k<l} J_{kl} (e^l \otimes e_k - e^k \otimes e_l)$, denoting its associated Nijenhuis tensor, regarded as a $(0,3)$-tensor thanks to the metric $g$, by $N_-$. 

Let us start by assuming the generalized K\"ahler structure to be split.
The split condition $[J_+,J_-]=0$ imposes 
\[
J_{23}=-J_{14}, \quad J_{24}=J_{13}, \quad J_{25}=-J_{16},\quad J_{26}=J_{15}, \quad J_{45}=-J_{36},\quad J_{46}=J_{35}.
\]
In what follows, we denote $H_\pm=d^c_{\pm} \omega_{\pm}$, with $\omega_{\pm}=g(J_{\pm} \cdot, \cdot)$.

Now, we start by computing $N_-(e_1,e_2,e_5)=2aJ_{16}^2$, so that either $J_{16}=0$ or $a=0$ (with $J_{16} \neq 0$). If $J_{16}=0$, one computes $N_-(e_5,e_6,e_6)=-q(J_{35}^2+J_{36}^2)$; if we assume $J_{35}=J_{36}=0$, then we have
\[
N_-(e_3,e_4,e_1)=c(J_{34}^2-1),\quad \lVert J_-e_3 \rVert^2=J_{13}^2+J_{14}^2+J_{34}^2,
\]
which force $J_{13}=J_{14}=0$, $J_{34}=\pm 1$, so that
\[
(H_++H_-)(e_1,e_3,e_4)=-c(1+J_{12}^2+J_{15}^2)
\]
cannot vanish. Instead, if we assume $q =0$ (with $J_{35}^2 +J_{36}^2 \neq 0$), we compute
\[
N_-(e_2,e_3,e_6)=aJ_{15}J_{36},\quad N_-(e_2,e_4,e_6)=aJ_{15}J_{35},
\]
whose vanishing forces $J_{15}=0$, since $a=0$ would make $\mathfrak{g}$ nilpotent. To obtain a contradiction, we only need to compute
\[
N_-(e_5,e_6,e_5)=a(J_{56}^2-1),\quad \lVert J_-e_5 \rVert^2=J_{35}^2+J_{36}^2 + J_{56}^2.
\]

Now, we can assume $a=0$, $J_{16} \neq 0$. In order for $\mathfrak{g}$ to be non-nilpotent, we also have to require $q \neq 0$. First, we have $N_-(e_1,e_2,e_2)=v_2(J_{15}^2+J_{16}^2)$, which forces $v_2=0$. We then look at
\[
N_-(e_3,e_4,e_5)=-2J_{15}(\alpha(e_3)J_{35}-\alpha(e_4) J_{36}).
\]
A first possibility is given by $J_{15}=0$: in this case, we have
\[
N_-(e_1,e_3,e_5)=qJ_{16}J_{36},\quad N_-(e_1,e_4,e_5)=qJ_{16}J_{35},
\]
which force $J_{35}=J_{36}=0$. Then,
\[
N_-(e_3,e_4,e_1)=c(J_{34}^2-1),\quad \lVert J_- e_3 \rVert^2=J_{13}^2+J_{14}^2+J_{34}^2
\]
imply $J_{13}=J_{14}=0$, $J_{34}^2=1$. Finally, computing
\[
N_-(e_1,e_2,e_1)=\tfrac{1}{c} J_{16}^2 \lVert \alpha \rVert^2
\]
yields $\alpha=0$, so that
\[
(H_++H_-)(e_1,e_3,e_4)=-c(1+J_{12}^2+J_{16}^2)
\]
is never zero. A second possibility is given by $J_{15} \neq 0$, $\alpha(e_3)=0$ and either $\alpha(e_4)=0$ or $J_{36}=0$ (with $\alpha(e_4) \neq 0$): in the former case, one has $N_-(e_5,e_6,e_6)=-q(J_{35}^2+J_{36}^2)$, forcing $J_{35}=J_{36}=0$. Now, one has $N_-(e_1,e_2,e_1)=c(J_{13}^2+J_{14}^2)$, yielding $J_{13}=J_{14}=0$. The contradiction follows from 
\[
(H_++H_-)(e_1,e_3,e_4)=-c(1+J_{34}^2(J_{12}^2+J_{15}^2+J_{16}^2)).
\]
The third and final possibility is given by $J_{15} \neq 0$, $\alpha(e_3) \neq 0$, $J_{35}=\frac{\alpha(e_4)}{\alpha(e_3)} J_{36}$. In this case, we have $N_-(e_3,e_4,e_4)=-\frac{1}{c \alpha(e_3)} q J_{36}^2 \lVert \alpha \rVert^2$, from which $J_{36}=0$ follows. A contradiction can be obtained by considering
\[
N_-(e_5,e_6,e_1)=\tfrac{1}{c} \lVert \alpha \rVert^2(J_{56}^2 -1), \quad \lVert J_-e_5 \rVert^2=J_{15}^2+J_{16}^2+J_{56}^2.
\]
This concludes the split case.

Now, to work on the non-split case, we have to determine under what conditions the starting SKT structure $(J_+,g)$ on $\mathfrak{g}$ admits holomorphic Poisson structures. With respect to $J_+$, a basis for $\mathfrak{g}^{1,0}$ is given by
\begin{equation} \label{10vectors}
Z_1=e_1-ie_2,\quad Z_2=e_3-ie_4,\quad Z_3=e_5-ie_6.
\end{equation}
With respect to the basis $\{Z_1,Z_2,Z_3,\overline{Z}_1,\overline{Z}_2,\overline{Z}_3\}$ for $\mathfrak{g} \otimes \C$, the non-zero brackets, up to exchanging the terms and conjugating, are given by
\begin{gather*}
[Z_1,Z_3]=-iaZ_1,\quad [Z_2,Z_3]=qZ_2,\quad [Z_1,\overline{Z}_3]=-ia\overline{Z}_1,\\ [Z_2,\overline{Z}_2]=-ic(Z_1+\overline{Z}_1), \quad [Z_2,\overline{Z}_3]=-i(\alpha(e_3)-i\alpha(e_4))(Z_1+\overline{Z}_1)-qZ_2,\\
\begin{aligned}
[Z_3,\overline{Z}_3]=&-\tfrac{1}{c}(i\lVert \alpha \rVert^2-cv_2)Z_1 + \tfrac{i}{c}(a+iq)(\alpha(e_3)+i\alpha(e_4))Z_2-iaZ_3 \\
                     &-\tfrac{1}{c}(i\lVert \alpha \rVert^2 + cv_2) \overline{Z}_1 + \tfrac{i}{c}(a-iq)(\alpha(e_3)-i\alpha(e_4))\overline{Z}_2 - ia \overline{Z}_3.
\end{aligned}
\end{gather*}
Now, an explicit computation using the definition shows that the endomorphism $\overline{\partial}_{\overline{Z}_l}$ of $\mathfrak{g}^{2,0}$, $l=1,2,3$, can be represented in matrix form, with respect to the basis $\{Z_{12},Z_{13},Z_{23}\}$, as
\footnotesize
\begin{gather*}
\overline{\partial}_{\overline{Z}_1}=\begin{pmatrix}
0 & 0 & ia \\ 0 & 0 & 0 \\ 0 & 0 & 0
\end{pmatrix}, \qquad
\overline{\partial}_{\overline{Z}_2}=\begin{pmatrix}
0 & 0 & i(\alpha(e_3)+i\alpha(e_4)) \\ 0 & 0 & -ic \\ 0 & 0 & 0
\end{pmatrix}, \\
\overline{\partial}_{\overline{Z}_3}=\begin{pmatrix}
-q & \tfrac{i}{c}(a+iq)(\alpha(e_3)+i\alpha(e_4)) & \tfrac{1}{c}(i\lVert \alpha \rVert^2 - cv_2) \\ 0 & -ia & -i(\alpha(e_3)-i\alpha(e_4)) \\ 0 & 0 & -i(a-iq)
\end{pmatrix}.
\end{gather*}
\normalsize
One can easily see that the intersection of the kernels of the three endomorphisms is non-trivial if and only if either $a=0$ or $q=0$ (with $a^2+q^2 \neq 0$ to have a non-nilpotent Lie algebra): such intersection is spanned by $(\alpha(e_3)+i\alpha(e_4))Z_{12}-Z_{13}$, for $a=0$, or by $Z_{12}$, for $q=0$. Such holomorphic $(2,0)$-vectors are then automatically Poisson, as shown by an explicit computation.

Let us consider the case $a=0$. We simplify the results of the previous paragraph, by just using that holomorphic Poisson structures lie in $\mathbb{C} \left<Z_{12},Z_{13}\right>$. This means that
\[
[J_+,J_-]g^{-1} \in \R\left<e_{13},e_{14},e_{15},e_{16},e_{23},e_{24},e_{25},e_{26}\right>.
\]
To achieve this, we only need to set
\[
J_{45}=-J_{36},\quad J_{46}=J_{35}.
\]
We can compute $N_-(e_3,e_4,e_5)=-2J_{15}(\alpha(e_3)J_{35}-\alpha(e_4) J_{36})$, so that, similarly to our earlier discussion, we have three possibilities to discuss. If $J_{15}=0$, we focus on
\[
N_-(e_1,e_3,e_5)=qJ_{16}J_{36},\quad qJ_{16}J_{36}.
\]
Now, either $J_{35}=J_{36}=0$, in which case
\begin{equation} \label{N341}
N_-(e_3,e_4,e_1)=c(J_{34}^2-1),\quad \lVert J_-e_3 \rVert^2=J_{13}^2+J_{23}^2+J_{34}^2,\quad \lVert J_-e_4 \rVert^2=J_{14}^2+J_{24}^2+J_{34}^2
\end{equation}
imply $J_{13}=J_{23}=J_{14}=J_{24}=0$, $J_{34}^2=1$, so that
\[
(H_++H_-)(e_1,e_3,e_4)=-c(1+J_{12}^2+J_{16}^2)
\]
can never vanish,
or $J_{16}=0$, with $J_{35}^2+J_{36}^2 \neq 0$: in this case, a contradiction is given by
\begin{equation} \label{N566}
N_-(e_5,e_6,e_6)=-q(J_{35}^2+J_{36}^2).
\end{equation}
Instead, if $\alpha(e_3)=0$, with $J_{15} \neq 0$, either $J_{36}=0$, in which case $N_-(e_5,e_6,e_6)=-qJ_{35}^2$ implies $J_{35}=0$, and now \eqref{N341} forces $J_{13}=J_{23}=J_{14}=J_{24}=0$, $J_{34}^2=1$, so that a contradiction follows from
\begin{equation} \label{H134}
(H_++H_-)(e_1,e_3,e_4)=-c(1+J_{12}^2+J_{15}^2+J_{16}^2),
\end{equation}
or $\alpha(e_4)=0$ (with $J_{36} \neq 0$): here, we have \eqref{N566}, which can never vanish. Lastly, if $\alpha(e_3) \neq 0$ and $J_{35}=\tfrac{\alpha(e_4)}{\alpha(e_3)} J_{36}$, we have 
\[
N_-(e_3,e_4,e_4)=-\tfrac{1}{c\alpha(e_3)}q\lVert \alpha \rVert^2 q,
\]
implying $J_{36}=0$, so that, again, \eqref{N341} forces $J_{13}=J_{23}=J_{14}=J_{24}=0$, $J_{34}^2=1$ and \eqref{H134} provides a contradiction.

We can then move on to the case $q=0$ (with $a \neq 0$). Here, holomorphic Poisson structures are spanned by $Z_{12}$, so that we have
\[
[J_+,J_-]g^{-1} \in  \R \left< e_{13},e_{14},e_{23},e_{24} \right>,
\]
meaning that we have to set
\[
J_{25}=-J_{16},\quad J_{26}=J_{15}, \quad J_{45}=-J_{36},\quad J_{46}=J_{35}.
\]
Actually, we also have $J_{16}=0$, since $N_-(e_1,e_2,e_5)=2aJ_{16}^2$. Looking at
\[
N_-(e_2,e_3,e_6)=aJ_{15}J_{36},\quad N_-(e_2,e_4,e_6)=aJ_{15}J_{35},
\]
we have either $J_{35}=J_{36}=0$, yielding \eqref{N341}, implying $J_{13}=J_{23}=J_{14}=J_{24}=0$, $J_{34}^2=1$ and consequently
\[
(H_+,H_-)(e_1,e_3,e_4)=-c(1+J_{12}^2+J_{15}^2),
\]
or $J_{15}=0$ (with $J_{35}^2 + J_{36}^2 \neq 0$), which immediately yields a contradiction, since $N_-(e_3,e_4,e_5)=a(J_{35}^2+J_{36}^2)$. This concludes the proof of the proposition.
\end{proof}

We now move one to the case where $\mathfrak{k}_3$ is an abelian ideal.

\begin{proposition}
Let $\mathfrak{g}$ be a non-K\"ahler $6$-dimensional strongly unimodular almost nilpotent Lie algebra with nilradical $\mathfrak{n}$ satisfying $\dim \mathfrak{n}^1=1$. Then $\mathfrak{g}$ does not admit generalized K\"ahler structures $(J_+,J_-,g)$ such that $(J_+,g)$ satisfies $J_+(\mathfrak{n}^1) \subset \mathfrak{n}$ and $\mathfrak{k}_3$ being an abelian ideal.
\end{proposition}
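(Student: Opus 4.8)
The plan is to argue by contradiction along the same lines as the preceding proposition. Suppose $(J_+,J_-,g)$ is a generalized K\"ahler structure with $J_+(\mathfrak{n}^1)\subset\mathfrak{n}$ and $\mathfrak{k}_3$ an abelian ideal. First I would fix a basis $\{e_1,\ldots,e_6\}$ adapted to $(J_+,g)$ as in Section \ref{Hermitian_sub_sec}, with $J_+e_{2l-1}=e_{2l}$. Since $(J_+,g)$ is SKT with $\mathfrak{k}_3$ an abelian ideal, Proposition \ref{sub_SKTprop} (invoking \eqref{assumptionSKT} and \eqref{sub_integrable}) forces $a_1=0$, $a_2=-a\neq0$ and $A\in\mathfrak{u}(\mathfrak{k}_3,J_+\rvert_{\mathfrak{k}_3},g)$, equivalently the reduced data \eqref{SKT_sub1}; the only surviving parameters are then $a\neq0$, the rotation parameter of $A$ and the components of $v_1,v_2,v$, and in particular $c=0$, $\eta=-a\,e^{25}$. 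By \eqref{sub_domega_eq} the fundamental form satisfies $d\omega_+=a\,e^{126}+\dots$, so that $H_+\coloneqq d^c_+\omega_+$ has the nonzero component $H_+(e_1,e_2,e_5)=\pm a$; this is the term that will ultimately obstruct the existence of $J_-$. I would then write the generic $g$-skew-symmetric $J_-=\sum_{k<l}J_{kl}(e^l\otimes e_k-e^k\otimes e_l)$ and denote by $N_-$ its Nijenhuis tensor, viewed as a $(0,3)$-tensor via $g$, writing also $H_-\coloneqq d^c_-\omega_-$.

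In the split case I would impose $[J_+,J_-]=0$, which yields a family of linear identities among the $J_{kl}$ and leaves only a few free entries. Evaluating a handful of components of $N_-$, together with the orthogonality relations $\lVert J_-e_l\rVert^2=1$, forces the off-diagonal entries to vanish and the diagonal ones to equal $\pm1$; the contradiction then comes from the single component $(H_++H_-)(e_1,e_2,e_5)$, which reduces to $\pm a(1+(\text{sum of squares}))$ and hence cannot vanish, $a$ being nonzero. This replaces the role played by $c\neq0$ in the previous proposition, where $c=0$ does not occur.

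For the non-split case I would first locate the holomorphic Poisson structures of $(J_+,g)$. Working in the $(1,0)$-basis \eqref{10vectors}, a computation of the complex brackets shows that $\overline\partial_{\overline Z_2}$ vanishes and that $\overline\partial_{\overline Z_1}$ sends $Z_{23}$ to $-ia\,Z_{12}$ (and kills $Z_{12},Z_{13}$), so that holomorphic $(2,0)$-vectors must lie in $\mathbb{C}\left<Z_{12},Z_{13}\right>$; restricting $\overline\partial_{\overline Z_3}$ there, a nonzero kernel exists if and only if the rotation parameter of $A$ vanishes, in which case the Poisson structure is spanned by $Z_{12}$. This confines $[J_+,J_-]g^{-1}$ to $\R\left<e_{13},e_{14},e_{23},e_{24}\right>$, which imposes linear relations on $J_-$. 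Running the same $N_-$ and $H_++H_-$ analysis under these relations again isolates the component at $(e_1,e_2,e_5)$ proportional to $a$, producing the contradiction and finishing the proof.

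The hard part will be organizing the non-split branch: correctly computing the operators $\overline\partial_{\overline Z_l}$ on $\mathfrak{g}^{2,0}$ and reading off that holomorphic Poisson structures appear exactly when $A$ has vanishing rotation part, and then carrying out the branching of the Nijenhuis computation. I expect the bookkeeping of the $J_{kl}$ to be heavy but routine: in every branch the combination of the vanishing of selected components of $N_-$, the orthogonality of $J_-$, and the irreducible torsion term $H_+(e_1,e_2,e_5)=\pm a$ should leave a component of $H_++H_-$ forced to be a nonzero multiple of $a$, exactly as in the case $[\mathfrak{k}_3,\mathfrak{k}_3]\neq\{0\}$.
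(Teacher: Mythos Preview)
Your proposal is correct and follows essentially the same strategy as the paper: fix an adapted basis via Proposition~\ref{sub_SKTprop}, handle the split case by imposing $[J_+,J_-]=0$ and chasing components of $N_-$ and $H_++H_-$, and in the non-split case locate the holomorphic Poisson structures to constrain $[J_+,J_-]g^{-1}$ before running the same analysis.

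Two small differences are worth flagging. First, in the non-split endgame the paper takes a shorter route than the one you sketch: after the span condition forces $J_{25}=-J_{16}$, $J_{26}=J_{15}$, $J_{45}=-J_{36}$, $J_{46}=J_{35}$, the Nijenhuis components $N_-(e_3,e_4,e_5)$, $N_-(e_3,e_6,e_3)$, $N_-(e_4,e_6,e_4)$ together with the orthogonality relation $\lVert J_-e_1\rVert^2=\lVert J_-e_2\rVert^2$ kill $J_{13},J_{14},J_{23},J_{24},J_{35},J_{36}$ outright, whence $[J_+,J_-]=0$. This contradicts the non-split hypothesis directly and avoids repeating the $H_++H_-$ analysis you plan; your approach would also work but is less economical. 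Second, your claim that the rotation parameter $b$ of $A$ must vanish for holomorphic Poisson structures to exist is correct (since $\overline\partial_{\overline{Z}_3}Z_{12}=bZ_{12}$ and $a\neq 0$ rules out any contribution from $Z_{13}$), and is in fact sharper than the paper's phrasing; however, the subsequent argument does not actually use $b=0$, so you need not carry this restriction through the computation.
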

\begin{proof}
We begin by analyzing the split case. By contradiction, assume $J_+ \mathfrak{n}^1 \subset \mathfrak{n}$ and $\mathfrak{k}_3$ to be an abelian ideal. Then, by Proposition \ref{sub_SKTprop}, there exists an adapted unitary basis $\{e_l\}$, $J_+e_{2j-1}=e_{2j}$, $j=1,2,3$, having structure equations
\begin{alignat*}{3}
de^1&=-ae^{25}+v_1e^{56},\quad &
de^2&=-ae^{26}+v_2e^{56},\quad &
de^3&=b e^{46} + v_3 e^{56}, \\
de^4&=-b e^{36} + v_4 e^{56}, \quad &
de^5&=ae^{56}, \quad &
de^6&=0,
\end{alignat*}
for some $a \in \R-\{0\}$, $b,v_l \in \R$, $l=1,2,3,4$. Consider the generic skew-symmetric $J_-=\sum_{k<l}J_{kl}(e^l \otimes e_k - e^k \otimes e_l)$. To have $[J_+,J_-]=0$, we need to impose
\[
J_{23}=-J_{14},\quad J_{24}=J_{13},\quad J_{25}=-J_{16},\quad J_{26}=J_{15},\quad J_{45}=-J_{36},\quad J_{46}=J_{35}.
\]
Denoting by $N_-$ the Nijenhuis tensor associated with $J_-$, regarded as a $(0,3)$-tensor with the aid of the metric $g$, we can compute
\[
N_-(e_3,e_4,e_5)=a(J_{35}^2+J_{36}^2),
\]
implying $J_{35}=J_{36}=0$. Then,
\[
N_-(e_1,e_2,e_5)=2aJ_{16}^2,\quad N_-(e_3,e_6,e_3)=-aJ_{14}^2,\quad N_-(e_4,e_6,e_4)=-aJ_{13}^2,
\]
by which we have $J_{13}=J_{14}=J_{16}=0$. Denoting $H_\pm=d^c_\pm \omega_\pm$, we then have
\[
(H_++H_-)(X,e_5,e_6)=-g(v,X)(1+J_{34}^2J_{56}^2),\quad X \in \mathfrak{k}_3,
\]
so that we have to set $v_3=v_4=0$. We now have
\[
N_-(e_1,e_2,e_1)=v_1 J_{15}^2,
\]
so that we can discuss the two cases $v_1=0$ and $J_{15}=0$ (with $v_1 \neq 0$): in the latter case we would have
\[
(H_++H_-)(e_1,e_5,e_6)=-v_1(1+J_{12}^2J_{56}^2) \neq 0,
\]
so that we have to stick with $v_1=0$. We compute
\[
N_-(e_1,e_5,e_5)=aJ_{15}(J_{12}+J_{56}).
\]
Again, we discuss the two cases $J_{15}=0$ and $J_{56}=-J_{12}$ (with $J_{15} \neq 0$): in the former case,
\[
(H_++H_-)(e_2,e_5,e_6)=-v_2(J_{12}^2J_{56}^2+1)
\]
implies $v_2=0$, but then, the following two expressions cannot vanish simultaneously:
\[
N_-(e_1,e_5,e_2)=aJ_{12}(J_{12}-J_{56}),\quad (H_++H_-)(e_1,e_2,e_5)=-a(1+J_{12}^3J_{56}).
\]
Instead, in the case $J_{56}=-J_{12}$ (with $J_{15} \neq 0$),
\[
N_-(e_2,e_5,e_6)=-J_{15}(2aJ_{12}-v_2J_{15})
\]
forces $v_2=\frac{2aJ_{12}}{J_{15}}$. But then
\[
(H_++H_-)(e_1,e_2,e_5)=-a(1+(J_{12}^2+J_{15}^2)^2)
\]
cannot vanish.

Let us now work in the non-split case, which means we first have to study holomorphic Poisson structures with respect to $J_+$. A basis of $(1,0)$-vectors with respect to $J_+$ is thus provided by
\[
Z_1=e_1-ie_2,\qquad Z_2=e_3-ie_4,\qquad Z_3=e_5-ie_6.
\]
With respect to the basis $\{Z_1,Z_2,Z_3,\overline{Z}_1,\overline{Z}_2,\overline{Z}_3\}$ for $\mathfrak{g} \otimes \C$, the non-zero brackets, up to exchanging the terms, are given by
\begin{gather*}
\begin{align*}
&[Z_1,Z_3]=-iaZ_1, \quad \; & 
&[Z_2,Z_3]=bZ_2, \quad \; &
&[Z_1,\overline{Z}_3]=- i a Z_1, \\
&[Z_2,\overline{Z}_3]=-bZ_2, \quad \; &
&[Z_3,\overline{Z}_1]=-iaZ_1, \quad \; &
&[Z_3,\overline{Z}_2]=b\overline{Z}_2,
\end{align*} \\
\begin{align*}
&[Z_3,\overline{Z}_3]=-i \left( (v_1+iv_2)Z_1+(v_3+iv_4)Z_2+aZ_3+(v_1-iv_2)\overline{Z}_1+(v_3-iv_4)\overline{Z}_2 + a\overline{Z}_3 \right),
\end{align*} \\
[\overline{Z}_1, \overline{Z}_3]=ia\overline{Z}_1, \qquad [\overline{Z}_2,\overline{Z}_3]=b\overline{Z}_2,
\end{gather*}
By a direct computation, we then get $\overline\partial_{\overline{Z}_2}Z_{jk}=0$ for all $j,k$, while
\[
\overline\partial_{\overline{Z}_1}Z_{12}=\overline\partial_{\overline{Z}_1}Z_{13}=0,\quad \overline\partial_{\overline{Z}_1}Z_{23}=-iaZ_{12},
\]
so that potential holomorphic Poisson structures must lie in $\mathbb{C}\left<Z_{12},Z_{13}\right>$. Then one computes
\[
\overline\partial_{\overline{Z}_3}Z_{12}=b Z_{12},\quad \overline\partial_{\overline{Z}_3}Z_{13}=i(v_1+iv_4)Z_{12}+iaZ_{13}.
\]
It follows that holomorphic $(2,0)$-vectors exist and that they must be multiples of $Z_{12}=(e_1-ie_2)\wedge(e_3-ie_4)$. These are also Poisson, since $[Z_{12},Z_{12}]=-2[Z_1,Z_2]\wedge Z_1 \wedge Z_2=0$.

We can then consider the generic skew-symmetric $J_-=\sum_{k<l}J_{kl}(e^l \otimes e_k - e^k \otimes e_l)$. By the above discussion, we must have
\[
[J_+,J_-]g^{-1} \in \R\left< e_{13},e_{14},e_{23},e_{24} \right>,
\]
which forces $J_{25}=-J_{16}$, $J_{26}=J_{15}$, $J_{45}=-J_{36}$ and $J_{46}=J_{35}$.
One can then compute
\[
N_-(e_3,e_4,e_5)=a(J_{35}^2+J_{36}^2),
\]
which implies $J_{35}=J_{36}=0$.

One has
\[
N_-(e_3,e_6,e_3)=-aJ_{23}^2,\quad N_-(e_4,e_6,e_4)=-aJ_{24}^2,
\]
so that $J_{23}=J_{24}=0$. Then
\[
\lVert J_- e_1 \rVert^2 - \lVert J_- e_2 \rVert^2=J_{13}^2+J_{14}^2,
\]
whose vanishing implies $J_{13}=J_{14}=0$, since $(J_-)^2=-\text{Id}$. But now one has $[J_+,J_-]=0$ and the sought generalized K\"ahler structure must be split.
\end{proof}

In real dimension six, the only remaining case is thus represented by generalized K\"ahler structures $(J_+,J_-,g)$ whose underlying SKT structures are described by case (3). In higher dimension, too, nothing is known about the existence of generalized K\"ahler structures on almost nilpotent Lie algebras with nilradical having one-dimensional commutator. 

On this subject, we recall that, by \cite{FP1}, all generalized K\"ahler structures on $6$-dimensional almost abelian Lie algebras  are split. In dimension higher than six, this is no longer true, as the next theorem shows.

\begin{theorem} \label{gkexamples}
There exist non-K\"ahler compact solvmanifolds of complex dimension $n \geq 4$ admitting non-split generalized K\"ahler structures.
\end{theorem}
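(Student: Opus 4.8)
The plan is to realize the examples at the level of Lie algebras, since an invariant generalized K\"ahler structure on a compact quotient $\Gamma\backslash G$ is the same as a generalized K\"ahler structure $(J_+,J_-,g)$ on $\mathfrak{g}=\mathrm{Lie}(G)$, and then to produce lattices. By \cite{FP1} every generalized K\"ahler structure on a $6$-dimensional almost abelian Lie algebra is split, so any non-split almost abelian example must have real dimension at least eight; this matches the bound $n\geq 4$ and suggests building the example by combining a torsion-carrying split piece with a flat piece responsible for the non-commutativity. Concretely, I would look for $\mathfrak{g}$ of the form $\mathfrak{g}=\mathfrak{g}'\oplus\mathbb{R}^{4}$ (a Lie algebra direct sum, which is again almost abelian), where $(\mathfrak{g}',K_+,K_-,g')$ is a \emph{non-K\"ahler} almost abelian Lie algebra carrying a \emph{split} generalized K\"ahler structure, such as the ones coming from \cite{FT,FP1}.

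On the flat summand $\mathbb{R}^4\cong\mathbb{H}$ with its standard inner product I would take $L_+$ and $L_-$ to be left multiplication by the unit quaternions $i$ and $j$; these are orthogonal complex structures inducing the same orientation, both flat (hence K\"ahler, with vanishing torsion), and they satisfy $L_+L_-=-L_-L_+$, so in particular $[L_+,L_-]\neq 0$. Setting $J_\pm\coloneqq K_\pm\oplus L_\pm$ and $g\coloneqq g'\oplus g_{\mathrm{flat}}$, the pair $(J_+,J_-,g)$ is Hermitian and SKT with respect to both complex structures, and the generalized K\"ahler coupling splits as a direct sum, $d^c_+\omega_++d^c_-\omega_-=(d^c_{K_+}\omega'_++d^c_{K_-}\omega'_-)\oplus(d^c_{L_+}\omega^{L}_++d^c_{L_-}\omega^{L}_-)$, whose first summand vanishes because $(K_+,K_-,g')$ is generalized K\"ahler and whose second summand vanishes because $L_\pm$ are K\"ahler. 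Finally $[J_+,J_-]=[K_+,K_-]\oplus[L_+,L_-]=0\oplus[L_+,L_-]\neq 0$, so the structure is non-split. For arbitrary $n\geq 4$ one enlarges the flat factor, replacing $\mathbb{R}^4$ by $\mathbb{R}^4\oplus\mathbb{R}^{2(n-2-\dim_{\mathbb{C}}\mathfrak{g}')}$ and extending $J_\pm$ by a common complex structure on the extra flat directions.

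The main obstacle is not the verification, which is routine once the pieces are in place, but the existence of the non-K\"ahler split building block $\mathfrak{g}'$ in small enough dimension to reach exactly $n=4$, together with a lattice on the associated group. I would therefore pin down $\mathfrak{g}'$ explicitly: a non-K\"ahler almost abelian Lie algebra of real dimension $2\dim_{\mathbb{C}}\mathfrak{g}'$ whose adjoint operators have some eigenvalue with non-zero real part (so that it is not of type I), and for which the split generalized K\"ahler equations $d^c_{K_+}\omega'_++d^c_{K_-}\omega'_-=0$ can be solved. The reason dimension six is excluded for the full construction is precisely that there is no room for both a genuinely non-K\"ahler split generalized K\"ahler summand and an orthogonal flat $\mathbb{R}^4$ inside a $6$-dimensional almost abelian algebra, in accordance with \cite{FP1}.

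To conclude I would address compactness and non-K\"ahlerity. Writing $G=G'\times\mathbb{R}^{2(n-\dim_{\mathbb{C}}\mathfrak{g}')}$, a lattice is obtained as a product $\Gamma=\Gamma'\times\mathbb{Z}^{2(n-\dim_{\mathbb{C}}\mathfrak{g}')}$, where $\Gamma'\subset G'$ is a lattice of the base (its existence is part of the data of the examples of \cite{FT,FP1}, and could alternatively be produced via the Bock criterion \cite{Bock}) and the flat factor contributes a torus. The resulting compact solvmanifold $\Gamma\backslash G\cong(\Gamma'\backslash G')\times T^{2(n-\dim_{\mathbb{C}}\mathfrak{g}')}$ carries the invariant non-split generalized K\"ahler structure constructed above. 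It is not K\"ahler: the factor $\Gamma'\backslash G'$ is non-K\"ahler by the type I obstruction of \cite{Has2}, and a compact product complex manifold is K\"ahler if and only if both factors are, so the product cannot be K\"ahler either. This yields, for every $n\geq 4$, a compact non-K\"ahler solvmanifold admitting a non-split generalized K\"ahler structure.
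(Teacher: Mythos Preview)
Your plan is essentially the same idea as the paper's: the non-split commutator $[J_+,J_-]$ is supported on a $4$-dimensional ``hyperk\"ahler'' block on which $J_+$ and $J_-$ act by two anticommuting complex structures, while the torsion $H_+=-H_-$ lives on a complementary split generalized K\"ahler piece. The paper carries this out on an \emph{indecomposable} almost abelian Lie algebra $\mathfrak{a}_{2n}^{p,q}$: the $4$-block $\langle f_4,\ldots,f_7\rangle$ is not central but is rotated by $\mathrm{ad}_{f_{2n}}$ via $C_q$. Your direct-sum variant (take $q=0$) would also work and is slightly simpler; both yield the same existence statement.

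The genuine gap in your write-up is the building block $\mathfrak{g}'$. You say you ``would pin it down'' and point to \cite{FT,FP1}, but those references produce six-dimensional examples, so your product $\mathfrak{g}'\oplus\mathbb{R}^4$ only reaches $n\geq 5$. To hit $n=4$ you need a \emph{four}-dimensional non-K\"ahler unimodular almost abelian Lie algebra carrying a split generalized K\"ahler structure and a lattice. This is precisely what the paper writes down: the subalgebra $\langle f_1,f_2,f_3,f_{2n}\rangle$ with $D'=\mathrm{diag}(1,C_p-\tfrac12 I_2)$, complex structures $K_\pm f_1=f_{2n}$, $K_\pm f_2=\pm f_3$, and torsion $H_+=-f^{123}$. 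The lattice comes from \cite{AO} (Andrada--Origlia), not from \cite{FT,FP1}, since one needs $\exp(t_0 D')$ conjugate to an integer matrix; \cite{Bock} alone does not hand you the arithmetic input. Once you insert this explicit $\mathfrak{g}'$ and cite \cite{AO} for its lattice, your product argument goes through; the non-K\"ahlerity of $\Gamma\backslash G$ then follows directly from \cite{Has2} since $G=G'\times\mathbb{R}^{2(n-2)}$ is not of type~I (no need for the product-of-complex-manifolds detour).
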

\begin{proof}
Consider the $2n$-dimensional unimodular almost abelian Lie groups $A_{2n}^{p,q}$, $n \geq 4$, $p,q \in \R-\{0\}$, whose Lie algebra $\mathfrak{a}_{2n}^{p,q}$ is endowed with a basis $\{f_1,\ldots,f_{2n}\}$ with structure equations
\begin{gather*}
df^1=f^1 \wedge f^{2n},\quad df^2=-\tfrac{1}{2}f^2 \wedge f^{2n} + p f^3 \wedge f^{2n}, \quad df^3=-p f^2 \wedge f^{2n} -\tfrac{1}{2} f^3 \wedge f^{2n}, \quad df^{2n}=0,\\
df^{2l+2}= q f^{2l+3} \wedge f^{2n}, \quad df^{2l+3}= - q f^{2l+2} \wedge f^{2n},\quad 1 \leq l \leq n-2,
\end{gather*}
On $A_{2n}^{p,q}$ we can consider the left-invariant generalized K\"ahler structure $(J_{\pm},g)$ provided by
\begin{gather*}
J_\pm f_1=f_{2n},\quad J_\pm f_2=\pm f_3, \\
J_+f_4=f_5,\quad J_+f_6=-f_7,\quad J_-f_4=-f_7,\quad J_-f_5=f_6, \\
J_\pm f_{2l+6}=f_{2l+7}, \; 1 \leq l \leq n-4, \\
g=\sum_{l=1}^{2n} (f^l)^2.
\end{gather*}
Its torsion $H_+=d^c_+\omega_+ = - d^c_- \omega_-$ is provided by $H_+=-f^{123}.$

The generalized K\"ahler structure is non-split, since
\[
[J_+,J_-]g^{-1}=-2(f_{46}+f_{57}) \neq 0.
\]
We claim that, for some values of $p$ and $q$, $A_{2n}^{p,q}$ admits cocompact lattices. By \cite{Bock}, an almost abelian Lie algebra $\R^{2n-1} \rtimes_D \R$, $D \in \mathfrak{gl}_{2n-1}$,  admits cocompact lattices if and only if there exists a basis for $\mathfrak{n}$ and some $t_0 \in \R-\{0\}$ such that the matrix associated with $\text{exp}(t_0 D)$ has integer entries.
In our case, in the basis $\{f_1,\ldots,f_{2n-1}\}$ for the nilradical of $\mathfrak{a}_{2n}^{p,q}$, we have
\[
D=\text{diag}(1,C_p - \tfrac{1}{2} \text{Id}_2,C_q,\ldots,C_q).
\]
By \cite{AO}, there exist $p,t^\prime \in \R-\{0\}$ such that the $3 \times 3$-matrix
$\text{exp}\big(t^\prime\text{diag}(1,C_p-\tfrac{1}{2}\text{Id}_2)\big)$
is similar to an integer matrix, say $E \in \text{GL}_3$. Considering one of these values for $p$, we set $t_0=t^\prime$ and $q=\frac{2\pi}{t^\prime}$. It then follows that $\text{exp}(t_0 D)$ is similar to the integer matrix
$\text{diag}(E,1,\ldots,1)$,
concluding the proof.
\end{proof}

\section{Appendix}

Borrowing the notation of  \cite{SW},  the next two tables list all   $6$-dimensional  strongly unimodular almost nilpotent Lie algebras with nilradical having one-dimensional commutator. For each of them, we also indicate whether they admit complex structures, dividing the two cases $J \mathfrak{n}^1 \not\subset \mathfrak{n}$ and  $J \mathfrak{n}^1 \subset \mathfrak{n}$. We also indicate whether they admit SKT structures or balanced structures with $\hat\theta=0,\frac{\pi}{2}$, following Theorems \ref{class_SKT_perp}, \ref{class_SKT_sub}, \ref{class_bal_perp} and \ref{class_bal_sub}.

\begin{table}[H]
\begin{center}
\addtolength{\leftskip} {-2cm}
\addtolength{\rightskip}{-2cm}
\scalebox{0.58}{
\begin{tabular}{|l|l|l|l|l|l|l|l|}
\hline \xrowht{10pt}
\multirow{2}{*}{\vspace{-5pt}Name} & \multirow{2}{*}{\vspace{-5pt}Structure equations} & \multicolumn{2}{c|}{Complex structures} & \multicolumn{2}{c|}{SKT structures} & \multicolumn{2}{c|}{Balanced structures}\\
\cline{3-8} \xrowht{10pt}
&& $J \mathfrak{n}^1\not\subset \mathfrak{n}$ & \multicolumn{1}{c|}{$J \mathfrak{n}^1 \subset \mathfrak{n}$}
& $J \mathfrak{n}^1= \nperp$ & \multicolumn{1}{c|}{$J \mathfrak{n}^1 \subset \mathfrak{n}$}
& $J \mathfrak{n}^1= \nperp$ & \multicolumn{1}{c|}{$J \mathfrak{n}^1 \subset \mathfrak{n}$}
 \\
\hline \hline \xrowht{20pt}
$\mathfrak{h}_3 \oplus \frs{3.1}^{-1}$ & $(f^{23},0,0,f^{46},-f^{56},0)$ & --  & -- & -- & -- & -- & --
   \\ \hline \xrowht{20pt}
$\mathfrak{h}_3 \oplus \frs{3.3}^{0}$ & $(f^{23},0,0,f^{56},-f^{46},0)$ & \cmark & -- & \cmark & -- & -- & --
   \\ \hline \xrowht{20pt}
$\frs{4.6} \oplus \R^2$ & $(f^{23},f^{26},-f^{36},0,0,0)$ & -- & \cmark & -- & \cmark & -- & --
   \\ \hline \xrowht{20pt}
$\frs{4.7} \oplus \R^2$ & $(f^{23},f^{36},-f^{26},0,0,0)$ & \cmark & \cmark & \cmark & \cmark & -- & --
   \\ \hline \xrowht{20pt}
$\frs{5.15} \oplus \R$ & $(f^{23}+f^{46},f^{26},-f^{36},0,0,0)$  & -- & -- & -- & -- & -- & --
   \\ \hline \xrowht{20pt}
$\frs{5.16} \oplus \R$ & $(f^{23}+f^{46},f^{36},-f^{26},0,0,0)$  & -- & \cmark & -- & -- & -- & \cmark
\\ \hline \xrowht{20pt}
$\frs{6.24}$ & $(f^{23},f^{26},-f^{36},f^{56},0,0)$ & -- & -- & -- & -- & -- & --
\\ \hline \xrowht{20pt}
$\frs{6.25}$ & $(f^{23},f^{36},-f^{26},0,f^{46},0)$ & -- & \cmark & -- & \cmark & -- & --
\\ \hline \xrowht{20pt}
$\frs{6.30}$ & $(f^{23}+f^{56},f^{26},-f^{36},0,f^{46},0)$ & -- & -- & -- & -- & --& --
\\ \hline \xrowht{20pt}
$\frs{6.31}$ & $(f^{23}+f^{56},f^{36},-f^{26},0,f^{46},0)$ & -- & -- & -- & -- & --& --
\\ \hline \xrowht{20pt}
$\frs{6.32}^{-1}$ & $(f^{23},f^{36},0,f^{46},-f^{56},0)$ & -- & -- & -- & -- & --& --
\\ \hline \xrowht{20pt}
$\frs{6.34}^0$ & $(f^{23},f^{36},0,f^{56},-f^{46},0)$ & -- & -- & -- & -- & -- & --
\\ \hline \xrowht{20pt}
$\frs{6.43}$ & $(f^{23},f^{26},-f^{36},f^{26}+f^{46},f^{36}-f^{56},0)$ & -- & -- & -- & -- & -- & --
\\ \hline \xrowht{20pt}
$\frs{6.44}$ & $(f^{23},f^{36},-f^{26},f^{26}+f^{56},f^{36}-f^{46},0)$ & \cmark & -- & -- & -- & -- & --
\\ \hline \xrowht{20pt}
$\frs{6.45}^{a,-1}$ & $(f^{23},af^{26},-af^{36},f^{46},-f^{56},0)$, $a>0$ & -- & -- & -- & -- & -- & --
\\ \hline \xrowht{20pt}
$\frs{6.46}^{a,-a}$ & $(f^{23},f^{36},-f^{26},af^{46},-af^{56},0)$, $a \neq 0$ & -- & -- & -- & -- & -- & --
\\ \hline \xrowht{20pt}
$\frs{6.47}^{-1}$ & $(f^{23},-f^{26},f^{36},f^{36}+f^{46},-f^{56},0)$ & -- & -- & -- & -- & -- & --
\\ \hline \xrowht{20pt}
$\frs{6.51}^{a,0}$ & $(f^{23},af^{26},-af^{36},f^{56},-f^{46},0)$, $a>0$ & -- & \cmark & -- & \cmark & -- & --
\\ \hline \xrowht{20pt}
$\frs{6.52}^{0,b}$ & $(f^{23},f^{36},-f^{26},bf^{56},-bf^{46},0)$, $b>0$ & \cmark & -- & \cmark & -- & -- & --
\\ \hline 
%%%%%%%%%%%%%%%%%%%%%%%%%%
\end{tabular}}
\caption{$6$-dimensional strongly unimodular almost nilpotent Lie algebras with nilradical $\mathfrak{n}\cong \mathfrak{h}_3 \oplus \R^2$.} \label{table-h3}
\end{center}
\end{table}

\begin{table}[H]
\begin{center}
\addtolength{\leftskip} {-2cm}
\addtolength{\rightskip}{-2cm}
\scalebox{0.58}{
\begin{tabular}{|l|l|l|l|l|l|l|l|}
\hline \xrowht{10pt}
\multirow{2}{*}{\vspace{-5pt}Name} & \multirow{2}{*}{\vspace{-5pt}Structure equations} & \multicolumn{2}{c|}{Complex structures} & \multicolumn{2}{c|}{SKT structures} & \multicolumn{2}{c|}{Balanced structures}\\
\cline{3-8} \xrowht{10pt}
&& $J\mathfrak{n}^1\not\subset \mathfrak{n}$ & \multicolumn{1}{c|}{$J\mathfrak{n}^1 \subset \mathfrak{n}$}
& $J \mathfrak{n}^1= \nperp$ & \multicolumn{1}{c|}{$J \mathfrak{n}^1 \subset \mathfrak{n}$}
& $J \mathfrak{n}^1= \nperp$ & \multicolumn{1}{c|}{$J \mathfrak{n}^1 \subset \mathfrak{n}$} \\
\hline \hline \xrowht{20pt}
$\frs{6.158}$ & $(f^{24}+f^{35},0,f^{36},0,-f^{56},0)$ & -- & \cmark & -- & \cmark & -- & --
\\ \hline \xrowht{20pt}
$\frs{6.159}$ & $(f^{24}+f^{35},0,f^{56},0,-f^{36},0)$ & \cmark & -- & -- & -- & \cmark & --
\\ \hline \xrowht{20pt}
$\frs{6.160}$ & $(f^{24}+f^{35},f^{46},f^{36},0,-f^{56},0)$ & -- & -- & -- & --  & -- & --
\\ \hline \xrowht{20pt}
$\frs{6.161}^{\eps}$ & $(f^{24}+f^{35},\eps f^{46},f^{56},0,f^{36},0)$, $\varepsilon=\pm 1$ & -- & -- & -- & -- & -- & --
\\ \hline \xrowht{20pt}
$\frs{6.162}^a$ & $(f^{24}+f^{35},f^{26},af^{36},-f^{46},-af^{56},0)$, $0<a \leq 1$ & $a=1$ & -- & -- & -- & $a=1$ & --
\\ \hline \xrowht{20pt}
$\frs{6.163}$ & $(f^{24}+f^{35},f^{26},f^{26}+f^{36},-f^{46}-f^{56},-f^{56},0)$ & -- & -- & -- & -- & -- & --
\\ \hline \xrowht{20pt}
$\frs{6.164}^a$ & $(f^{24}+f^{35},af^{26},f^{56},-af^{46},-f^{36},0)$, $a>0$ & -- & \cmark & -- & \cmark & -- & --
\\ \hline \xrowht{20pt}
$\frs{6.165}^a$ & $(f^{24}+f^{35},af^{26}+f^{36},-f^{26}+af^{36},-af^{46}+f^{56},-f^{46}-af^{56},0)$, $a>0$ & \cmark & -- & -- & -- & \cmark & --
\\ \hline \xrowht{20pt}
$\frs{6.166}^a$ & $(f^{24}+f^{35},f^{46},af^{56},-f^{26},-af^{36},0)$, $0<|a| \leq 1$ & \cmark  & -- & -- & -- & \cmark & --
\\ \hline \xrowht{20pt}
$\frs{6.167}$ & $(f^{24}+f^{35},f^{36},-f^{26},f^{26}+f^{56},f^{36}-f^{46},0)$ & \cmark  & -- & -- & -- & \cmark & --
\\ \hline
%%%%%%%%%%%%%%%%%%%%%%%%%%
\end{tabular}}
\caption{$6$-dimensional strongly unimodular almost nilpotent Lie algebras with nilradical $\mathfrak{n}\cong \mathfrak{h}_5$.} \label{table-h5}
\end{center}
\end{table}
%\nocite{*}

\end{document}